        \def\version{20 June, 2025}		                   %
\patchcmd{\section}{\scshape}{\bigskip\sc\LARGE}{}{}
\newtheorem{theorem}{Theorem}[section] 
\newtheorem{lemma}[theorem]{Lemma} 
\newtheorem{prop}[theorem] {Proposition} 
\newtheorem{proposition}[theorem] {Proposition}
\theoremstyle{definition}
\newtheorem{definition}[theorem] {Definition}
\newtheorem{remark}[theorem]  {Remark}
\newtheorem*{nobs}{\!\!}
\makeatletter\@addtoreset{equation}{section}\makeatother
\renewcommand{\subsection}{\secdef \subsct\sbsect}
\newcommand{\subsct}[2][default]{\refstepcounter{subsection}\vspace{0.15cm}{\flushleft\bf \arabic{section}.\arabic{subsection}~\bf #1  }\nopagebreak\nopagebreak}
\newcommand{\sbsect}[1]{\vspace{0.1cm}\noindent{\bf #1}\vspace{0.1cm}}
\newcommand\ncite[2][]{%
\textnormal{\cite[#1]{#2}}
}
\newcommand{\e}{{\rm e}}
\newcommand{\eps}{\varepsilon}
\renewcommand{\phi}{\varphi}
\newcommand{\ssup}[1] {{\scriptscriptstyle{({#1}})}}
\newcommand{\Exp}{{\rm Exp}}
\newcommand{\Poi}{{\rm Poi}}
\newcommand{\Prob}{{\bf P}}
\newcommand{\Expec}{{\bf E}}
\newcommand{\Pol}{\P} 
\newcommand{\dd}{\,\mathrm{d}}
\newcommand{\ee}{\mathrm{e}}
\newcommand{\R}{\mathbb R}
\newcommand{\Z}{\mathbb Z}
\newcommand{\Q}{\mathbb Q}
\newcommand{\N}{\mathbb N}
\newcommand{\E}{\mathbb E}
\renewcommand{\P}{\mathbb P}
\newcommand{\bPi}{{\boldsymbol{\Pi}}}
\newcommand{\bH}{\boldsymbol{H}}
\newcommand{\bW}{\boldsymbol{W}}
\newcommand{\bZ}{\boldsymbol{Z}}
\newcommand{\br}{\boldsymbol{r}}
\newcommand{\bgamma}{\boldsymbol{\gamma}}
\newcommand{\bcL}{\boldsymbol{\cL}}
\newcommand{\bcA}{\boldsymbol{\cA}}
\newcommand{\btau}{\boldsymbol{\tau}}
\newcommand{\bsigma}{\boldsymbol{\sigma}}
\newcommand{\radon}[2]{\frac{\dd #1}{\dd #2}}
\newcommand{\slope}{s}
\newcommand{\height}{h}
\renewcommand{\epsilon}{\varepsilon}
\renewcommand{\P}{\mathbb{P}}
\newcommand{\1}{\mathbbm{1}}
\newcommand\forqq[1]{,\qquad #1}
\newcommand{\cA}{\mathcal{A}}
\newcommand{\cB}{\mathcal{B}}
\newcommand{\cC}{\mathcal{C}}
\newcommand{\cD}{\mathcal{D}}
\newcommand{\cE}{\mathcal{E}}
\newcommand{\cF}{\mathcal{F}}
\newcommand{\cH}{\mathcal{H}}
\newcommand{\cL}{\mathcal{L}}
\newcommand{\cM}{\mathcal{M}}
\newcommand{\cN}{\mathcal{N}}
\newcommand{\cP}{\mathcal{P}}
\newcommand{\cQ}{\mathcal{Q}}
\newcommand{\cW}{\mathcal{W}}
\newcommand{\fa}{\mathfrak{a}}
\newcommand{\fb}{\mathfrak{b}}  
\newcommand{\fc}{\mathfrak{c}}   
\newcommand{\fd}{\mathfrak{d}}
\newcommand{\fE}{\mathfrak{E}}
\newcommand{\fF}{\mathfrak{F}}
\newcommand{\fQ}{\mathfrak{Q}}
\newcommand{\fS}{\mathfrak{S}}
\newcommand{\sumn}{\sum_{n\in \N}}
\newcommand{\sumk}{\sum_{k\in \N}}
\newcommand{\limn}{\lim_{n\rightarrow \infty}}
\newcommand{\limN}{\lim_{N\rightarrow \infty}}
\newcommand{\limL}{\lim_{L\rightarrow \infty}}
\newcommand{\liminfn}{\liminf_{n\rightarrow \infty}}
\newcommand{\limsupk}{\limsup_{k\rightarrow \infty}}
\newcommand{\limsupn}{\limsup_{n\rightarrow \infty}}
\newcommand{\arrown}{\xrightarrow{n\rightarrow \infty}}
\newcommand{\arrowL}{\xrightarrow{L\rightarrow \infty}}
\newcommand{\arrowgamman}{\xrightarrow{\Gamma,n\rightarrow \infty}}
\newcommand{\bigcupn}{\bigcup_{n\in\N}}
\newcommand{\supk}{\sup_{k\in \N}}
\DeclareMathOperator*{\argmax}{arg\,max}
\DeclareMathOperator{\supp}{supp}
\newcommand{\rp}{\mathrm{p}}
\newcommand{\rb}{\mathrm{b}}
\newcommand{\bred}{\bgroup\color{red}}
\newcommand{\ered}{\egroup}
\newenvironment{colorwil}
    {\color[rgb]{0,0.5,0}    }
\newcommand{\bw}{\bgroup \color[rgb]{0,0.5,0} }
\newcommand{\ew}{\egroup}    
\newcommand{\bb}{\bgroup \color{blue} }
\newcommand{\eb}{\egroup}
\newcommand{\bwil}{\begin{colorwil}}
\newcommand{\ewil}{\end{colorwil}}
\definecolor{calccolor}{rgb}{0.6,0.3,0}
\newenvironment{calc}    {\color{calccolor}     }    {     }
\newcommand{\cnewline}{\\}
\newcommand{\cand}{&}
\newcommand{\bcalc}{\begin{calc}}
\newcommand{\ecalc}{\end{calc}}
\renewcommand{\cnewline}{}  
\renewcommand{\cand}{}  
\newcommand{\UU}{U}
\begin{document}
\title[Weakly self-avoiding walk in a Pareto-distributed random potential]
{\Large Weakly self-avoiding walk\\\medskip in a Pareto-distributed random potential}

\author[Wolfgang K\"onig, Nicolas P\'etr\'elis, Renato Soares dos Santos,  Willem van Zuijlen]{Wolfgang K\"onig \\ Nicolas P\'etr\'elis \\ Renato Soares dos Santos \\  Willem van Zuijlen}

\renewcommand{\thefootnote}{}
\footnote{\textit{AMS Subject Classification:} Primary 60H25, 60G70.
Secondary 82C44, 60F10, 60G55, 60G57.}
\footnote{\textit{Keywords: } random walk in random potential, random variational problem, parabolic Anderson model, path localisation, intermittent islands, weakly self-avoiding walk, Poisson point process convergence, spatial extreme-value analysis, Pareto potential distribution. }
\footnote{\textit{Acknowledgements:}
RSdS was partially supported by CNPq grants 313921/2020-2, 406001/2021-9 and FAPEMIG grant APQ-02288-21.}
\renewcommand{\thefootnote}{1}

\maketitle

\centerline{\small
\version}

\begin{abstract}
We investigate a model of continuous-time simple random walk paths in $\mathbb{Z}^d$ undergoing two competing interactions: an attractive one towards the large values of a random potential, and a self-repellent one in the spirit of the well-known weakly self-avoiding random walk. We take the potential to be i.i.d.~Pareto-distributed with parameter $\alpha>d$, and we tune the strength of the interactions in such a way that they both contribute on the same scale as $t\to\infty$.

Our main results are (1) the identification of the logarithmic asymptotics of the partition function of the model in terms of a random variational formula, and, (2) the identification of the path behaviour that gives the overwhelming contribution to the partition function for $\alpha>2d$: 
the random-walk path follows an optimal trajectory
that visits each of a finite number of random lattice sites for a positive random fraction of time. We prove a law of large numbers for this behaviour, i.e., that all other path behaviours give strictly less contribution to the partition function.
The joint distribution of the variational problem and of the optimal path can be expressed in terms of a limiting Poisson point process arising by a rescaling of the random potential. The latter convergence is in distribution
and is in the spirit of a standard extreme-value setting for a rescaling of an i.i.d. potential in large boxes, like in \cite{KLMS09}.
\end{abstract}

\setcounter{tocdepth}{1} 
\tableofcontents

\section{Introduction and main results}

\noindent In the last decades, there was a significant interest in the study of random motions that are attracted to certain regions defined by a surrounding  random medium. The most-studied type of models is called a {\em  random motion in a random potential}, which appears in the study of the parabolic Anderson model (PAM). The methods have been refined and extended  significantly in recent years, and a number of specific models  have been successfully treated in detail. The present paper makes a contribution to this line of research by studying a model that combines attraction with repulsion and shows, as a result, a much more pronounced behaviour.

We explain the model and the main purpose in Section~\ref{sec:model_purpose}. 
The crucial rescaling that we take explained in Section~\ref{sec:rescaling}, in which we also introduce our main objects of interest.  The key variational formula and the main results are presented in Section~\ref{varformainres}. In Section~\ref{sec:heuristics} we provide some heuristic explanations  for our results. The remainder of the paper is described in Section~\ref{sec:Orga}. Remarks on the literature are given in Section \ref{sec:Lit}.

\subsection{The model and main purpose}\label{sec:model_purpose}

\noindent Let $d\in \N$ and $\xi = (\xi(z))_{z\in\Z^d}$ be a random potential with distribution $\Prob$ that consists of i.i.d.\ random variables. 
Let $\P$ be the law of a continuous-time simple random walk $X=(X_s)_{s\geq 0}$ on the lattice $\Z^d$ with generator the discrete Laplacian $\Delta$ starting from the origin.  We take into account two types of microscopic interactions. The random walk interacts with the random field $\xi$ and 
undergoes a self-repulsion of strength $\beta$. 
This leads us to associate with every trajectory $X$ the Hamiltonian
\begin{equation} \label{Hamilton}
H_{t}^{\ssup{\xi,\beta}}(X)=\int_{0}^{t} \xi (X_s)\,\dd s -\beta \int_0^t \int_0^t \1_{\{X_s=X_u\}} \  \dd s\,  \dd u,
\end{equation} 
where $\beta\in[0,\infty)$ is the intensity of the self-repulsion. The first term is the interaction with the random potential $\xi$, the second is the {\em self-intersection local time (SILT)}, the amount of time pairs at which the random walk is at the same site. We introduce a polymer measure $\Pol_{t}^{\ssup{\xi,\beta}}$ that is absolutely continuous with respect to $\P$ (more precisely, to its restriction to paths on $[0,t]$) with Radon-Nikodym derivative  given by 
\begin{equation}\label{defpol}
\frac{\text{d} \Pol_{t}^{\ssup{\xi,\beta}}}{\text{d} \P} (X)= \frac{\ee^{H_{t}^{\ssup{\xi,\beta}}(X)}}{Z_{t}^{\ssup{\xi,\beta}}},
\end{equation}
where the normalizing constant $Z_{t}^{\ssup{\xi,\beta}}=\E[\ee^{H_{t}^{\ssup{\xi,\beta}}}]$ is the partition function of the model. We call this model the {\em weakly self-avoiding random walk in a random potential}. We want to study its large-$t$ behaviour. 

When $\beta =0$, 
the Feynman--Kac formula shows that $Z_{t}^{\ssup{\xi,0}}$ 
equals the total mass $U(t)=\sum_{x\in\Z^d}u(t,x)$  of the solution $u$ to the \underline{parabolic Anderson model} (PAM), the heat equation with random potential $\xi$:
$$
\partial_t u(t,x)=\Delta u(t,x)+\xi(x) u(t,x),\qquad x\in\Z^d,
$$
with localised initial condition $u(0,0)=1$ and $u(0,x)=0$ for $x\in \Z^d \setminus \{0\}$. 
On the other side, with $\xi=0$ and $\beta>0$,  $\Pol_t^{\ssup{0,\beta}}$ is the law of a weakly self-avoiding walk in continuous time. Since the SILT is not an additive functional, there is no obvious connection between this model and any partial differential equation.

It is clear that the Hamiltonian is a function of the walker's local times $\ell_t$, given by 
\begin{align}
\label{eqn:local_times}
\ell_t(z)= \ell_t^{\ssup{X}}(z) = \int_0^t \1 \{X_s = z\} \,\dd s \forqq{z\in \Z^d}. 
\end{align}
Indeed, 
\begin{equation}\label{Hamiltonlocaltimes}
H_{t}^{\ssup{\xi,\beta}}(X)=\sum_{z\in\Z^d}\xi(z)\ell_t(z)-\beta \sum_{z\in\Z^d}\ell_t(z)^2=\langle \xi,\ell_t\rangle-\beta \|\ell_t\|_2^2,
\end{equation}
where we wrote $\langle\cdot,\cdot\rangle$ for the standard inner product on $\Z^d$ and $\|\cdot\|_2$ for the standard $\ell^2$-norm on $\Z^d$. 

In earlier work on the PAM, it turned out that the model is the easier to analyse and the resulting picture is 
 more pronounced for heavy-tailed potentials.
Here we will assume that the potential variables~$\xi(z)$ at all sites $z\in\Z^d$ are independently \emph{Pareto-distributed}
with parameter $\alpha>d$, i.e., have the distribution function
\begin{equation}\label{defPareto}
F(r)=\Prob[\, \xi(z)\le r \,]=1-r^{-\alpha},\qquad r\ge 1.
\end{equation}
In particular, we have $\xi(z)\geq 1$ for all $z\in\Z^d$, almost surely. This is the most heavy-tailed distribution for which the PAM is well defined; indeed, \cite[Theorem~2.1]{GM90} says that $\alpha>d$ is necessary and sufficient for the partition function for $\beta=0$ to be finite. 
Hence,  by positivity of the self-interaction  our model is well-defined for any $\beta\geq 0$.

In \cite{KLMS09}, it turned out that the typical behaviour of the random walk
in the polymer measure for $\beta=0$ is to rush quickly to one of the peak points of the potential and to spend the remainder of the time in it,
and the highest peak sites form a rescaled Poisson point process in the spirit
of spatial extreme-value theory.
Now we add a self-repellent force and show that the picture is much more pronounced.
Indeed, the typical path in our polymer model visits not only one
of these peak sites, but several of them after each other, spending a
specified amount of time in them each.
We will describe this behaviour in terms of a random variational problem,
defined on a Poisson point process that we introduce below.

\subsection{Rescaling and point measures}\label{sec:rescaling}

\noindent It is the purpose of this paper to study the counterplay between the effects coming from the two terms in the Hamiltonian and the underlying probability distribution of the walk. 
To make sure that these three effects (i.e., attraction by the potential, self-repulsion and entropy -- 
see the heuristics in Section~\ref{sec:heuristics}) 
all run on the same scale, we take $\beta$ depending on $t$ as follows. 
Fix a parameter $\theta \in (0,\infty)$ and set 
\begin{equation}\label{betadef}
\beta_t:=\theta\,  \frac{t^{q-1}}{(\log t)^q}, \qquad\mbox{where}\quad q=\frac{d}{\alpha-d}.
\end{equation}
Note that $q$ and the large-$t$ behaviour of $\beta_t$ are decreasing in $\alpha$; for $\alpha > 2d$, we  have that $\beta_t\to 0$ as $t\to\infty$. 
To reduce the amount of parameters, in the following we write 
\begin{equation}\label{modeldefinition}
H_t^{\ssup{\xi}} : = H_t^{\ssup{\xi,\beta_t}}, 
\qquad 
Z_t^{\ssup{\xi}} : = Z_t^{\ssup{\xi,\beta_t}}, 
\qquad 
\Pol_t^{\ssup{\xi}} := \Pol_t^{\ssup{\xi,\beta_t}}. 
\end{equation}

We denote by  
\begin{equation}\label{defr}
r_t:=\Big(\frac{t}{\log t}\Big)^{1+q}
\end{equation} 
an important characteristic spatial length scale. More precisely, $r_t$ will turn out to be the typical distance of the relevant islands from the origin at which we  will find $(X_s)_{s\in[0,t]}$ with high probability under $\Pol_{t}^{\ssup{\xi}}$.
 Furthermore, it will turn out that the largest potential values in boxes of radius $\approx r_t$ are of order $r_t^{d/\alpha}$. It is convenient to express statements like these in terms of point processes, i.e., random variables with values in the set $\cM_\rp((0,\infty) \times \R^d)$ of Radon measures on $(0,\infty) \times \R^d$ with values in $\N_0 \cup \{\infty\}$, also called point measures since they are of the form $\sum_{n\in\N} \delta_{x_n}$ with $x_n \in (0,\infty) \times \R^d$. 
The crucial point is that the rescaled point process
\begin{equation}\label{Pitdef}
\Pi_t = \sum_{z\in \Z^d} \delta_{\big(\frac{\xi(z)}{r_t^{d/\alpha}}, \frac{z}{r_t} \big)}
\end{equation}
converges as $t\to\infty$ , weakly with respect to the vague topology in $\cM_\rp((0,\infty) \times \R^d)$, towards a Poisson point process (PPP) $\Pi$ with intensity measure 
$\alpha f^{-1-\alpha}  \dd f \otimes \dd y$:
\begin{equation}\label{PPPdef}
\Pi\sim {\rm PPP}\big((0,\infty)\times\R^d,\alpha f^{-1-\alpha}  \dd f \otimes \dd y\big).
\end{equation}
This is a basic result from spatial extreme-value analysis; see Lemma~\ref{l:P(t)toPi} for the precise statement. We will often write the points in $(0,\infty)\times\R^d$ as $(f,y)$. The process $\Pi$ may also be seen as a standard Poisson point process in $\R^d$ with Fr\'echet-distributed i.i.d.\ marks. We will write the probability with respect to $\Pi$ also by $\Prob$. 

In order to formulate the path behaviour of the walk in terms of the local times $\ell_t(x)=\ell^{\ssup{X}}_t(x)=\int_0^t \1 \{X(s) = x\} \,{\rm d }s$, 
we need to rescale them in time by $t$ and in space by $r_t$. 
Those rescaled local times are considered as a density with respect to $\Pi_t$ of 
the measure $W_t^{\ssup{\xi,X}}$ defined by
\begin{equation}\label{Wtdensity}
\frac{\dd W^{\ssup{\xi,X}}_t}{\dd \Pi_t}(f,y)=\frac{\ell_t(yr_t)}{t},\qquad (f,y)\in (0,\infty)\times \R^d,
\end{equation}
where we have extended the local times to a function $\ell_t\colon\R^d\to [0,t]$ satisfying $\ell_t=0$ on $\R^d\setminus \Z^d$. 
 Note that 
\begin{align*}
W^{\ssup{\xi,X}}_t = \sum_{z\in \Z^d} \frac{\ell_t(z)}{t} \delta_{\big(\frac{\xi(z)}{r_t^{d/\alpha}}, \frac{z}{r_t} \big)}
\end{align*}  
We will often omit the superscripts and write simply $W_t$ for $W_t^{\ssup{\xi,X}}$. 
It does not encode the number nor the order of the visits of the random walk to the sites. $W_t$ lies in the set $\cW$ of all measures $\mu$ on $(0,\infty)\times\R^d$ with total mass $\mu((0, \infty)\times\R^d)\leq 1$. 
Our main object of study will be $W_t$. Certainly the rescaled local times are an object of high interest themselves, but their behaviour may be deduced from that of $\Pi_t$ and $W_t$. 

By using the identities
\begin{equation}\label{r_tpropertis}
r_t^{d/\alpha}t= r_t \log t \quad (\tfrac{d}{\alpha} = 1- \tfrac{1}{1+q})
\qquad\mbox{and}\qquad 
\beta_t t^2=\theta r_t\log t,
\end{equation}
from \eqref{Hamiltonlocaltimes} it is easily seen that 
\begin{equation}\label{Hamil_Pi_t}
\begin{aligned}
 H_t^{\ssup{\xi}}(X)
&= \sum_{z\in\Z^d}\xi(z)\ell_t(z)-\beta_t \sum_{z\in\Z^d}\ell_t(z)^2
= \sum_{z\in \frac{\Z^d}{r_t} } t \xi(z r_t) \frac{\ell_t(z r_t)}{t}-\beta_t t^2 \sum_{z\in \frac{\Z^d}{r_t} } \Big(\frac{\ell_t(z r_t)}{t}\Big)^2\\
& = r_t \log t \int_{(0,\infty)\times\R^d}\big[f w(f,y)-\theta w(f,y)^2\big]\,\dd \Pi_t(f,y), \qquad\mbox{with } w = \frac{\dd W_t}{\dd \Pi_t},
\end{aligned}
\end{equation}
i.e., the Hamiltonian is an explicit functional of the rescaled local times and the point process $\Pi_t$.  
This is the starting point of our analysis.

\subsection{Main results: convergence towards a variational formula}\label{varformainres}

\noindent  In this section we formulate and comment on the main results of our paper. 
 In Theorem~\ref{thm:variational_formula1}~\ref{item:partition_convergence} we prove the asymptotic behaviour of the partition function $Z_t^{\ssup{\xi}}$ defined in \eqref{defpol} in terms of a limiting variational formula $\Xi$, which we define in Definition~\ref{def-functionals} in terms of an energy and entropy functional. 
In Theorem~\ref{thm:variational_formula1}~\ref{item:2d_part} we show that for $\alpha>2d$,
 the rescaled local times measure $W_t$ of a typical trajectory sampled from the mixture of $\Prob$ and $\Pol_t^{\ssup{\xi}}$ converges in distribution to the maximizer of the variational formula. 

Contrary to the parabolic Anderson model, which corresponds to $\theta=0$, this maximizer is -- with probability larger than $0$ -- not a Dirac measure. However, which may be quite unexpected, the support of this maximizer is still finite. More precisely, the number of points in the support is a random variable which attains any value of $\N$ with positive probability. 
 Interestingly, the behaviour of this variational formula changes for $\alpha \in (d,2d)$, we comment on this in Remark~\ref{rem:alpha_in_d_2d}.  

 We introduce a few definitions of functionals and notation as a preparation for the statements of our main result, Theorem~\ref{thm:variational_formula1}.  

\begin{definition}\label{def-functionals}
Let $\cP \in \cM_\rp((0,\infty) \times \R^d)$.  
We define 
\begin{enumerate}
\item the {\em energy functional} $\Phi_\cP \colon \cW \rightarrow [-\infty,\infty]$ by
\begin{align}
\label{eqn:def_Phi}
\Phi_\cP(\mu) 
:= \begin{cases}
\int_{(0,\infty) \times \R^d}\big[ f w(f,y)- \theta w(f,y)^2\big]\, \dd \cP (f,y) &\mbox{if } \mu \ll \cP, w  = \radon{\mu}{\cP}, \\
-\infty &\mbox{otherwise,} 
\end{cases}
\end{align}

\item 
\label{item:entropy_functional}
the {\em entropy functional} $\cD_\cP \colon \cW \rightarrow [0,\infty]$ by 
\begin{align}\label{defdist}
\cD_\cP(\mu) 
:= \begin{cases}
\sup_{Y \subset \supp_{\R^d} \mu , \# Y <\infty } D_0( Y) &\mbox{if } \mu \ll \cP, \\
\infty &\mbox{otherwise,} 
\end{cases}
\end{align}
where $D_0(\emptyset) = 0$ and $D_0(Y)$ for a finite nonempty set $Y\subset \R^d$ is the smallest possible $|\cdot|$-length of a path from the origin that reaches all points in $Y$; i.e., the minimum over $\sum_{i=1}^{N} |\sigma_i - \sigma_{i-1}|$ of bijections $\sigma: \{0,\dots, N\} \rightarrow Y\cup \{0\}$ with $\sigma_0 =0$, 
 where $N= \# (Y \setminus \{0\})$. 
We wrote 
\begin{align}
\label{eqn:support_in_R_d}
\supp_{\R^d} \mu= \{ y \in \R^d \colon \exists f>0, (f,y) \in \supp \mu \},
\end{align} 
for the support of the projection of $\mu$ on $\R^d$. 

\item the \emph{functional} $\Psi_\cP \colon \cW \rightarrow [-\infty,\infty)$ by 
\begin{align}\label{Psidef}
\Psi_\cP(\mu) 
:= 
\begin{cases}
\Phi_\cP(\mu) - q \cD_\cP(\mu) & 
\mbox{if }\Phi_\cP(\mu) <\infty,\\
 - \infty & 
  \mbox{otherwise.}
\end{cases}
\end{align}
\end{enumerate} 
For $\cP \in  \cM_\rp((0,\infty) \times \R^d) $ we define
\begin{equation}\label{Xidef}
\Xi(\cP) =\sup_{\mu \in \cW} \Psi_{\cP}(\mu). 
\end{equation}
Because $\Psi_\cP(0)=0$, \eqref{Xidef} defines a function $\Xi : \cM_\rp((0,\infty) \times \R^d) \rightarrow [0,\infty]$. 

\hfill$\Diamond$
\end{definition}

We write $\Prob \rtimes \P_t^{\ssup{\xi}}$ for the mixture of the laws of $\Prob$ and $\P_t^{\ssup{\xi}}$,  i.e., 
\begin{align}
\label{eqn:mixture}
\Prob \rtimes \P_t^{\ssup{\xi}} [ \, A\times B \,]= \Expec\big[\1_A(\xi) \, \P_t^{\ssup{\xi}} [\, X\in B\, ] \big]. 
\end{align}

We equip $\cW$, the space of all measures on $(0,\infty) \times \R^d$ with total mass $\le 1$, with the vague topology. By \cite[Corollary 13.31]{Kl08} and \cite[Theorem 15.7.7]{Ka83},  $\cW$ is a compact Polish space, which will be convenient for the formulation of our results and proofs.
Therefore, by \cite[Corollary 13.30]{Kl08} (corollary of Prohorov’s theorem) and \cite[Theorem 15.7.7]{Ka83}, 
the set of probability measures on $\cW$ forms a compact Polish space. 


\begin{theorem}
\label{thm:variational_formula1} 
Fix $\theta\in(0,\infty)$ and $\alpha\in(d,\infty)$. 
We have the following convergences in distribution:  
\begin{enumerate}
\item 
\label{item:partition_convergence}
{\em partition function:}
 \begin{equation}\label{limitfreeener}
 \frac{1}{r_t \log t} \log Z_t^{\ssup{\xi}} 
\ \overset{t\to\infty}{\Longrightarrow} \ \Xi(\Pi),
\end{equation}
 and, 
\begin{align*}
\Prob\big[\, \Xi(\Pi) \in [0,\infty) \,\big]=1. 
\end{align*}

\item
\label{item:2d_part}
 {\em law of the rescaled local times:} 
 Let $\alpha \in (2d,\infty)$. Then the following hold:  
\begin{enumerate}[label=\normalfont{(\roman*)}] 

\item 
\label{i:mu_star_optimizes_Psi}
 There exists 
a random variable $\mu^*$ with values in $\cW$ that maximizes $\Psi_\Pi$, in the sense that 
$\Prob$-almost surely,
\begin{equation}\label{eqn:mu_star_max_xi}
 \Psi_{\Pi}(\mu^*)=\Xi(\Pi).
\end{equation}

 \item 
 \label{i:existence_n_props_of_max}
 This maximizer $\mu^*$ is $\Prob$-almost surely unique in the sense that,
$\Prob$-almost surely, $\Psi_\Pi(\nu) < \Psi_\Pi(\mu^*)$ for any $\nu \in \cW\setminus \{ \mu^*\}$. 
 Furthermore, 
\begin{itemize}
\item $\Prob$-almost surely, $\mu^*$ is a probability measure with finite support and $\mu^*\ll \Pi$. In particular, $\Prob[\, \mu^* = 0 \,] = \Prob [\, \# \supp \mu^* =0 \,] = 0$. 
\item For all $k\in \N$,  $\Prob [\, \# \supp \mu^* =k \,] >0$. 
\end{itemize}
Consequently, 
\begin{align}
\label{eqn:Xi_Pi_positive_and_finite}
\Prob \big[\, \Xi(\Pi) \in (0,\infty) \, \big] = 1. 
\end{align}

\item 
\label{i:law_W_convergence}
 Under the mixed measure $\Prob \rtimes \P_t^{\ssup{\xi}}$ as in \eqref{eqn:mixture}, 
 $W_t = W_t^{\ssup{\xi,X}}$ converges in distribution to $\mu^*$; 
 \begin{align}
\label{eqn:W_t_convergence}
 W_t \ \overset{t\to\infty}{\Longrightarrow} \ \mu^* \ \mbox{ in } \cW, 
 \end{align}
 more precisely, for all $g\in C_\rb(\cW)$, 
 \begin{align}
 \label{eqn:weak_convergence_W_t_expressed}
 \Expec\big[ \E_t^{\ssup{\xi}} [ g(W_t^{\ssup{\xi,X}}) ]\big] \rightarrow \Expec[ g(\mu^*)]. 
 \end{align}
\end{enumerate}
\end{enumerate}
\end{theorem}

The proof of Theorem \ref{thm:variational_formula1} is given in Section~\ref{prooofthvarfor} conditionally on crucial assertions for the lower bound part of the convergence in \ref{item:partition_convergence} (which are proved in Section~\ref{sec:LowBound}) 
and crucial assertions about the upper bound in \ref{item:partition_convergence} and for \ref{i:law_W_convergence} (which are proved in Section \ref{sec:UppBound}).

\begin{remark}[The appearance of the energy and entropy functionals]
\label{rem-Explanation}
We will later (in Section~\ref{sec:heuristics} on the heuristics)
see that $q\, \cD_{\Pi}$ plays the role of the large-deviation rate functional for the rescaled local times on the scale $r_t\log t$; hence we called it an entropy functional. 
Observe that, see for example  \eqref{Hamil_Pi_t}, 
\begin{align}
\label{eqn:Hamiltonian_in_terms_of_Phi}
H_t^{\ssup{\xi}}(X) = (r_t \log t) \, \Phi_{\Pi_t}(W_t^{\ssup{\xi,X}}), 
\end{align}
and, therewith, 
\begin{equation}
\label{Zrepr}
Z_{t}^{\ssup{\xi}}=\E\big[\ee^{(r_t \log t)\, \Phi_{\Pi_t}(W_t)}\big].
\end{equation}
This says that the random walker gains on the exponential scale $r_t \log t$ the potential reward that is given for $w= \frac{\dd W_t}{\dd \Pi_t}$ by $\int_{(0,\infty) \times \R^d} f w(f,y) \, \dd \cP (f,y)$ and it pays the self-repellence price that is given by the expression $\int_{(0,\infty) \times \R^d} \theta w(f,y)^2 \, \dd \cP (f,y)$. 
See Section \ref{sec:heuristics} for a more precise heuristic explanation.
\hfill$\Diamond$
\end{remark}

\begin{remark}[Interpretation of Theorem~\ref{thm:variational_formula1}~\ref{item:2d_part}]\label{remmesmaxstructure}
Since $\mu^*$ has finite support and is absolutely continuous with respect to 
$\Pi$, there exist (random) $k^*\in \N$ and $(f_1^*,y_1^*), \dots, (f_{k^*}^*,y_{k^*}^*) \in \supp(\Pi)$ and
$w^*_1, \dots, w^*_{k^*} \in (0,1]$ satisfying $\sum_{i=1}^{k^*} w^*_i =  1$ such that 
\begin{align*}
\mu^*:=\sum_{i=1}^{k^*} w^{*}_i \delta_{(f_i^{*},y^{*}_i)}.
\end{align*}
Hence, if we interpret the convergence in \eqref{eqn:W_t_convergence} as almost sure convergence, the typical path under $\Pol_{t}^{\ssup{\xi}}$ spends $\sim w^*_i t$ time units  in the site $\sim \lfloor y^{*}_i r_t\rfloor$ with value $\xi(\lfloor y^{*}_i r_t\rfloor)\sim f_i^{*} r_t^{d/\alpha}$ for any $i\in\{1,\dots,k^*\}$
 and elsewhere only $o(t)$ time units 
 
 (or does not even reach them).

%
%

The above is an informal interpretation. More formally, we obtain the following consequence from Theorem~\ref{thm:variational_formula1}~\ref{item:2d_part}.
Given $h\in C_c((0,\infty) \times \R^d)$, observe that the function $g: \cW \rightarrow \R$ defined by $g(\mu) = \int_{(0,\infty)\times \R^d} h \dd \mu$ is continuous and bounded on $\cW$, i.e., $g\in C_\rb(\cW)$. 
Because 
\begin{align*}
g(W_t^{\ssup{\xi,X}}) 
& = \int_{(0,\infty)\times\R^d} h(f,y) \frac{\ell_t(r_t y)}{t} \dd \Pi_t(f,y)
 = \sum_{z\in \Z^d} h \Big( \frac{\xi(z)}{r_t^{d/\alpha}}, \frac{z}{r_t} \Big) \frac{\ell_t(z)}{t}, \\
 g(\mu^*) &= \int_{(0,\infty) \times \R^d} h \dd \mu^* =  \sum_{i=1}^{k^*} w_i^* h(f_i^*,y_i^*),
\end{align*}
and since $W_t \Longrightarrow \mu^*$ in $\cW$, see \eqref{eqn:weak_convergence_W_t_expressed}, we obtain 
\begin{align*}
\Expec \bigg[ 
\sum_{z\in \Z^d} \frac{\E_t^{\ssup{\xi}}[\ell_t(z)]}{t} h \Big( \frac{\xi(z)}{r_t^{d/\alpha}}, \frac{z}{r_t} \Big) 
\bigg] 
\ \longrightarrow \ \Expec\bigg[ \sum_{i=1}^{k^*} w_i^* h(f_i^*,y_i^*) \bigg]. 
\end{align*}
\hfill$\Diamond$ \end{remark}

\begin{remark}
\label{remark:more_general_statement}
[Generalization of Theorem~\ref{thm:variational_formula1}~\ref{item:2d_part}~\ref{i:law_W_convergence}]
Actually, we are able to prove a slightly more general but more abstract convergence than in Theorem~\ref{thm:variational_formula1}~\ref{item:2d_part}~\ref{i:law_W_convergence}. Indeed, let us write $\cL^{\ssup{\xi}}_t$ for the law of $W_t$ under $\P_t^{\ssup{\xi}}$, so that for a Borel set $A \subset \cW$ we have 
\begin{align}
\label{eqn:law_of_W_under_P_t_xi_beta_t}
\cL^{\ssup{\xi}}_t (A)
= \Pol_t^{\ssup{\xi}} (\{X \colon W_t^{ \ssup{\xi,X}}\in A\})
= \E_t^{\ssup{\xi}} [ \1\{W_t^{\ssup{\xi,X}} \in A\}]
\begin{calc}
= \int \1\{W_t^{\ssup{\xi,X}} \in A\} \dd \P_t^{\ssup{\xi}}(X)
\end{calc}. 
\end{align}
Then for $\mu^*$ as in Theorem~\ref{thm:variational_formula1}, we can show the following convergence in distribution  with respect to the weak topology  (see Remark~\ref{remark:proof_more_general_conv}), 
 \begin{align}
 \label{eqn:law_converges_to_delta_mu_star}
 \cL^{\ssup{\xi}}_t \ \overset{t\to\infty}{\Longrightarrow} \ \delta_{\mu^*} . 
 \end{align}
More precisely, for all continuous bounded functionals $f$ defined on the set of probability measures on $\cW$, 
\begin{align}
\label{eqn:weak_convergence_cL_expressed}
\Expec \big[ f(\cL^{\ssup{\xi}}_t) \big] 
\ \overset{t\to\infty}{\Longrightarrow} \
\Expec \big[ f(\delta_{\mu^*}) \big]. 
\end{align}
The convergence in Theorem~\ref{thm:variational_formula1}~\ref{item:2d_part}~\ref{i:law_W_convergence} follows from this by taking $f(\nu) = \int g \dd \nu$ in \eqref{eqn:weak_convergence_cL_expressed} for $g\in C_\rb(\cW)$ so that $f(\cL_t^{\ssup{\xi}}) = \E_t^{\ssup{\xi}}[g(W_t^{\ssup{\xi,X}})]$ and $f(\delta_{\mu^*})= g(\mu^*)$, implying \eqref{eqn:weak_convergence_W_t_expressed}. 
\hfill$\Diamond$ 
\end{remark}

\begin{remark}[Large-deviations explanation]\label{rem-LDPExplanation}
Standard ideas from the theory of large deviations applied to the formula in \eqref{Zrepr} already suggest that the statements in Theorems~\ref{thm:variational_formula1} are true. Indeed, if $(W_t)_{t\in(0,\infty)}$ would satisfy a large-deviations principle (LDP) on the scale $r_t\log t$ with rate function $q\mathcal D_{\Pi}$, and if the limit $\Pi_t\Longrightarrow\Pi$ could be combined with this LDP, and if the energy functional $\Phi_\Pi$ would have appropriate continuity and boundedness properties, then Varadhan's lemma would imply the validity of our main statement. Roughly, this is also our strategy for proving the theorems, but a lot of technicalities need to be overcome along the way.
\hfill$\Diamond$
\end{remark}

\begin{remark}[Traveling Saleman Distance]
Observe that $D_0(Y)$ in the definition of $\cD_\cP$ (see \eqref{defdist}) represents the Travelling Salesman Distance of a path connecting $0$ to all the points of $Y$ but without returning to $0$. 
\end{remark}

 \begin{remark}[A particular case: $d=1$]\label{cased=1}
In dimension $1$, both the expressions of the energy functional $\Phi_\Pi$ in \eqref{eqn:def_Phi} and of the entropy functional $\cD_\Pi$ in \eqref{defdist} turn out to be much easier.  To be more specific, we consider 
$x,z\in [0,\infty)^2$ and $\mu\in \mathcal{W}$ such that $ \mu \ll \Pi$ and $\min \supp_{\R} \mu=-x$ and $\max \supp_{\R} \mu=z$. Then,  
$\mathcal D_\Pi(\mu)=(x+z)- \min\{x,z\}$ since it is the shortest distance that one has to travel so that starting from the origin both sites $-x$ and $z$ are visited (here we can of course say that the path first visits $x$ and then $z$ if $x<z$ and the other way around if $x>z$). 
Moreover, by screening effect in dimension $1$, that is since every
site in $[-x,z]$ is visited by a trajectory that
reaches both $-x$ and $z$, any  $(f,y)$ belonging to $\supp \Pi$ with $y \in  [-x,z]$ may be in the support of such $\mu\in \cW$ without increasing the entropy. 
Note that, $\Prob$-a.s. $\Pi((0,\infty)\times \{0\})=0$ and therefore, it is sufficient to consider $(x,z)$ that are not simultaneously null. 
We introduce the order statistics $(f^{\ssup{i}}_{[-x,z]})_{i\in \N}$ of the field inside $[-x,z]$ such that for a sequence $(y^{\ssup{i}}_{[-x,z]})_{i\in\N}$ 
\begin{align*}
\supp \Pi \cap \big[(0,\infty) \times [-x,z] \big] = \{ (f^{\ssup{i}}_{[-x,z]},y^{\ssup{i}}_{[-x,z]}) : i\in\N\}. 
\end{align*}
%

Because every $\mu \in \cW$ with $ \supp \mu \subset  (0,\infty) \times [-x,z]$ and $\mu \ll \cP$ is of the form 
$\mu = \sum_{i=1}^\infty w_i \delta_{(f^{\ssup{i}}_{[-x,z]},y^{\ssup{i}}_{[-x,z]})}$ for a sequence $(w_i)_{i\in\N}$ in $[0,\infty]$ with $\sum_{i=1}^\infty w_i \le 1$, we have the following identity by definition of $\Phi_\cP$ \eqref{eqn:def_Phi}, 
\begin{align}
\label{eqn:sup_Phi_over_interval_1d}
\sup_{\mu \in \cW \colon \supp \mu \subset  (0,\infty) \times [-x,z]} \Phi_\Pi(\mu) 
= \sup_{ w_1,w_2,\dots \ge 0, \sum_{i=1}^\infty w_i \le 1}  \sum_{i=1}^\infty \Big( f^{\ssup{i}}_{[-x,z]} w_i - \theta w_i^2 \Big). 
\end{align}
As the $f^{\ssup{i}}_{[-x,z]}$ are chosen decreasingly in $i$, it turns out that for large $i$, it is not worth taking $w_i$ positive, or in other words, that there exists a $k$ for which one may restrict to those sequences $(w_i)_{i\in\N}$ with $w_{k+1}=w_{k+2}=\cdots =0$. 
This is made precise in Proposition~\ref{p:AnaPhik}: 
This finite $k$ is given in terms of a formula of the the order statistics $(f^{\ssup{i}}_{[-x,z]})_{i\in\N}$, let us call it $k_{\star}$ here (see \eqref{e:defkstar}). 
By writing $\varphi_{k_\star} \big(f^{(1)}_{[-x,z]},\dots,f^{(k_\star)}_{[-x,z]}\big)$ for the right-hand side of \eqref{eqn:sup_Phi_over_interval_1d} with the restriction $w_i =0$ for $i \ge k_\star+1$ (which agrees with the definition of $\varphi_k$ given in \eqref{eqn:varphi_k_def}), 
 we then have the following description of $\Xi(\Pi)$ in terms of  the leftmost and rightmost points in $\supp \mu$:
\begin{equation}\label{xirepdef}
\Xi(\Pi):= \sup_{x,y\in [0,\infty)^2\setminus\{(0,0)\}} \varphi_{k_\star}\big(f^{(1)}_{[-x,z]},\dots,f^{(k_\star)}_{[-x,z]}\big)
-q(x+z)- q\min\{x,z\}.
\end{equation}

%
%

\hfill$\Diamond$ 
\end{remark}

\begin{remark}[Suggested scenario for $\alpha\in(d,2d)$]
\label{rem:alpha_in_d_2d}
In the course of our proof for Theorem \ref{thm:variational_formula1}
we in particular show that the characteristic variational formula $\Xi(\Pi)$ is finite almost surely for $\alpha>d$ and positive for $\alpha>2d$.
The latter assertion seems crucial for the behaviour of the path in the random potential.
It is not easy to give a short argument for that; apparently the PPP possesses sufficiently many sufficiently high
potential values with not too large distances between them, 
such that trajectories exist for which it is worth paying the travels in order to profit from spending time in those large potentials. 

This is different for $\alpha\in(d,2d)$. Indeed, in a forthcoming paper we will show that both $\{\Xi(\Pi)=0\}$
and $\{\Xi(\Pi)>0\}$ have positive probability here. This can be roughly explained as follows: 
With positive probability the PPP, like for $\alpha >2d$, possesses sufficiently many high potentials with not too large distances. 
Also, the complement has positive probability, leading to no such preferable locations as it is not worth travelling that far to profit from the large potential. 
We conjecture that the intermediate order statistics need to be considered to reflect the true behaviour of the random path. 
A closer description of this scenario will be given in a future work.
\hfill$\Diamond$ 
\end{remark}

%
%
%

\subsection{Heuristic explanation}\label{sec:heuristics}

\noindent Let us give here an explanation of the main result,  Theorem~\ref{thm:variational_formula1}. In Section \ref{sec:LowBound} we will turn the following heuristics into a proof of the lower bound (however, the proof of the upper bound in Section~\ref{sec:UppBound} is very different).

We need to understand the large-$t$ behaviour of the partition function $Z_t^{\ssup \xi}$ defined in \eqref{modeldefinition}, i.e., the expectation of $\ee^{H_t^{\ssup{\xi}}}$ with $\beta_t$ defined in \eqref{betadef}  and the Hamiltonian as in \eqref{Hamil_Pi_t}. The first step is to understand the scales on which the probability from the simple random walk and the contribution from the potential $\xi$ run, where we first ignore the self-intersection term and concentrate on the potential-interaction term. Hence, this part of the heuristics is the same as for the behaviour of the PAM with Pareto-distributed potential in \cite{KLMS09}; let us give an overview now. Note that there is a competition for the random walk between a reward  (called \lq energy\rq) from staying much time in sites with extremely large values of the potential  and the probabilistic cost (called \lq entropy\rq)  to reach such preferable sites quickly: travelling far  and fast, the walker finds a larger potential value  where it can stay longer, but this is more costly. We need to find an optimal balance.

As in \cite{KLMS09}, we obtain a lower bound by inserting the indicator on the event  $\cA_t^{z,s}$ that the walker wanders on some fixed shortest path to a site $z$ during the time interval $[0,st)$ and stays at $z$ during $[st,t]$. 
The probability of this event is  (recall that 
the jump rate of our random walk is $2d$)  
\begin{align*}
\P(\cA_t^{z,s}) = \Poi_{2dst}(|z|)  (2d)^{-|z|} \ee^{-(1-s)2dt} \#\{\mbox{ shortest paths } 0 \longleftrightarrow	z \}. 
\end{align*}

Taking $|z|\gg t$, using Stirling's estimation for the term $|z|!$ that appears in the $\Poi$-term, we see that the dominating terms in the exponent are $ |z| \log |z| $ and $|z| \log (st)$, so that, dropping all lower-order terms,
$$
\P(\cA_t^{z,s}) \approx \exp \left\{ |z| \left[ \log \frac{t}{|z|} + \log s \right] \right\}.
$$
Now let us examine the contribution from the potential $\xi$. In order to obtain a preferably large lower bound, we pick $z$ as a maximizing point of the potential $\xi$ within a box of radius $r$. According to the Pareto-tails, we are able to pick $z$ such that $\xi(z)\approx r^{d/\alpha}$, and this site will be
 approximately of the order $r$, 
 i.e., $|z|\approx r$. Hence, from the stay at $z$ during $[st,t]$, the potential contributes $\approx \e^{t(1-s) r^{d/\alpha}}$. The potential values that the random walk experiences on the fast rush during $[0,st]$ are negligible. Hence, we have the lower bound
$$
Z_t^{\ssup \xi}\geq\exp \left\{ r \left[ \log \frac{t}{r} + \log s \right] \right\} \e^{t r^{d/\alpha}} \e^{-st r^{d/\alpha}},
$$
and we have still the freedom to optimize over small $s$ and large $r$.
The optimal choice of $s\in[0,1]$ for the second and the last term is $s\approx\frac 1 t r^{1-d/\alpha}$, which implies the lower bound
\begin{equation}\label{probesti}
Z_t^{\ssup \xi}\geq\exp\Big\{r \log \frac tr +t r^{d/\alpha}-\log \big(t r^{d/\alpha -1}\big)\Big\}=\exp\Big\{tr^{d/\alpha}-\frac d\alpha r\log r\Big\}.
\end{equation}
The maximal $r$ satisfies $t r^{d/\alpha -1}=1+\log r$, and this is asymptotically satisfied by $r=r_t=(t/\log t)^{1+q}$ as in \eqref{defr} with $q=\frac d{\alpha-d}$ as in \eqref{betadef}. Then both the energy term $t r^{d/\alpha}$ and the entropy term $-\frac d\alpha r\log r\approx -q r_t\log t$ are on the scale $r_t \log t$. Interestingly, the latter comes exclusively from the probability of the crucial event $\cA^{z,s}_t$, after optimizing on $s\approx 1/\log t$, whose choice depends on the potential value. This explains the appearance of the prefactor $q$ in \eqref{Psidef} and the notion of an \lq entropy functional\rq\ in Definition \ref{def-functionals}.

So far, this was the first part of the explanation, which applied also to \cite{KLMS09}, since we considered only the potential interaction. Now let us become specific to our model, where an  additional self-intersection term in the Hamiltonian appears and makes the path paying an extra energy price when staying a long time in a single site. If this time is of order $t$, then the price is of order $\beta_t t^2=\theta r_t\log t$ (see \eqref{r_tpropertis}) i.e., it is on the same scale. Therefore, the strategy has to be improved by not only visiting one site, but several after each other and staying in each of them some time $\asymp t$. 
Standard assertions from spatial extreme-value theory guarantee that there are not only one, but many sites with potential values $\asymp r_t^{d/\alpha}$,  and they are homogeneously distributed over a centered ball with radius $\approx r_t$, so there are many good candidates for sites to be visited. One needs to make a choice of the number of the visited sites and the order in which they are visited during the time interval $[0,t]$.  The travel between them costs an additional price of the same order as the first travel from the origin to one of them since the distances of all these travels are on the same scale. The functional $\Phi_{\mathcal P}( \mu)$ in \eqref{eqn:def_Phi} describes the energetic gain (staying $\approx w(f,y)t$ time units in a site $\approx y r$ with potential value $f r_t^{d/\alpha}$ for all the $(f,y)$ in $\mathcal P$ and paying $\theta w(f,y)^2$ for the self-intersections), and the functional $q \mathcal D_{\mathcal P}$ in \eqref{defdist} describes the exponential probabilistic cost payed by the simpe random walk. Hence, the rate functional $\Psi_{\mathcal P}=\Phi_{\mathcal P}-q \mathcal D_{\mathcal P}$ in \eqref{Psidef} gives the entire exponential cost of this path strategy on the scale $r_t$ for $\mathcal P=\Pi_t$, as we explained in Remark~\ref{rem-Explanation}. Then the exponential behaviour of the partition function $Z_t^{\ssup \xi}$ is given by the maximum of $\mu\mapsto \Psi_{\Pi}(\mu)$, as in Varadhan's lemma. An additional technical difficulty is the combination of the large-deviation arguments with the point process convergence $\Pi_t\to \Pi$; see Remark~\ref{rem-Explanation}.

\subsection{Organization of the paper}\label{sec:Orga}

\noindent The remainder of the paper is organized as follows. In Section~\ref{sec:VP}, we explain our strategy for proving Theorem~\ref{thm:variational_formula1} 
and formulate two types of intermediate results:
the first comprises a deterministic version of Theorem~\ref{thm:variational_formula1} for point measures that possess certain properties, while the second states that $\Pi_t$ and $\Pi$ possess these properties. 
This version of Theorem~\ref{thm:variational_formula1} is proved in Section~\ref{sec:AnaVP} along with fundamental compactness and continuity properties of the energy functional $\Phi_\cP$ and the entropy functional $\cD_\cP$, as well as the existence and main properties of the maximizer. 
The proof that $\Pi_t$ and $\Pi$ have the good properties is given in Section~\ref{subsec:optimizers_with_finite_support}. 
Hence, Sections \ref{sec:VP}--\ref{subsec:optimizers_with_finite_support} derive all the properties of the variational formula for $\Xi(\Pi)$ as formulated in Theorem~\ref{thm:variational_formula1}. 
In Section~\ref{prooofthvarfor} we start the proof of the large-$t$ analysis of the model, Theorem~\ref{thm:variational_formula1}, by formulating two main ingredients for the proof for the lower respectively upper bound (Propositions~\ref{lowerboundrand} resp.~\ref{p:uppbound}). The two propositions are proved in Sections~\ref{sec:LowBound} and \ref{sec:UppBound}, respectively.

\subsection{Literature remarks}\label{sec:Lit}

\noindent Let us give some survey on the literature on random motions in random potential and localisation properties. First examples appeared in work by Sznitman on Brownian motion among Poissonian obstacles in the early 1990s, see his monograph \cite{Sz98}. Among many other things, he proved almost-sure attraction to one island, but did not identify this island. An analogous localization result (i.e., for the solution of the PAM rather than for the random motion) in the space-discrete setting with an i.i.d.\ doubly-exponentially distributed potential was \cite{GKM07}. Around 2010, it turned out that the strongest attraction to the intermittent islands is present for potentials with heavy tails, since they have a particularly pronounced profile: indeed, the  islands are just singletons here. This has been observed for the first time for the most heavy-tailed potential distribution, the {\em Pareto distribution}, in \cite{KLMS09} and has been investigated in great detail in \cite{MOS11} and also for the exponential distribution in \cite{LM12}; see the survey \cite{M11}. For double-exponentially distributed potential, localization (and much more) was proved in \cite{BKS18}. Most of these localization results are formulated and proved for the solution of the PAM rather than for the random walk in the Feynmna--Kac formula. See \cite[Sect.\ 6]{Ast16} and \cite[Sect.\ 6.3]{K16} for two comprehensive survey texts on such localization results up to 2016. 

These two survey texts triggered interest in localization of discrete-time random walks among Bernoulli traps, the (time and space) discrete version of Brownian motion among Poisson obstacles. Deep localization properties were derived in \cite{DFSX20a, DFSX20b, DFSX21} in this setting in dimension $d\geq 2$. Similar results for a correlated random potential in $d=1$ (with i.i.d.\ gaps between the obstacles) have been derived recently in \cite{PS22}. 

Earlier work \cite{OR16, OR17, OR18} analysed the strongly related model of a spatial random branching walk in a Pareto-distributed random field of branching rates. For this model, this series of papers derives a description that resembles our model and results quite strongly. It turns out there that the main bulk of the particles is highly concentrated in a number of sites that are defined in terms of a Poisson point process (essentially the same as our $\Pi$); more precisely, the branching process subsequently visits  points of this point process that are step for step extremal with respect to a compromise between high potential values and short distances. This precise mechanism is different from the one that is detected in the parabolic Anderson model (PAM) in \cite{KLMS09}; the main difference to that model being that the branching process is consistent and has no finite time horizon, like the PAM. With respect to our model, an additional difference is the repellent effect from the second part of our Hamiltonian.

The second feature in our model is the Hamiltonian of the famous weakly self-repellent random walk, the negative exponential of the self-intersection local time. It is here only a side-remark that the behaviour of the weakly self-repellent walk is poorly understood in dimensions $d\in\{2,3,4\}$, and it was a substantial challenge to investigate it in the other dimensions. See \cite{MS13, S11} for surveying texts. Generally, it is expected that the typical behaviour is a more or less uniformly spread-out behaviour in space on a scale $t^{\gamma_d}$ that is much larger than the scale $t^{1/2}$ of the free walk (at least in $d\leq 4$), but much less than the scale $t$ of a ballistic walk (at least in $d\geq 2$). However, all these effects will not be seen in our model, because of the presence of the random potential. We will necessarily be working on a much rougher scale than those scales that are believed to be responsible for this spread-out behaviour, and the resulting behaviour will be much more spread-out, but for reasons that have to do with the potential and not with the self-repulsion.

\subsection{Notation}\label{sec:notation}

\noindent We write $\N = \{1,2,\dots\}$ and $\N_0 = \{0,1,2,\dots,\}$. 
For the rest of the paper, we fix $d\in \N$, $\theta, \alpha \in (0,\infty)$. 
We set $Q_R:=[-R,R]^d$ for $R\in (0,\infty)$. 
We write $\Longrightarrow$ for convergence in distribution. 
We abbreviate \lq Poisson point process\rq\ by \lq PPP\rq.
For $y=(y_1,\dots,y_d) \in \R^d$ we write $|y| = \sum_{i=1}^d |y_i|$ for the $\ell^1$ norm  and for $a\in \R^d$, $r>0$ we write
$B(a,r) = \{ y \in \R^d : |y-a| < r\}$ the open $\ell^1$ ball in $\R^d$ around $a$ of radius $r$.

\section{Preparation}\label{sec:VP}

\noindent In the present section, we prepare for the proof of Theorem~\ref{thm:variational_formula1} by  analysing the variational formula $\Xi$ in \eqref{Xidef}. On the way, we need to extract several continuity and compactness properties of the energy and entropy functionals $\Phi_{\mathcal P}$ and $\mathcal D_{\mathcal P}$ as functions of $\mathcal P\in \mathcal M_\rp((0,\infty)\times \R^d)$. For this, we keep $\mathcal P$ deterministic in this section, but restrict to a subclass of such $\mathcal P$'s for which we can prove all needed assertions and for which we can prove that the processes $\Pi_t$ and $\Pi$ satisfy them. 
We define in particular a class of {\em good} point measures, see Definition~\ref{def:good_point_measure},
with the characteristic that if $\mathcal P$ is good then $\Psi_{\mathcal P}$ has at most one maximizer. In Theorem~\ref{thm:variational_formula_deterministic} we formulate all the necessary properties for deterministic good point measures, among other things the uniqueness of the maximizer and its continuous dependence on $\mathcal P$. Furthermore, in Lemma~\ref{l:goodness} we state that $\Pi_t$ and $\Pi$ are almost surely good.  The proofs are deferred to later sections.

It will be convenient for us to compactify   specific subsets  of $(0,\infty) \times \R^d$ as described next. For $\height, \slope>0$ we define the cone-shaped set (see also Figure~\ref{fig:cH}) with height $\height$ and slope $\slope$ by
\begin{equation}\label{e:defcH}
\cH^\slope_\height := \left\{ (f,y) \in (0,\infty) \times \R^d \colon\, f > \slope |y| + \height \right\}.
\end{equation}
\begin{figure}[H]
		\includegraphics[width=0.3\textwidth]{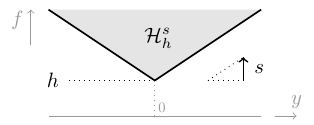}  
	\caption{Illustration of $\cH^s_h$.}
	\label{fig:cH}
\end{figure}

We can embed $(0,\infty) \times \R^d$ continuously and openly into a locally compact Polish space $\mathfrak{E}$ with certain properties, mentioned in the lemma below. 
For a locally compact metric space $E$, we write $\cM_\rp(E)$ for the set of point measures on $E$, i.e., $\N_0\cup \{\infty\}$-valued Radon measures, or equivalently, due to the fact that the support of each such measure is countable and locally finite, the set of Radon measures that can be written as $\sum_{n\in\N} \delta_{x_n}$ for a sequence $(x_n)_{n\in\N}$ in $E$. 
We equip $\cM_\rp(E)$ with the \emph{vague topology}, i.e., $\cP_n \rightarrow \cP$ in $\cM_\rp(E)$ if and only if $\int \phi\, \dd \cP_n \rightarrow \int \phi \,\dd \cP$ for each continuous compactly supported $\phi \colon E \rightarrow \R$.
When $E=\mathfrak{E}$ we will simply write $\cM_\rp = \cM_\rp(\mathfrak{E})$.
We denote by $\cM_\rp^\circ$ the set of point measures in $\cM_\rp$ that are supported in $(0,\infty) \times \R^d$ and equip it with the topology from $\cM_\rp$.

\begin{lemma}
\label{l:about_cM_p}
There exists a locally compact Polish space $\fE$, with $(0,\infty)\times \R^d \subset \fE$, such that 
\begin{enumerate}[label={\normalfont(\roman*)}] 
\item 
\label{item:compactness_in_fE_wrt_cH}
for every $\height,\slope >0$, the
open set $\cH^\slope_\height$ 
is relatively compact in $\fE$, and for every compact subset $K$ in $ \mathfrak{E}$ there exist $\height,\slope>0$ such that $K \cap [(0,\infty) \times \R^d] \subset \cH_\height^\slope$, 
\item 
\label{item:open_and_continuous_embed}
the map $\iota : (0,\infty) \times \R^d \rightarrow \fE$ given by $\iota((f,y))=(f,y)$, $(f,y) \in (0,\infty) \times \R^d$ is open and continuous. In other words, 
$(0,\infty) \times \R^d$ is continuously and openly embedded in $\fE$. 
\end{enumerate}
Moreover, 
\begin{enumerate}
\item 
\label{item:cM_circ_as_subset}
$\cM_\rp^\circ$ can be viewed as a subspace of $ \cM_\rp((0,\infty)\times \R^d)$, in the sense that for $\cP \in \cM_\rp^\circ$, $\cP \circ \iota$ defines a point measure on $(0,\infty) \times \R^d$. 
\item 
\label{item:extension_to_fE}
Let $\cP \in \cM_{\rp}((0,\infty) \times \R^d)$. 
Define $\overline \cP$ on $\fE$ by $\overline \cP(A) = \cP( \iota^{-1}(A))$ for Borel sets $A\subset \fE$. 
Then $\overline \cP$ is an element of $\cM_\rp^\circ$ if and only if $\cP(\cH_h^s)<\infty$ for all $s,h>0$. 
\end{enumerate}
\end{lemma}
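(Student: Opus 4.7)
The plan is to construct $\fE$ explicitly as the set $[0,\infty) \times \R^d$ equipped with a topology transported from the Euclidean one via a change of variables that simultaneously bounds every cone $\cH_h^s$ and preserves the Euclidean topology on $(0,\infty) \times \R^d$. The key observation is that the involutive map $\phi \colon (0,\infty) \times \R^d \to (0,\infty) \times \R^d$ defined by $\phi(f,y) = (1/f,\, y/f)$ is a Euclidean self-homeomorphism, and a direct calculation gives
\begin{equation*}
\phi(\cH_h^s) = \{(g,z) \in (0,\infty) \times \R^d : hg + s|z| < 1\},
\end{equation*}
which is bounded in $\R^{d+1}$ and has compact closure in $[0,\infty) \times \R^d$.

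I would therefore set $\fE := [0,\infty) \times \R^d$ as a set, and define its topology by declaring the bijection $\Phi \colon \fE \to [0,\infty) \times \R^d$ given by $\Phi(f,y) = \phi(f,y)$ for $f > 0$ and $\Phi(0,y) = (0,y)$ to be a homeomorphism onto the closed half-space with its Euclidean topology. Then $\fE$ is locally compact Polish, and the subset $(0,\infty) \times \R^d = \Phi^{-1}((0,\infty) \times \R^d)$ is open in $\fE$. For \ref{item:open_and_continuous_embed}, the inclusion $\iota$ corresponds under $\Phi$ to $\phi|_{(0,\infty) \times \R^d}$, itself a Euclidean self-homeomorphism, so the subspace topology that $\fE$ induces on $(0,\infty) \times \R^d$ coincides with the Euclidean one, making $\iota$ open and continuous. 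For \ref{item:compactness_in_fE_wrt_cH}, the first half is immediate from the calculation above. For the second half, if $K \subset \fE$ is compact then $\Phi(K) \subset [0,M] \times \overline{B_R(0)}$ for some $M, R > 0$; pulling back, $K \cap ((0,\infty) \times \R^d) \subset \{(f,y) : f \geq 1/M,\ |y| \leq Rf\}$, and an elementary check shows that this is contained in $\cH_h^s$ for, e.g., $s = 1/(3R)$ and $h = 1/(3M)$.

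The remaining assertions \ref{item:cM_circ_as_subset} and \ref{item:extension_to_fE} then follow from the definitions. For \ref{item:cM_circ_as_subset}, since $\iota$ is an open continuous embedding, any $\cP \in \cM_\rp^\circ$ restricts via $\iota$ to a point measure on $(0,\infty) \times \R^d$. For \ref{item:extension_to_fE}, the measure $\overline \cP$ is supported in $(0,\infty) \times \R^d \subset \fE$ by construction, so it lies in $\cM_\rp^\circ$ iff it is Radon on $\fE$, i.e., finite on every compact $K \subset \fE$; by \ref{item:compactness_in_fE_wrt_cH}, every such $K$ satisfies $K \cap ((0,\infty)\times\R^d) \subset \cH_h^s$ for suitable $h,s$, and conversely each closure $\overline{\cH_h^s}$ is compact, so the Radon condition is equivalent to $\cP(\cH_h^s) < \infty$ for all $s,h > 0$.

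The only nontrivial step is identifying the right change of variables; once one sees that the involution $(f,y) \mapsto (1/f,\,y/f)$ compactifies all cones at once while preserving the Euclidean topology on $(0,\infty) \times \R^d$, the remainder of the argument is essentially bookkeeping.
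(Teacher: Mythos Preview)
Your proof is correct and takes a genuinely different route from the paper's. The paper constructs $\fE$ as $((0,\infty)\times\R^d)\cup(0,\infty]$, adding a one-dimensional ``slope'' boundary $(0,\infty]$ at infinity, and writes down an explicit metric $\fd$ on this union (a point $(f,y)$ with $f\to\infty$ converges to $s\in(0,\infty]$ iff $f/(1\vee|y|)\to s$). Verifying that $\fd$ is a metric, that $\fE$ is locally compact Polish, and that the cones $\cH_h^s$ have the right closures then occupies most of the appendix.

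Your construction instead transports the Euclidean half-space $[0,\infty)\times\R^d$ back through the involution $\phi(f,y)=(1/f,y/f)$, adding a $d$-dimensional boundary $\{0\}\times\R^d$ at infinity. This is more economical: you never have to introduce or check a bespoke metric, the local compactness and Polish property are inherited for free from $\R^{d+1}$, and the identity $\phi(\cH_h^s)=\{(g,z):hg+s|z|<1\}$ makes relative compactness of the cones immediate. The paper's choice has the minor conceptual advantage of remembering only the ratio $f/|y|$ at infinity (which is the quantity that actually matters in the later analysis of $\Psi_\cP$), but this plays no role in the present lemma, which only asks for \emph{some} $\fE$ with the listed properties. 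Your argument for \ref{item:cM_circ_as_subset} and \ref{item:extension_to_fE} is essentially the same as the paper's; the only point worth making explicit is that ``$\overline\cP$ is supported in $(0,\infty)\times\R^d$'' in the closed-support sense follows \emph{a posteriori} from the Radon property (a boundary point would otherwise have a compact neighbourhood containing infinitely many atoms), not merely from $\overline\cP(\{0\}\times\R^d)=0$.
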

\begin{proof}
The proof is given in Appendix~\ref{section:space_fE}, below Lemma~\ref{l:fE}. 
\end{proof}

\begin{remark}
\label{remark:vague_convergence}
Observe that by the Portmanteau theorem \cite[Theorem 13.16]{Kl08}, $\cP_n \rightarrow \cP$ in $\cM_\rp(\fE)$ implies that $\cP_n(A) \rightarrow \cP(A)$ for all measurable relatively compact $A \subset \fE$  with $\cP(\partial A)=0$. And hence by Lemma~\ref{l:about_cM_p}~\ref{item:extension_to_fE}, in particular for all measurable $A$  with $\cP(\partial A)=0$  that are a subset of $\cH_h^s$ for some $h,s>0$. 
\end{remark}

\begin{lemma}
\label{l:Pi_and_Pi_t_as_in_cM_p}
 For all $t>0$, $\Prob (\overline \Pi \in \cM_\rp^\circ)  =\Prob (\overline \Pi_t \in \cM_\rp^\circ) =1$ 
(with $\overline \cP$ as in Lemma~\ref{l:about_cM_p}~\ref{item:extension_to_fE}). 
\end{lemma}
\begin{proof}
Let $h,s >0$. 
We show that 
$\Expec( \Pi(\cH_h^s) )<\infty$ and $\Expec( \Pi_t(\cH_h^s) )<\infty$, so that, e.g., 
$\Prob( \Pi(\cH_h^s)< \infty) =1$, and therefore  $\Prob (\bigcap_{s,h\in (0,\infty) \cap \Q} \{\Pi(\cH_h^s)< \infty\} ) =1$. 
Because $\cH_h^s \subset \cH_j^t$ for $t\le s$ and $j\le h$, this implies $\Prob (\bigcap_{s,h\in (0,\infty)} \{\Pi(\cH_h^s)< \infty\} ) =1$ and thus, by Lemma~\ref{l:about_cM_p}~\ref{item:extension_to_fE} that $\overline \Pi \in \cM_\rp$. 

We have 
\begin{align*}
\Pi_t (\cH_h^s) 
= \sum_{z\in \Z^d} \delta_{\big(\frac{\xi(z)}{r_t^{d/\alpha}}, \frac{z}{r_t} \big)} (\cH_h^s) 
= \sum_{z\in \Z^d} \1 
\Big\{ \frac{\xi(z)}{r_t^{d/\alpha}}> s \Big|\frac{z}{r_t}\Big| +h \Big\}. 
\end{align*}
We calculate 
\begin{align*}
\Prob \Big( 
\frac{\xi(z)}{r_t^{d/\alpha}}> s \Big|\frac{z}{r_t}\Big| +h
\Big)
= \Big(r_t^{d/\alpha} \big(s \Big|\frac{z}{r_t}\Big| +h\big)\Big)^{-\alpha} 
= r_t^{-d} \big(s \Big|\frac{z}{r_t}\Big| +h\big)^{-\alpha}. 
\end{align*}
Therefore, because $\alpha >d$, 
\begin{align*}
\Expec \Big(\Pi_t (\cH_h^s)  \Big)
\le \sum_{z\in \Z^d} r_t^{-d} \big(s \Big|\frac{z}{r_t}\Big| +h\big)^{-\alpha} <\infty. 
\end{align*}
Note that $\Pi(\cH_\height^s)$ is a Poisson distributed random variable with parameter
\begin{equation*}
\int_{\R^d} \int_0^\infty \1_{\cH_\height^s}(f,y)  \frac{\alpha}{f^{\alpha+1}} \dd f \dd y = 
\int_{\R^d} \frac{1}{(s|y| + \height)^\alpha} \dd y,
\end{equation*}
which is finite for $\alpha>d$, so that $\E(\Pi(\cH_h^s)) < \infty$. 
\end{proof}

From here on, we will make abuse of notation and write $\Pi$ also for $\overline \Pi$ and $\Pi_t$ for $\overline \Pi_t$.

In the following lemma we state the convergence of $\Pi_t$ towards $\Pi$, as mentioned between \eqref{Pitdef} and \eqref{PPPdef}: 

\begin{lemma}[$\Pi_t\Longrightarrow\Pi$]\label{l:P(t)toPi}
Let $\alpha\in(d,\infty)$. 
Let $t_1,t_2,\dots \in (0,\infty)$ and $t_n \rightarrow \infty$. 
We may view $\Pi_{t_n}$ and $\Pi$ as elements of $\cM_\rp^\circ$ for all $n$. 
Then $\Pi_{t_n} \rightarrow \Pi$ in $\cM_{\rp}^\circ$ as $n\rightarrow \infty$. 
\end{lemma}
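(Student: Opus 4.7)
The plan is to establish the claimed convergence via Laplace functionals. By a standard criterion (Kallenberg's theorem for Poisson convergence of simple point processes in $\cM_\rp$ with the vague topology), it suffices to verify that for every non-negative, continuous, compactly supported $\phi\colon\fE\to[0,\infty)$,
\begin{equation}\label{eq:LaplConv}
\Expec\bigl[\ee^{-\langle\phi,\Pi_{t_n}\rangle}\bigr]\;\xrightarrow{n\to\infty}\;\exp\!\Bigl(-\!\!\int_{(0,\infty)\times\R^d}\!\bigl(1-\ee^{-\phi(f,y)}\bigr)\alpha f^{-1-\alpha}\,\dd f\,\dd y\Bigr).
\end{equation}
I would prove \eqref{eq:LaplConv} directly from the i.i.d.\ Pareto structure; the only structural input from the compactification $\fE$ is the localization supplied by Lemma~\ref{l:about_cM_p}~\ref{item:compactness_in_fE_wrt_cH}.

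Fix such $\phi$ and pick, via Lemma~\ref{l:about_cM_p}~\ref{item:compactness_in_fE_wrt_cH}, parameters $\height,\slope>0$ with $\supp\phi\cap\bigl((0,\infty)\times\R^d\bigr)\subset\cH_\height^\slope$. Using independence of $\{\xi(z)\}_{z\in\Z^d}$ and the substitution $f=r/r_t^{d/\alpha}$ in the Pareto integral, each factor in
\[
\Expec\bigl[\ee^{-\langle\phi,\Pi_t\rangle}\bigr]=\prod_{z\in\Z^d}\Expec\Bigl[\ee^{-\phi(r_t^{-d/\alpha}\xi(z),\,r_t^{-1}z)}\Bigr]
\]
equals $1-r_t^{-d}g_t(z/r_t)$, where
\[
g_t(y):=\int_{r_t^{-d/\alpha}}^\infty\bigl(1-\ee^{-\phi(f,y)}\bigr)\,\alpha f^{-1-\alpha}\,\dd f.
\]
Once $r_t^{-d/\alpha}<\height$, the support condition on $\phi$ lets me replace the lower integration limit by $\slope|y|+\height$, yielding the uniform bound $g_t\le\height^{-\alpha}$ and the envelope $g_t(y)\le(\slope|y|+\height)^{-\alpha}$, which lies in $L^1(\R^d)$ because $\alpha>d$.

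Taking logarithms and using $|\log(1-x)+x|=O(x^2)$,
\[
\log\Expec\bigl[\ee^{-\langle\phi,\Pi_t\rangle}\bigr]=-\sum_{z\in\Z^d}r_t^{-d}g_t(z/r_t)+O\!\Bigl(r_t^{-d}\sup_y g_t(y)\cdot\!\sum_{z\in\Z^d}r_t^{-d}g_t(z/r_t)\Bigr).
\]
The error is $O(r_t^{-d})$ and vanishes; the leading sum is a Riemann sum of mesh $r_t^{-1}$ for $\int_{\R^d}g_t$, and combining it with the pointwise convergence $g_t(y)\to g_\infty(y):=\int_0^\infty(1-\ee^{-\phi(f,y)})\alpha f^{-1-\alpha}\dd f$ and the common $L^1$-envelope via dominated convergence produces the right-hand side of \eqref{eq:LaplConv}.

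The only real delicacy is making sure that the compactification $\fE$ interacts correctly with the computation: I need that compactly supported test functions on $\fE$ genuinely live inside some cone $\cH_\height^\slope$ where the Poisson intensity is finite, for otherwise the Riemann-sum/envelope step would break down near $f=0$ or $|y|=\infty$. This is precisely what Lemma~\ref{l:about_cM_p}~\ref{item:compactness_in_fE_wrt_cH} supplies; once that localization is in hand, the remainder is a routine Riemann-sum plus dominated-convergence argument and no serious technical obstacle remains.
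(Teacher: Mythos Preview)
Your proof is correct and takes a genuinely different route from the paper. The paper does not compute Laplace functionals directly; instead it invokes a black-box convergence criterion \cite[Lemma~7.4]{BKS18} (built on \cite[Proposition~3.21]{Re87}) and verifies its two hypotheses: a one-point intensity limit $\lim_{r\to\infty} r^d\,\Prob(\xi(0)>s\,r^{d/\alpha})=s^{-\alpha}$, and a tail-tightness bound showing that $\sum_{|x|>rn}\Prob(\xi(0)/r^{d/\alpha}>\slope|x|/r+\height)\to 0$ uniformly in $r$ as $n\to\infty$. These two ingredients correspond exactly to the two analytic facts you use inside your Laplace computation---the pointwise identification $g_t\to g_\infty$ and the integrable envelope $(\slope|y|+\height)^{-\alpha}$---so the arguments are morally the same, just packaged differently. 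Your approach is more self-contained (no external lemma needed beyond the standard Laplace-functional characterisation of weak convergence), while the paper's is shorter on the page because the Riemann-sum and dominated-convergence steps are absorbed into the cited criterion. One small point you leave implicit: the claim that $\Pi_{t_n},\Pi\in\cM_\rp^\circ$ is part of the statement and is handled separately by the paper via Lemma~\ref{l:Pi_and_Pi_t_as_in_cM_p}; your argument tacitly assumes this, which is fine.
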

\begin{proof}
That we may view $\Pi_{t_n}$ and $\Pi$ as elements of $\cM_\rp^\circ$ follows by Lemma~\ref{l:Pi_and_Pi_t_as_in_cM_p}. 

The convergence follows by \cite[Lemma~7.4]{BKS18} (the fact that we have $(0,\infty) \times \R^d$ instead of $\R \times \R^d$ does not change the validity of the lemma, as the proof builds on \cite[Proposition~3.21]{Re87} can be carried out in our situation in the same way). For this we have to check the two conditions, namely (7.17) and (7.18) of that lemma (we take the $\widehat N_t$ in that lemma to be equal to zero, furthermore let us mention that in (7.17) there should be ``$\frac{t^d}{(2\widehat N_t +1)^d}$'' instead of ``$\frac{t^d}{(2\widehat N_t)^d}$''). The first condition, (7.17), follows by 
\begin{align*}
\lim_{r \rightarrow \infty} r^d \Prob \Big( \frac{\xi(0)}{r^{d/\alpha}} >s \Big) 
= \lim_{r \rightarrow \infty} r^d (r^{d/\alpha} s)^{-\alpha} = s^{-\alpha}. 
\end{align*}
The second condition, (7.18), follows by the fact that for all $\slope,\height>0$ 
\begin{align*}
&
\sum_{x\in \Z^d : |x|> rn} \Prob \Big( \frac{\xi(0)}{r^{d/\alpha}} > \slope \frac{|x|}{r} + \height \Big) 
 \le 
\sum_{x\in \Z^d : |x|> rn} \Prob \Big( \xi(0) > \slope |x| r^{\frac{d}{\alpha} - 1 }  \Big) \\
& \le \slope^{-\alpha} 
\sum_{x\in \Z^d : |x|> rn} |x|^{-\alpha} r^{\alpha-d} 
 \le \slope^{-\alpha} 
\int_{\frac{rn}{2} }^\infty u^{-\alpha} r^{\alpha-d} u^{d-1} \dd u 
\cnewline 
\cand \begin{calc}
= \slope^{-\alpha} r^{\alpha - d} \frac{u^{d-\alpha}}{d-\alpha} \Big|_{\frac{rn}{2}}^\infty 
= \slope^{-\alpha} r^{\alpha - d} \frac{(\frac{rn}{2})^{d-\alpha}}{\alpha -d} 
\end{calc} 
= \slope^{-\alpha} \frac{2^{\alpha - d}}{\alpha -d} n^{d-\alpha} \xrightarrow{ n \rightarrow \infty} 0. 
\end{align*}
\begin{calc}
So that indeed, 
\begin{align*}
\limn \limsup_{r\rightarrow \infty} 
\sum_{x\in \Z^d : |x|> rn} \Prob \Big( \frac{\xi(0)}{r^{d/\alpha}} > \slope \frac{|x|}{r} + \height \Big) =0. 
\end{align*}
\end{calc}
\end{proof}

For $\cP \in \cM^\circ_\rp$ and $R>0$, define 
\label{e:defMr}
\begin{equation}
M_R(\cP) := \sup \left\{ f \colon\, (f,y) \in \cP \text{ and } y \in Q_R \right\}.
\end{equation}

\begin{lemma}
\label{l:basicgoodprop}
Let $\cP,\cP_1,\cP_2,\dots$ be in $ \cM_{\rp}^\circ$ such that $\cP_n \to \cP$ in $\cM_{\rp}$. 
Then
\begin{equation}
\label{e:condcompact}
\sup_{n \in \N} M_R(\cP_n) <\infty \;\; \mbox{ for all } R>0\qquad \text{ and } \qquad
\lim_{R \to \infty} \sup_{n \in \N} \frac{M_R(\cP_n)}{R}=0.
\end{equation}
In particular, $\lim_{R \to \infty} \frac{M_R(\cP)}{R}=0$.
\end{lemma}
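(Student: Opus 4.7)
The plan is to prove both conclusions by contradiction, exploiting the compactness properties of $\fE$ provided by Lemma~\ref{l:about_cM_p}. The key mechanism is the following: if the $f$-coordinates of $\cP_n$-points with suitably bounded $y$-coordinates blow up, then the offending points are trapped inside a fixed relatively compact cone $\cH^s_h$, so that a subsequence converges in $\fE$ to a limit $p$; but $f\to\infty$ forces $p\in \fE\setminus((0,\infty)\times\R^d)$, a region on which $\cP\in\cM_\rp^\circ$ puts no mass, so that $p$ can be separated from $\supp\cP$ by a continuous compactly supported test function, in conflict with vague convergence.

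To prove the first assertion, fix $R>0$ and assume towards a contradiction that $\sup_n M_R(\cP_n)=\infty$. Using $|y|\le Rd$ for $y\in Q_R$, the slice $\{(f,y):y\in Q_R,\ f>Rd+1\}$ is contained in the cone $\cH^1_1$, which is relatively compact in $\fE$ by Lemma~\ref{l:about_cM_p}. Choose a subsequence $(n_k)$ and points $p_k=(f_k,y_k)\in\supp\cP_{n_k}$ with $y_k\in Q_R$ and $f_k\to\infty$; for large $k$ we have $p_k\in\cH^1_1$, so passing to a further subsequence $p_k\to p\in\overline{\cH^1_1}$, and $f_k\to\infty$ forces $p\notin(0,\infty)\times\R^d$.

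Now the contradiction: the Radon measure $\cP$ places only finitely many atoms $q_1,\dots,q_\ell$ in the compact set $\overline{\cH^1_1}$, and each $q_i$ lies in $(0,\infty)\times\R^d$, hence is distinct from $p$. By local compactness and Hausdorffness of $\fE$ choose an open $V\ni p$ with $\overline V$ compact and $\overline V\cap\{q_1,\dots,q_\ell\}=\emptyset$, so that $\cP(\overline V)=0$. Urysohn's lemma furnishes $\phi\in C_c(\fE)$ with $0\le\phi\le 1$, $\supp\phi\subset V$ and $\phi(p)=1$. Vague convergence gives $\int\phi\,\dd\cP_{n_k}\to\int\phi\,\dd\cP=0$, yet $\int\phi\,\dd\cP_{n_k}\ge\phi(p_k)\to\phi(p)=1$ by continuity of $\phi$, the desired contradiction.

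For the second assertion, suppose $\limsup_{R\to\infty}\sup_n M_R(\cP_n)/R = c > 0$ and extract $R_k\to\infty$, $n_k$ and $p_k=(f_k,y_k)\in\supp\cP_{n_k}$ with $y_k\in Q_{R_k}$ and $f_k\ge cR_k/2$. Since $|y_k|\le R_kd$, a direct computation shows $p_k\in\cH^{c/(4d)}_1$ for all sufficiently large $k$, and $f_k\ge cR_k/2\to\infty$ again forces the subsequential limit $p$ to lie outside $(0,\infty)\times\R^d$; the test-function argument of the previous paragraph applies verbatim with $\cH^1_1$ replaced by $\cH^{c/(4d)}_1$. The ``in particular'' statement follows by applying the second assertion to the constant sequence $\cP_n\equiv\cP$. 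The only genuinely non-routine point is ensuring that the extracted boundary limit $p$ is truly separated from $\supp\cP$ in $\fE$; this is precisely where Lemma~\ref{l:about_cM_p} is indispensable, since it tells us that the ``points at infinity'' added by the compactification carry no $\cP$-mass and that the cones $\cH^s_h$ are relatively compact neighbourhoods of these boundary points.
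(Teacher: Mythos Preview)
Your argument is essentially correct but takes a genuinely different route from the paper. The paper gives a short direct proof: fixing $\varepsilon\in(0,1)$ and a height $h\in(0,\varepsilon)$ with $\cP(\partial\cH_h^\varepsilon)=0$, it observes geometrically that $(0,\infty)\times Q_R$ is contained in the union of the low strip $(0,\varepsilon(R+1)]\times\R^d$ and the cone $\cH_h^\varepsilon$; invoking \cite[Proposition~3.13]{Re87} to bound $\sup_n\sup\{f:(f,y)\in\supp\cP_n\cap\cH_h^\varepsilon\}$ by some finite $M$, one obtains the single estimate $\sup_n M_R(\cP_n)\le\max\{\varepsilon(R+1),M\}$, from which both conclusions follow by dividing by $R$ and letting $\varepsilon\downarrow0$. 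Your approach instead argues by contradiction, extracting a sequence of offending points, trapping them in a relatively compact cone, and passing to a boundary limit that conflicts with $\cP\in\cM_\rp^\circ$ via a test function. The paper's argument is more economical (one inequality does everything) and avoids subsequence bookkeeping; yours is more self-contained in that it does not cite \cite{Re87}.

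There is one small gap you should patch. In both parts your test-function contradiction uses $\int\phi\,\dd\cP_{n_k}\to\int\phi\,\dd\cP$, which requires $n_k\to\infty$. In the first assertion this is justified once you note (which you do not quite say) that each individual $M_R(\cP_n)$ is finite: since $\{(f,y):y\in Q_R,\ f>Rd+1\}\subset\cH_1^1$ and $\cP_n\in\cM_\rp^\circ$, only finitely many atoms of $\cP_n$ lie there. Hence $\sup_n M_R(\cP_n)=\infty$ forces infinitely many distinct $n$, and you may take $n_k$ strictly increasing. In the second assertion, however, your extracted indices $n_k$ may be bounded; if $n_k=n_0$ along a subsequence, vague convergence to $\cP$ is unavailable. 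This case is even easier, though: the points $p_k$ would all lie in $\supp\cP_{n_0}\cap\cH_1^{c/(4d)}$, a finite set, contradicting $f_k\to\infty$. A single sentence distinguishing these two cases would close the gap. A related minor imprecision: to deduce $\cP(\overline V)=0$ it is cleaner to use directly that $p\notin\supp\cP$ (since $\supp\cP\subset(0,\infty)\times\R^d$), rather than only separating $p$ from the finitely many atoms inside $\overline{\cH_1^1}$, since $\overline V$ might protrude beyond that cone.
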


\begin{proof}
Fix $\varepsilon \in (0,1)$. First observe that for any $\cQ \in \cM_\rp$ and any $\height >0$, 
$\cQ(\cH_\height^\varepsilon) < \infty$  since $\cH^\varepsilon_\height$ is relatively compact (in $\fE$),
and thus $V(\cQ) := \sup\{f \colon (f,y) \in \supp \cQ \cap \cH_\height^\epsilon \} < \infty$.
Fix $\height \in (0,\varepsilon)$ such that $\cP(\partial \cH_\height^\varepsilon) = 0$. 
By \cite[Proposition~3.13]{Re87}, there exists an $n_0 \in \N$ such that $V(\cP_n) \leq V(\cP)+1$ for all $n\geq n_0$,
implying $M = \sup_{n \in \N} V(\cP_n) < \infty$. 
For $R>0$, note that $(0,\infty) \times Q_R = A \cup B$ where $A \subset (0,\varepsilon (R+1)] \times \R^d$ and $B \subset \cH_\height^\epsilon$ (see Figure~\ref{fig:cone_and_strip}),
so that 
\begin{figure}[H]
		\includegraphics[width=0.4\textwidth]{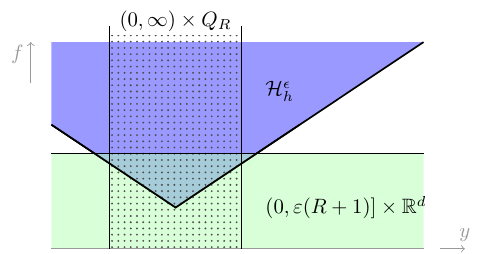}  
	\caption{Illustration $(0,\infty) \times Q_R \subset \cH_h^\epsilon \cup (0,\varepsilon (R+1)] \times \R^d$.}
	\label{fig:cone_and_strip}
\end{figure}

\begin{align*}
\sup_{n \in \N} M_R(\cP_n) \leq \max\{\varepsilon (R+1), M\} < \infty,
\end{align*}
implying the first statement in \eqref{e:condcompact}.
For the second statement, divide the above inequality by $R$, take the $\limsup$ as $R \to \infty$ and then 
the limit as $\varepsilon \to 0$.
\end{proof}

Recall that $\cW$ denotes the set of subprobability measures on $(0,\infty)\times\R^d$ with total mass $\leq 1$.
 For  $R>0$ and $\cP\in \cM_\rp((0,\infty) \times \R^d)$ define the sets 
\begin{align}
\label{e:def_fF}
\fF(\cP) &:= \left\{ \mu \in \cW\colon\, \mu \ll \cP \ \text{and}\  \supp(\mu) \ \text{is finite} \right\}, \\
\label{e:def_fF_1}
\fF_1(\cP) & := \{  \mu \in \fF(\cP) : \mu \mbox{ is a probability measure}\}. 
\end{align}
In the following lemma we show that if a point measure $\cP$ has sufficiently many points, then one may restrict to take the supremum over elements in $\fF_1(\cP)$ in the variational formula for $\Xi(\cP)$ \eqref{Xidef}. 
Then, we introduce the notion of a good point measure, under which we will prove a conditional version of Theorem \ref{thm:variational_formula1}~\ref{item:2d_part}~\ref{i:existence_n_props_of_max}.

\begin{lemma}
\label{l:Xi_as_sup_over_probm_fF_1}
Let $\cP \in \cM_\rp^\circ$. Suppose that 
\begin{align}
\label{eqn:many_points}
\forall \delta> 0 \ 
\exists m\in \N \ 
\exists \mbox{ distinct } (f_1,y_1),\dots (f_m,y_m) \in \supp \cP, \quad 
\sum_{i=1}^m f_i \ge 2\theta, \ D_0(y_1,\dots,y_m) <\delta. 
\end{align}
Then 
\begin{align}
\label{eqn:supremum_over_finite_supports_and_prob_m}
\Xi(\cP) 
\begin{calc} = \sup_{\mu \in \cW} \Psi_\cP(\mu)  \end{calc}
= \sup_{\mu \in \fF_1(\cP) } \Psi_\cP(\mu). 
\end{align}
\end{lemma}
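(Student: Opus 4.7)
The inclusion $\sup_{\mu\in\fF_1(\cP)}\Psi_\cP(\mu)\le \Xi(\cP)$ is trivial since $\fF_1(\cP)\subset\cW$. For the reverse inequality, the plan is to fix an arbitrary $\mu\in\cW$ with $\Psi_\cP(\mu)>-\infty$ (equivalently $\mu\ll\cP$, $\Phi_\cP(\mu)<\infty$, and $\cD_\cP(\mu)<\infty$) and, for each $\delta>0$, construct a sequence $\nu^{(N)}\in\fF_1(\cP)$ with $\liminf_{N\to\infty}\Psi_\cP(\nu^{(N)})\ge \Psi_\cP(\mu)-2q\delta$; letting $\delta\downarrow 0$ and then taking the supremum over $\mu$ would yield the claim. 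The construction has two steps: first truncate $\mu$ to a finitely supported measure, then top it up to a probability measure via the clustering hypothesis \eqref{eqn:many_points}.

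For the truncation, I would enumerate $\supp\mu=\{(f_n,y_n):n\in\N\}$ with densities $w_n=\frac{\dd\mu}{\dd\cP}(f_n,y_n)\in(0,1]$ satisfying $\sum_n w_n\le 1$, and set $\mu^{(N)}:=\sum_{n\le N}w_n\delta_{(f_n,y_n)}\in\fF(\cP)$. Since $f_n w_n-\theta w_n^2\ge -\theta w_n$ and $\sum w_n\le 1$, the series defining $\Phi_\cP(\mu)$ converges absolutely, so $\Phi_\cP(\mu^{(N)})\to\Phi_\cP(\mu)$. As $\supp_{\R^d}\mu^{(N)}\uparrow\supp_{\R^d}\mu$ and $D_0$ is monotone under inclusion, $\cD_\cP(\mu^{(N)})\uparrow\cD_\cP(\mu)$. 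Hence $\Psi_\cP(\mu^{(N)})\to\Psi_\cP(\mu)$.

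For the normalization, set $c_N:=\mu^{(N)}((0,\infty)\times\R^d)\le 1$, and use \eqref{eqn:many_points} to pick distinct points $(f'_i,y'_i)\in\supp\cP$, $i=1,\dots,m$, with $\sum_i f'_i\ge 2\theta$ and $D_0(y'_1,\dots,y'_m)<\delta$. Define $\nu^{(N)}$ by adding mass $a:=(1-c_N)/m$ at each $(f'_i,y'_i)$ on top of whatever mass $w^{(N)}_i:=\mu^{(N)}(\{(f'_i,y'_i)\})\le c_N$ is already there. The resulting density is at most $c_N+(1-c_N)/m\le 1$, so $\nu^{(N)}\in\fF_1(\cP)$. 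A direct expansion yields
\[
\Phi_\cP(\nu^{(N)})-\Phi_\cP(\mu^{(N)})=\sum_{i=1}^m\bigl(f'_i a-2\theta w^{(N)}_i a-\theta a^2\bigr)\ge 2\theta a-2\theta a c_N-\theta m a^2=\frac{\theta(1-c_N)^2}{m}\ge 0,
\]
using $\sum_i f'_i\ge 2\theta$ and $\sum_i w^{(N)}_i\le c_N$. The elementary bound $D_0(A\cup B)\le D_0(A)+2D_0(B)$ (traverse $B$ and backtrack to the origin, then traverse $A$) gives $\cD_\cP(\nu^{(N)})\le\cD_\cP(\mu^{(N)})+2\delta$, so $\Psi_\cP(\nu^{(N)})\ge\Psi_\cP(\mu^{(N)})-2q\delta$; sending $N\to\infty$ and then $\delta\downarrow 0$ closes the argument. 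The delicate step is exactly this energy comparison: the hypothesis $\sum_i f'_i\ge 2\theta$ is what compensates the $-2\theta w^{(N)}_i a$ penalty arising from possible collisions of the new atoms with $\supp\mu^{(N)}$, and without it the sign of the $\Phi_\cP$-increment could not be controlled. The truncation and the entropy estimate are routine.
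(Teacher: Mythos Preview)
Your argument is correct and more self-contained than the paper's. The paper proceeds differently: it first invokes the compactness result (Proposition~\ref{p:gamma_convergence_Psi}\ref{item:compactness}) to restrict $\Xi(\cP)$ to $\cW_R$, then uses Lemma~\ref{l:continuity_Psi_in_ssup_L} (the $\mu^{\ssup L}$-truncation) to reduce further to $\fF(\cP)$; only then does it normalize, and for that step it appeals to the detailed optimizer analysis of Proposition~\ref{p:AnaPhik} (Case~1) to produce the probability measure supported on the union of $\supp\nu$ and the clustering points. By contrast, you bypass all three auxiliary results: your truncation is a bare series argument (absolute convergence of $\sum(f_nw_n-\theta w_n^2)$ plus monotone convergence of $D_0$), and your normalization uses a fixed uniform allocation $a=(1-c_N)/m$ together with the explicit inequality $\Phi_\cP(\nu^{(N)})-\Phi_\cP(\mu^{(N)})\ge\theta(1-c_N)^2/m\ge0$, which isolates exactly how the hypothesis $\sum_i f'_i\ge 2\theta$ beats the collision cost. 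What the paper's route buys is consistency with the Gamma-convergence framework it is developing anyway; what your route buys is an elementary proof that does not require any of Section~\ref{sec:reformulation} or Section~\ref{sec:AnaPhik} as input. Both use the same entropy bound $D_0(A\cup B)\le D_0(A)+2D_0(B)$.
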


 The proof of lemma is given at the end of Section~\ref{sec:reformulation}.

\begin{definition}[Good point measure]
\label{def:good_point_measure}
We say that a point measure $\cP\in\cM_\rp^\circ$ is \emph{good} if 
$\Psi_\cP$ possesses at most one maximizer in $\fF(\cP)$ in the sense that 
there exists at most one $\nu \in \fF(\cP)$ such that $\sup_{\mu \in \fF(\cP)} \Psi_\cP(\mu) = \Psi_\cP(\nu)$, 
and if it satisfies at least one of the two following conditions: 
\begin{enumerate}[label={\normalfont(\roman*)}]
\item 
\label{item:good3_condition_new_points}
There exists a $\beta>2$ such that for all $R,C>0$ there exists a $\epsilon_{R,C} >0$ such that for $\epsilon\leq \epsilon_{R,C}$ and for all $y\in Q_{R}$  the set $[(C\epsilon, \infty) \times B(y,\epsilon^\beta)] \cap \supp \cP$ is nonempty. 

\item\label{itemgoodfinitelymanypoints}  $\cP((0,\infty)\times Q_R)<\infty$ for every $R>0$.
\end{enumerate}
\end{definition}

Now we can formulate a deterministic version of Theorem~\ref{thm:variational_formula1}~\ref{item:2d_part}~\ref{i:existence_n_props_of_max} (and more) for good processes.

\begin{theorem}[Analysis of $\Psi_\cP$ for good $\cP$]\label{thm:variational_formula_deterministic}
If $\cP,\cP_1,\cP_2,\dots\in \cM_\rp^\circ$ are good in the sense of Definition~\ref{def:good_point_measure}, then the following statements hold.
\begin{enumerate}
\item {\em Maximizer:} 
\label{item:var_f_maximizer}
There exists  a unique $\mu^*\in \cW$ such that
\begin{equation}\label{defxicP}
 \Psi_{\cP}(\mu^*)=\sup_{\mu \in \cW} \Psi_{\cP}(\mu).
\end{equation}
This maximizer $\mu^*$ has finite support, is a probability measure and satisfies  $\mu^* \ll \cP$, i.e., $\mu^*\in \fF_1(\cP)$. 
\item 
 {\em Multisupport maximizer:} 
\label{item:support_more_than_one}
Let $k\in \N$, $\epsilon := \frac{\theta}{4 q k^4}$ and $L> 2 \theta + (q+1)\varepsilon$. 
With 
 $B_\epsilon = \overline {B(0,\epsilon)}$, 
 define the regions of $(0,\infty) \times \R^d$ (see also Figure~\ref{fig:k_strategy})
\begin{align}
\label{eqn:regions_multisupport}
\begin{aligned}
G 
& = \left[ L, L+\frac{2\theta}{k} \right] \times B_{\varepsilon}, 
\quad E^1 = (\varepsilon, L) \times B_{\varepsilon}, 
\quad E^2 = \left(L + \frac{2\theta}{k}, \infty \right) \times B_{\varepsilon}, \\
E^3 & = \big\{(f,y) \in (0,\infty) \times \R^d \colon\, |y| > \varepsilon, f > {\varepsilon} \vee ( |y| - 3 \theta) \big\}.
\end{aligned}
\end{align}
If $\cP$ satisfies 
\begin{align}
\label{eqn:spreading_maxis}
\cP(G)=k \quad \text{ and } \quad \cP(E^1) = \cP(E^2) = \cP(E^3) =0,
\end{align}
then $\# \supp \mu^* = k$. 
\item {\em Stability:} 
\label{item:var_f_stability}
For any open neighbourhood $O\subset \cW$ of $\mu^*$,
\begin{equation}
\sup_{O^{\rm c}} \Psi_{\cP}< \sup_{\cW}\Psi_\cP=\Psi_{\cP}(\mu^*).
\end{equation}

\item {\em Continuity of maximizer:} 
\label{item:var_f_continuity}
If $\cP_n\to \cP$ in $\cM_\rp^\circ$, then the maximizers $\mu^*_n$ of $\Psi_{\cP_n}$ converge towards $\mu^*$ as $n\to\infty$ in the vague topology.
\end{enumerate}
\end{theorem}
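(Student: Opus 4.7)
The main engine is the compactness of $\cW$ in the vague topology; once upper semicontinuity of $\Psi_\cP$ is in place, existence of a maximizer is immediate and everything else is a matter of fine analysis of the structure of the optimizer. First I would verify that good point measures (in either sense of Definition~\ref{def:good_point_measure}) satisfy hypothesis \eqref{eqn:many_points}, so that by Lemma~\ref{l:Xi_as_sup_over_probm_fF_1} one may restrict the optimization to $\fF_1(\cP)$. For part (a), I would show that $\Phi_\cP$ is upper semicontinuous and $\cD_\cP$ is lower semicontinuous on $\cW$, so that $\Psi_\cP$ is upper semicontinuous, giving existence of a maximizer on the compact set $\cW$. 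To see that the optimizer $\mu^*$ has finite support and is absolutely continuous with respect to $\cP$, I would use the strict concavity of $w\mapsto fw-\theta w^2$ together with the estimate $M_R(\cP)/R\to0$ from Lemma~\ref{l:basicgoodprop}: points of $\cP$ far from the origin are dominated in value by the entropy price $q|y|$, so only finitely many points can carry mass at an optimum. That $\mu^*$ is a probability measure follows by noting that any deficit in mass can be compensated by placing a small Dirac on a point with sufficiently large $f$, guaranteed to exist by \eqref{eqn:many_points}. Uniqueness is precisely the defining property of goodness.

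For part (b), I would compute explicitly. Let $\nu^*:=\tfrac{1}{k}\sum_{i=1}^k \delta_{(f_i,y_i)}$ where $(f_i,y_i)_{i=1}^k$ are the $k$ points of $\cP$ in $G$; since $|y_i|\leq\varepsilon$ and $f_i\geq L$, the tour through the $y_i$ has length at most $(k+1)\varepsilon$, hence
\begin{equation*}
\Psi_\cP(\nu^*)\geq L-\frac{\theta}{k}-q(k+1)\varepsilon.
\end{equation*}
Conversely, the assumptions $\cP(E^1)=\cP(E^2)=\cP(E^3)=0$ imply that every point of $\cP$ outside $G$ has $f\leq\varepsilon$ or is a far point with $f\leq|y|-3\theta$; in either case its per-unit-mass contribution to $\Psi_\cP$ is strictly smaller than $L-\tfrac{\theta}{k}$. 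A case analysis --- distinguishing how $\mu$ splits its mass between $G$ and its complement, and between using $k$ versus fewer than $k$ of the points in $G$ --- then shows that the optimum concentrates all mass on the $k$ points in $G$; the critical inequality is $\tfrac{\theta}{k-1}-\tfrac{\theta}{k}>q(k+1)\varepsilon$, which is precisely why the choice $\varepsilon=\theta/(4qk^4)$ works. Uniqueness from part (a) then gives $\#\supp\mu^*=k$.

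Parts (c) and (d) follow from uniqueness together with compactness and semicontinuity. For (c), if $\sup_{O^c}\Psi_\cP=\Psi_\cP(\mu^*)$, then upper semicontinuity on the compact set $O^c$ produces a maximizer in $O^c$, contradicting uniqueness. For (d), given $\cP_n\to\cP$ in $\cM_\rp^\circ$, extract a vague subsequential limit $\mu'$ of $\mu^*_n$ in the compact space $\cW$. An upper semicontinuity argument --- using the relative compactness of $\cH_\height^\slope$ in $\fE$ from Lemma~\ref{l:about_cM_p} to truncate tails uniformly in $n$ --- gives $\Psi_\cP(\mu')\geq\limsup_n\Psi_{\cP_n}(\mu_n^*)$. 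A matching lower bound comes from approximating $\mu^*$ by test measures $\nu_n\in\fF_1(\cP_n)$ with $\nu_n\to\mu^*$ vaguely, using either goodness condition to transport the finite support of $\mu^*$ onto nearby atoms of $\cP_n$. Then $\Psi_\cP(\mu')=\Psi_\cP(\mu^*)$, and uniqueness forces $\mu'=\mu^*$.

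The main obstacle will be establishing the joint continuity properties of $\Phi_\cP$ and $\cD_\cP$ in $(\cP,\mu)$ with the correct quantitative tails. The energy functional integrates the unbounded function $fw-\theta w^2$ against $\cP$, which lives on the non-compact space $(0,\infty)\times\R^d$; the compactification $\fE$ together with the cone estimate of Lemma~\ref{l:basicgoodprop} is essential for trading tail contributions against entropy. The entropy $\cD_\cP$ is a combinatorial traveling-salesman-type length, and arguing that the optimal tour behaves continuously under vague convergence of $\cP$ --- in particular that the support of limiting optimizers matches the combinatorial structure of the limit --- is the most delicate step, and is where the uniqueness baked into the definition of goodness earns its keep.
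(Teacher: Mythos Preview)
Your overall architecture for (c) and (d) is sound and matches the paper's: upper semicontinuity plus compactness of $\cW$ plus uniqueness of the maximizer, with the paper phrasing this via $\Gamma$-convergence (Proposition~\ref{p:gamma_convergence_Psi}) and the fundamental theorem of $\Gamma$-convergence.

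There are, however, two genuine gaps in your treatment of part (a).

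\textbf{First}, your opening move --- verifying that every good $\cP$ satisfies \eqref{eqn:many_points} so as to invoke Lemma~\ref{l:Xi_as_sup_over_probm_fF_1} --- is false for goodness condition~\ref{itemgoodfinitelymanypoints}. A point measure with $\cP((0,\infty)\times Q_R)<\infty$ for every $R$ may have arbitrarily few points near the origin, or only points with tiny $f$-values; there is no way to assemble $\sum_i f_i\ge 2\theta$ with $D_0(y_1,\dots,y_m)<\delta$. The paper does \emph{not} route part (a) through Lemma~\ref{l:Xi_as_sup_over_probm_fF_1}; it proves existence, finite support, and the probability-measure property directly in Proposition~\ref{p:exist_n_finiten_of_maxis}, with the last of these established only under condition~\ref{item:good3_condition_new_points}.

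\textbf{Second, and more seriously}, your argument that the maximizer has finite support is incomplete. What you sketch --- that $M_R(\cP)/R\to 0$ forces the support into some $\cW_R$ --- yields only \emph{bounded} support. Under goodness condition~\ref{item:good3_condition_new_points}, $\cP$ has infinitely many points in every ball, so a maximizer in $\cW_R$ could a priori be supported on infinitely many atoms of $\cP$. Ruling this out is the heart of Proposition~\ref{p:exist_n_finiten_of_maxis}(c): when $\sum_i f_i>2\theta$ the finite truncation $k_\star$ from Proposition~\ref{p:AnaPhik} already does the job, but when $\sum_i f_i\le 2\theta$ the paper constructs an explicit strictly improving competitor by exploiting condition~\ref{item:good3_condition_new_points} to find a new point $(\widetilde f,\widetilde y)$ with $\widetilde f>3\theta a$ within distance $a^\beta$ of the boundary of the current support (here $a$ is a tail parameter and $\beta>2$), so that the extra entropy cost $O(a^\beta)$ is beaten by the energy gain $\Omega(a^2)$. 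This is the step your proposal does not supply.

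For part (b), your uniform-weight test measure $\nu^*=\tfrac1k\sum\delta_{(f_i,y_i)}$ differs from the paper's, which uses the optimal weights \eqref{e:wmax} and the exact formula \eqref{defphisupt} for $\varphi_k$. Your route can in principle be made to work, but the paper's computation of $\varphi_{m+1}-\varphi_m\ge\theta/k^4=4q\varepsilon$ gives a cleaner per-point increment that directly beats the entropy increase $\le 4\varepsilon$; your ``critical inequality'' $\tfrac{\theta}{k-1}-\tfrac{\theta}{k}>q(k+1)\varepsilon$ is in the same spirit but your tour-length bound $(k+1)\varepsilon$ is not right (all $y_i$ lie in $B_\varepsilon$, but consecutive hops can be up to $2\varepsilon$).
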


\begin{figure}[H]
		\includegraphics[width=0.5\textwidth]{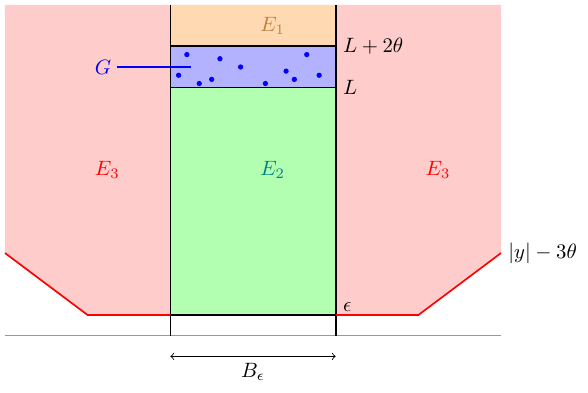}  
	\caption{Illustration of the regions $G$, $E^1,E^2$ and $E^3$ as in \eqref{eqn:regions_multisupport}.}
	\label{fig:k_strategy}
\end{figure}

 The proof of Theorem~\ref{thm:variational_formula_deterministic} is given in Section~\ref{exisuni}.

In order to be able to apply Theorem~\ref{thm:variational_formula_deterministic} to the point processes $\Pi_t$ defined in \eqref{Pitdef} and its limiting PPP $\Pi$ defined in \eqref{PPPdef}, we use the following lemmas, whose proofs are given in Section~\ref{subsec:optimizers_with_finite_support}. 

\begin{lemma}[Goodness of $\Pi$ and $\Pi_t$]\label{l:goodness} 
Fix $\alpha\in(2d,\infty)$. 
Then, for any $t\in (0,\infty)$, with probability one, $\Pi$ and $\Pi_t$ are good. 
\end{lemma}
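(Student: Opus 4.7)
The plan is to verify, separately for $\cP = \Pi_t$ and $\cP = \Pi$, both halves of Definition~\ref{def:good_point_measure}: the geometric alternative (i) or (ii), and the almost-sure uniqueness of maximizers of $\Psi_\cP$ in $\fF(\cP)$. For $\Pi_t$, condition (ii) is immediate and deterministic: the support of $\Pi_t$ is contained in $(0,\infty) \times \frac{1}{r_t}\Z^d$, so
\[
\Pi_t((0,\infty)\times Q_R) \le \#\{z\in \Z^d : |z|_\infty \le R\, r_t\} < \infty
\]
for every $R>0$.

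For $\Pi$, I would verify condition (i). Since $\alpha > 2d$, the interval $(2,\alpha/d)$ is nonempty; fix any $\beta$ in it and set $\gamma := \alpha - \beta d > 0$. Fix $R, C > 0$. The strategy is a Borel--Cantelli argument along the dyadic sequence $\epsilon_n := 2^{-n}$. Partition $Q_R$ into $\ell^\infty$-cubes $B$ of side $s_n := \epsilon_n^\beta/(2^\beta d)$, so that each $B$ has $\ell^1$-diameter $d s_n = \epsilon_{n+1}^\beta$. Using that $\Pi$ has intensity $\alpha f^{-1-\alpha}\, df \otimes dy$, the probability that a given $B$ contains no point $(\widetilde f, \widetilde y)$ of $\Pi$ with $\widetilde f > C \epsilon_n$ equals
\[
\exp\bigl(-\mathrm{vol}(B)\,(C\epsilon_n)^{-\alpha}\bigr) = \exp\bigl(-c\, \epsilon_n^{-\gamma}\bigr)
\]
for some $c = c(C,\beta,d) > 0$. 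A union bound over the $O(R^d\, \epsilon_n^{-\beta d})$ cubes gives probability $O\bigl(R^d\, 2^{n\beta d}\exp(-c\, 2^{n\gamma})\bigr)$, which is summable in $n$. Borel--Cantelli produces an almost-sure random $N$ such that for every $n \ge N$, each level-$n$ cube contains a point of $\Pi$ with $\widetilde f > C\epsilon_n$. Then for any $\epsilon \in (\epsilon_{n+1}, \epsilon_n]$ with $n \ge N$ and any $y \in Q_R$, the level-$n$ cube containing $y$ furnishes the required $(\widetilde f, \widetilde y)$: indeed $\widetilde f > C\epsilon_n \ge C\epsilon$ and $|\widetilde y - y| \le d s_n = \epsilon_{n+1}^\beta < \epsilon^\beta$. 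So (i) holds with $\epsilon_{R,C} := \epsilon_N$.

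It remains to establish almost-sure uniqueness of the maximizer of $\Psi_\cP$ in $\fF(\cP)$, for both $\cP = \Pi$ and $\cP = \Pi_t$. For any finite $S = \{(f_1,y_1),\dots,(f_k,y_k)\} \subset \supp\cP$, the inner optimization
\[
\Phi^*(S) := \max\Bigl\{ \sum_i \bigl(f_i w_i - \theta w_i^2\bigr) : w_i \ge 0, \ \sum_i w_i \le 1 \Bigr\}
\]
is strictly concave in $w$ (Hessian $-2\theta I$) on a compact convex domain, hence its optimizer is unique. Thus any two distinct maximizers $\mu_1, \mu_2 \in \fF(\cP)$ must have distinct (essential) supports $S_j := \supp \mu_j$ and must satisfy $\widetilde\Psi(S_1) = \widetilde\Psi(S_2)$, where $\widetilde\Psi(S) := \Phi^*(S) - q\, D_0(\{y : (f,y) \in S\})$. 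Enumerate the almost-surely countable set $\supp\cP$ as $\{X_i : i \in \N\}$. For two fixed distinct finite index sets $I, J \subset \N$, pick $i_0 \in I \setminus J$ (or $J \setminus I$). Conditional on all other randomness, $\widetilde\Psi(S_J)$ is constant in the $f$-coordinate $F_{i_0}$ of $X_{i_0}$, while $\widetilde\Psi(S_I)$ has derivative $w_{i_0}^* \in [0,1]$ by the envelope theorem, and $w_{i_0}^* > 0$ once $F_{i_0}$ exceeds a finite threshold (depending on the other $f$-values), so $\widetilde\Psi(S_I)$ is strictly increasing in $F_{i_0}$ above that threshold. Because $F_{i_0}$ has an absolutely continuous conditional distribution (Fréchet for $\Pi$, scaled Pareto for $\Pi_t$), the coincidence set $\{\widetilde\Psi(S_I) = \widetilde\Psi(S_J)\}$ is Lebesgue-null, so $\Prob(\widetilde\Psi(S_I) = \widetilde\Psi(S_J)) = 0$. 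A union bound over the countably many pairs $(I,J)$ of finite index sets completes the proof.

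The main obstacle is this last step: $\widetilde\Psi(S_I)$ is only piecewise smooth in $F_{i_0}$ (the optimal weight may pin at $0$ over an initial range), so one cannot directly invoke the implicit function theorem. However, strict monotonicity in $F_{i_0}$ above a finite deterministic threshold (once $i_0$ is forced into the support) already suffices for the coincidence set to be Lebesgue-null, bypassing the need for global smoothness. All other parts of the plan are straightforward: the condition for $\Pi_t$ is trivial, and the condition for $\Pi$ is a standard Poisson covering argument crucially enabled by the hypothesis $\alpha > 2d$, which permits choosing $\beta \in (2,\alpha/d)$.
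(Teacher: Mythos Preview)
Your verification of the geometric alternative is essentially the paper's argument (Lemma~\ref{l:part_goodness}): condition~\ref{itemgoodfinitelymanypoints} for $\Pi_t$ is immediate from the discreteness of $r_t^{-1}\Z^d$, and condition~\ref{item:good3_condition_new_points} for $\Pi$ is a Borel--Cantelli covering argument enabled by the choice $\beta\in(2,\alpha/d)$. The paper parametrises by $\epsilon=k^{-1/\beta}$ and uses balls of radius $1/k$ rather than your dyadic cubes, but the mechanism is identical.

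For uniqueness, your envelope-theorem route differs from the paper's and contains a genuine gap. The assertion that ``strict monotonicity above a threshold suffices for the coincidence set $\{\widetilde\Psi(S_I)=\widetilde\Psi(S_J)\}$ to be Lebesgue-null'' is false: on the initial range where $w_{i_0}^*=0$, the map $F_{i_0}\mapsto\Phi^*(S_I)$ is \emph{constant}, and if $\widetilde\Psi(S_J)$ happens to equal that constant the coincidence set is a full interval. This occurs with positive probability --- take $d=1$, $J=I\setminus\{i_0\}$, $y_{i_0}$ lying between $0$ and the farthest point of $\{y_j:j\in J\}$, and some $f_j>2\theta$; then $D_0(\{y_i:i\in I\})=D_0(\{y_j:j\in J\})$, and whenever $F_{i_0}$ is small enough to force $w_{i_0}^*=0$ one has $\widetilde\Psi(S_I)=\widetilde\Psi(S_J)$. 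So your union-bound summands are not all null. The repair is to sharpen the inclusion: if $\mu_1$ is a maximizer of $\Psi_\cP$ with support \emph{exactly} $S_I$, then the $\Phi^*(S_I)$-optimal weights must all be strictly positive, since otherwise the $\Phi^*$-optimiser $\mu'$ satisfies $\Phi_\cP(\mu')=\Phi^*(S_I)>\Phi_\cP(\mu_1)$ by strict concavity and $\cD_\cP(\mu')\le\cD_\cP(\mu_1)$, contradicting maximality of $\mu_1$. Hence the event of two distinct maximizers lies in the union over $(I,J)$ of $\{\widetilde\Psi(S_I)=\widetilde\Psi(S_J)\}\cap\{w_{i_0}^*(S_I)>0\}$; on the second event $F_{i_0}\mapsto\widetilde\Psi(S_I)$ is strictly increasing, and the intersection is then null.

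The paper (Lemmas~\ref{l:formula_Phi_nu_if_finite_optimizer}--\ref{l:prob_tilde_varphi_and_dist_zero}) sidesteps the flat-part issue altogether: it first shows that any nonzero maximizer $\nu$ with $k$-point support has $\Phi_\cP(\nu)$ given by the \emph{explicit} quadratic $\widetilde\varphi_k$ of~\eqref{eqn:def_widetilde_varphi}, not the envelope $\Phi^*$. The coincidence then becomes $\widetilde\varphi_{k+1}(\dots,F_{i_*})=\text{const}$, a non-degenerate polynomial equation of degree at most two in $F_{i_*}$, whose solution set is finite regardless of the constant. This is cleaner than tracking where the envelope is flat, at the cost of invoking the explicit structure of Proposition~\ref{p:AnaPhik}.
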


\begin{lemma}
\label{l:Pi_spreads_maxis}
Let $\alpha \in (d,\infty)$. Let $k\in \N$ and $G, E^1,E^2,E^3$ be as in \eqref{eqn:regions_multisupport}. 
Then 
\begin{align*}
\Prob\big[ \, \Pi(G)=k, \Pi(E^1 \cup E^2 \cup E^3)=0 \, \big] >0. 
\end{align*}
\end{lemma}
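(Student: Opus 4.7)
The plan is to exploit the fact that $\Pi$ is a Poisson point process on $(0,\infty)\times \R^d$ with intensity measure $\nu(\dd f,\dd y) = \alpha f^{-1-\alpha}\,\dd f\otimes \dd y$, so that the counts of $\Pi$ over disjoint measurable sets are independent Poisson random variables. The desired probability therefore factorises as
\begin{equation*}
\Prob\bigl(\Pi(G)=k,\ \Pi(E^1\cup E^2\cup E^3)=0\bigr)=\frac{\nu(G)^k}{k!}\,\ee^{-\nu(G)}\cdot \ee^{-\nu(E^1\cup E^2\cup E^3)},
\end{equation*}
provided one first verifies (i) pairwise disjointness of $G,E^1,E^2,E^3$, and (ii) finiteness of $\nu$ on each of these sets (with $\nu(G)>0$).

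First, disjointness is immediate from the definitions in \eqref{eqn:regions_multisupport}: the sets $G$, $E^1$, $E^2$ are separated by their respective $f$-ranges $[L,L+2\theta/k]$, $(\varepsilon,L)$, $(L+2\theta/k,\infty)$, while $E^3$ is separated from the other three by its $y$-range $\{|y|>\varepsilon\}$ versus $B_\varepsilon$. Next, the intensities $\nu(G)$, $\nu(E^1)$, $\nu(E^2)$ are trivially finite, being products of the Lebesgue measure of the bounded ball $B_\varepsilon$ with an explicit finite integral of $\alpha f^{-1-\alpha}$ over a bounded or cut-off $f$-interval; in particular $\nu(G)=\mathrm{Vol}(B_\varepsilon)\bigl(L^{-\alpha}-(L+2\theta/k)^{-\alpha}\bigr)>0$.

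The main step to check is finiteness of $\nu(E^3)$, which is where the hypothesis $\alpha>d$ enters. Integrating out $f$ gives
\begin{equation*}
\nu(E^3)=\int_{|y|>\varepsilon}\bigl(\varepsilon \vee (|y|-3\theta)\bigr)^{-\alpha}\,\dd y.
\end{equation*}
Split the integral at $|y|=\varepsilon+3\theta$: the inner part contributes $\varepsilon^{-\alpha}\,\mathrm{Vol}\{\varepsilon<|y|\leq \varepsilon+3\theta\}<\infty$, while the outer part is dominated by $\int_{|y|>\varepsilon+3\theta}(|y|-3\theta)^{-\alpha}\,\dd y$, which after the substitution $r=|y|-3\theta$ is comparable to $\int_1^\infty r^{d-1-\alpha}\,\dd r<\infty$ precisely because $\alpha>d$. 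Combining the two parts, $\nu(E^3)<\infty$, and plugging everything back into the product formula above yields a strictly positive probability, as required.
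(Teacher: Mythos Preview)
Your proof is correct and follows exactly the same approach as the paper's own proof: disjointness of the regions plus finiteness (and positivity for $G$) of their intensity measures under $\nu$ yields independent Poisson counts, hence a strictly positive probability. The paper's version is terser, simply asserting that each region has finite positive intensity; your explicit verification of $\nu(E^3)<\infty$ via the tail integral $\int r^{d-1-\alpha}\,\dd r$ makes transparent where the hypothesis $\alpha>d$ is actually used.
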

\begin{proof}
Since the regions in \eqref{eqn:regions_multisupport} are disjoint and each of them has finite and positive intensity measure,
the random variables $\Pi(G)$, $\Pi(E^1)$, $\Pi(E^2)$, $\Pi(E^3)$ are independent and have non-trivial Poisson distributions, so that $\Prob[ \Pi(G)=k, \Pi(E^1)=\Pi(E^2)=\Pi(E^3)=0]$ has positive probability. 
\end{proof}

It is clear that Theorem~\ref{thm:variational_formula1}~\ref{item:2d_part}~\ref{i:existence_n_props_of_max} directly follows from Theorem~\ref{thm:variational_formula_deterministic}, combined with Lemma~\ref{l:goodness} and Lemma~\ref{l:Pi_spreads_maxis}.

For the proof of the lower bound in Section~\ref{sec:LowBound}, we use the following lemma so that we can apply Lemma~\ref{l:Xi_as_sup_over_probm_fF_1} to $\Pi$. 

\begin{lemma}
\label{l:Pi_satisfies_many_points}
Let $\alpha \in (d,\infty)$. With probability one, $\Pi$ satisfies \eqref{eqn:many_points}. 
\end{lemma}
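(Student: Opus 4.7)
The plan is to fix $\delta > 0$, prove that $\Prob(\cE_\delta) = 1$ where $\cE_\delta$ denotes the event inside \eqref{eqn:many_points} for this particular $\delta$, and then take a countable intersection over $\delta \in \Q \cap (0,\infty)$, using the monotonicity $\cE_{\delta'} \subset \cE_\delta$ whenever $\delta' \le \delta$. To establish $\Prob(\cE_\delta) = 1$, I will construct a family of events $\cA_n$ (depending on $\delta$) such that $\cA_n \subset \cE_\delta$ for all sufficiently large $n$ and $\Prob(\cA_n) \to 1$; the conclusion then follows from $\Prob(\cE_\delta) \geq \sup_n \Prob(\cA_n) = 1$.

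Set $r_n := 2^{-n}$ and choose $c_n > 0$ so that $|B(0,r_n)|\, c_n^{\,1-\alpha} = 4\theta$, so that $c_n \asymp r_n^{d/(\alpha-1)}$, where $B(0,r) = \{y\in\R^d\colon |y|\le r\}$. Let $N_n := \Pi\bigl((c_n,\infty) \times B(0,r_n)\bigr)$; this is Poisson distributed with mean $\lambda_n = |B(0,r_n)|\, c_n^{-\alpha} = 4\theta/c_n \asymp r_n^{-d/(\alpha-1)} \to \infty$. Set $\cA_n := \{\lambda_n/2 \leq N_n \leq 2\lambda_n\}$, so that $\Prob(\cA_n) \geq 1 - 2 e^{-c\lambda_n} \to 1$ by standard Poisson concentration. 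Enumerate the points of $\supp\Pi \cap ((c_n,\infty)\times B(0,r_n))$ as $(f_1,y_1),\dots,(f_{N_n},y_{N_n})$ (they are a.s.\ distinct by the Poisson property). On $\cA_n$ one has
(a) $\sum_{i=1}^{N_n} f_i \geq c_n N_n \geq c_n \lambda_n/2 = 2\theta$ by the choice of $c_n$; and
(b) since all $y_i$ lie in $B(0,r_n)$, the standard grid-covering construction (partition $B(0,r_n)$ into sub-cubes of side $\asymp r_n N_n^{-1/d}$ and follow a snake-Hamiltonian path through them with one local visit per point) gives the deterministic travelling-salesman bound $D_0(y_1,\dots,y_{N_n}) \leq C_d\, r_n\, N_n^{(d-1)/d}$ (with the convention $N^0 := 1$ in dimension $d=1$, where even the trivial bound $D_0 \leq 4 r_n$ suffices). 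Using $N_n \leq 2\lambda_n$ this gives $D_0 \leq C(d,\theta,\alpha)\, r_n^{\,(\alpha-d)/(\alpha-1)}$, which tends to zero because $\alpha > d$. Hence $\cA_n \subset \cE_\delta$ for all $n$ sufficiently large.

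The main obstacle in this argument is to obtain a sharp enough travelling-salesman bound: the naive path-from-origin estimate $D_0 \leq (2N_n - 1) r_n$ only yields $D_0 \lesssim r_n^{(\alpha-1-d)/(\alpha-1)}$, which fails to vanish for $\alpha \in (d, d+1]$. The grid-covering exponent $(d-1)/d$ is therefore essential to cover the full parameter range $\alpha > d$; fortunately this bound is deterministic and depends only on the fact that the relevant $y_i$ all lie in $B(0,r_n)$, so no further control on their joint distribution is needed.
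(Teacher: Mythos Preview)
Your proof is correct and takes a genuinely different route from the paper's. The paper fixes the spatial box at the target scale $Q_{\delta/d}$ and lowers the threshold to $1/n$; it then invokes a separate probabilistic lemma (a greedy nearest-point construction in a homogeneous PPP) to extract only $k_n \asymp \lambda_n^{1/d}$ points whose path length is automatically $<\delta$, while the sum of marks grows like $k_n/n \asymp n^{\alpha/d-1}$. You instead shrink the box to radius $r_n = 2^{-n}$ and use \emph{all} $N_n$ points inside, relying on Poisson concentration for the count and on the classical deterministic travelling-salesman bound $D_0 \le C_d\, r_n N_n^{(d-1)/d}$; your calibration $c_n \asymp r_n^{d/(\alpha-1)}$ is exactly what makes both $\sum f_i \ge 2\theta$ and $D_0 \asymp r_n^{(\alpha-d)/(\alpha-1)} \to 0$ work simultaneously.

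Your approach has the advantage of being entirely deterministic once the point count is controlled (no auxiliary PPP lemma is needed), and the Few/space-filling-curve bound is textbook material. The paper's approach avoids invoking that external result at the cost of proving its own greedy lemma, but that lemma is short and self-contained. Either way, the exponent gymnastics are comparable, and the decisive feature in both arguments is the same: one must do better than the naive $D_0 \le (2N-1)r$ estimate in dimensions $d\ge 2$, which you correctly identify and resolve with the $(d-1)/d$ exponent.
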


\section{Analysis of the variational formula}\label{sec:AnaVP}

\noindent Here we give the proof of Theorem~\ref{thm:variational_formula_deterministic}; that is, we analyse the maximum of $\Psi_\cP$ and its maximizer for an arbitrary point measure $\cP$ that is good in the sense of Definition~\ref{def:good_point_measure}. 

Let us first give a short outline of the proof. 
In Section \ref{sec:AnaPhik} we analyse the maximization of the energy functional $ \Phi_\cP(\mu)$ over $\mu$ when the number of points of $\cP$ is fixed; this involves only the maximization over the potential values. 
In Section~\ref{sec:reformulation}, we introduce the crucial tool for handling variational problems, namely the Gamma-convergence, and derive $\Gamma$-continuity properties of $\cP\mapsto \Phi_\cP$ and $\cP\mapsto\cD_\cP$ and consider  the compactness of the objects appearing in the variational formula $\sup_{\mu\in \cW} \Psi_\cP(\mu)$ (the right-hand side of \eqref{defxicP}): if $\cP$ is good, then one can restrict the variational formula to measures in $\cW$ that have a compact support with respect to $\R^d$. 
Then we give the proof of Lemma~\ref{l:Xi_as_sup_over_probm_fF_1}. 
In Section~\ref{sec:SuppFinite} we show that any maximizer of $\Psi_\cP$  is necessarily of finite support. In Section \ref{exisuni} finally we prove Theorem~\ref{thm:variational_formula_deterministic}~\ref{item:var_f_maximizer}, putting together the results derived in the preceding sections, namely the $\Gamma$-continuity of $\cP \mapsto -\Psi_\cP$, and the fact that we need to optimize $\Psi_\cP$ only over compact subsets of $\cW$. Recall that by our definition of ``good'', the uniqueness of the maximizer is guaranteed for good $\cP$.

\subsection{Maximization of $\Phi_\cP$ with fixed number of points}\label{sec:AnaPhik}

\noindent In this section we derive, for a given point measure with finite support, $\cP = \sum_{i=1}^k \delta_{(f_i,y_i)}$, explicit information about the maximization of $\Phi_\cP(\mu)$ over $\mu$. We need  slightly adapted notation. 
Since we optimize here only  $\Phi_\cP(\mu)$ over $\mu$, we can also drop the points $y_1,\dots,y_k$; see Definition~\ref{def-functionals}. 
We obtain explicit information about the maximising vector $w=(w_1,\dots,w_k)=(\mu(f_i,y_i))_{i=1}^k$.

Fix $\theta \in (0,\infty)$ as always. Furthermore, we fix $k\in\N$,  assume that $\cP=\sum_{i=1}^k\delta_{(f_i,y_i)}$ and therefore may restrict our maximization problem to $\mu$ of the form $\mu=\sum_{i=1}^k w_i\delta_{(f_i,y_i)}$. Then a comparison to Definition~\ref{def-functionals} shows that 
\begin{align}
\label{eqn:max_varphi_k_is_max_Phi}
\sup_{\mu\in\cW}\Phi_\cP(\mu) = \varphi_k(f_1,\dots,f_k),
\end{align}
where  $\varphi_k\colon (0,\infty)^k\to[0,\infty)$ is defined as
\begin{equation}
\label{eqn:varphi_k_def}
\varphi_k( f_1,\dots, f_k)  = 
\sup_{\substack{w_1,\dots,w_k\geq 0\\ \sum_{i=1}^k w_i\leq 1}} \sum_{i=1}^k \bigg( w_i f_i-\theta w_i^2 \bigg).
\end{equation}
We are going to analyze the function $\varphi_k$ in this section. 

Since $\varphi_k(f_1,\dots,f_k)$ does not depend on the order of the $f_i$, we may assume them to be ordered in a decreasing way. The following is the main result of this section; it identifies the optimal $w_1,\dots,w_k$ and thus the optimal $\mu$, provides some of its properties and shows its uniqueness.

\begin{prop}[Analysis of $\varphi_k$]\label{p:AnaPhik}
Fix $k\in\N$ and $f_1\geq f_2\geq\dots\geq f_k > 0$. 

{\it \underline{Case 1}: $f_1+\dots+f_k\geq 2\theta$.} 
Let $k_\star = K_\star(f_1,\dots,f_k)$, where
\begin{equation}\label{e:defkstar}
K_\star\big(f_1, \ldots, f_k\big) := 
\inf \Big\{ j \in \{1, \ldots, k-1\} \colon\, j f_{j+1} \leq \sum_{i=1}^j f_i - 2\theta \Big\} \wedge k,
\end{equation}
where we interpret $\inf \emptyset = \infty$.
Then the unique maximizer in \eqref{eqn:varphi_k_def} is given by
\begin{equation}\label{e:wmax}
w_i := 
\begin{cases}
\frac1{2\theta}\big[f_i - \tfrac{1}{k_\star}(\sum_{j=1}^{k_\star} f_j - 2\theta)\big] & \text{if } i \leq k_\star, \\
0 & \text{otherwise.}
\end{cases}
\end{equation}
Moreover, $w_i>0$ for $i\in \{1,\dots,k_\star\}$, $w_1+\dots+w_{k_*}=1$ and 
\begin{align}\label{defphisupt}
\varphi_k(f_1, \ldots, f_k) = \varphi_{k_\star}(f_1, \ldots, f_{k_\star}) 
= \frac{1}{4\theta} \Big(  \sum_{i=1}^{k_\star} f^2_i  - \frac{1}{ k_\star}\Big(\sum_{i=1}^{k_\star}f_i - 2\theta \Big)^2 \Big) .
\end{align}
In particular,
\begin{equation}\label{e:ineqPhik}
\sum_{i=1}^{k_\star - 1} \frac{f_i^2}{4\theta} < \varphi_k(f_1, \ldots, f_k) \leq \sum_{i=1}^{k_\star} \frac{f_i^2}{4 \theta}.
\end{equation}

{\it \underline{Case 2}: $f_1+\dots+f_k< 2\theta$.}
Then the unique maximizer is given by $w_i = f_i/2\theta$, $1\leq i \leq k$. 
Moreover, $w_1+\dots+w_k<1$ and
\begin{equation}\label{defphilesst}
\varphi_k(f_1, \ldots, f_k)  = \sum_{i=1}^k \frac{f_i^2}{4 \theta}.
\end{equation}
\end{prop}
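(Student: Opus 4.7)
My plan is to solve the constrained quadratic maximization \eqref{eqn:varphi_k_def} directly by the Karush--Kuhn--Tucker conditions. Since $\theta>0$, the objective is strictly concave in $(w_1,\dots,w_k)$, and the feasible set $\{(w_1,\dots,w_k)\in[0,\infty)^k\colon \sum_i w_i\leq 1\}$ is compact and convex, so a unique maximizer $(w_1^*,\dots,w_k^*)$ exists. Because the objective is symmetric in the pairs $(f_i,w_i)$, the assumption $f_1\geq\dots\geq f_k$ is without loss of generality, and the two announced cases split according to whether the sum constraint is active.

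In Case 2, the interior stationarity equations give $w_i^*=f_i/(2\theta)$, which is feasible precisely because $\sum_i f_i/(2\theta)<1$; substitution yields \eqref{defphilesst}. In Case 1 this candidate is infeasible, so by strict concavity the sum constraint binds: $\sum_i w_i^*=1$. Introducing a multiplier $\lambda$, the stationarity condition becomes $w_i^*=(f_i-\lambda)_+/(2\theta)$, and the monotone ordering of the $f_i$ forces the active set $\{i\colon w_i^*>0\}$ to be an initial segment $\{1,\dots,j\}$. The binding constraint fixes $\lambda=\lambda_j:=(\sum_{i=1}^j f_i-2\theta)/j$, while complementary slackness demands
\begin{equation*}
f_{j+1}\leq \lambda_j<f_j \qquad \text{(with the convention $f_{k+1}:=-\infty$ if $j=k$).}
\end{equation*}
The first inequality rewrites as $j f_{j+1}\leq \sum_{i=1}^j f_i-2\theta$, and for $j>1$ the second is exactly the failure of the same inequality at index $j-1$, so $j$ must coincide with $K_\star(f_1,\dots,f_k)$ as in \eqref{e:defkstar}; one also checks $\lambda_{k_\star}\geq 0$ from $\sum_{i=1}^{k_\star} f_i\geq 2\theta$, which in turn follows since the full-support candidate with $\lambda<0$ would contradict Case~1. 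Plugging $w_i^*=(f_i-\lambda_{k_\star})/(2\theta)$ back into the objective and simplifying via $\sum_{i=1}^{k_\star} w_i^*=1$ gives
\begin{equation*}
\varphi_k \;=\; \tfrac{1}{4\theta}\Bigl(\sum\nolimits_{i=1}^{k_\star} f_i^2 - k_\star \lambda_{k_\star}^2\Bigr),
\end{equation*}
which rearranges to \eqref{defphisupt} together with \eqref{e:wmax}; strict positivity of $w_i^*$ for $i\leq k_\star$ follows from $\lambda_{k_\star}<f_{k_\star}\leq f_i$.

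The sandwich \eqref{e:ineqPhik} is then read off from the explicit formula \eqref{defphisupt}, using the sign of the quadratic correction term together with the defining inequalities for $k_\star$. The main technical point I would spend the most care on is the rigorous identification of the active set with $\{1,\dots,k_\star\}$: one must show that no other candidate initial segment produces a feasible KKT point. This reduces to a short monotonicity analysis of $j\mapsto \lambda_j$ and $j\mapsto f_{j+1}-\lambda_j$, exploiting the ordering $f_1\geq\dots\geq f_k$, which pinpoints $j=k_\star$ as the unique index at which both KKT inequalities simultaneously hold.
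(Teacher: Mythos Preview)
Your KKT approach is correct for the main claims and is essentially the paper's argument in different packaging: the paper also uses Lagrange multipliers, but organizes them through an auxiliary lemma that first treats the unconstrained optimum, then the optimum under the binding constraint $\sum_i w_i=1$, and finally shows by a direct redistribution argument that $w_k=0$ whenever $(k-1)f_k\le\sum_{i=1}^{k-1}f_i-2\theta$, applying this iteratively to peel off indices $k,k-1,\dots,k_\star+1$. Your complementary-slackness identification of the active set $\{1,\dots,k_\star\}$ is exactly the same computation, and the monotonicity you flag (once $jf_{j+1}\le\sum_{i\le j}f_i-2\theta$ holds it persists for larger $j$) is precisely what the paper uses when it asserts this inequality ``for all $j>k_\star$''.

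One caution on \eqref{e:ineqPhik}, which you say can be ``read off'' from \eqref{defphisupt}. The upper bound is indeed immediate since the subtracted term is nonnegative, but the stated lower bound would require $f_{k_\star}^2>\tfrac{1}{k_\star}\bigl(\sum_{i=1}^{k_\star}f_i-2\theta\bigr)^2$, i.e.\ $f_{k_\star}>\sqrt{k_\star}\,\lambda_{k_\star}$, and this does \emph{not} follow from the available inequality $f_{k_\star}>\lambda_{k_\star}$ when $k_\star\ge2$. For instance with $k=k_\star=2$, $\theta=1$, $f_1=f_2=10$ one computes $\varphi_2=9.5$ while $\sum_{i=1}^{k_\star-1}f_i^2/(4\theta)=25$. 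The paper's own proof does not verify this lower bound either, and \eqref{e:ineqPhik} is not invoked anywhere else in the paper, so this is an issue with the proposition as stated rather than with your strategy; just do not claim that the lower half of the sandwich follows from \eqref{defphisupt} and the defining inequalities for $k_\star$.
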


The proof of Proposition \ref{p:AnaPhik} builds on the following lemma, and is given below the proof of Lemma~\ref{l:optimizing_over_w}.

\begin{lemma}
\label{l:optimizing_over_w}
Let $k\in \N$ and $f_1\ge f_2 \ge \cdots f_k \ge 0$. 
\begin{enumerate}
\item The map 
\label{item:over_positive_w}
\begin{align}
\label{eqn:sum_w_i_f_i}
(w_1,\dots,w_k)\mapsto \sum_{i=1}^k [w_i f_i-\theta  w_i^2]
\end{align}
is maximized over $[0,\infty)^k$ precisely for $w_i = \frac{f_i}{2\theta}$. 
\item 
\label{item:over_positive_with_trivial_extra_condition}
Suppose $\sum_{i=1}^k \frac{f_i}{2\theta} \le 1$. Then \eqref{eqn:sum_w_i_f_i} is maximized over $(w_1,\dots,w_k) \in[0,\infty)^k$ under the constraint $\sum_{i=1}^k w_i \le 1$ precisely by $w_i = \frac{f_i}{2\theta}$. 
\item 
\label{item:over_all_w_i_with_sum_gamma}
Let $\gamma \in \R$. 
Then \eqref{eqn:sum_w_i_f_i} (as a function on $\R^k$) is maximized over $(w_1,\dots,w_k)$ in $ \R^k$ under the constraint $\sum_{i=1}^k w_i = \gamma$ by 
\begin{align}
\label{eqn:w_i_with_sum_equal_gamma}
w_j = \frac{f_j}{2\theta} + \frac1k \Big(\gamma - \sum_{i=1}^k \frac{f_i}{2\theta} \Big) \forqq{j\in \{1,\dots,k\}}. 
\end{align}
\item 
\label{item:positive_w_i_with_sum_gamma}
Let $\gamma \in [0,\infty)$ and suppose that 
$k f_k + 2\theta \gamma - \sum_{i=1}^k f_i \ge 0$. 
Then \eqref{eqn:sum_w_i_f_i} is maximized over $(w_1,\dots,w_k) \in[0,\infty)^k$ under the constraint $\sum_{i=1}^k w_i = \gamma$ by 
\eqref{eqn:w_i_with_sum_equal_gamma}, and we have 
\begin{align}
\label{eqn:optimal_sum_with_gamma_sum_of_w}
\sum_{i=1}^k w_i f_i-\theta  w_i^2 
& = \frac{1}{4\theta} \sum_{i=1}^k f_i^2 - \frac{\theta}{k} \Big( \gamma - \sum_{i=1}^k \frac{f_i}{2\theta} \Big)^2 . 
\end{align}
\item 
\label{item:positive_w_i_with_sum_1_by_extra_condition}
Suppose $\sum_{i=1}^k \frac{f_i}{2\theta} \ge 1 $ and  
$k f_k + 2\theta  - \sum_{i=1}^k f_i \ge 0$. 
Then \eqref{eqn:sum_w_i_f_i} is maximized over $[0,1]^k$ under the constraint $\sum_{i=1}^k w_i \le 1$ by \eqref{eqn:w_i_with_sum_equal_gamma} with $\gamma =1$. 
\item 
\label{item:zero_is_better}
Suppose $k f_k + 2\theta - \sum_{i=1}^{k} f_i \le 0$ or equivalently
\begin{align}
\label{eqn:k_is_not_active}
(k-1) f_k + 2\theta - \sum_{i=1}^{k-1} f_i \le 0. 
\end{align}
Then, if \eqref{eqn:sum_w_i_f_i} is maximized by $(w_1,\dots,w_k)\in[0,1]^k$ with $\sum_{i=1}^k w_i \le 1$, then $w_k=0$. 
\end{enumerate}
\end{lemma}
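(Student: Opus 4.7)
The proof plan rests on the fact that
$F(w) := \sum_{i=1}^k (w_i f_i - \theta w_i^2)$
is a strictly concave quadratic on $\R^k$ (Hessian $-2\theta I$), so on any closed convex feasible set a maximizer, whenever it exists, is unique. I will handle the six parts by combining unconstrained critical-point analysis, Lagrange multipliers for the equality constraints in (c)--(d), and the full KKT conditions when non-negativity becomes active in (f).

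Parts (a) and (b) are nearly immediate: setting $\partial_{w_i} F = f_i - 2\theta w_i = 0$ gives the critical point $w_i = f_i/(2\theta)$, which lies in $[0,\infty)^k$, and strict concavity makes it the unique maximizer for (a); under the hypothesis of (b) it also obeys $\sum w_i \leq 1$, so it remains optimal on the smaller set. For (c), I apply Lagrange multipliers with a single multiplier $\lambda$ on $\sum w_i = \gamma$; stationarity gives $w_j = (f_j-\lambda)/(2\theta)$ and summing determines $\lambda = \frac{1}{k}(\sum_i f_i - 2\theta\gamma)$, which rearranges to \eqref{eqn:w_i_with_sum_equal_gamma}. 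Uniqueness again comes from strict concavity on the hyperplane $\sum w_i = \gamma$.

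For (d), I will verify that the (c)-maximizer is coordinatewise non-negative under the stated hypothesis: by the ordering $f_1 \geq \cdots \geq f_k$ the smallest entry is $w_k = (kf_k + 2\theta\gamma - \sum_i f_i)/(2\theta k)$, which is $\geq 0$ exactly when the hypothesis holds; this point therefore lies in $\{w \in [0,\infty)^k : \sum w_i = \gamma\}$ and is its unique maximizer. The value formula \eqref{eqn:optimal_sum_with_gamma_sum_of_w} then follows by direct substitution: writing $w_j = f_j/(2\theta) + \delta$ with $\delta := \frac{1}{k}(\gamma - \sum_i f_i/(2\theta))$, the linear-in-$\delta$ terms cancel and one is left with $\sum_i f_i^2/(4\theta) - k\theta\delta^2$. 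For (e), the hypothesis $\sum_i f_i/(2\theta) \geq 1$ forces the sum constraint to be active at any maximizer: if $\sum w_i^* < 1$ then the argument of (a), applied to the support of $w^*$, yields $w_i^* = f_i/(2\theta)$ for every $i$, so $\sum w_i^* = \sum_i f_i/(2\theta) \geq 1$, a contradiction. Hence $\sum w_i^* = 1$ and (d) with $\gamma = 1$ applies; that $w_j \in [0,1]$ is automatic since $w_j \geq 0$ and $\sum w_j = 1$.

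The most delicate part is (f), which I will prove by KKT. Introducing multipliers $\lambda \geq 0$ for $\sum w_i \leq 1$ and $\mu_i \geq 0$ for $w_i \geq 0$, stationarity reads $f_i - 2\theta w_i^* = \lambda - \mu_i$ with complementary slackness. Suppose for contradiction that $w_k^* > 0$; then $\mu_k = 0$ gives $w_k^* = (f_k - \lambda)/(2\theta)$ and in particular $f_k > \lambda$. By the ordering, $f_i \geq f_k > \lambda$ for every $i$, so every $w_i^* > 0$ and $w_i^* = (f_i - \lambda)/(2\theta)$; summing and using $\sum w_i^* \leq 1$ yields $\lambda \geq \frac{1}{k}(\sum_i f_i - 2\theta)$. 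Combining with $f_k > \lambda$ gives $kf_k + 2\theta > \sum_i f_i$, contradicting \eqref{eqn:k_is_not_active}. Beyond the bookkeeping of the KKT conditions in (f), I do not anticipate any substantive obstacle.
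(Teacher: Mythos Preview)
Your proof is correct. Parts (a)--(d) are handled essentially as in the paper (separable concavity, Lagrange multipliers, and the check that the affine-constrained maximizer is coordinatewise nonnegative). The approaches diverge at (e) and (f). For (e), the paper simply reads off from the explicit value formula \eqref{eqn:optimal_sum_with_gamma_sum_of_w} that the optimum over $\gamma\in[0,1]$ is attained at the $\gamma$ closest to $\sum_i f_i/(2\theta)$, hence at $\gamma=1$; your contradiction argument via (a) reaches the same conclusion and is equally clean. For (f), the paper does not invoke KKT but gives a direct redistribution: starting from any feasible $\widetilde w$, it sets $w_k=0$ and $w_i=\widetilde w_i+\frac{1}{k-1}\widetilde w_k$ for $i<k$, and computes
\[
F(\widetilde w)-F(w)\le -\theta\,\widetilde w_k^{\,2},
\]
a strictly quantitative improvement. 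Your KKT argument is more systematic and perhaps more transparent conceptually; the paper's construction is elementary and additionally yields the explicit gap $\theta\widetilde w_k^{\,2}$, which is not used elsewhere but is a nice by-product.
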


\begin{proof}
\ref{item:over_positive_w} follows by the fact that $w_i \mapsto w_i f_i-\theta  w_i^2$ is concave for all $i$, so that the maximum is attained where its derivative equals zero (or at the boundary, i.e., for $w_i=0$, but this gives an outcome that is clearly less than for $w_i = \frac{f_i}{2\theta}$). 

\ref{item:over_positive_with_trivial_extra_condition} follows immediately from \ref{item:over_positive_w}. 

\ref{item:over_all_w_i_with_sum_gamma} is proved by using the Lagrange multiplier method:
Define $L : \R^{k+1} \rightarrow \R$ by 
\begin{align*}
L(w_1,\dots,w_k,\lambda) := \sum_{i=1}^k w_i f_i -\theta \sum_{i=1}^k w_i^2 - \lambda \Big(\sum_{i=1}^k w_i - \gamma\Big)  \forqq{w_1,\dots,w_k,\lambda  \in [0,\infty)}.
\end{align*}
$(w_1,\dots,w_k,\lambda)$ is the extremal point for $L$ if $\nabla L(w_1,\dots,w_k,\lambda)=0$, which is the case if 
\begin{align*}
f_i - \lambda - 2\theta w_i & = 0 \qquad \mbox{ for all } i, \qquad \mbox{ and } \qquad 
\sum_{i=1}^k w_i  = \gamma. 
\end{align*}
Combining gives $\lambda = \frac 1k \sum_{i=1}^k f_i - 2\theta \gamma$ and \eqref{eqn:w_i_with_sum_equal_gamma}. This extremal point for $L$ is the maximizer for $L$ over $\R^k$ as $L$ is concave and because $\lim_{|x| \rightarrow \infty} x f_i - \theta x^2 = - \infty$ for all $i$. 

\ref{item:positive_w_i_with_sum_gamma} follows from \ref{item:over_all_w_i_with_sum_gamma} as the condition implies  that $k f_j + 2\theta \gamma - \sum_{i=1}^k f_i \ge 0$ (remember $f_j \ge f_k$) for all $j$ and thus $w_j\ge 0$ for $w_j$ as in  \eqref{eqn:w_i_with_sum_equal_gamma}, i.e., 
\begin{align*}
w_j = \frac{f_j}{2\theta} + \frac1k \Big(\gamma - \sum_{i=1}^k \frac{f_i}{2\theta} \Big)  \ge 0. 
\end{align*}
As furthermore, 
\begin{align*}
\frac{f_j}{\theta} -  w_j = \frac{f_j}{2\theta} - \frac1k \Big( \gamma - \sum_{i=1}^k \frac{f_i}{2\theta} \Big),
\end{align*}
we have obtain \eqref{eqn:optimal_sum_with_gamma_sum_of_w} by the following equality: 
\begin{align*}
\sum_{i=1}^k w_i f_i-\theta  w_i^2 
& = \sum_{i=1}^k w_i (f_i-\theta  w_i) 
= \theta \sum_{i=1}^k \Big( \Big(\frac{f_i}{2\theta}\Big)^2 - \frac{1}{k^2} \big( \gamma - \sum_{i=1}^k \frac{f_i}{2\theta} \big)^2 \Big).
\end{align*}

\ref{item:positive_w_i_with_sum_1_by_extra_condition} follows from \ref{item:positive_w_i_with_sum_gamma} as one observes that \eqref{eqn:optimal_sum_with_gamma_sum_of_w} is maximal when $\gamma$ is closest to $\sum_{i=1}^k \frac{f_i}{2\theta}$. 

\ref{item:zero_is_better} 
Suppose $\widetilde w_1,\dots, \widetilde w_k \in [0,1]$ for $k\ge 2$ are such that $\sum_{i=1}^k \widetilde w_i \le 1$ (we may assume $k\ge 2$ as $\theta>0$ so that \eqref{eqn:k_is_not_active} cannot be satisfied for $k=1$). 
Let us define $w_1,\dots, w_k$ by $w_k=0$ and for $i\in \{1,\dots,k-1\}$
\begin{align*}
w_i := \widetilde w_i + \tfrac{1}{k-1} \widetilde w_k. 
\end{align*}
Then by writing $\gamma = \sum_{i=1}^k \widetilde w_k = \sum_{i=1}^k w_k$, we see that
\begin{align*}
\sum_{i=1}^k \widetilde w_i f_i - \theta \widetilde w_i^2  - \left( \sum_{i=1}^k  w_i f_i - \theta  w_i^2  \right) 
& = \widetilde w_k f_k - \theta \widetilde w_k^2 
+ \sum_{i=1}^{k-1} (\widetilde w_i - w_i) (f_i - \theta (\widetilde w_i + w_i ) ) \\
& = \widetilde w_k f_k - \theta \widetilde w_k^2 - \frac{\widetilde w_k }{k-1}  
\Big( \sum_{i=1}^{k-1} f_i - 2 \theta \gamma \Big) \\
& = \frac{ \widetilde w_k}{k-1} \left( (k-1) f_k - \sum_{i=1}^{k-1} f_i  +2\theta - (1-\gamma) 2\theta - (k-1)\theta \widetilde w_k \right) \\
& \le - \frac{\widetilde w_k}{k-1} \left(  (1-\gamma) 2\theta + (k-1) \theta \widetilde w_k  \right) 
\le -  \theta \widetilde w_k^2 . 
\end{align*}
This proves that the maximizer has to satisfy $w_k =0$. 
\end{proof}

\begin{proof}[Proof of Proposition \ref{p:AnaPhik}]
Case 2 follows directly from Lemma \ref{l:optimizing_over_w}~\ref{item:over_positive_with_trivial_extra_condition}. 

In Case 1, observe first that $j f_{j+1} \leq \sum_{i=1}^j f_i - 2 \theta$ for all $j > k_\star$, 
and that $f_1+\cdots+f_{k_\star} \geq 2 \theta$. 
By definition of $K_\star$ one has $(k_\star -1) f_{k_\star} > \sum_{j=1}^{k_\star -1} f_j -2 \theta$ and thus $f_{k_\star} > \frac{1}{k_\star} ( \sum_{j=1}^{k_\star} f_j - 2\theta)$ and so $w_1\ge w_2 \ge \cdots \ge  w_{k_\star}>0$. 
By Lemma \ref{l:optimizing_over_w}~\ref{item:zero_is_better} it follows that $w_i = 0$ for $i >k_\star$ and so by \ref{item:positive_w_i_with_sum_1_by_extra_condition} one completes the proof. 
\end{proof}

\subsection{Some topological properties of the variational formula}\label{sec:reformulation}

\noindent In this section, we prove that the functional $\cP \mapsto -\Psi_\cP$ introduced in Definition~\ref{def-functionals} is Gamma-continuous in the vague topology, which is the crucial property under which we can find later arguments for the existence of maximizers and continuity properties of the maximizers as a function of $\cP$. The main tool of the arguments is a characterization of the vague convergence of point measures in terms of one-by-one convergence of its points.

Let us introduce the crucial sense of convergence for variational formulas.

\begin{definition}[Gamma convergence]\label{def:gamma_convergence}
Let $X$ be a metric space. 
Let $f,f_1,f_2,\dots\colon X\rightarrow [-\infty,\infty]$.  We say that the sequence $(f_n)_{n\in\N}$ \emph{Gamma converges} to $f$, written $f_n \arrowgamman f$, if 
\begin{enumerate}[label={\normalfont(\roman*)}]
\item 
\label{item:gamma_liminf}
for all $x\in X$ and all sequences $(x_n)_{n\in\N}$ in $X$ with $x_n \rightarrow x$, 
\begin{align*}
f(x) \le \liminfn f_n(x_n), 
\end{align*}
\item 
\label{item:gamma_limsup}
for all $x\in X$ there exists a sequence $(x_n)_{n\in\N}$ in $X$ such that $x_n \rightarrow x$ and 
\begin{align*}
f(x) \ge \limsupn f_n(x_n). 
\end{align*}
\end{enumerate}
\end{definition}

\begin{remark}
\label{remark:gamma_and_lsc}
Observe that $f\arrowgamman f$ if and only if $f$ is lower semi-continuous. 
\hfill$\Diamond$ \end{remark}

We use the following statements about $\Gamma$-convergence, which are sometimes referred to as the Fundamental Theorem(s) of Gamma convergence: 

\begin{theorem}
\label{theorem:gamma_convergence}
Let $X$ be a metric space. 
Let $f,f_1,f_2,\dots\colon X\rightarrow [-\infty,\infty]$. Suppose $f_n \arrowgamman f$. 
\begin{enumerate}
\item 
\label{item:gamma_conv_on_cpt}
\ncite[Proposition 1.18]{Bra02} For each compact $K\subset X$ 
\begin{align*}
\inf_{x\in K} f(x) \le \liminfn \inf_{x\in K} f_n(x). 
\end{align*}
\item 
\label{item:gamma_conv_optimizers}
\ncite[Theorem 1.21]{Bra02} 
Suppose there exists a compact set $K\subset X$ such that $\inf_{x\in X} f_n(x) = \inf_{x\in K} f_n(x)$ for all $n\in\N$. 
Suppose that $x_1,x_2,\dots \in X$ are such that $f_n(x_n) = \inf_{x\in X} f_n(x)$ for all $n\in\N$. Then there exists a subsequence of $(x_n)_{n\in\N}$ that converges to an $y\in X$ for which $\inf_{x\in X} f(x) = f(y)$. 
\end{enumerate}
\end{theorem}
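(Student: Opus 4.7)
The plan is to derive both parts directly from Definition~\ref{def:gamma_convergence} by combining a sequential compactness extraction with the liminf inequality \ref{item:gamma_liminf} and, for part \ref{item:gamma_conv_optimizers}, the recovery-sequence property \ref{item:gamma_limsup}.

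For part \ref{item:gamma_conv_on_cpt}, I would set $\ell := \liminf_{n\to\infty}\inf_{x\in K} f_n(x)$. If $\ell=+\infty$ the inequality is vacuous, so I may assume $\ell<+\infty$ and extract a subsequence $n_k$ along which $\inf_{x\in K} f_{n_k}(x)\to \ell$. For each $k$ choose $x_{n_k}\in K$ with $f_{n_k}(x_{n_k})\le \inf_{x\in K}f_{n_k}(x)+1/k$. Compactness of $K$ yields a further subsequence (not relabelled) with $x_{n_k}\to x^\star\in K$. Applying \ref{item:gamma_liminf} along this subsequence gives $f(x^\star)\le \liminf_k f_{n_k}(x_{n_k})=\ell$, hence $\inf_{x\in K}f(x)\le f(x^\star)\le \ell$.

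For part \ref{item:gamma_conv_optimizers}, the hypothesis $\inf_X f_n=\inf_K f_n$ together with $f_n(x_n)=\inf_X f_n$ allows me to replace $x_n$ by an approximating point $\tilde x_n\in K$ with $f_n(\tilde x_n)\le f_n(x_n)+1/n$, taking $\tilde x_n=x_n$ whenever $x_n\in K$. Compactness of $K$ then produces a subsequence $\tilde x_{n_k}\to y\in K$, and \ref{item:gamma_liminf} yields $f(y)\le \liminf_k f_{n_k}(\tilde x_{n_k})=\liminf_k f_{n_k}(x_{n_k})$. To see that $y$ is a global minimizer, fix an arbitrary $z\in X$ and invoke \ref{item:gamma_limsup} to obtain a recovery sequence $z_n\to z$ with $\limsup_n f_n(z_n)\le f(z)$; since $x_n$ is a minimizer of $f_n$, $f_n(x_n)\le f_n(z_n)$, so $\liminf_n f_n(x_n)\le f(z)$. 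Taking the infimum over $z\in X$ gives $\liminf_n f_n(x_n)\le \inf_X f \le f(y)$, whence equality holds throughout and $f(y)=\inf_X f$.

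The only real subtlety is cosmetic: the minimizing sequence $(x_n)$ lives a priori in $X$, not in the compact set $K$, so sequential compactness cannot be invoked on it literally. The approximation step above circumvents this using exactly the hypothesis $\inf_X f_n=\inf_K f_n$, and the argument otherwise uses only the metric structure of $X$ (sequential compactness of $K$ and the definition of $\Gamma$-convergence), so no additional complication arises in the generality required for our applications to $\cW$ with the vague topology.
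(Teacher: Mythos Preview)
The paper does not supply its own proof of this theorem; it simply cites \cite[Proposition~1.18 and Theorem~1.21]{Bra02}. Your argument is essentially the standard one found there, so there is no meaningful comparison of approaches to make.

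Two small points on the write-up of \ref{item:gamma_conv_optimizers}. First, the final chain does not quite close as written: you show $f(y)\le \liminf_k f_{n_k}(x_{n_k})$ along the extracted subsequence, but then bound $\liminf_n f_n(x_n)$ over the \emph{full} sequence by $\inf_X f$. Since $\liminf_k f_{n_k}(x_{n_k})\ge \liminf_n f_n(x_n)$ in general, these two estimates do not combine. The fix is immediate: the recovery-sequence comparison $f_n(x_n)\le f_n(z_n)$ actually gives $\limsup_n f_n(x_n)\le f(z)$, hence $\liminf_k f_{n_k}(x_{n_k})\le \limsup_n f_n(x_n)\le \inf_X f$, and the chain closes.

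Second, your workaround with $\tilde x_n\in K$ yields a convergent subsequence of $(\tilde x_n)$, not of $(x_n)$ itself, so it does not literally establish the statement as phrased (indeed, the statement as phrased is not quite right without assuming the minimizers lie in $K$; take $X=\R$, $K=[0,1]$, $f_n\equiv 0$, $x_n=n$). You are right, however, that this is irrelevant for the paper's applications: whenever the result is invoked, the ambient space is $\cW$, which is compact, so one may take $K=X$ and $\tilde x_n=x_n$.
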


The main result of Section~\ref{sec:reformulation} is the following proposition. Part \ref{item:compactness} will allow us to restrict the search for a maximizer $\mu^*$ of $\Psi_\cP$ to those $\mu$ whose $\R^d$-support is within some box in $\R^d$. Recall that $\cW$ is the set of subprobability measures on $(0,\infty)\times \R^d$  with total mass $\leq 1$, and $Q_R=[-R,R]^d$. 
Furthermore, we introduce
\begin{align}\label{eqn:def_cW_R}
\cW_R 
& = \{ \mu \in \cW \colon \supp \mu \subset  (0,\infty) \times Q_R  \},\qquad R>0. 
\end{align}

\begin{proposition}
\label{p:gamma_convergence_Psi}
Let $\cP, \cP_1,\cP_2,\cP_3,\dots$ be in $\cM^\circ_\rp$ such that $\cP_n \to \cP$ in $\cM_\rp$. Then 
\begin{enumerate}
\item {\em Compactness}
\label{item:compactness}
\begin{equation*}
\lim_{R \to \infty}  \sup_{\mu \in \cW \setminus \cW_R}  \sup_{n\in \N} \Psi_{\cP_n} (\mu)  = - \infty. 
\end{equation*}
\item {\em Gamma convergence of $-\Psi$}
\label{item:gamma_convergence_Psi}
\begin{align*}
-\Psi_{\cP_n} \arrowgamman -\Psi_{\cP}.
\end{align*}
\end{enumerate}
\end{proposition}

The proof of this proposition is at the end of this section. We prepare for the proof by citing a well-known result from point-process theory about a characterization of vague convergence by point-wise convergence.
 For $\cP \in \cM_\rp$ and $L>0$, recalling that $Q_L=[-L,L]^d$, we denote by $\cP^{\ssup{L}}$ the point measure $\1_{[L^{-1},\infty) \times Q_L} \cP$, which means $\frac{\dd \cP^{\ssup{L}}}{\dd \cP} =  \1_{[L^{-1},\infty) \times Q_L} $, i.e., 
\begin{align}
\label{eqn:ssup_L}
\cP^{\ssup L}(A)=\cP \Big(A \cap \big[ [L^{-1},\infty)\times Q_L \big] \Big)
\end{align}
for any Borel measurable $A\subset \fE$. 
For $\mu \in \cW$ we also write $\mu^{\ssup{L}} = \1_{[L^{-1},\infty) \times Q_L} \mu$. 
 
 Observe that as $[L^{-1},\infty) \times Q_L \subset \cH_h^s $ for some $h,s>0$ (e.g. $s= \frac14$ and $h= \frac{L}{2}$), and $\cH_h^s$ is relatively compact in $\fE$, $\cP([L^{-1},\infty) \times Q_L) \in \N_0$ for all $\cP \in \cM_\rp$. 

\begin{lemma}
\label{l:convergence_on_compacts}
Let $\cP,\cP_1,\cP_2,\dots \in \cM_\rp$ and $L>0$ be such that $\cP_n \rightarrow \cP$ in $\cM_\rp$ and $\cP(\partial([L^{-1},\infty)\times Q_L))=0$. 
\begin{enumerate}
\item Put $k = \cP([L^{-1}, \infty) \times Q_L)  \in \N_0$. Then there exist $(f_i,y_i), (f_i^n,y_i^n) \in [L^{-1},\infty)\times Q_L$,  for $n\in \N$ and $i\in\{1,\dots,k\}$ 
such that, for all large enough $n\in\N$ (with empty sums interpreted as zero),
\begin{align*}
& \cP_n^{\ssup L} = \sum_{i=1}^k \delta_{(f_i^n,y_i^n)}, \quad 
\cP^{\ssup L} = \sum_{i=1}^k \delta_{(f_i,y_i)}, \quad 
(f_i^n, y_i^n) \arrown (f_i,y_i), \quad i\in \{1,\dots,k\}. 
\end{align*}

\item 
\label{item:conv_measures_LL_pointm_on_compacts}
Suppose $\mu,\mu_1,\mu_2,\dots$ are in  $\cW$ such that $\mu_n \rightarrow \mu$ in $\cW$ and $\mu_n \ll \cP_n$ for all $n\in\N$. 
Then $\mu \ll \cP$ and, with $k$, $(f_i,y_i),(f_i^n,y_i^n)$ as above, there exist
$(w_1^n, \dots, w_k^n),(w_1, \dots, w_k) \in [0,1]^k$ such that, 
for all large enough $n$,
\begin{align*}
& \mu_n^{\ssup L} = \sum_{i=1}^k w_i^n \delta_{(f_i^n,y_i^n)}, \quad 
\mu^{\ssup L} = \sum_{i=1}^k w_i \delta_{(f_i,y_i)}, \quad w_i^n \arrown w_i, \qquad i\in \{1,\dots,k\}.
\end{align*}
\end{enumerate}
\end{lemma}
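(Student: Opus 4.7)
The set $[L^{-1},\infty)\times Q_L$ is relatively compact in $\fE$ by Lemma~\ref{l:about_cM_p}~\ref{item:compactness_in_fE_wrt_cH} (it sits inside some cone $\cH_{h}^{s}$), and the assumption $\cP(\partial([L^{-1},\infty)\times Q_L))=0$ makes it a continuity set of $\cP$. My first step is to apply the Portmanteau direction of vague convergence recalled in Remark~\ref{remark:vague_convergence} to obtain $\cP_n([L^{-1},\infty)\times Q_L) \arrown k$. Since both sides are $\N_0$-valued, $\cP_n^{\ssup{L}}$ has exactly $k$ atoms for all large $n$. I would then enumerate the atoms of $\cP^{\ssup{L}}$ as $(f_1,y_1),\dots,(f_k,y_k)$ and, around each, pick a sequence of shrinking open balls $B_i^{(\varepsilon)}\ni (f_i,y_i)$ lying strictly inside $[L^{-1},\infty)\times Q_L$, pairwise disjoint and continuity sets of $\cP$. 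Applying Portmanteau to each $B_i^{(\varepsilon)}$ forces exactly one atom $(f_i^n,y_i^n) \in B_i^{(\varepsilon)}$ for large $n$, and a diagonal argument in $\varepsilon$ then yields $(f_i^n,y_i^n)\to (f_i,y_i)$.

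\textbf{Plan for part (b).} Since $\mu_n \ll \cP_n$, the atoms of $\mu_n$ lie among those of $\cP_n$; setting $w_i^n := \mu_n(\{(f_i^n,y_i^n)\}) \in [0,1]$ therefore gives $\mu_n^{\ssup{L}} = \sum_{i=1}^k w_i^n \delta_{(f_i^n,y_i^n)}$ for all large $n$. I would fix continuous test functions $\phi_i\colon\fE\to[0,1]$ with compact support in tiny disjoint neighborhoods of the $(f_i,y_i)$, all lying in the interior of $[L^{-1},\infty)\times Q_L$ and away from the remaining atoms of $\cP$, with $\phi_i \equiv 1$ on a smaller neighborhood of $(f_i,y_i)$. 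Part~(a) ensures that eventually the unique atom of $\cP_n^{\ssup{L}}$ inside $\supp\phi_i$ is $(f_i^n,y_i^n)$, at which $\phi_i$ equals $1$, whence $\int \phi_i\,\dd\mu_n = w_i^n$. Vague convergence then gives $w_i^n \to w_i := \int \phi_i\,\dd\mu$, and shrinking the $\phi_i$ identifies $w_i = \mu(\{(f_i,y_i)\})$. By uniqueness of vague limits, $\mu^{\ssup{L}} = \sum_{i=1}^k w_i \delta_{(f_i,y_i)}$, which is absolutely continuous with respect to $\cP^{\ssup{L}}$. Exhausting $(0,\infty)\times\R^d$ by sets of the form $(M^{-1},\infty)\times(-M,M)^d$ as $M\to\infty$ along values of $M$ for which the boundary is a $\cP$-null set (all but countably many) upgrades this to $\mu \ll \cP$ globally.

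\textbf{Main obstacle.} The delicate point is preventing atoms of $\cP_n$ from drifting across the boundary of $[L^{-1},\infty)\times Q_L$ into the support of a test function $\phi_i$, since vague convergence in $\fE$ does not by itself forbid atoms from crossing boundaries of Borel sets; one must insist that $\supp\phi_i$ sits strictly in the interior and invoke Portmanteau on a slight continuity-set fattening of $B_i^{(\varepsilon)}$ to rule out spurious contributions. A secondary care is matching the enumerations $(f_i^n,y_i^n)$ to $(f_i,y_i)$ consistently in $n$: because the $k$ limit points are distinct, taking the balls $B_i^{(\varepsilon)}$ pairwise disjoint of small enough radius ensures that each $B_i^{(\varepsilon)}$ eventually captures exactly one of the $k$ atoms of $\cP_n^{\ssup{L}}$, fixing a canonical labelling.
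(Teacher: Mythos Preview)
Your proposal is correct and follows the same underlying idea as the paper; the only difference is that the paper simply invokes \cite[Proposition~3.13]{Re87} (which is precisely the statement that vague convergence of point measures, restricted to a relatively compact continuity set, yields a consistent enumeration of atoms converging one by one), whereas you reprove that proposition by hand via Portmanteau and shrinking disjoint balls. One minor caveat: your labelling argument in the ``secondary care'' paragraph assumes the $k$ limit atoms $(f_i,y_i)$ are distinct, but point measures in $\cM_\rp$ may have multiplicities; the fix is easy (group coincident atoms into a single ball carrying the correct multiplicity), and in any case the point processes $\Pi$ and $\Pi_t$ relevant to the paper are almost surely simple.
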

\begin{proof}
For the first statement, note that $[L^{-1}, \infty) \times Q_L$ is a relatively compact subset of $\mathfrak{E}$
and apply \cite[Proposition~3.13]{Re87}. 
\begin{calc}
(See also Theorem~\ref{theorem:resnick_on_compact}.)
\end{calc}
The second statement is a straightforward consequence of the first.
\begin{calc}
That $\mu \ll \cP$ follows by the fact that from the convergences one obtains $\mu^{\ssup{L}} \ll \cP^{\ssup{L}}$ for all $L>0$: Let $f_L$ be a density function which equals zero outside $[L,\infty) \times Q_L$. Then $\mu = \limL \mu^{\ssup{L}} = \limL f_L \cP^{\ssup{L}}= \limL f_L \cP$. 
Hence $f= \limL f_L = \sup_{L\in\N} f_L$ is the density for $\mu$ with respect to $\cP$. 
\end{calc}
\end{proof}

Here is the main step in the proof of Proposition~\ref{p:gamma_convergence_Psi}.

\begin{lemma}
\label{l:upper_semicontinuity_for_parts_of_tilde_Psi}
Let $\cP , \cP_1,\cP_2,\dots \in \cM_\rp^\circ$ be such that $\cP_n \to \cP$ in $\cM_\rp$, and let $\mu,\mu_1,\mu_2,\dots \in \cW$ be such that $\mu_n \to \mu$ in $\cW$.
Then
\begin{enumerate}
\item 
\label{item:cD_cP_gamma_conv}
$\cD_{\cP}(\mu) \leq \liminf_{n \to \infty} \cD_{\cP_n}(\mu_n)$.
\item 
\label{item:conv_tilde_Phi}
If $\mu_n \ll \cP_n$ and there exists a $R>0$ such that  $\mu_n \in \cW_R$ for all $n \in \N$, then
\begin{align*}
\Phi_{\cP}(\mu) \geq \limsup_{n \to \infty} \Phi_{\cP_n}(\mu_n). 
\end{align*}
\end{enumerate}
\end{lemma}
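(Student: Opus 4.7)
The strategy is to use Lemma~\ref{l:convergence_on_compacts}, which on any relatively compact slab $S_L:=[L^{-1},\infty)\times Q_L$ with $\cP$-null boundary (possible since the atoms of $\cP$ form a countable set) reduces vague convergence $\cP_n\to\cP$ and $\mu_n\to\mu$ to atom-by-atom convergence of a common finite list $(f_i^n,y_i^n)\to(f_i,y_i)$ with $\mu_n$-weights $w_i^n\to w_i$. Everything outside such a slab will be controlled by the subprobability bound $\mu_n((0,\infty)\times\R^d)\le 1$ and, for part~\ref{item:conv_tilde_Phi}, by the uniform finiteness $\sup_n M_R(\cP_n)<\infty$ from Lemma~\ref{l:basicgoodprop}.

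For part~\ref{item:cD_cP_gamma_conv}, passing to the liminf-achieving subsequence, we may assume the liminf is finite and thus $\mu_n\ll\cP_n$ for all large $n$. Applying Lemma~\ref{l:convergence_on_compacts}\ref{item:conv_measures_LL_pointm_on_compacts} on an increasing family of slabs $S_{L_j}$ and using $\mu^{\ssup{L_j}}\uparrow\mu$ and $\cP^{\ssup{L_j}}\uparrow\cP$ yields $\mu\ll\cP$. Now fix any finite $Y=\{y_{i_1},\dots,y_{i_m}\}\subset\supp_{\R^d}\mu$ and choose $L$ large enough that each of the corresponding atoms $(f_{i_j},y_{i_j})\in\supp\mu$ lies in the interior of $S_L$. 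Lemma~\ref{l:convergence_on_compacts}\ref{item:conv_measures_LL_pointm_on_compacts} then provides atoms of $\mu_n$ with spatial parts $y_{i_j}^n\to y_{i_j}$ and weights $w_{i_j}^n\to w_{i_j}>0$, so $Y_n:=\{y_{i_j}^n\}_j\subset\supp_{\R^d}\mu_n$ for all large $n$. Since $D_0$ is a minimum over finitely many sums of $\ell^1$-distances it is continuous in its coordinate arguments, giving $D_0(Y_n)\to D_0(Y)$. Combined with the bound $D_0(Y_n)\le\cD_{\cP_n}(\mu_n)$ and the supremum over $Y$, this proves $\cD_\cP(\mu)\le\liminf_n\cD_{\cP_n}(\mu_n)$.

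For part~\ref{item:conv_tilde_Phi}, first observe that $\cW_R$ is closed under vague convergence (testing against compactly supported functions off $Q_R$), so $\mu\in\cW_R$; Lemma~\ref{l:convergence_on_compacts} again yields $\mu\ll\cP$, and we set $w_n:=\dd\mu_n/\dd\cP_n$, $w:=\dd\mu/\dd\cP$. For $L\ge R$ with $\cP(\partial S_L)=0$, split
\begin{equation*}
\Phi_{\cP_n}(\mu_n)=\int_{(0,L^{-1})\times Q_R}\!\bigl[fw_n-\theta w_n^2\bigr]\dd\cP_n\;+\;\int_{S_L}\bigl[fw_n-\theta w_n^2\bigr]\dd\cP_n.
\end{equation*}
The first summand is at most $\int fw_n\,\dd\cP_n=\int f\,\dd\mu_n\le L^{-1}\mu_n((0,\infty)\times\R^d)\le L^{-1}$, using $-\theta w_n^2\le 0$ and $f<L^{-1}$. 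The second is a finite sum over the atoms furnished by Lemma~\ref{l:convergence_on_compacts} and hence converges to $\int_{S_L}[fw-\theta w^2]\,\dd\cP$. Thus $\limsup_n\Phi_{\cP_n}(\mu_n)\le L^{-1}+\int_{S_L}[fw-\theta w^2]\,\dd\cP$. Letting $L\to\infty$ along a sequence avoiding boundary atoms, monotone convergence gives $\int_{S_L}fw\,\dd\cP\uparrow\int fw\,\dd\cP=\int f\,\dd\mu\le M_R(\cP)<\infty$ and $\int_{S_L}\theta w^2\,\dd\cP\uparrow\int\theta w^2\,\dd\cP\in[0,\infty]$, so $\int_{S_L}[fw-\theta w^2]\,\dd\cP\to\Phi_\cP(\mu)\in[-\infty,\infty)$, completing the upper bound.

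The main obstacle is that $\cP_n$ may carry infinitely many atoms in $(0,\infty)\times Q_R$ accumulating near $\{f=0\}$, ruling out a direct atom-by-atom passage to the limit; the splitting at level $L^{-1}$ combined with the subprobability normalization $\mu_n\le 1$ is precisely what makes the tail uniformly negligible, and the uniform bound on $M_R(\cP_n)$ from Lemma~\ref{l:basicgoodprop} is what keeps $\int f\,\dd\mu$ finite so that the $L\to\infty$ limit actually recovers $\Phi_\cP(\mu)$. The analogous difficulty in part~\ref{item:cD_cP_gamma_conv}---that new spatial atoms of $\mu$ could emerge only in the limit---is circumvented by the fact that $\cD_\cP$ is taken as a supremum over \emph{finite} subsets of $\supp_{\R^d}\mu$, each of which lies in some $S_L$ and is therefore accessible to Lemma~\ref{l:convergence_on_compacts}.
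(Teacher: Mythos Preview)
Your proof is correct and follows essentially the same approach as the paper: both parts reduce to the atom-by-atom convergence in Lemma~\ref{l:convergence_on_compacts} on slabs $S_L=[L^{-1},\infty)\times Q_L$, with the tail $(0,L^{-1})\times Q_R$ controlled via the subprobability bound to give a uniform $L^{-1}$ error. The only cosmetic difference is that in part~\ref{item:conv_tilde_Phi} you pass to the limit $L\to\infty$ via monotone convergence on the two pieces $\int_{S_L}fw\,\dd\cP$ and $\int_{S_L}\theta w^2\,\dd\cP$ separately, whereas the paper bounds $|\Phi_\cP(\mu)-\Phi_\cP(\mu^{\ssup L})|$ directly by choosing $L$ with $\mu((0,L^{-1})\times Q_L)<\varepsilon/\theta$; both arguments are valid and equally short.
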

\begin{proof}
\ref{item:cD_cP_gamma_conv}
If $\mu \not \ll \cP$, then by Lemma~\ref{l:convergence_on_compacts} 
there exists an $N\in \N$ such that $\mu_n \not \ll \cP_n$ for all $n\ge N$,
and the conclusion trivially holds. 
Therefore, we may assume $\mu \ll \cP$ and $\mu_n \ll \cP_n$ for all $n\in\N$. 
Moreover, we may assume that $\mu \ne 0$. 
Let $L>0$ be such that $\cP$ has zero measure on the boundary of $[L^{-1}, \infty) \times Q_L$ and $\cD_\cP(\mu^{\ssup L}) > 0$, which implies $\mu^{\ssup L} \neq 0$. 
Let $k$, $f_i^n,y_i^n,w_i^n$, $f_i,y_i,w_i$ be as in Lemma~\ref{l:convergence_on_compacts}, and note that $k\ge 1$. 
Let $i_1,\dots,i_m \in \{1,\dots,k\}$, $m\in\N$, be the distinct indices such that $w_{i_j} > 0$, $j\in \{1,\dots,m\}$, and $w_\ell=0$ otherwise.
We may assume that, for all $i \leq k$ and all $n$ large enough, $w_i>0$ implies $w_i^n>0$. 
Then
\begin{align*}
\cD_{\cP_n}(\mu_n) 
\ge \cD_{\cP_n}(\mu_n^{\ssup L})
\ge D_0(y_{i_{1}}^n,\dots,y_{i_{m}}^n) \arrown D_0(y_{i_{1}},\dots,y_{i_{m}}) = \cD_\cP(\mu^{\ssup L}). 
\end{align*}
Therefore, for any $L>0$, $\liminfn \cD_{\cP_n} (\mu_n) \ge \cD_\cP (\mu^{\ssup L})$. 
Since $\cD_{\cP}(\mu) = \sup_{L>0} \cD_{\cP}(\mu^{\ssup L})$, the claim follows. 

\ref{item:conv_tilde_Phi} 
Let $R>0$, $\mu_n \in \cW_R$, $\mu_n \ll \cP_n$ for all $n\in\N$ and $\mu_n \rightarrow \mu$ in $\cW$. 
Note that this implies $\mu \in \cW_R$ as well and, by Lemma~\ref{l:convergence_on_compacts}~\ref{item:conv_measures_LL_pointm_on_compacts}, $\mu \ll \cP$.
Let $\epsilon>0$. Let us first show that for large $L>0$ 
\begin{align}
\label{eqn:smallness_difference_box_uniform}
\Phi_{\cP_n}(\mu_n) < \Phi_{\cP_n}(\mu_n^{\ssup L}) + \epsilon \ \mbox{ for all } n\in\N \qquad \text{ and } \qquad
|\Phi_\cP(\mu) - \Phi_\cP(\mu^{\ssup L})| < \epsilon. 
\end{align}
Indeed, take $L>R$ such that $L^{-1} < \epsilon$,
$\mu((0,L^{-1}) \times Q_L) < \epsilon/\theta$ and $\cP(\partial ( [L^{-1}, \infty) \times Q_L)) = 0$.
Then
\begin{align*}
\Phi_{\cP_n}(\mu_n) - \Phi_{\cP_n}(\mu_n^{\ssup L}) = \int_{(0,L^{-1}) \times Q_L} \left[ f - \theta \frac{\dd \mu_n} {\dd \cP_n}(f,y) \right]\dd \mu_n (f,y) \leq L^{-1} < \varepsilon,
\end{align*}
and the same inequality is valid with $\cP_n$, $\mu_n$ replaced by $\cP$, $\mu$, for which also
\begin{align*}
\Phi_{\cP}(\mu) - \Phi_{\cP}(\mu^{\ssup L}) = \int_{(0,L^{-1}) \times Q_L} \left[ f - \theta \frac{\dd \mu}{\dd \cP}(f,y) \right] \dd \mu(f,y) \geq - \theta \mu((0,L^{-1}) \times Q_L) > - \epsilon,
\end{align*}
where we used that $\frac{\dd \mu }{\dd \cP} \leq 1$. This concludes \eqref{eqn:smallness_difference_box_uniform}.
Now it is enough to show that $\Phi_{\cP_n}(\mu_n^{\ssup L}) \to \Phi_{\cP}(\mu^{\ssup L})$, 
but this follows by Lemma~\ref{l:convergence_on_compacts}: with $k$,$f_i,f_i^n,y_i,y_i^n,w_i,w_i^n$ as therein and $n$ large enough,
\begin{align*}
\Phi_{\cP_n}(\mu_n^{\ssup L}) = 
\sum_{i=1}^k \Big( w_i^n f_i^n - \theta (w_i^n)^2 \Big)
\rightarrow 
\sum_{i=1}^k \Big( w_i f_i - \theta w_i^2  \Big)
=
 \Phi_{\cP}(\mu^{\ssup L}). 
\end{align*} 
\end{proof}

As a by-product of the proof, we obtained: 

\begin{lemma}
\label{l:continuity_Psi_in_ssup_L}
Let $\cP \in \cM_\rp^\circ$ and $\mu \in \cW_R$ for some $R\in (0,\infty)$. 
Then 
\begin{align*}
\Phi_\cP(\mu^{\ssup L}) \arrowL \Phi_\cP(\mu),
\qquad 
 \cD_\cP(\mu^{\ssup L}) \arrowL \cD_\cP(\mu), 
\qquad 
\Psi_\cP(\mu^{\ssup{L}}) \xrightarrow{L \rightarrow \infty} \Psi_\cP(\mu). 
\end{align*}
\end{lemma}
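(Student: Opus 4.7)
The plan is to prove the three convergences in the stated order; the third follows by combining the first two.

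For $\Phi_\cP(\mu^{\ssup L}) \to \Phi_\cP(\mu)$, I first dispose of the case $\mu \not\ll \cP$. Picking a Borel set $A$ with $\cP(A)=0$ and $\mu(A)>0$, monotone convergence gives $\mu^{\ssup L}(A) = \mu(A \cap [L^{-1},\infty)\times Q_L) \uparrow \mu(A)$, so $\mu^{\ssup L} \not\ll \cP$ for all $L$ large enough and both sides equal $-\infty$. In the absolutely continuous case, write $w = \frac{\dd\mu}{\dd\cP}$; since $\cP$ is a sum of unit atoms and $\mu$ is a subprobability measure dominated by $\cP$, one has $w\le 1$ on $\supp \cP$. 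The density $w^L = w\,\mathbf{1}_{[L^{-1},\infty)\times Q_L}$ of $\mu^{\ssup L}$ converges pointwise to $w$ on $(0,\infty)\times \R^d$. Splitting $\Phi_\cP(\mu^{\ssup L}) = \int fw^L\,\dd\cP - \theta\int (w^L)^2\,\dd\cP$, dominated convergence applies to each piece with bounds $fw^L\mathbf{1}_{\supp\mu}\le M_R(\cP)\,w$ (using $\mu\in\cW_R$ and $M_R(\cP)<\infty$ from Lemma~\ref{l:basicgoodprop}) and $(w^L)^2\le w^2\le w$, both $\cP$-integrable since $\int w\,\dd \cP=\mu((0,\infty)\times\R^d)\le 1$.

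For $\cD_\cP(\mu^{\ssup L}) \to \cD_\cP(\mu)$, the non-absolutely-continuous case is dealt with as above: both sides equal $+\infty$ for $L$ large. Assume now $\mu\ll\cP$, whence $\mu^{\ssup L}\ll\cP$ and $\supp_{\R^d}\mu^{\ssup L}\subset\supp_{\R^d}\mu$, giving the monotone bound $\cD_\cP(\mu^{\ssup L})\le \cD_\cP(\mu)$. For the matching $\liminf$, fix $M<\cD_\cP(\mu)$ and choose a finite set $Y\subset \supp_{\R^d}\mu$ with $D_0(Y)>M$. For each $y\in Y$, because $\supp\mu\subset (0,\infty)\times\R^d$, there exists $f_y>0$ with $(f_y,y)\in\supp\mu$. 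Then for $L>\max\{R,\,\max_{y\in Y} f_y^{-1}\}$ the point $(f_y,y)$ lies in the interior of $[L^{-1},\infty)\times Q_L$, so any sufficiently small open neighbourhood $U$ satisfies $\mu^{\ssup L}(U)=\mu(U)>0$; hence $(f_y,y)\in \supp\mu^{\ssup L}$ and $Y\subset \supp_{\R^d}\mu^{\ssup L}$. Thus $\cD_\cP(\mu^{\ssup L})\ge D_0(Y)>M$, and taking $M\uparrow \cD_\cP(\mu)$ (including $M\to\infty$) concludes this step.

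Finally, $\Psi_\cP(\mu^{\ssup L})\to\Psi_\cP(\mu)$ follows by cases: by the first step, either $\Phi_\cP(\mu)$ and $\Phi_\cP(\mu^{\ssup L})$ are both finite for large $L$, in which case $\Psi_\cP=\Phi_\cP-q\cD_\cP$ and the limit is given by continuity of $(a,b)\mapsto a-qb$ on $\R\times[0,\infty]$ combined with the first two steps; or they are both $-\infty$, in which case $\Psi_\cP=-\infty$ on both sides. The only genuinely delicate step is the $\liminf$ bound for $\cD_\cP$, where one must ensure that the cutoff $\mathbf{1}_{[L^{-1},\infty)}$ does not shrink $\supp_{\R^d}\mu$; this is precisely what the strict positivity of the lifts $f_y$ provides. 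The remaining steps are routine applications of dominated and monotone convergence.
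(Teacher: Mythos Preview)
Your proof is correct. The paper's proof is a one-liner that invokes the definition of $\cD_\cP$ (as a supremum over finite subsets, giving $\cD_\cP(\mu)=\sup_{L>0}\cD_\cP(\mu^{\ssup L})$ directly) and the estimate \eqref{eqn:smallness_difference_box_uniform} already derived in the proof of Lemma~\ref{l:upper_semicontinuity_for_parts_of_tilde_Psi}, namely $|\Phi_\cP(\mu)-\Phi_\cP(\mu^{\ssup L})|<\varepsilon$ for $L$ large, obtained by writing the difference as $\int_{(0,L^{-1})\times Q_L}[f-\theta\tfrac{\dd\mu}{\dd\cP}]\,\dd\mu$ and bounding each term.

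Your route differs in two respects. For $\Phi_\cP$ you give a self-contained dominated-convergence argument (with the bound $fw\le M_R(\cP)\,w$ via Lemma~\ref{l:basicgoodprop}) rather than recycling the paper's difference estimate; this is equally short and avoids the dependence on the earlier proof. For $\cD_\cP$ your argument is essentially the same as the paper's, though you spell out the $\liminf$ direction more carefully. You also treat the case $\mu\not\ll\cP$ explicitly, which the paper's terse proof leaves to the reader; this makes your argument more complete at no real cost.
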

\begin{proof}
This follows by definition of $\cD_\cP$ and by \eqref{eqn:smallness_difference_box_uniform}. 
\end{proof}

\begin{proof}[Proof of Proposition~\ref{p:gamma_convergence_Psi}]
\ref{item:compactness}
It suffices to show that given any $A>0$, there exists an $R_0>0$ such that,  
\begin{align*}
\Psi_{\cP_n}(\mu) \le -  A  \qquad  \mbox{ for all $R \ge R_0$, $\mu \in \cW  \setminus \cW_R$ and $n\in\N$}. 
\end{align*}
Recall the definition of $M_R$ from \eqref{e:defMr} and recall that $q = \frac{d}{\alpha -d} >0$.
By Lemma~\ref{l:basicgoodprop}, there exists an $R_0 >0$ such that
\begin{align}
\label{eqn:conditions_R_0_in_terms_of_max}
\max\{A, \sup_{n \in \N }M_R(\cP_n) \} \le \tfrac12 q (R-1) \qquad \mbox{ for all } R \ge R_0. 
\end{align}
Let $R\ge R_0$, $\mu \in \cW \setminus \cW_R$ and $n\in\N$. 
We decompose $(0,\infty)\times \R^d$ into
\begin{align*}
S_R^{\ssup 0}:=(0,\infty) \times Q_R, \qquad S_{R}^{\ssup k}:= (0,\infty)\times \big[Q_{R+k} \setminus Q_{R+k-1}\big] \mbox{ for } k\in\N. 
\end{align*}
Observe that $\zeta:=\mu((0,\infty) \times \R^d)$ is in $(0,1]$.
Write $\zeta = \sum_{k\in\N_0} \zeta_k$ with $\zeta_k := \mu( S_R^{\ssup k})$ for $k\in\N_0$.
Note that $\zeta_k>0$ implies $\cD_{\cP_n}(\mu) \ge R+k-1 $, so that $\zeta_k \cD_{\cP_n}(\mu) \ge \zeta_k (R+k-1)$ for $k\in\N_0$.
Since $\Psi_{\cP_n}(\mu) = -\infty$ if $\mu \not \ll \cP_n$, we may and do assume $\mu \ll \cP_n$. 
Hence we have the lower bound $\zeta \cD_{\cP_n}(\mu)  = \sum_{k\in\N_0} \zeta_k \cD_{\cP_n}(\mu) 
\ge \sum_{k\in\N_0}  \zeta_k (R+k-1)$. Furthermore, we have the upper bound
$$
\Phi_{\cP_n}(\mu)  \le \sum_{k\in\N_0} \int_{S_R^{\ssup k}} f \,{\rm d}\mu(f,y) 
\le \sum_{k\in\N_0} \zeta_k M_{R+k}(\cP_n).
$$
Together with \eqref{eqn:conditions_R_0_in_terms_of_max} and $D_{\cP_n}(\mu) \geq R \geq R_0$, this gives 
\begin{align*}
\Psi_{\cP_n}(\mu)
& = \Phi_{\cP_n}(\mu) -  q \cD_{\cP_n}(\mu)
\leq \sum_{k\in\N_0} \zeta_k \left[ \tfrac12 q (R+k-1) - q (R+k-1) \right] - q(1-\zeta)R \\
& \leq \sum_{k\in\N_0} \zeta_k (-A) - (1-\zeta)A 
= - A. 
\end{align*}

\ref{item:gamma_convergence_Psi}
Let us first show \ref{item:gamma_liminf} of Definition~\ref{def:gamma_convergence}. Pick $\mu,\mu_1,\mu_2,\dots\in\cW$ such that $\mu_n\to\mu$.
We have to show that $-\Psi_\cP(\mu) \le \liminfn -\Psi_{\cP_n}(\mu_n)$, i.e., $\Psi_\cP(\mu) \ge \limsupn \Psi_{\cP_n}(\mu_n)$. 
By \ref{item:compactness} we may assume that there exists an $R>0$ such that $\mu,\mu_n \in \cW_R$ for all $n\in\N$, because if such $R$ does not exist, then $\limsup_{n \to \infty} \Psi_{\cP_n}(\mu_n) = - \infty$.
Passing to subsequences if necessary, we may also assume that $\mu_n \ll \cP_n$ for all $n\in\N$ and, by Lemma~\ref{l:convergence_on_compacts}~\ref{item:conv_measures_LL_pointm_on_compacts}, $\mu \ll \cP$. 
In this case, the desired statement is a direct consequence of Lemma~\ref{l:upper_semicontinuity_for_parts_of_tilde_Psi}.

To verify \ref{item:gamma_limsup} of Definition~\ref{def:gamma_convergence}, let $\mu \in \cW$. 
Assume first that $\Psi_\cP(\mu) = -\infty$. 
Since $\cP_n$ is countable for each $n$, there exists a $(f,y) \in (0,\infty)\times \R^d \setminus \bigcupn \cP_n$. 
Setting $\mu_n = (1-\tfrac1n)\mu + \tfrac1n \delta_{(f,y)}$,
it is clear that $\mu_n \to \mu$ and $\Psi_{\cP_n}(\mu_n) = -\infty$ for every $n$.

Assume now that $\Psi_\cP(\mu) > -\infty$,
which implies $\mu \ll \cP$. 
As we will soon see, it suffices to show the following: for every $L>0$, 
there exists a sequence $\mu_n$ with $\mu_n=\mu_n^{\ssup L}$ such that $\cD_{\cP_n}(\mu_n) \rightarrow \cD_{\cP}( \mu^{\ssup L} )$
and $\Phi_{\cP_n}(\mu_n) \to \Phi_\cP(\mu^{\ssup L})$.
To prove the latter, fix $L>0$ and take $k$, $(f_i,y_i),(f_i^n,y_i^n)$ as in Lemma~\ref{l:convergence_on_compacts}. 
Define $w_i = \mu (f_i,y_i)$ for $i \in \{1,\dots,k\}$ and $\mu_n := \sum_{i=1}^k w_i \delta_{(f_i^n,y_i^n)}$. Then 
\begin{align*}
\cD_{\cP_n}(\mu_n) = D_0 (\{y_j^n \colon w_j>0\}) \rightarrow D_0(\{y_j \colon\, w_j > 0\}) = \cD_\cP(\mu^{\ssup L}),
\end{align*}
$\mu_n \to \mu^{\ssup L}$ and $\Phi_{\cP_n}(\mu_n) \to \Phi_\cP(\mu^{\ssup L})$ as well, 
as shown in the last line of the proof of Lemma~\ref{l:upper_semicontinuity_for_parts_of_tilde_Psi}.

Now, for each $m\in\N$, we can find a sequence $(\nu_{m,n})_{n\in\N}$ in $\cW$ with $\nu_{m,n}^{\ssup{m}}=\nu_{m,n}$ such that $\cD_{\cP_n}(\nu_{m,n}) \rightarrow \cD_{\cP}( \mu^{\ssup{m}} )$ and $\Phi_{\cP_n}(\nu_{m,n}) \rightarrow \Phi_\cP(\mu^{\ssup m})$ as $n\rightarrow \infty$.
Let $(N_m)_{m\in\N}$ be a strictly increasing sequence in $\N$ such that 
\begin{align*}
\Big|\cD_{\cP_n}(\nu_{m,n}) - \cD_{\cP}( \mu^{\ssup{m}} )\Big| \vee \Big|\Phi_{\cP_n}(\nu_{m,n}) -  \Phi_\cP(\mu^{\ssup m})\Big| < \frac1m \qquad \text{ for all } n \geq N_m.
\end{align*}
Define $m_n := \max\{m \in \N: N_m \leq n\}$ and $\mu_n := \nu_{m_n, n}$.
Note that $m_n \to \infty$, $\mu_n \to \mu$ and $n \geq N_{m_n}$, so that by Lemma~\ref{l:continuity_Psi_in_ssup_L},
\begin{align*}
\Big|\Psi_{\cP_n}(\mu_n) - \Psi_{\cP}(\mu)\Big| \leq 
\Big|\Psi_{\cP_n}(\mu_n) - \Psi_{\cP}(\mu^{\ssup {m_n}})\Big|
+ \Big|\Psi_\cP(\mu^{\ssup {m_n}}) - \Psi_\cP(\mu)\Big| \arrown 0. 
\end{align*}
\end{proof}

With the convergence of Lemma~\ref{l:continuity_Psi_in_ssup_L} and the compactness in Proposition~\ref{p:gamma_convergence_Psi}~\ref{item:compactness}, we prove Lemma~\ref{l:Xi_as_sup_over_probm_fF_1}:

\begin{proof}[Proof of Lemma~\ref{l:Xi_as_sup_over_probm_fF_1}]
By Proposition~\ref{p:gamma_convergence_Psi}~\ref{item:compactness} it follows that there exists an $R>0$ such that $\Xi(\cP)$ equals $\sup_{\nu \in \cW_R} \Psi_\cP(\nu)$. 
Then, by Lemma~\ref{l:continuity_Psi_in_ssup_L}, it follows that $\Xi(\cP)$ equals $\sup_{\nu \in \fF(\cP)} \Psi_\cP(\nu)$. 
Let $\nu \in \fF(\cP)$ and $\delta>0$.  
We show $\sup_{\mu \in \fF_1(\cP)} \Psi_\cP(\mu) \ge \Psi_\cP(\nu) - 2\delta$. 
Let $(f_1,y_1),\dots, (f_m,y_m)$ be distinct elements of $\cP$ such that $\sum_{i=1}^m f_i \ge 2\theta$ and $D_0(y_1,\dots,y_m) <\delta$. 
Let $k\in\N_0$ and $(f_{m+1},y_{m+1}),\dots, (f_{m+k},y_{m+k})$ be the distinct elements that form the support of $\nu$ (so possibly $k=0$). 
By Proposition~\ref{p:AnaPhik} there exist $w_i$ for $i\in \{1,\dots,m+k\}$ with $\sum_{i=1}^{m+k} w_i =0$ such that for $\mu = \sum_{i=1}^{m+k} w_i \delta_{(f_i,y_i)}$ one has 
\begin{align*}
\Phi_\cP(\mu) = \varphi_{k+m}(f_1,\dots,f_{m+k}) \ge \varphi_m(f_1,\dots,f_m) \ge \Phi_\cP(\nu), \quad 
\cD_\cP(\mu) \le D_0(y_1,\dots,y_{m+k}) \le 2\delta + \cD_\cP(\nu),
\end{align*}
and thus $\Psi_\cP(\mu) \ge \Psi_\cP(\nu) - 2\delta$. 
\end{proof}


\subsection{Maximizers have finite support}\label{sec:SuppFinite}

\noindent In  this section we prove that, if $\cP$ is a good point  measure  in $\cM_\rp^\circ$, then every maximizer $\mu^*$ of $\Psi_\cP$ has a finite support. It is this result that needs one of the two conditions \ref{item:good3_condition_new_points} or \ref{itemgoodfinitelymanypoints} of Definition \ref{def:good_point_measure}. Indeed, we will use \ref{item:good3_condition_new_points} to construct, from a maximization candidate with infinitely many points, a better one with only finitely many points, and we will use \ref{itemgoodfinitelymanypoints} for a simple argument that the maximizer has only finitely many points.

\begin{proposition}[Maximizers have finite support]
\label{p:exist_n_finiten_of_maxis}
Let $\cP$ be a point measure in $\cM_{\rm p}^\circ$ that is good in the sense of Definition~\ref{def:good_point_measure}. 
Then 
\begin{enumerate}
\item 
\label{item:at_least_one_maximizer}
$\Psi_\cP$ has at least one maximizer,
\item 
\label{item:existence_radius_in_which_max_lie}
there exists a $R>0$ such that every maximizer of $\Psi_\cP$ lies in $\cW_R$, 
\item 
\label{item:maximizers_have_finite_support}
every maximizer has finite support, 
\end{enumerate}
and, if $\cP$ satisfies \ref{item:good3_condition_new_points} of Definition~\ref{def:good_point_measure}, then 
\begin{enumerate}[resume]
\item 
\label{item:max_are_prob_and_sum_f_i_larger_2theta}
every maximizer $\nu$ is a probability measure, i.e.,  $\nu = \sum_{i=1}^k w_i \delta_{(f_i,y_i)}$ for some $k\in\N$, $w\in [0,1]^k$ $\sum_{i=1}^k w_i=1$, $f_i \in (0,\infty)$, $y_i \in \R^d$ for $i\in \{1,\dots,k\}$. 
Moreover, $\sum_{i=1}^k f_i > 2\theta$. 
\end{enumerate}
\end{proposition}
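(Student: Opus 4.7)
For \ref{item:at_least_one_maximizer} and \ref{item:existence_radius_in_which_max_lie}, I would start from the trivial lower bound $\sup_{\cW}\Psi_\cP\ge\Psi_\cP(0)=0$. Proposition~\ref{p:gamma_convergence_Psi}~\ref{item:compactness} applied with the constant sequence $\cP_n\equiv\cP$ then provides an $R>0$ such that $\Psi_\cP(\mu)<0$ for every $\mu\in\cW\setminus\cW_R$; every maximizer is thus forced to lie in $\cW_R$, giving \ref{item:existence_radius_in_which_max_lie}. Proposition~\ref{p:gamma_convergence_Psi}~\ref{item:gamma_convergence_Psi} with the same constant sequence yields lower semicontinuity of $-\Psi_\cP$, while $\cW_R$ is compact as a closed subset of the compact Polish space $\cW$, so $\Psi_\cP$ attains its supremum on $\cW_R$, giving \ref{item:at_least_one_maximizer}.

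For \ref{item:maximizers_have_finite_support} under condition \ref{itemgoodfinitelymanypoints} of goodness the argument is immediate: any maximizer $\mu^*$ has $\Phi_\cP(\mu^*)>-\infty$, so $\mu^*\ll\cP$, and therefore $\supp\mu^*\subset\supp\cP\cap((0,\infty)\times Q_R)$, which is finite by hypothesis. For \ref{item:maximizers_have_finite_support} and \ref{item:max_are_prob_and_sum_f_i_larger_2theta} under condition \ref{item:good3_condition_new_points} I would rely on Proposition~\ref{p:AnaPhik}: since $\cD_\cP$ depends on $\mu$ only through $\supp_{\R^d}\mu$, the weights of any maximizer must maximize the strictly concave functional $\sum_i(w_i f_i-\theta w_i^2)$ at fixed support, subject to $w_i\ge 0$ and $\sum_i w_i\le 1$. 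The KKT conditions force one of two regimes. In regime (B), the multiplier $\lambda$ is strictly positive, $w_i=(f_i-\lambda)/(2\theta)$, $\sum_i w_i=1$, and the support coincides with $\{(f,y)\in\supp\cP:f>\lambda,\,y\in Q_R\}$; this last set lies in some cone $\cH^s_h$ and is therefore finite by Lemma~\ref{l:about_cM_p}~\ref{item:compactness_in_fE_wrt_cH}, and the identity $\sum_i f_i=k\lambda+2\theta$ simultaneously yields \ref{item:maximizers_have_finite_support} and \ref{item:max_are_prob_and_sum_f_i_larger_2theta}. The remaining regime (A) has $\lambda=0$, $w_i=f_i/(2\theta)$ and $\sum_i f_i\le 2\theta$, and must be ruled out.

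To exclude regime (A) under \ref{item:good3_condition_new_points}, the strategy is a local saturation argument. I would first check $\mu^*\neq 0$: using \ref{item:good3_condition_new_points} one builds a finite-support test measure $\nu\in\fF_1(\cP)$ supported on many small-$f$ atoms near the origin, arranged so that $\sum_i f_i>2\theta$ while the $\R^d$-support sits in a ball of radius $O(\epsilon^{\beta-1/d})$; Case 1 of Proposition~\ref{p:AnaPhik} combined with $\beta>2$ then gives $\Psi_\cP(\nu)>0=\Psi_\cP(0)$. Now pick $y_0\in\supp_{\R^d}\mu^*$. For any candidate atom $(\widetilde f,\widetilde y)\in\supp\cP\setminus\supp\mu^*$ close to $y_0$, perturbing $\mu^*$ by adding weight $\widetilde w\approx\widetilde f/(2\theta)$ at $(\widetilde f,\widetilde y)$ (and, in the boundary subcase $\sum_i f_i=2\theta$, simultaneously shaving weight off an existing atom) increases $\Phi_\cP$ by at least $c\widetilde f^2$ for some explicit $c>0$, while $\cD_\cP$ grows by at most $2|\widetilde y-y_0|$. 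Condition \ref{item:good3_condition_new_points} applied with a fixed $C>0$ supplies, for every small $\epsilon$, such an atom with $\widetilde f>C\epsilon$ and $|\widetilde y-y_0|<\epsilon^\beta$; the net gain is then at least $cC^2\epsilon^2-2q\epsilon^\beta$, strictly positive for small $\epsilon$ because $\beta>2$, contradicting optimality. By optimality, every such nearby atom of $\cP$ must therefore already lie in $\supp\mu^*$. Iterating this saturation (each newly captured atom serving as a new centre), $\supp\mu^*$ must contain every atom of $\cP$ inside a ball $B(y_0,r_0)$ of some fixed radius $r_0>0$. But \ref{item:good3_condition_new_points} forces such a ball to contain at least of the order of $\epsilon^{-d\beta}$ atoms with $f>C\epsilon$ for every small $\epsilon$, contributing a total $f$-mass of the order of $\epsilon^{1-d\beta}$, which tends to $\infty$ since $d\beta>1$; this contradicts $\sum_i f_i\le 2\theta$.

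The hard part will be making the iteration-and-counting step of the last paragraph fully rigorous: the one-step saturation only enlarges the captured region by $\epsilon^\beta$, so one needs on the order of $r_0/\epsilon^\beta$ iterations to cover a ball of fixed radius, and one must control the dependence on $\epsilon$ carefully so that the ratio of $\epsilon^2$-gain to $\epsilon^\beta$-cost remains favourable along the whole iteration. A secondary but fiddly point is the boundary subcase $\sum_i f_i=2\theta$ where $\sum_i w_i=1$ prevents simply adding mass, so the saturation step must be rephrased as a swap between the new atom and an existing one; the concrete constant $c$ in the $c\widetilde f^2$ gain is then halved, but the comparison with $\epsilon^\beta$ still goes through.
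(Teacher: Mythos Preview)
Your treatment of \ref{item:at_least_one_maximizer}, \ref{item:existence_radius_in_which_max_lie}, and \ref{item:maximizers_have_finite_support} under condition \ref{itemgoodfinitelymanypoints} is essentially the paper's. Your handling of condition \ref{item:good3_condition_new_points} differs, and it is worth comparing.

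\textbf{Regime (B).} Your KKT argument is correct and arguably cleaner than the paper's treatment of the corresponding case $\sum_i f_i>2\theta$, which routes through Proposition~\ref{p:AnaPhik} and the index $k_\star$. One small correction: from $w_i=(f_i-\lambda)/(2\theta)>0$ you only get that the support is \emph{contained in} $\{(f,y)\in\supp\cP:f>\lambda,\ y\in Q_R\}$, not that it coincides with it. Containment is all you need, since that set lies in $[\lambda,\infty)\times Q_R$, which is relatively compact in $\fE$ and therefore carries only finitely many atoms of $\cP$. The identity $\sum_i f_i=k\lambda+2\theta$ then gives \ref{item:max_are_prob_and_sum_f_i_larger_2theta} directly in this regime.

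\textbf{Regime (A).} Here the paper takes a different and shorter route. Rather than showing that optimality forces the support to swallow an $\epsilon^\beta$-dense cloud of atoms, the paper constructs a single explicit improvement. Given an infinite-support maximizer $\nu$ with $\sum_i f_i\le 2\theta$, it exploits the fact that $\supp_{\R^d}\nu$ has an accumulation point $z$ on the boundary of the smallest cube $Q_{R_1}$ containing it. Condition \ref{item:good3_condition_new_points} is then invoked \emph{once}, at a point $\bar z$ just outside $Q_{R_1}$ near $z$, to produce a new atom $(\widetilde f,\widetilde y)\notin\supp\nu$ with $\widetilde f>3\theta a$ and $|\widetilde y-\hat y|<4a^\beta$ for some $\hat y\in\supp_{\R^d}\nu$. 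Replacing all but the first $k$ atoms of $\nu$ by this single atom (with weight $a=\sum_{i>k}f_i/(2\theta)$) yields a finite-support $\mu$ with $\Psi_\cP(\mu)-\Psi_\cP(\nu)\ge \theta a^2-8qa^\beta>0$. The same one-shot construction handles \ref{item:max_are_prob_and_sum_f_i_larger_2theta} in the finite-support case.

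\textbf{On your iteration worry.} Your concern about accumulating costs over $r_0/\epsilon^\beta$ steps is misplaced, and once this is seen your argument becomes much easier than you fear. You are not \emph{modifying} $\mu^*$ along the iteration; you are \emph{deducing} that atoms were already in $\supp\mu^*$. Each application of the one-step comparison is a stand-alone contradiction argument, so there is nothing to accumulate. What remains is a pure connectivity statement: the one-step shows that $\supp_{\R^d}\mu^*$ is closed under ``add any $\cP$-atom with $f>C\epsilon$ at distance less than $\rho:=C^2\epsilon^2/(8q\theta)$'', while condition \ref{item:good3_condition_new_points} makes such atoms $\epsilon^\beta$-dense in $Q_R$; since $\epsilon^\beta\ll\rho$ for $\beta>2$, a chain argument along any segment propagates membership throughout $Q_R$. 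This is valid, but it is more machinery than the paper's single improvement step.
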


\begin{proof} 
\ref{item:at_least_one_maximizer}
By Proposition~\ref{p:gamma_convergence_Psi}, see also Remark~\ref{remark:gamma_and_lsc}, $\Psi_\cP$ is upper semicontinuous. 
$\cW$ is sequentially compact by \cite[Corollary 13.31]{Kl08}. 
Therefore $\Psi_\cP$ has at least one maximizer.

\ref{item:existence_radius_in_which_max_lie}
By Proposition~\ref{p:gamma_convergence_Psi}~\ref{item:compactness}, we may pick $R>0$ so large that 
$\sup_{\mu \in \cW\setminus\cW_R}\Psi_\cP(\mu) <0 \le \sup_{\mu \in \cW_R}\Psi_\cP(\mu)$. 
This implies that every maximizer lies in $\cW_R$. 

\ref{item:maximizers_have_finite_support}
Let $\nu\in\cW_R$ be a maximizer of $\Psi_\cP$. 
Clearly, $\nu\ll\cP$ (otherwise $\Psi_\cP(\nu) = -\infty < \Psi_\cP(0)$).  
Under \ref{itemgoodfinitelymanypoints} of Definition~\ref{def:good_point_measure}, $\nu$ has finite support. Therefore we assume instead that  \ref{item:good3_condition_new_points} of Definition~\ref{def:good_point_measure} holds. 
Moreover, without loss of generality we may assume that the supports of $\cP$ and $\nu$ are infinite. 
We are going to show that there exists a $\mu\in \cW_{R+1}$ such that $\supp \mu$ is a finite set 
and  $\Psi_\cP(\mu) > \Psi_\cP(\nu)$, which implies the claim. 

Let $(f_i,y_i)\in (0,\infty) \times \R^d$ for $i\in\N$ be distinct and such that $\{(f_i,y_i) \colon i\in\N\} = \supp \nu $. 
We may assume that $f_1\geq f_2\geq f_3\geq \dots$ and $f_k \rightarrow 0$ as $k\to \infty$ (due to the fact that $[\epsilon,\infty) \times Q_R$ is relatively compact in $\fE$, because it is a subset of $\cH_h^s$ for some $s,h>0$, and so there are only finitely many $i$ such that $f_i \ge \epsilon$ for all $\epsilon>0$). 
We separate the proof in two cases, depending on $\sum_{i\in\N} f_i$: Case 1: $\sum_{i\in\N} f_i \in (2\theta,\infty]$, Case 2: $\sum_{i\in\N} f_i \in [0,2\theta]$.

\textbf{\underline{Case 1}} $\sum_{i\in\N} f_i \in (2\theta,\infty]$.
The idea is that Proposition~\ref{p:AnaPhik} tells us that $\mathcal D_\cP$ is maximized using a finite number of points such that adding points the $\Phi$ part will not enlarge, but the $\cD$ part will increase. 

Let $\delta>0$ be such that $\sum_{i\in\N}   f_i \ge  2\theta + 2\delta$. 
Let $K_1$ be such that $\sum_{i=1}^{K_1} f_i > 2\theta + \delta$. 
Let $K_2 \ge K_1$ be such that $f_k < \frac{\delta}{K_1}$ for all $k> K_2$. 
Then for $k\ge  K_2$ we have 
\begin{align}
\label{eqn:Phi_cP_le_varphi_finite}
\Phi_\cP(\nu) & \le \varphi_k(f_1,\dots,f_k) + \delta, \\
\label{eqn:condition_for_k_star_finite}
\sum_{i=1}^k f_i - k f_{k+1} 
& = \sum_{i=1}^k (f_i - f_{k+1}) 
\ge \sum_{i=1}^{K_1} (f_i - \frac{\delta}{K_1}) 
\ge 2\theta + \delta - \delta = 2\theta.
\end{align}
By \eqref{eqn:Phi_cP_le_varphi_finite} it follows that (as the above can be done for any $\delta>0$), for $\varphi_k$ as in \eqref{eqn:varphi_k_def},
\begin{align*}
\Phi_\cP(\nu) \le  \supk \varphi_k(f_1,\dots,f_k). 
\end{align*}
\begin{calc}
By \eqref{eqn:Phi_cP_le_varphi_finite} it follows that for all $\delta>0$ there exists a $K_2>0$ such that $\Phi_\cP(\nu) \le \varphi_k(f_1,\dots,f_k) + \delta$ for all $k\ge K_2$ and thus $\Phi_\cP(\nu) \le \supk \varphi_k(f_1,\dots,f_k) +\delta$ for all $\delta >0$. 
\end{calc}
By \eqref{eqn:condition_for_k_star_finite} it follows that there exists a $\ell \in \N$ such that $K_\star(f_1,\dots,f_k) = \ell$ for all $k \ge K_2$, where $K_\star$ is as in \eqref{e:defkstar}. 
Therefore, by Proposition~\ref{p:AnaPhik}, we have $\varphi_{\ell}(f_1,\dots,f_k) =  \varphi_m(f_1,\dots,f_m)$ for all $m\ge K_2$ and thus, with $w_i$ as in \eqref{e:wmax}, for $w$ given by 
\begin{align*}
w(f,y) = \begin{cases}
w_i & \mbox{if } i\in\{1,\dots,\ell\} \mbox{ and }  (f,y) = (f_i,y_i), \\
0 & \mbox{otherwise},
\end{cases}
\end{align*}
we have for $\mu = w \cP$ that $\Phi_\cP(\mu) = \varphi_{\ell}(f_1,\dots, f_{\ell}) = \supk \varphi_k(f_1,\dots,f_k)$ and thus 
\begin{align*}
\Phi_\cP(\mu) \ge  \Phi_\cP(\nu), 
\qquad  \cD_\cP(\mu) <  \cD_\cP(\nu)
\qquad \mbox{and therefore}\qquad \Psi_\cP(\mu) > \Psi_\cP(\nu). 
\end{align*}

 \textbf{\underline{Case 2}} $\sum_{i\in\N} f_i \in [0,2\theta]$.
Let us first  introduce some objects. 
For $k\in\N$, let 
\begin{align*}
a_k 
:= \frac{\sum_{i=k+1}^\infty f_i}{2\theta}.
\end{align*}
Then $a_k \rightarrow 0 $ as $k\rightarrow \infty$. 
Let $R_1:=\inf\{s>0\colon  \supp \nu\subset (0,\infty)\times Q_{s}\}$ the smallest $r$ such that $\nu\in\cW_r$. 
Then $R_1\leq R$ since $\nu\in\cW_R$. 
Then there exists a function $\varphi \colon \N \rightarrow \N$  such that $y_{\varphi(n)}$ converges to $z=(z_1,\dots, z_d)\in\partial Q_{R_1}$ as $n\rightarrow \infty$. 
Assume, without loss of generality, that  $z_1=R_1$. 
Let $\beta>2$ and  $\epsilon = \epsilon_{R_1+1,3\theta}$ as in the condition \ref{item:good3_condition_new_points} of Definition~\ref{def:good_point_measure} on $\cP$. 
Pick $k\in \N$ such that $\theta a_k^2- 8 q a_k^\beta>0$ and $3 a_k^\beta\leq 1$ and  $a_k <\epsilon \wedge 1$. 
Then define 
\begin{align*}
a = a_k \qquad \mbox{and}\qquad
\bar z = (R_1+ 2a^\beta, z_2, \dots, z_d). 
\end{align*}
Observe that $\bar z \in Q_{R_1+1}$. 
By the assumption  \ref{item:good3_condition_new_points} on $\cP$ from Definition~\ref{def:good_point_measure} there exists a 
\begin{align*}
(\widetilde f, \widetilde y)\in [(3\theta a,\infty)\times B(\bar z,a^\beta)] \cap \supp \cP
\end{align*}
 We observe that $(\widetilde f,\widetilde y) \notin \supp \nu$ since $\widetilde y\notin Q_{R_1}$, but $\widetilde y \in Q_{R_1 + 3a^\beta} \subset Q_{R+1}$. 
 Since $\lim_{n\to \infty} y_{\phi(n)}=z$, 
 there exists a $(\hat f,\hat y)\in \supp \nu $ such that  $\hat y\in B(z, a^\beta)$ and therefore $(\widetilde f,  \widetilde y)\in (3\theta a,\infty) \times B(\hat y, 4 a^\beta)$. 
 \begin{figure}[H]
		\includegraphics[width=0.8\textwidth]{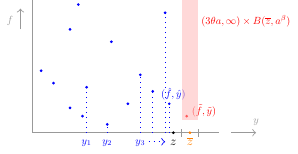}  
	\caption{Visualisation of $z$, $\overline z$ and $(\tilde f, \tilde y)$ (for $d=1$).}
	\label{fig:goodness}
\end{figure}

We recall Proposition~\ref{p:AnaPhik}, Case 2, which tells us that the unique maximizer of $\varphi_k$ as in \eqref{eqn:varphi_k_def}, is given by $(\frac{f_1}{2\theta}, \dots, \frac{f_k}{2\theta})$. 
Define $\mu = w \cP$ (i.e., $\radon{\mu}{\cP} = w$), where 
\begin{equation}\label{wcandidatedef}
w(f,y) = 
\begin{cases}
\frac{f_i}{2\theta} &\mbox{if } (f,y) = (f_i,y_i) \mbox{ for some } i \in \{1,\dots, k\}, \\
a 
&\mbox{if } (f,y) = (\widetilde f, \widetilde y), \\
0 & \mbox{otherwise}. 
\end{cases}
\end{equation}
Because
\begin{align*}
\sum_{i=1}^k \frac{f_i}{2\theta} + a =  \frac{\sum_{i=1}^\infty f_i}{2\theta}  \in [0,1],
\end{align*}
it is clear that $\mu\in\cW_{R+1}$ and that $\mu$ has only finite support. 
We are going to show that $\Psi_\cP(\mu)>\Psi_\cP(\nu)$. 

Observe that
\begin{align*}
\sum_{i=k+1}^\infty \frac{f_i^2}{4\theta} 
\le \frac{1}{4\theta} \left( \sum_{i=k+1}^\infty f_i \right)^2 
= \frac{1}{4\theta} \left(2\theta a  \right)^2 = \theta a^2. 
\end{align*}
Therefore, by using that $\sum_{i=1}^k \frac{f_i^2}{4\theta}= \sum_{i=1}^\infty \frac{f_i^2}{4\theta} 
- \sum_{i=k+1}^\infty \frac{f_i^2}{4\theta}$, we see that 
\begin{equation}\label{lowerphiw}
\Phi_\cP(\mu) 
 \ge 
\sum_{i=1}^{k}  w_i  f_i +  a \widetilde f - \theta \Big(\sum_{i=1}^k  w_i^2 + a^2 \Big) 
 = \sum_{i=1}^k \frac{f_i^2}{4\theta}  + a \widetilde f - \theta a^2
 \ge  \sum_{i=1}^\infty \frac{f_i^2}{4\theta}  + a \widetilde f . 
\end{equation}
By  Proposition~\ref{p:AnaPhik} it follows that 
\begin{equation}\label{lowerphiv}
\Phi_\cP(\nu) \le \sum_{i=1}^\infty \frac{f_i^2}{4\theta}.
\end{equation}
Moreover, we recall the definition of $\cD_{\cP}$ in \eqref{defdist} and see that
\begin{align}
\label{eqn:distance_w_compared_with_v}
\cD_{\cP}(\nu)\geq D_0(\{y_1,\dots,y_k,\hat y\}) \geq D_0(\{y_1,\dots,y_k,\hat y,\widetilde y\})- 8 a^\beta,
\end{align}
where the last inequality holds true since any path starting from $0$ and visiting all points in $\{y_1,\dots,y_k,\hat y\}$ 
can be extended to visit $\widetilde y$ as well by traveling back and forth along the straight line linking $\hat y$ and $\widetilde y$ (which are at most $4a^\beta$ apart from each other). Since $\supp_{\R^d} \mu = \{y_1,\dots,y_k,\widetilde y\}$, 
\begin{align*}
D_0(\{y_1,\dots,y_k,\hat y,\widetilde y\})\geq D_0(\{y_1,\dots,y_k,\widetilde y\}) =\cD_{\cP}(\mu).
\end{align*} 
Thus, we deduce from 
\eqref{eqn:distance_w_compared_with_v} that 
\begin{equation}\label{compdist}
\cD_\cP(\mu)\le \cD_\cP(\nu) + 8a^\beta,
\end{equation}
and therefore, using \eqref{lowerphiw}, \eqref{lowerphiv} and \eqref{compdist} in combination with the fact that
$\widetilde f\geq 3\theta a$ we obtain 
\begin{equation}\label{psiwlarpsiv}
\Psi_\cP(\mu) - \Psi_\cP(\nu)\ge - \sum_{i=k+1}^\infty \frac{f_i^2}{4\theta} + a \widetilde f - \theta a^2 -8 q a^\beta\ge a \widetilde f - 2\theta a^2 - 8q a^\beta  > \theta a^2 - 8q a^\beta >0. 
\end{equation}

\ref{item:max_are_prob_and_sum_f_i_larger_2theta}
Suppose that $\mu = \sum_{i=1}^k w_i \delta_{(f_i,y_i)}$. 
If $\mu$ is a maximizer and $\sum_{i=1}^k f_i \ge  2\theta$, then $\sum_{i=1}^k w_i =1$ because of Proposition~\ref{p:AnaPhik} (as, like in Case 1, $\Phi_\cP(\mu) = \varphi_k(f_1,\dots,f_k)$), so that $\mu$ is a probability measure. 

If $\sum_{i=1}^k f_i < 2\theta$ and $\mu$ maximizes $\Phi_\cP$ (observe $\Phi_\cP$, not $\Psi_\cP$), then by  Proposition~\ref{p:AnaPhik}, $\sum_{i=1}^k w_i <1$, i.e.,  $\mu$ is not a probability measure. 
Moreover, like in Case 2 above, one can show that $\mu$ is not a maximizer of $\Psi_\cP$: 
Indeed, one chooses an $a\in (0,\epsilon \wedge 1)$ with $a \le 1 - \sum_{i=1}^k w_i$,  $\theta a^2 - 8 q a^\beta>0$ and $3a^\beta \le 1$ and follows the same lines as in Case 2 to find a $(\tilde f, \tilde y)$ such that $\Psi_\cP(\mu + a \delta_{(\tilde f, \tilde y)}) > \Psi_\cP(\mu)$. 
\end{proof}


\subsection{Proof of Theorem \ref{thm:variational_formula_deterministic}}\label{exisuni}

\noindent In this section, we prove Theorem \ref{thm:variational_formula_deterministic} subject to 
Proposition~\ref{p:gamma_convergence_Psi}~\ref{item:gamma_convergence_Psi}
 (Gamma-convergence of $-\Psi$) and Proposition~\ref{p:exist_n_finiten_of_maxis} (finiteness of support of maximizers).

\begin{proof}[Proof of Theorem~\ref{thm:variational_formula_deterministic}]
The existence for \ref{item:var_f_maximizer} follows directly by Proposition~\ref{p:exist_n_finiten_of_maxis}, whereas the uniqueness is imposed by the fact that $\cP$ is assumed to be good. 
\ref{item:var_f_stability} follows by the fact that $\Psi_\cP$ is upper-semicontinuous and $\cW$ is sequentially compact (see the beginning of the proof of Proposition~\ref{p:exist_n_finiten_of_maxis}), so that, in particular, $O^c$ is sequentially compact for any open set $O\subset \cW$. Therefore there exists a maximizer of $\Psi_\cP$ on $O^c$, which by the uniqueness cannot be equal to the maximizer over $\cW$, therefore proving the desired inequality. 
\ref{item:var_f_continuity} follows from Theorem~\ref{theorem:gamma_convergence}~\ref{item:gamma_conv_optimizers} and  Proposition~\ref{p:gamma_convergence_Psi}~\ref{item:gamma_convergence_Psi}. 

Let us now prove \ref{item:support_more_than_one}. 
The idea is that the points in $G$ are all worth visiting because of their large energy values, but at the same time they are not distinct enough so as to give preference to only a couple of them. 
Having no points in $E^1,E^2$ and $E^3$ contributes to make points outside of $G$ not worth visiting, because either their energy values are too low or their distance too large. 

Fix $\cP \in \cM_\rp^\circ$ with $\cP(G)=k$ and $\cP(E^i) = 0$, $i=1, 2, 3$.
Denote by $(f_1,y_1),\dots,(f_k,y_k)$ the $k$ points of $\cP$ in $G$, with $f_1\ge f_2 \ge \cdots \geq f_k$. 
Take $w_i$ as in \eqref{e:wmax} of Proposition~\ref{p:AnaPhik} and let $\nu^*:= \sum_{i=1}^k w_i \delta_{(f_i, y_i)}$.
Note that, since $f_i \geq L > 2\theta$ for $1 \leq i \leq k$, the relevant formulas from Proposition~\ref{p:AnaPhik}
will (mostly) be \eqref{e:wmax} and \eqref{defphisupt}.
We divide the proof into the following steps:
\begin{enumerate}[label={\normalfont(Step \arabic*)}, labelsep=0.8em, leftmargin=*] 
\item $w_i > 0$ for all $i \in \{1,\dots,k\}$. 
\item If $\nu \in \cW$ and $\supp \nu \subset G$ then 
$\Psi_\cP(\nu) \leq \Psi_\cP(\nu^*)$;
\item If $\nu \in \cW$ and $\supp \nu \not \subset G$ then 
$\Psi_\cP(\nu) \leq \Psi_\cP(\nu^*)$.
\end{enumerate}
Steps~2--3 together with \ref{item:var_f_maximizer} will then show that $\mu^*=\nu^*$, and this together with Step~1 implies \ref{item:support_more_than_one}.

\smallskip

\noindent
\underline{Step 1}
By Proposition~\ref{p:AnaPhik} it suffices to check that $k = K_\star(f_1,\dots,f_k)$,  where $K_\star$ is as in \eqref{e:defkstar}. 
This follows as for any $j\in \{1,\dots,k-1\}$ we have 
\begin{align}
\label{eqn:estimate_of_sum_f_i_min_2theta}
\sum_{i=1}^j f_i - 2\theta \le j (L+\frac{2\theta}{k}) - 2\theta = jL - \frac{(k-j)2\theta}{k} \le j f_{j+1} - \frac{2\theta}{k} < j f_{j+1} .
\end{align}

\noindent
\underline{Step 2}
We can assume that $\nu \ll \cP$. 
We will first show the statement for $\nu \in \cW$ which are nonzero. 
Observe that by \eqref{eqn:max_varphi_k_is_max_Phi}, for any $\nu \in \cW$ with $\emptyset \ne \supp \nu \subset G$, 
\begin{align*}
\Psi_\cP(\nu) = \Phi_\cP(\nu) - \cD_\cP(\nu) \le \varphi_{|J|}( (f_j)_{j\in J} ) - q D_0 ( (y_j)_{j\in J}) ,
\end{align*}
where $J\subset \{1,\dots,k\}$ is such that $\supp \nu = \{ (f_j,y_j) : j\in J\}$. 
Therefore  it suffices to show 
that for all $J\subset \{1,\dots,k\}$, $J\ne \emptyset$, one has 
\begin{align*}
\varphi_{|J|} ( (f_j)_{j\in J} - q D_0 ( (y_j)_{j\in J} ) \le \Psi_\cP(\nu^*) = \varphi_k(f_1,\dots,f_k) - q D_0(y_1,\dots, y_k). 
\end{align*}
Of course for $k=1$ the above is clear. 
Suppose that $k\ge 2$. Let $J\subset \{1,\dots,k\}$ with $m:=|J|\le k-1$ and $\ell \in \{1,\dots,k\} \setminus J$. 
By an inductive argument on $m$, it suffices to show that
\begin{align}
\label{eqn:increase_by_including_one_point}
\varphi_m( (f_i)_{i\in J} ) - q D_0((y_i)_{i\in J}) 
\leq
\varphi_{m+1}( (f_i)_{i\in J\cup \{\ell\} } ) - q D_0((y_i)_{i\in J \cup \{\ell\} }). 
\end{align}
First of all, a  straightforward computation using \eqref{defphisupt} gives
\begin{calc}
setting $S= \sum_{i\in J} f_i - 2\theta$ and using that $\frac{1}{m} - \frac{1}{m+1} = \frac{1}{m(m+1)}$, 
\end{calc}
\begin{align}
\cand 
\varphi_{j+1}((f_i)_{i \in J\cup \{\ell\}}) - \varphi_j((f_i)_{i \in J}) 
\cnewline
\cand \begin{calc}
= 
\frac{1}{4\theta} \Big( \sum_{i\in J\cup \{\ell\}} f^2_i  - \frac{1}{ m+1 }\Big(\sum_{i\in J\cup \{\ell\}} f_i - 2\theta \Big)^2\Big)
- 
\frac{1}{4\theta} \Big(  \sum_{i\in J} f^2_i  - \frac{1}{ m }\Big(\sum_{i\in J} f_i - 2\theta \Big)^2 \Big)
\end{calc} \cnewline
\cand \begin{calc}
= 
\frac{1}{4\theta} \bigg[
f^2_\ell  - \frac{1}{ m+1 }\Big(f_\ell + \sum_{i\in J} f_i - 2\theta \Big)^2\Big)
+ \frac{1}{ m }\Big(\sum_{i\in J} f_i - 2\theta \Big)^2 \Big)
	\bigg]
\end{calc} \cnewline
\cand \begin{calc}
= 
\frac{1}{4\theta} \bigg[
f^2_\ell  - \frac{1}{ m+1 }\Big(f_\ell^2 + 2f_\ell S + S^2 \Big)
+ \frac{1}{ m }S^2 \Big)
	\bigg]
\end{calc} \cnewline
\cand \begin{calc}
= 
\frac{1}{4\theta} \bigg[
\frac{m}{m+1}
f^2_\ell  - \frac{2f_\ell S}{ m+1 }
+ \frac{1}{ m (m+1) } S^2 \Big)
	\bigg]
= 
\frac{1}{4\theta m (m+1)} \bigg[ m f_\ell - S\bigg]^2
\end{calc} \cnewline
& = \frac{1}{ 4 \theta m(m+1)} \left[ m f_\ell - \left(\sum_{i \in J}f_i - 2\theta\right)\right]^2 
\begin{calc}
\geq \frac{1}{4 \theta m(m+1)} \big(\frac{2 \theta}{k}\big)^2 
\end{calc}
\geq \frac{\theta}{k^4} = 4 q \varepsilon,
\end{align}
where in the last line we used that
\begin{align}
\label{eqn:sum_f_i_min_2theta_le_m_f_ell}
\sum_{i\in J} f_i - 2\theta \le m f_\ell - \frac{2\theta(k-m)}{k}, 
\end{align}
which follows similarly as the estimate in \eqref{eqn:estimate_of_sum_f_i_min_2theta}. 
From this and the observations 
\begin{align*}
D_0((y_i)_{i\in J \cup \{\ell\} }) - D_0((y_i)_{i\in J}) \le 4\epsilon, \quad 4q\epsilon - 4\epsilon>0, 
\end{align*}
we deduce \eqref{eqn:increase_by_including_one_point}. 
\begin{calc}
This basically follows from the observation that 
\begin{align*}
|y_1 - y_2| + |y_2 - y_3| \le |y_1 - y_3| + 2 |y_2 - y_3|\le |y_1 - y_3| + 2|y_2| + 2|y_3|, \\
|y_1 - y_2| + |y_2 - y_3| \le |y_1 - y_2| + |y_2| + |y_3. 
\end{align*}
\end{calc}
In order to finish this step, it suffices to observe that 
(because $L>2\theta + (q+1)\epsilon$), 
\begin{align}
\label{eqn:lower_bound_nu_supp_in_G}
\Psi_\cP( \nu^*) \ge 
 \varphi_1(f_1) - q D_0(y_1)
= f_1 - \theta -q|y_1| 
\ge L - \theta - q \epsilon > \epsilon >0. 
\end{align}

\noindent
\underline{Step 3}
By \ref{item:var_f_maximizer} it suffices to show this step for $\nu$ with $\supp \nu \subset \cP$ finite support. 
Let $(f,y) \in \supp \nu \setminus G$ be such that $f$ is maximal among such points.
Assume first that $f>\varepsilon$. In this case, our assumptions on $\cP$ imply that $|y|\geq f+ 3\theta$. 
If $f \geq L+ \frac{2\theta}{k}$, then $\Phi_{\cP}(\nu) \le f$, $\cD_\cP(\nu) \ge |y| \ge f + 3\theta \ge f$, and thus 
\begin{align*}
\Psi_\cP(\nu) = \Phi_\cP(\nu) - q \cD_\cP(\nu) 
\le f - q f <0 < \Psi_\cP(\nu^*). 
\end{align*}
If instead $\varepsilon < f \leq L+2\theta/k$, then $\Phi_\cP(\nu) \le L+\frac{2\theta}{k}$ and $|y| \ge f + 3 \theta > \varepsilon+\theta + \frac{2\theta}{k}$, so (remember \eqref{eqn:lower_bound_nu_supp_in_G})
\begin{align*}
\Psi_\cP(\nu) \leq L+\frac{2\theta}{k} - q |y|
\le L -(q-1)\frac{2\theta}{k} -q \theta - q\varepsilon < L -\theta-q\varepsilon \leq \Psi_\cP(\nu^*). 
\end{align*}
Assume now that $f \leq \varepsilon$. If $\supp \nu \cap G = \emptyset$ then
$\Psi_\cP(\nu) \leq \varepsilon  \le \Psi_\cP(\nu^*)$ (because of \eqref{eqn:lower_bound_nu_supp_in_G} again).
Lastly, suppose $\supp \nu \cap G = \{(f_i,y_i)_{i \in J}\} \neq \emptyset$ where $J \subset \{1,\ldots, k\}$ with $|J|=m \geq 1$. 
Let $N \in \N$ be the number of points in $\supp \nu \setminus G$. 
Denote by $(f_{k+1},y_{k+1}), \dots, (f_{k+N},y_{k+N})$ the points in $\supp \nu \setminus G$ with $f= f_{k+1} \geq f_{k+2} \geq \cdots \geq f_{k+N}$.
Observe that 
\begin{align*}
m f \le m \epsilon < mL -2\theta \le \sum_{j\in J} f_j - 2\theta, 
\end{align*} 
so that for $I = \{k+1,\dots,k+N\}$, $K_\star( (f_i)_{i\in J \cup I} ) = K_\star( (f_i)_{i \in J} )$.
The latter equals $ |J|$ due to \eqref{eqn:sum_f_i_min_2theta_le_m_f_ell} and thus $\Phi_\cP(\nu) \le \varphi_{|J\cup I|}( (f_i)_{i\in J \cup I} ) \le \varphi_m( (f_i)_{i\in J}) \le  \Phi_\cP(\nu^*)$ and so $\Psi_\cP(\nu) \le \Psi_\cP(\nu^*)$. 
\end{proof}

\section{Goodness of \texorpdfstring{$\Pi$}{} and of \texorpdfstring{$\Pi_t$}{} }
\label{subsec:optimizers_with_finite_support}

\noindent  In order that we can apply Theorem \ref{thm:variational_formula_deterministic} to the rescaled process $\Pi_t$ (for all $t>0$)  defined in \eqref{Pitdef} and to the PPP $\Pi$ defined in \eqref{PPPdef}, we show in this section that they are good  in the sense of Definition~\ref{def:good_point_measure} for any $\alpha\in(2d,\infty)$. 
Moreover, we prove Lemma~\ref{l:Pi_satisfies_many_points} for $\alpha \in (d,\infty)$. 
That is, we prove Lemma~\ref{l:goodness}, see Lemma~\ref{l:part_goodness} and Lemma~\ref{l:at_most_one_finite_max}. 
That both $\Pi$ and $\Pi_t$ can be viewed as elements of $\cM_\rp^\circ$, has been shown in Lemma~\ref{l:Pi_and_Pi_t_as_in_cM_p}.

\begin{lemma}
\label{l:prob_Pi_on_halfopen_times_ball}
Let $s,r>0$ and $x\in \R^d$. 
Let $V_d\in (0,\infty)$ be the volume of the unit ball in $\R^d$. 
Then 
\begin{align*}
\Prob\Big( \Pi\big( [s,\infty) \times B(x,r) \big) =0 \Big) = \ee^{- V_d s^{-\alpha} r^d}. 
\end{align*}
\end{lemma}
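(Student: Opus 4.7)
The plan is to use the defining property of the Poisson point process $\Pi$: for any Borel set $A\subset (0,\infty)\times\R^d$ with finite intensity measure $\nu(A)$, the random variable $\Pi(A)$ is Poisson distributed with parameter $\nu(A)$, so in particular $\Prob(\Pi(A)=0)=e^{-\nu(A)}$. Applying this with $A=[s,\infty)\times B(x,r)$, the only thing to compute is the intensity measure of $A$ under the intensity $\alpha f^{-1-\alpha}\, \dd f\otimes \dd y$.

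First I would separate the integral via Fubini:
\begin{align*}
\nu\bigl([s,\infty)\times B(x,r)\bigr)
=\int_{B(x,r)}\int_s^\infty \alpha f^{-1-\alpha}\,\dd f\,\dd y.
\end{align*}
The inner $f$-integral evaluates to $\bigl[-f^{-\alpha}\bigr]_s^\infty=s^{-\alpha}$, which is independent of $y$. The outer integral then just contributes the Lebesgue volume of $B(x,r)$, namely $V_d r^d$ (since $V_d$ is the volume of the unit $\ell^1$-ball in the notation of the paper, and balls are translation invariant). Hence $\nu(A)=V_d s^{-\alpha} r^d$, which is finite, so the Poisson formula applies and gives $\Prob(\Pi(A)=0)=e^{-V_d s^{-\alpha}r^d}$.

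There is essentially no obstacle here; the only thing to note is that we need $\nu(A)<\infty$ so that $\Pi(A)$ is a proper Poisson random variable, and this is guaranteed because both the $f$-tail $\int_s^\infty \alpha f^{-1-\alpha}\,\dd f=s^{-\alpha}$ and the spatial ball $B(x,r)$ are finite. No use of the assumption $\alpha>d$ is required for this particular statement, since we integrate over a bounded spatial region.
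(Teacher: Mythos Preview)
Your proof is correct and follows essentially the same approach as the paper: compute the intensity measure of $[s,\infty)\times B(x,r)$ as $V_d s^{-\alpha} r^d$ and then use that $\Pi$ evaluated on this set is Poisson with that parameter. Your write-up is slightly more detailed (explicit Fubini, the remark on finiteness and on $\alpha>d$ not being needed), but the argument is the same.
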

\begin{proof}
The random variable  $\Pi([s,\infty)\times B(x,r))$ is Poisson distributed with parameter 
\begin{equation}
\int_{[s,\infty)\times B(x,r)}\alpha y^{-(1+\alpha)} \dd y \otimes \dd z
=\int_{[s,\infty)}\frac{\alpha}{y^{1+\alpha}} \dd y \ V_d r^d = V_d s^{-\alpha} r^d.
\end{equation}
\end{proof}

\begin{lemma}
\label{l:part_goodness}
\ 
\begin{enumerate}
\item 
\label{item:Pi_t_satisfies_part_goodness}
 For $t\in (0,\infty)$,  with probability one, $\Pi_t$ satisfies \ref{itemgoodfinitelymanypoints} of Definition \ref{def:good_point_measure}. 
\item 
\label{item:Pi_satisfies_part_goodness}
 If $\alpha \in (2d,\infty)$, then  with probability one, $\Pi$ satisfies \ref{item:good3_condition_new_points} of Definition \ref{def:good_point_measure}. 
\end{enumerate}
\end{lemma}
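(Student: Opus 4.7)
\textbf{Part (i):} This is essentially deterministic. Since the support of $\Pi_t$ consists of the points $(\xi(z)/r_t^{d/\alpha}, z/r_t)$ with $z\in \Z^d$, and since the map $z\mapsto z/r_t$ is injective, for any $R>0$ one has
\begin{equation*}
\Pi_t\big((0,\infty)\times Q_R\big)\;\leq\;\#\{z\in\Z^d \colon z\in r_t Q_R\}\;<\;\infty.
\end{equation*}
Thus \ref{itemgoodfinitelymanypoints} of Definition~\ref{def:good_point_measure} holds for every realisation of $\xi$.

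\textbf{Part (ii): plan.} I will show that under $\alpha>2d$ the Poisson point process $\Pi$ almost surely satisfies \ref{item:good3_condition_new_points} of Definition~\ref{def:good_point_measure} with the choice
\begin{equation*}
\beta\;\in\;\big(2,\,\tfrac{\alpha}{d}\big),
\end{equation*}
which is non-empty precisely because $\alpha>2d$. Fix $R,C>0$. The strategy is the standard covering/Borel--Cantelli scheme. For each $n\in\N$ set $\varepsilon_n:=2^{-n}$ and let $\{y_{n,1},\dots,y_{n,N_n}\}\subset Q_R$ be a finite cover of $Q_R$ by $\ell^1$-balls of radius $\tfrac12\varepsilon_{n+1}^{\beta}$; clearly we may choose $N_n\leq c_R\,\varepsilon_{n+1}^{-\beta d}$ for some constant $c_R$. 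Define the good event
\begin{equation*}
G_n\;:=\;\Big\{\,\Pi\big((C\varepsilon_n,\infty)\times B(y_{n,i},\tfrac12\varepsilon_{n+1}^{\beta})\big)\geq 1\;\;\text{for every }i=1,\dots,N_n\,\Big\}.
\end{equation*}

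By Lemma~\ref{l:prob_Pi_on_halfopen_times_ball}, the probability that any one of these boxes contains no point of $\Pi$ equals $\exp\!\big(-V_d (C\varepsilon_n)^{-\alpha}(\tfrac12\varepsilon_{n+1}^{\beta})^d\big)=\exp(-c_1\,2^{n\alpha - (n+1)\beta d})$ for a positive constant $c_1$. A union bound gives
\begin{equation*}
\Prob(G_n^{\rm c})\;\leq\;c_R\,2^{(n+1)\beta d}\exp\!\big(-c_1\,2^{n(\alpha-\beta d)-\beta d}\big),
\end{equation*}
and since $\alpha-\beta d>0$ the right-hand side is summable in $n$. By the Borel--Cantelli lemma, almost surely $G_n$ holds for all sufficiently large $n\geq n_0(\omega)$.

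To conclude, I will interpolate between dyadic scales. Set $\varepsilon_{R,C}:=\varepsilon_{n_0}$. For any $\varepsilon\leq\varepsilon_{R,C}$, pick the unique $n\geq n_0$ with $\varepsilon_{n+1}<\varepsilon\leq\varepsilon_n$, and for $y\in Q_R$ pick an index $i$ with $|y-y_{n,i}|\leq \tfrac12\varepsilon_{n+1}^{\beta}$. On $G_n$ there exists $(\tilde f,\tilde y)\in \supp\Pi$ with $\tilde f>C\varepsilon_n\geq C\varepsilon$ and $|\tilde y-y_{n,i}|\leq\tfrac12\varepsilon_{n+1}^{\beta}$, so that
\begin{equation*}
|\tilde y - y|\;\leq\;\varepsilon_{n+1}^{\beta}\;\leq\;\varepsilon^{\beta},
\end{equation*}
which is exactly the content of \ref{item:good3_condition_new_points}.

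\textbf{Main difficulty.} The only genuinely delicate point is lining up the three competing exponents: (a) the number of covering balls grows like $\varepsilon^{-\beta d}$, (b) the intensity in each box scales like $\varepsilon^{\beta d-\alpha}$, and (c) one wants the quantifier over $\varepsilon$ to be uniform (all small $\varepsilon$, not just a sequence). The first two are balanced precisely by requiring $\beta d<\alpha$, i.e.\ the need for some $\beta\in(2,\alpha/d)$, which is where the hypothesis $\alpha>2d$ enters; the third is handled by the dyadic sandwiching above, using that both $\varepsilon\mapsto C\varepsilon$ and $\varepsilon\mapsto\varepsilon^{\beta}$ are monotone so that a slightly stronger statement on the dyadic mesh upgrades to the statement for every small $\varepsilon$.
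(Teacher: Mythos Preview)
Your proof is correct and follows essentially the same approach as the paper: choose $\beta\in(2,\alpha/d)$, cover $Q_R$ by balls of radius comparable to $\varepsilon^\beta$, use Lemma~\ref{l:prob_Pi_on_halfopen_times_ball} together with a union bound and Borel--Cantelli, exploiting $\alpha-\beta d>0$ for summability. The only cosmetic differences are that the paper parametrises the scales by integers $k$ (with $\varepsilon=k^{-1/\beta}$ and covering mesh $1/k$) rather than dyadically, and that you spell out the interpolation between scales and the uniformity in $\varepsilon$ more explicitly than the paper does.
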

\begin{proof}
\ref{item:Pi_t_satisfies_part_goodness}
By construction $\Pi_t$ satisfies \ref{itemgoodfinitelymanypoints} since $\Pi_t\big( (0,\infty)\times Q_R\big)$ is simply the cardinality of
$\{x\in \Z^d\colon |x|\leq R r_t\}$, which is $R^d r_t^d$. 

\ref{item:Pi_satisfies_part_goodness}
Let $\beta \in (2,\frac{\alpha}{d})$. 
First observe that $Q_R$ can be covered by balls $B(z,\frac{1}{k})$ with $z\in \frac1k \Z^d$ and $z\in Q_{R+1}$, i.e., 
\begin{align*}
Q_R \subset \bigcup_{z\in (\frac1k \Z^d) \cap Q_{R+1}} B(z, \tfrac{1}{k}). 
\end{align*}
Then, observe that therefore, with probability one $\Pi$ satisfies \ref{item:good3_condition_new_points} of Definition \ref{def:good_point_measure} if 
\begin{calc}
(replace $\epsilon$ by $k^{-\frac{1}{\beta}}$)
\end{calc}
\begin{align*}
\Prob 
\bigg( 
\bigcup_{R,C \in (0,\infty) \cap \Q} 
\bigcap_{N\in\N} 
\bigcup_{k \ge N} 
\bigcup_{z\in (\frac1k \Z^d) \cap Q_{R+1} }
\, \Big\{ \Pi \cap \big[ [C k^{-\frac{1}{\beta}}, \infty) \times B(z,\tfrac1k) \big] = \emptyset \Big\} \bigg) 
=0. 
\end{align*}
By the Borel--Cantelli Lemma, the above holds if we can show that for any $R,C \in (0,\infty)$, 
\begin{align*}
\sumk \Prob 
\bigg( 
\bigcup_{z\in (\frac1k \Z^d) \cap  Q_{R+1} }
\, \Big\{ \supp \Pi \cap \big[ [C k^{-\frac{1}{\beta}}, \infty) \times B(z,\tfrac1k) \big] = \emptyset \Big\} \bigg) <\infty. 
\end{align*}
Let $R,C>0$. We may assume $R>2$. 
By estimating the probability of the union by the sum of the probabilities, and observing that $\# (\frac1k \Z^d) \cap Q_{R+1} \le (2 k (R+1) +1)^d \le (5 k R)^d$, \begin{calc}
(indeed, use that 
$R+1 \le 2R$ and $4k R +1 \le 5k R$)
\end{calc}
by Lemma~\ref{l:prob_Pi_on_halfopen_times_ball} (with $s= Ck^{-\frac{1}{\beta}}$, $r= \frac1k$, so that $s^{-\alpha} r^d = C^{-\alpha} k^{\frac{\alpha}{\beta} -d}$), 
\begin{align*}
\sumk \Prob 
\bigg( 
\bigcup_{z\in (\frac1k \Z^d) \cap  Q_{R+1} }
\, \Big\{ \supp \Pi \cap \big[ [C k^{-\frac{1}{\beta}}, \infty) \times B(z,\tfrac1k) \big] = \emptyset \Big\} \bigg)
\le \sumk (5kR)^d  \ee^{- V_d C^{-\alpha} k^{\frac{\alpha}{\beta} -d} }. 
\end{align*}
Because $\beta< \frac{\alpha}{d}$, we have $\frac{\alpha}{\beta}>d$ and therefore the above sum is finite. 
\end{proof}

\begin{lemma}
\label{l:formula_Phi_nu_if_finite_optimizer}
Let $\cP \in \cM_\rp^\circ$. Suppose $\nu \in \fF(\cP)$, $\nu \ne 0$ and  $\Psi_\Pi(\nu) = \sup_{\mu \in \fF(\Pi)} \Psi_\cP(\mu)$. 
Let $k\in \N$, $(f_1,y_1),\dots,(f_k,y_k) \in \cP$ be such that $\supp \nu = \{(f_1,y_1),\dots,(f_k,y_k)\}$. 
Then $\Phi_\cP(\nu) = \widetilde \varphi_k(f_1,\dots,f_k)$, where 
\begin{align}
\label{eqn:def_widetilde_varphi}
\widetilde \varphi_k(f_1,\dots,f_k) = \frac{1}{4\theta} \Big(  \sum_{i=1}^{k} f^2_i  - \frac{1}{ k}\Big(\sum_{i=1}^{k}f_i - 2\theta \Big)^2 \Big) . 
\end{align}
\end{lemma}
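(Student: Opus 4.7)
The plan is to combine the maximizer property of $\nu$ with the explicit finite-dimensional optimization carried out in Proposition~\ref{p:AnaPhik}. First I would show that $\nu$ in fact maximizes $\Phi_\cP$ over the finite-dimensional slice of measures
\[
\Big\{\mu = \sum_{i=1}^k w_i \delta_{(f_i,y_i)} : w_i \geq 0,\ \sum_{i=1}^k w_i \leq 1\Big\}.
\]
Indeed, for any such $\mu$ one has $\supp_{\R^d}\mu \subseteq \supp_{\R^d}\nu$, hence $\cD_\cP(\mu) \leq \cD_\cP(\nu)$; combined with the maximizer inequality $\Psi_\cP(\mu) \leq \Psi_\cP(\nu)$ this yields $\Phi_\cP(\mu) \leq \Phi_\cP(\nu)$. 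By \eqref{eqn:max_varphi_k_is_max_Phi} this identifies $\Phi_\cP(\nu) = \varphi_k(f_1,\dots,f_k)$.

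Next I would determine which case of Proposition~\ref{p:AnaPhik} applies. Since the $k$ points $(f_i,y_i)$ are exactly the support of $\nu$, all weights of $\nu$ are strictly positive, and by the uniqueness assertion of Proposition~\ref{p:AnaPhik} these weights must coincide with the unique optimizer described there. In Case~1 with $k_\star < k$ the optimal weights vanish for $i > k_\star$, contradicting positivity; hence if Case~1 applies then $k_\star = k$, and formula \eqref{defphisupt} instantly gives the expression $\widetilde\varphi_k$ of \eqref{eqn:def_widetilde_varphi}.

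The only remaining delicate point — and the main obstacle — is to exclude Case~2 of Proposition~\ref{p:AnaPhik}, namely $\sum_i f_i < 2\theta$: there the unique optimizer has all weights positive but takes the value $\sum_i f_i^2/(4\theta)$, which in general strictly exceeds $\widetilde\varphi_k$. In the context in which this lemma is applied — namely $\cP = \Pi$, which almost surely satisfies condition~\ref{item:good3_condition_new_points} of Definition~\ref{def:good_point_measure} — the bound $\sum_{i=1}^k f_i > 2\theta$ at any $\Psi_\cP$-maximizer is exactly the content of Proposition~\ref{p:exist_n_finiten_of_maxis}~\ref{item:max_are_prob_and_sum_f_i_larger_2theta}, so Case~2 is automatically ruled out and the proof is complete.
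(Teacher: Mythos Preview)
Your overall line of argument matches the paper's: both show that the weights of $\nu$ realise the supremum $\varphi_k(f_1,\dots,f_k)$ and then read off the value from the explicit finite-dimensional analysis. Your justification of $\Phi_\cP(\nu)=\varphi_k$ via the inequality $\cD_\cP(\mu)\le\cD_\cP(\nu)$ is more explicit than the paper's; the paper simply asserts it and then invokes Lemma~\ref{l:optimizing_over_w}\ref{item:zero_is_better} to force $k\min_i f_i+2\theta-\sum_i f_i>0$ (equivalent to your $k_\star=k$) and Lemma~\ref{l:optimizing_over_w}\ref{item:positive_w_i_with_sum_1_by_extra_condition} to obtain $\widetilde\varphi_k$.

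You are right to flag Case~2 as a genuine obstacle: when $\sum_i f_i<2\theta$ the unique maximizer of $\varphi_k$ still has all weights positive but yields $\sum_i f_i^2/(4\theta)\neq\widetilde\varphi_k$, so the stated conclusion fails for such $\cP$ (e.g.\ $\cP=\delta_{(f,y)}$ with $f<2\theta$ and $|y|$ small). The paper's own proof glosses over this --- it applies Lemma~\ref{l:optimizing_over_w}\ref{item:positive_w_i_with_sum_1_by_extra_condition} without checking its first hypothesis $\sum_i f_i\ge2\theta$. However, your proposed remedy does not close the gap for the lemma as stated. First, Proposition~\ref{p:exist_n_finiten_of_maxis}\ref{item:max_are_prob_and_sum_f_i_larger_2theta} requires condition~\ref{item:good3_condition_new_points} of Definition~\ref{def:good_point_measure}, whereas the lemma is phrased for arbitrary $\cP\in\cM_\rp^\circ$ and is invoked in Lemma~\ref{l:at_most_one_finite_max} not only for $\Pi$ but also for $\Pi_t$ and $\Pi_t^{\ssup L}$, which satisfy condition~\ref{itemgoodfinitelymanypoints}, not~\ref{item:good3_condition_new_points}. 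Second, there is a circularity concern: Proposition~\ref{p:exist_n_finiten_of_maxis} is stated for \emph{good} $\cP$, and goodness includes uniqueness of the maximizer in $\fF(\cP)$, which is precisely what Lemma~\ref{l:at_most_one_finite_max} uses the present lemma to establish. (The proof of Proposition~\ref{p:exist_n_finiten_of_maxis}\ref{item:max_are_prob_and_sum_f_i_larger_2theta} happens not to use uniqueness, so the circularity is only formal, but you should not lean on this without comment.)

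In short: your argument is correct where it overlaps with the paper's, and you have put your finger on a real issue that the paper leaves implicit; but invoking Proposition~\ref{p:exist_n_finiten_of_maxis}\ref{item:max_are_prob_and_sum_f_i_larger_2theta} neither proves the lemma in its stated generality nor covers all its applications.
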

\begin{proof}
Observe that (for $\varphi_k$ as in \eqref{eqn:varphi_k_def})
\begin{align*}
\Phi_\cP(\nu) = \varphi_k( f_1,\dots,f_k), \qquad 
\cD_\cP(\nu) = D_0 (y_1,\dots,y_k). 
\end{align*}
By the definition of the support, we have $\nu = \sum_{i=1}^k w_i \delta_{(f_i,y_i)}$ for some $w_1,\dots,w_k \in (0,1]$. 
By Lemma~\ref{l:optimizing_over_w} and~\ref{item:zero_is_better} we may assume that 
$k \min\{f_1,\dots,f_k\} + 2\theta - \sum_{i=1}^k f_i  >0$ (otherwise $w_i=0$ for some $i$). 
Therefore, by ~\ref{item:positive_w_i_with_sum_1_by_extra_condition} of that lemma, it follows that $\Phi_\cP(\nu) = \widetilde \varphi_k(f_1,\dots,f_k)$ (see also \eqref{e:wmax} and \eqref{defphisupt}). 
\end{proof}

\begin{lemma}
\label{l:at_most_one_finite_max}
Let $\alpha \in (0,\infty)$ and $t\in (0,\infty)$.  Recall the definition of $\fF(\cP)$ in \eqref{e:def_fF}. 
\begin{enumerate}
\item 
\label{item:Pi_at_most_one_finite_max}
With probability one, 
$\Psi_\Pi$ possesses at most one maximizer in $\fF(\Pi)$.
\item 
\label{item:Pi_t_at_most_one_finite_max}
With probability one, 
$\Psi_{\Pi_t}$ possesses at most one maximizer in $\fF(\Pi_t)$. 
Moreover, for $L>0$ and $\Pi_t^{\ssup{L}} =\1_{[L^{-1},\infty) \times Q_L} \Pi_t $ (see also \eqref{eqn:ssup_L}), the function 
 $\Psi_{\Pi_t^{\ssup{L}}}$ possesses at most one maximizer in $\fF(\Pi_t)$.
\end{enumerate}
\end{lemma}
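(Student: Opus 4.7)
\medskip\noindent\textbf{Proof plan.}
The strategy is to reduce the uniqueness statement to a generic-position condition on the values of $\cP$, and then to verify that condition by a countable union of null events. By Lemma~\ref{l:formula_Phi_nu_if_finite_optimizer} together with the uniqueness clause of Proposition~\ref{p:AnaPhik}, any maximizer $\nu=\sum_{i=1}^k w_i \delta_{(f_i,y_i)} \in \fF(\cP)$ of $\Psi_\cP$ is determined by its support, since the weights $(w_i)$ are forced and $\Phi_\cP(\nu)=\widetilde\varphi_k(f_1,\dots,f_k)$. Hence it suffices to show that, almost surely, no two distinct nonempty finite subsets $T_1\neq T_2$ of $\supp\cP$ satisfy
\begin{equation*}
\widetilde\varphi_{|T_1|}\bigl((f_i)_{i\in T_1}\bigr)-q\,D_0\bigl((y_i)_{i\in T_1}\bigr)=\widetilde\varphi_{|T_2|}\bigl((f_i)_{i\in T_2}\bigr)-q\,D_0\bigl((y_i)_{i\in T_2}\bigr),
\end{equation*}
and that no nonempty $T_1 \subset \supp\cP$ satisfies $\widetilde\varphi_{|T_1|}\bigl((f_i)_{i\in T_1}\bigr)=q\,D_0\bigl((y_i)_{i\in T_1}\bigr)$; the latter rules out a possible tie with the zero measure, whose $\Psi_\cP$-value is $0$.

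Next I would produce a countable enumeration $(F_n,Y_n)_{n\in\N}$ of $\supp\cP$. For $\Pi_t$ and $\Pi_t^{\ssup L}$ this is tautological via $\Z^d$ (respectively its restriction to those $z$ with $z/r_t\in Q_L$ and $\xi(z)/r_t^{d/\alpha}\geq L^{-1}$). For $\Pi$, I would partition $\fE$ into countably many disjoint relatively compact pieces of finite intensity and, on each piece, invoke the classical description of a PPP: conditionally on the a.s.\ finite count, the points are i.i.d.\ with density proportional to $f^{-1-\alpha}\mathbf 1$. Concatenating the enumerations within each piece yields the desired global enumeration, and the collection of pairs $(T_1,T_2)$ of distinct finite subsets of $\N$ is then countable, so it suffices to show that each fixed pair produces a null event.

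For such a fixed pair, choose $i_0\in T_1\triangle T_2$, without loss of generality $i_0\in T_1\setminus T_2$, and condition on every coordinate except $F_{i_0}$. The right-hand side and the term $D_0((Y_i)_{i\in T_1})$ are then constants, while the left-hand side depends on $F_{i_0}$ only through $\widetilde\varphi_{|T_1|}$. A direct inspection of \eqref{eqn:def_widetilde_varphi} shows that, as a polynomial in $F_{i_0}$ alone, it is nonconstant: quadratic with leading coefficient $(|T_1|-1)/(4\theta |T_1|)$ when $|T_1|\geq 2$, and linear with slope $1$ when $|T_1|=1$ (since $\widetilde\varphi_1(f)=f-\theta$). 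Consequently the equation has at most two solutions in $F_{i_0}$, and since the conditional law of $F_{i_0}$ admits a Lebesgue density (Pareto for $\Pi_t$ and $\Pi_t^{\ssup L}$, and proportional to $f^{-1-\alpha}$ restricted to the appropriate piece for $\Pi$), the event has conditional, hence unconditional, probability zero. Summing over the countable family of pairs, together with the analogous argument for the tie with the zero measure, completes the proof.

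The main obstacle is producing a labeling of $\supp\Pi$ for which the conditional distribution of each $F_{i_0}$ given the remaining data is manifestly absolutely continuous; the piecewise Poisson construction above is designed precisely to deliver this, because within a single piece the points form an i.i.d.\ sample with an absolutely continuous density, so Fubini reduces the equality event to a one-dimensional polynomial-root set. The $\Pi_t$ and $\Pi_t^{\ssup L}$ versions are analogous but simpler, since the spatial support is deterministic and the $F$-coordinates $(\xi(z)/r_t^{d/\alpha})_{z\in\Z^d}$ are literally i.i.d.\ Pareto.
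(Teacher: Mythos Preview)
Your proposal is correct and follows essentially the same route as the paper: reduce via Lemma~\ref{l:formula_Phi_nu_if_finite_optimizer} and Proposition~\ref{p:AnaPhik} to an equality of $\widetilde\varphi$-values over two distinct finite supports, enumerate the atoms, pick an index in the symmetric difference, and condition on everything else to obtain a nontrivial quadratic (or linear, when $|T_1|=1$) equation in a single variable with absolutely continuous law. The only cosmetic difference is in the enumeration step: rather than partitioning $\fE$ and concatenating, the paper simply restricts both candidate supports to a common box $S_L=[L^{-1},\infty)\times Q_L$ (any two finite supports lie in some $S_L$), represents $\1_{S_L}\Pi$ as $\sum_{j=1}^N\delta_{(F_j,Y_j)}$ with $N$ Poisson and $(F_j,Y_j)_{j\in\N}$ an infinite i.i.d.\ sequence, and then works with fixed indices in $\N$; this sidesteps the mild bookkeeping issue that a concatenated global enumeration has indices whose meaning depends on the random piece-counts.
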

\begin{proof}
\ref{item:Pi_at_most_one_finite_max}
We show that the event that there exist $\mu_1,\mu_2 \in \fF(\Pi)$ with $\mu_1 \ne \mu_2$ and $ \Psi_{\Pi}(\mu_1)=\Psi_{\Pi}(\mu_2)=\sup_{\mu\in \fF(\Pi)} \Psi_\Pi(\mu) $, has probability zero. 
For this it suffices to show that $\Prob(\cN_L)=0$ for any $L>0$, where, with $S_L:= [L^{-1},\infty) \times Q_L$, 
\begin{align*}
\cN_L = \Big\{
\exists \mu_1,\mu_2\in \fF(\Pi) 
& \colon\, 
\mu_1\neq \mu_2 , 
\ \supp \mu_i \subset S_L, 
\Psi_{\Pi}(\mu_1)=\Psi_{\Pi}(\mu_2)=\sup_{\mu\in \fF(\Pi)} \Psi_\Pi(\mu) 
\Big\}. 
\end{align*}
Let $L>0$. 
We give an explicit almost sure description of $\Pi$ on $S_L$. 
Let us write $\Theta$ for the intensity measure of $\Pi$, i.e., $\Theta({\rm d}(f,y))=\alpha f^{-(1+\alpha)}\,{\rm d} f\otimes{\rm d}y$. 
Let $N$ be a Poisson distributed variable with parameter $m_L$, where $m_L = \Theta(S_L)$. 
Let $(F_j,Y_j)_{j\in\N}$ be i.i.d.\ random variables that are independent from $N$ and whose law is given by $\frac{1}{m_L} \1_{S_L} \Theta$. 
Then, 
\begin{align*}
\1_{S_L} \Pi \ \mbox{ is equal in distribution to } \  \sum_{j=1}^{N} \delta_{(F_j,Y_j)} . 
\end{align*}
Without loss of generality, we may assume $\1_{S_L} \Pi = \sum_{j=1}^{N} \delta_{(F_j,Y_j)}$. 
Then, by Lemma~\ref{l:formula_Phi_nu_if_finite_optimizer} $\cN_L$ is included in the event 
(we make abuse of notation and for $m=0$ we understand $ \widetilde \varphi_m(F_{j_1},\dots, F_{j_m})$ and $D_0( Y_{j_1},\dots,  Y_{j_m})$ to be equal to $0$)
\begin{calc}
\begin{align*}
\Big\{ \exists k,m\in \N_0 \
& \exists (f_1,y_1),\dots,(f_k,y_k), (g_1,z_1),\dots, (g_m,z_m) \in S_L\cap \supp \Pi \colon \\
& \{(f_1,y_1),\dots,(f_k,y_m)\} \ne \{(g_1,z_1),\dots, (g_m,z_m)\} , \\
& \widetilde \varphi_k(f_1,\dots,f_k) = \widetilde \varphi_m(g_1,\dots,g_m) + D_0(y_1,\dots,y_k) - D_0(z_1,\dots,z_m) \Big\},
\end{align*}
 therefore in the event
\end{calc}
\begin{align}
\label{eqn:event_equality_maxim}
\left\{
\begin{aligned}
&  \mbox{There exist } k,m\in \N_0, \mbox{ distinct } i_1,\dots,i_k,i_* \mbox{ and distinct } j_1,\dots,j_m \mbox{ in } \{1,\dots,N\} \\
& \quad \mbox{such that } i_* \notin \{j_1,\dots,j_m\} \mbox{ and } \\
& \quad 
 \widetilde \varphi_{k+1}(F_{i_1},\dots,F_{i_k},F_{i_*}) = \widetilde \varphi_m(F_{j_1},\dots, F_{j_m}) + D_0( Y_{i_1},\dots, Y_{i_k},Y_{i_*}) - D_0( Y_{j_1},\dots,  Y_{j_m}) 
 \end{aligned} \right\}. 
\end{align}
The above event is included in the one where we replace $\{1,\dots,N\}$ by $\N$. 
\begin{calc}
Therefore, it suffices to let $k,m\in \N_0$, take distinct $i_1,\dots,i_k,i_*$ and distinct $j_1,\dots,j_m$ in $\N$ such that $i_* \notin \{j_1,\dots,j_m\}$ and show that 
\begin{align*}
\Prob \Big(  \widetilde \varphi_{k+1}(F_{i_1},\dots,F_{i_k},F_{i_*}) = \widetilde \varphi_m(F_{j_1},\dots, F_{j_m}) + D_0( Y_{i_1},\dots, Y_{i_k},Y_{i_*}) - D_0( Y_{j_1},\dots,  Y_{j_m})  \Big)=0. 
\end{align*}
\end{calc}
Then, it follows that \eqref{eqn:event_equality_maxim} has probability zero by Lemma~\ref{l:prob_tilde_varphi_and_dist_zero}. 

\ref{item:Pi_t_at_most_one_finite_max} Follows similar as the above argument: Besides replacing $\Pi$ by $\Pi_t$, replace  $\Theta$ by 
the product measure of $\alpha f^{-(1+\alpha)} \1_{[r_t^{-d/\alpha},\infty)}(f) \dd f$ and $\sum_{z\in r_t^{-1} \Z^d} \delta_z$, and $N$ by $\#(Q_L \cap \Z^d)$. Then again, one can show that the event \eqref{eqn:event_equality_maxim} has zero probability by applying Lemma~\ref{l:prob_tilde_varphi_and_dist_zero}. From this, the ``moreover'' part immediately follows too. 
\end{proof}

\begin{lemma}
\label{l:prob_tilde_varphi_and_dist_zero}
Suppose that $F_1,F_2,\dots$ are i.i.d.\ random variables with values in $(0,\infty)$ whose law has a density with respect to the Lebesgue measure. 
Let $Y_1,Y_2,\dots$ be i.i.d.\ random variables with values in $\R^d$. 
Let $k,m\in \N_0$. 
Suppose that $i_1,\dots,i_k,i_*$ are distinct element of $\N$ and $j_1,\dots,j_m$ are distinct elements in $\N$ such that $i_* \notin \{j_1,\dots,j_m\}$. Then
\begin{align}
\label{eqn:probability_of_equality_eq_zero}
\Prob \Big(  \widetilde \varphi_{k+1}(F_{i_1},\dots,F_{i_k},F_{i_*}) = \widetilde \varphi_m(F_{j_1},\dots, F_{j_m}) + D_0( Y_{i_1},\dots, Y_{i_k},Y_{i_*}) - D_0( Y_{j_1},\dots,  Y_{j_m})  \Big)=0. 
\end{align}
\end{lemma}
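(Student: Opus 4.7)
The strategy is to single out the variable $F_{i_*}$, which (by the distinctness hypotheses) does not appear on the right-hand side of the equation inside the probability in \eqref{eqn:probability_of_equality_eq_zero}, and to show that, as a function of $F_{i_*}$, the left-hand side is a non-constant polynomial. Conditioning on all the remaining variables, the equation then reduces to requiring $F_{i_*}$ to take one of finitely many explicit values, which has probability $0$ since $F_{i_*}$ has a Lebesgue density.

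More precisely, let $\cG$ be the $\sigma$-algebra generated by $(F_\ell)_{\ell\neq i_*}$ and $(Y_\ell)_{\ell\in\N}$. Conditionally on $\cG$, the right-hand side of the equation inside the probability in \eqref{eqn:probability_of_equality_eq_zero} is a deterministic real number $v$, while the left-hand side, evaluated at $F_{i_*}=x$, equals
\[
P(x) := \widetilde\varphi_{k+1}(F_{i_1},\dots,F_{i_k},x).
\]
Setting $S:=\sum_{\ell=1}^k F_{i_\ell}$ (with $S=0$ and empty sums interpreted as $0$ when $k=0$), one computes from \eqref{eqn:def_widetilde_varphi} that
\[
P(x)=\frac{1}{4\theta}\Bigl(\sum_{\ell=1}^k F_{i_\ell}^2+x^2-\frac{1}{k+1}(S+x-2\theta)^2\Bigr)=\frac{1}{4\theta}\Bigl(\frac{k}{k+1}\,x^2-\frac{2(S-2\theta)}{k+1}\,x\Bigr)+C,
\]
where $C$ is $\cG$-measurable. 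Thus $P$ is a polynomial of degree $2$ in $x$ when $k\geq 1$, and of degree $1$ (namely $P(x)=x-\theta$) when $k=0$. In either case $P$ is non-constant, so the set $\{x\in(0,\infty):P(x)=v\}$ is finite, of size at most $\max\{1,2\}=2$.

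Since $F_{i_*}$ is independent of $\cG$ and its law is absolutely continuous with respect to Lebesgue measure, it assigns measure $0$ to any finite set. Therefore
\[
\Prob\bigl(P(F_{i_*})=v\,\big|\,\cG\bigr)=0\qquad\text{almost surely,}
\]
and taking expectations yields \eqref{eqn:probability_of_equality_eq_zero}. The only mild subtlety is to verify that $P$ is genuinely non-constant (i.e.\ that the coefficient $\tfrac{1}{4\theta}\cdot\tfrac{k}{k+1}$ of $x^2$ for $k\geq 1$, and the coefficient $1$ of $x$ for $k=0$, are nonzero), which is immediate; no further complication is expected.
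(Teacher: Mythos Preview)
Your proof is correct and follows essentially the same approach as the paper: condition on all variables except $F_{i_*}$, observe that the equation becomes a non-constant polynomial (quadratic for $k\geq 1$, linear for $k=0$) in $F_{i_*}$, and conclude using that $F_{i_*}$ has a Lebesgue density. Your computation of the leading coefficient $\tfrac{k}{4\theta(k+1)}$ is in fact cleaner than the paper's (which contains a minor typo in its $A_k$).
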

\begin{proof}
We explain the following argument in more detail below. 
If we condition the above event in \eqref{eqn:probability_of_equality_eq_zero} on all variables except $F_{i_*}$, that is, on  $F_{i_1},\dots, F_{i_k}, F_{j_1},\dots, F_{j_m}$, $Y_{i_1},\dots,Y_{i_k}, Y_{j_1},\dots,Y_{j_m}$ and $Y_{i_*}$, then by the formula for $\widetilde \varphi_k$ \eqref{eqn:def_widetilde_varphi},  there exist $C_1,C_2,C_3\in \R$, $C_1 \ne 0$ or $C_2 \ne 0$ such that the event in the probability of \eqref{eqn:probability_of_equality_eq_zero} becomes
\begin{align*}
C_1 F_{i_*}^2 + C_2 F_{i_*} + C_3 =0. 
\end{align*}
The probability of such event is equal to zero as $F_{i_*}$ has a density with respect to the Lebesgue measure. 

Indeed, observe that $ \widetilde \varphi_1(f_{i_*}) 
\begin{calc}
= \frac{1}{4\theta} (4\theta f_{i_*} - 4 \theta^2) 
\end{calc}
= f_{i_*} -  \theta = A_0 f_{i_*}^2 + B_0 f_{i_*} - C_0$, for $A_0=0$, $B_0=1$ and $C_0 = -\theta$, and for $k\in \N$, 
\begin{align*}
A_k& = \frac{1}{4\theta} (1- \frac{4\theta^2}{k+1}), \qquad 
B_k = B_k(f_1,\dots,f_k) = \frac{2}{4\theta(k+1)}\Big( \sum_{i=1}^k f_i - 2\theta \Big), \\
C_k& = C_k(f_1,\dots,f_k) = \frac{1}{4\theta} \Big( \sum_{i=1}^k f_i^2 - \frac{1}{k+1} \Big(\sum_{i=1}^k f_i - 2\theta\Big)^2 \Big),
\end{align*}
that 
\begin{align*}
\widetilde \varphi_{k+1}(f_1,\dots,f_{k},f_{i_*}) 
= 
A_k f_{i_*}^2 + B_k f_{i_*} + C_k. 
\end{align*}
So that for $\tilde C_k = C_k - ( \widetilde \varphi_m( f_{j_1},\dots,  f_{j_m}) + D_0(  y_{i_1},\dots,  y_{i_k}, y_{i_*}) - D_0(  y_{j_1},\dots,   y_{j_m})  \Big))$, we have 
\begin{align}
& \Prob
\left( 
\begin{aligned}
 \widetilde \varphi_{k+1}(F_{i_1},\dots,F_{i_k},F_{i_*}) = \widetilde \varphi_m(F_{j_1},\dots, F_{j_m}) \\
+ D_0( Y_{i_1},\dots, Y_{i_k},Y_{i_*}) - D_0( Y_{j_1},\dots,  Y_{j_m})  
\end{aligned} \ 
 \bigg|  \overline{F} = \overline{f}, \overline{Y} = \overline{y} 
 \right)
 = \Prob( A_k F_{i_*}^2 + B_k F_{i_*} + \tilde C_k =0),
 \label{eqn:prob_parabolic_eq}
\end{align}
where 
\begin{align*}
\overline F
&= (F_{i_1}, \dots, F_{i_k}, F_{j_1}, \dots, F_{j_m}) , 
& & 
\overline{f} 
=(f_{i_1}, \dots, f_{i_k}, f_{j_1}, \dots, f_{j_m}), \\
\overline{Y} 
&=
( Y_{i_1}, \dots,  Y_{i_k},Y_{i_*},  Y_{j_1}, \dots,  Y_{j_m}), 
& &
\overline{y} 
=(y_{i_1}, \dots, y_{i_k},y_{i_*}, y_{j_1}, \dots, y_{j_m}). 
\end{align*}
As the law of $F_{i_*}$ has a density with respect to the Lebesgue measure, the right-hand side (and thus the left-hand side) of \eqref{eqn:prob_parabolic_eq} equals zero. 
\end{proof}

For the proof of Lemma~\ref{l:Pi_satisfies_many_points}, we use the following lemma. 

\begin{lemma}\label{p:TSPinPPP}
Let $\lambda \in (0,\infty)$. 
Let $\zeta$ be a PPP on $(0,1)^d$ with intensity $\lambda$. 
 If $k \leq (\lambda/4)^{1/d}$, then
 \begin{align*}
 \Prob\Big(\exists \text{ distinct } Z_1, \ldots, Z_k \in \zeta   \colon D_0(Z_1, \ldots, Z_k) < d \Big) 
 \geq 1- \exp \left( - \Big(\frac{\lambda}{4} \Big)^{\frac1d} \right). 
 \end{align*}
 \end{lemma}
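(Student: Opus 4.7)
The plan is to find, with high probability, a cluster of $k$ Poisson points inside a narrow slab aligned with one coordinate axis, so that visiting them in the order of that coordinate gives a short path from the origin through all of them. Concretely I would take
\[
A \;:=\; (0, 1/k)^{d-1} \times (0, 1) \;\subset\; (0,1)^d,
\]
so that $N_A := \zeta(A) \sim \Poi(\mu)$ with $\mu := \lambda / k^{d-1}$, and observe that the hypothesis $k \leq (\lambda/4)^{1/d}$ gives $\mu/k = \lambda / k^d \geq 4$.

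For the probabilistic estimate I would apply the sharp Chernoff bound for Poisson tails,
\[
\Prob( N_A < k ) \;\leq\; \Prob(N_A \leq k) \;\leq\; \exp\bigl(-k\,\psi(\mu/k)\bigr) \qquad \text{with } \psi(x) := x - 1 - \log x.
\]
Setting $K := (\lambda/4)^{1/d}$ and $u := K/k \geq 1$, so that $\mu/k = 4 u^d$ and $k = K/u$, the desired inequality $k\,\psi(\mu/k) \geq K$ reduces after multiplying by $u/K$ to the elementary estimate
\[
h(u) \;:=\; 4 u^d - u - 1 - \log 4 - d \log u \;\geq\; 0 \qquad \text{for all } u \geq 1,\ d \geq 1,
\]
which I would verify from $h(1) = 2 - \log 4 > 0$ together with $h'(u) = 4 d u^{d-1} - 1 - d/u \geq 4d - 1 - d = 3d - 1 > 0$ for all $u \geq 1$. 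This yields $\Prob(N_A \geq k) \geq 1 - e^{-K}$.

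On the event $\{N_A \geq k\}$ I would pick any $k$ Poisson points of $\zeta \cap A$, relabel them $Z_{(1)}, \ldots, Z_{(k)}$ so that their last coordinates are increasing (all distinct almost surely), and bound the length of the candidate path $\sigma = (Z_{(0)}, Z_{(1)}, \ldots, Z_{(k)})$, $Z_{(0)} := 0$, coordinate by coordinate:
\[
\sum_{i=1}^k |Z_{(i)} - Z_{(i-1)}| \;=\; \sum_{j=1}^d \sum_{i=1}^k \bigl|Z_{(i)}^{(j)} - Z_{(i-1)}^{(j)}\bigr|.
\]
For $j = d$ the inner sum telescopes to $Z_{(k)}^{(d)} < 1$ almost surely. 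For each $j < d$, every entry $Z_{(i)}^{(j)}$ lies in $[0, 1/k)$, so each summand is bounded by $\max_i Z_{(i)}^{(j)} < 1/k$, and the inner sum is therefore at most $k \cdot \max_i Z_{(i)}^{(j)} < 1$ almost surely. Adding these contributions yields $D_0(Z_{(1)}, \ldots, Z_{(k)}) < 1 + (d-1) = d$ almost surely, as required.

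The main obstacle is calibrating the Poisson tail so as to recover the stated exponent $(\lambda/4)^{1/d}$ uniformly in $k \leq K$: a crude sub-Gaussian Chernoff bound of the form $\exp(-\mu/8)$ only handles the regime where $k$ is comparable to $K$ and gives the wrong exponent when $k \ll K$. It is really the sharp Poisson rate function $\psi$, combined with the monotonicity of $h$ on $[1,\infty)$, that delivers a clean bound throughout the full admissible range.
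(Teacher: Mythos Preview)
Your proof is correct and complete, but it takes a genuinely different route from the paper's.

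The paper constructs the $k$ points greedily: starting from $Y_0=0$, it lets $R_{i+1}$ be the smallest $r$ with $\omega\bigl(Y_i+(0,r]^d\bigr)>0$ and $Y_{i+1}$ the corresponding Poisson point. The key observation is that the $\lambda R_i^d$ are i.i.d.\ $\Exp(1)$, so $\{Y_1,\dots,Y_k\}\subset(0,1)^d$ and $\sum|Y_i-Y_{i-1}|\le d$ as soon as $\sum_{i=1}^k R_i<1$, and the failure probability is controlled by a single exponential Markov bound on $\sum \lambda R_i^d$. Your approach instead confines all points to a thin slab $(0,1/k)^{d-1}\times(0,1)$, reduces the question to a Poisson count $N_A\ge k$, and uses the sharp Chernoff bound $\Prob(N_A\le k)\le\exp(-k\psi(\mu/k))$ together with a short calculus check to recover the exponent $(\lambda/4)^{1/d}$; the path bound then follows by telescoping in the long coordinate and summing trivially in the others.

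What each buys: the paper's nearest-neighbour chain avoids any slab geometry and is dimension-agnostic in the construction, but pays with a less transparent reduction (one has to verify that $\lambda R_i^d\sim\Exp(1)$ and juggle several inequalities at the end). Your slab argument is more elementary and makes the deterministic path bound immediate; the price is the auxiliary inequality $h(u)\ge 0$, which you handle cleanly. Both are equally valid; your version would be perfectly acceptable as a replacement.
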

 \begin{proof}
 Let $\omega$ be a PPP on $\R^d$ with intensity $\lambda$.
In an almost sure and inductive sense we define sequences $(R_i)_{i\in\N}$ in $(0,\infty)$ and $(Y_i)_{i\in\N}$ in $[0,\infty)^d$ by setting $R_0=0$ and $Y_0=0$ and (on the probability one set such that the following infima are finite)
\begin{align*}
 R_{i+1} := \inf \left\{r> 0\colon\, \omega( Y_i + (0,r]^d ) >0 \right\} \;, 
\end{align*}
and by letting $Y_{i+1}\in [0,\infty)^d$ be the unique point in $\supp \omega \cap  \left( Y_i + (0,R_{i+1}]^d \right)$ (see also Figure~\ref{fig:choices_Y_and_R}). 
\begin{figure}[H]
		\includegraphics[width=0.3\textwidth]{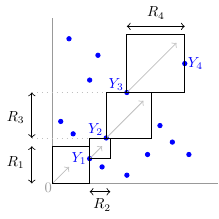}  
	\caption{Illustration of choosing $R_1,R_2,R_3,R_4$ and $Y_1,Y_2,Y_3,Y_4$.}
	\label{fig:choices_Y_and_R}
\end{figure}
Observe that 
\begin{align*}
 \Prob\Big(\exists \text{ distinct } Z_1, \ldots, Z_k \in \zeta   \colon D_0(Z_1, \ldots, Z_k) < d \Big) 
 \ge 
 \Prob\Big( 
 \{Y_1,\dots,Y_k\} \subset (0,1)^d , \sum_{i=1}^k |Y_i - Y_{i-1} | \le d
 \Big) 
\end{align*}
Note that  $Y_i \in Q_{R_1 + \cdots + R_i}$,
 $|Y_i - Y_{i-1}| \leq d R_i$ and $(R_1 + \cdots + R_k)^d \leq k^{d-1} (R_1^d + \cdots + R_k^d)$.
 Thus
 \begin{align*}
 \Prob \left( \{Y_1,\dots,Y_k\} \not \subset (0,1)^d \text{ or } \sum_{i=1}^k |Y_i - Y_{i-1} | \geq d \right)
 \leq \Prob \left( \sum_{i=1}^k R_i \geq 1\right)
 \leq \Prob \left( \sum_{i=1}^k \lambda R_i^d \geq \frac{\lambda}{k^{d-1}} \right).
 \end{align*}
 On the other hand, the random variables $\lambda R_i^d$ are i.i.d.\ Exp($1$). 
 \begin{calc}
 Indeed, we have $\Prob(\lambda R_1^d < t) = \Prob (\omega ( (0, (\frac{t}{\lambda})^{\frac1d} )^d )=0) = \exp ( - \lambda \frac{t}{\lambda}) = \exp(-t)$ for all $t\ge 0$. 
 \end{calc}
 Using $\E[ \ee^{\frac12 (\lambda R_1^d-2)} ] = 2\ee^{-1} < 1$,
  $\lambda k^{1-d} - 2 k \ge \frac{\lambda}{2} k^{1-d}$, 
$\frac{\lambda}{4 } k^{1-d} \ge (\frac{\lambda}{4})^{\frac1d}$
  and the Markov inequality, we obtain
 \begin{align*}
 \Prob \left( \sum_{i=1}^k \lambda R^d_i \geq \lambda k^{1-d} \right) \leq 
 \Prob \left( \sum_{i=1}^k (\lambda R^d_i-2) \geq \frac{\lambda}{2} k^{1-d} \right)
 \leq \ee^{ -\frac{\lambda}{4 }k^{1-d} } \E[\ee^{\frac12(\lambda R_1^d-2)}]^k
 < \ee^{ -\frac{\lambda}{4 } k^{1-d} }
 \le \ee^{- (\frac{\lambda}{4})^{1/d} }. 
 \end{align*}
\end{proof}

\begin{proof}[Proof of Lemma~\ref{l:Pi_satisfies_many_points}]
Let $\delta>0$. 
For $n,k\in\N$ let 
\begin{align*}
\cQ_n & := \left[\tfrac{1}{n},\infty \right) \times Q_{\frac{\delta}{d}}, \qquad \lambda_n := (\tfrac{\delta}{d})^d n^\alpha, \\
\cE_{n,k} & := 
\Big\{\exists \text{ distinct } (f_1, y_1),\dots, (f_k,y_k) \in \supp \Pi \cap \cQ_n	 \text{ such that }D_0(y_1, \ldots, y_{k}) <  \delta \Big\}.
\end{align*}
Then $\Pi(\cQ_n)$ is Poisson distributed with parameter $\lambda_n$. 
For $n\in\N$ let $k_n := \lceil \lambda_n^{\frac1d}  \rceil$. 
Then on the event $\cE_{n,k_n}$ we have 
\begin{align*}
\sum_{i=1}^{k_n} f_i \geq \frac{k_n}{n} \ge \frac{\delta}{d} n^{\frac{\alpha}{d}-1},
\end{align*} 
which is larger than $2\theta$ for sufficiently large $n$. 

Hence, for such large $n$ we have $\Prob( \Pi \mbox{ satisfies } \eqref{eqn:many_points} ) \ge \Prob(\cE_{n,k_n})$ and 
so it suffices to show 
\begin{align}
\label{eqn:prob_E_n_k_n_to_1}
\Prob(\mathcal E_{n,k_n}) \xrightarrow{n\to \infty}  1. 
\end{align}
Note that the projection of $\1_{\cQ_n} \Pi $ onto $(0, \delta/d)^d$ is a PPP on $(0,\delta/d)^d$ with intensity $\lambda_n (\delta/d)^{-d}$ (because $\Pi(\cQ_n)$ is  Poisson distributed with parameter $\lambda_n$). 
Therefore, for a PPP $\zeta$ on $(0,1)^d$ with intensity $\lambda_n$, we have 
\begin{align*}
\Prob(\mathcal E_{n,k_n}) 
= \P\Big(\exists \text{ distinct } Z_1, \ldots, Z_{k_\delta} \in \zeta   \colon D_0(Z_1, \ldots, Z_{k_\delta}) < d \Big) . 
\end{align*}
Therefore, by applying Lemma~\ref{p:TSPinPPP}, we conclude \eqref{eqn:prob_E_n_k_n_to_1}. 
\end{proof}


\section{Proof of Theorem \ref{thm:variational_formula1}}
\label{prooofthvarfor}

\noindent In the present section we will prove Theorem \ref{thm:variational_formula1}
subject to Proposition \ref{lowerboundrand} and Proposition \ref{p:uppbound} below, whose proofs are postponed to Sections \ref{sec:LowBound} and \ref{sec:UppBound}, respectively.

In some sense, Proposition \ref{lowerboundrand} gives us the lower bound of Theorem \ref{thm:variational_formula1}~\ref{item:2d_part}~\ref{i:law_W_convergence} whereas Proposition \ref{p:uppbound} gives us the corresponding  upper bound, as well as Theorem~\ref{thm:variational_formula1}~\ref{item:2d_part}~\ref{i:law_W_convergence}.
 
Our strategy is the following. We first need to \lq compactify\rq\ the partition function, i.e., to show that the random walk in the partition function can be restricted to some large box with a diameter on the scale $r_t$. This is done in Proposition~\ref{p:uppbound}~\ref{item:compactification} in the sense of a convergence in distribution. Furthermore, we derive upper (in Proposition~\ref{p:uppbound}~\ref{item:upper_bound_compactified_Z_n}) and lower  (in Proposition~\ref{lowerboundrand}) bounds for the compactified partition function that lead to the right limit, the variational formula $\Xi$. Finally we need to upper bound the compactified partition function with $W_t$ outside a neighbourhood of the maximizer against something that has a strictly smaller exponential rate. Here the stability of the variational formula from Theorem~\ref{thm:variational_formula_deterministic}(c) will be crucial.
 
The upper and lower bounds for the compactified partition function are proved even in the almost-sure sense with respect to $\xi$, using the Skorohod embedding. That is, we do not  work with a fixed trajectory $t\mapsto Z_t^{\xi,\beta}$ for a given realization of $\xi$, but  with a sequence of realizations that are constructed jointly on one probability space. For this, we fix a sequence of times $(t_n)_{n\in\N}$. Since this construction is used several times in the paper, we state it in the following remark.

\begin{remark}[Skorohod embedding]\label{skohorod-embedding}
Let $(t_n)_{n\in \N}$ be a strictly increasing sequence in $(0,\infty)$ such that $ t_n \to \infty$. 
By Skorohod's representation theorem (see, e.g.,  \cite[Theorem 1.6.7]{Bi99}) and Lemma~\ref{l:P(t)toPi} we can define on the same probability space
a sequence $(\bPi_n)_{n\in\N}$ of point processes and a Poisson point process $\bPi$ on $(0,\infty)\times \R^d$ of intensity  $\alpha f^{-1-\alpha}  \dd f \otimes \dd y$ 
such that $\bPi_n$ is the same in distribution as $\Pi_{t_n}$ (see \eqref{Pitdef}) for every $n\in\N$, and $\bPi$ is the same in distribution as $\Pi$, and 
\begin{align*}
\bPi_n \rightarrow \bPi \quad \mbox{ almost surely in } \cM_\rp^\circ. 
\end{align*}
Without loss of generality we may assume that
\begin{equation}
\label{eqn:Pin}
\bPi_n = \sum_{z\in \Z^d} \delta_{\big(\frac{\xi_n(z)}{r_{t_n}^{d/\alpha}}, \frac{z}{r_{t_n}} \big)}, 
\end{equation}
for some random variables $\xi_n(z)$ which are the same in distribution as $\xi(z)$, for any $n\in\N$ and $z\in \Z^d$. 
Fix a metric $\fd$ on $\cW$ that is compatible with the vague topology and write $B(\nu,\delta) =\{ \mu \in \cW : \fd(\nu,\mu) <\delta\}$ for $\nu \in \cW$ and $\delta>0$. 
We introduce the following notation: 
\begin{equation}
\label{simpli}
\begin{aligned}
& \br_n = r_{t_n}, 
\qquad
\bgamma_n=\br_n \log t_n, \\
& \bZ_n=Z_{t_n}^{\xi_n,\beta_{t_n}},
\qquad \bH_n(X)=H_{t_n}^{\xi_n,\beta_{t_n}} (X), 
 \qquad \pmb{\P}_n = \P_{t_n}^{\ssup{\xi_n}}, 
\qquad \bW_n = W_{t_n}^{\xi_n, X}, 
\end{aligned}
\end{equation}
and for $R,\delta>0$ and $n\in\N$ 
\begin{align}
\label{eqn:def_bZ_n_R_+_-}
\bZ_n^{R,-}
&=\E\Big[\ee^{\bH_n(X)}\ \1\Big\{ \max_{s\in[0,t_n]}|X_s|\leq R \br_n\Big\}\Big]\qquad\mbox{and}\qquad \bZ_n^{R,+}=\bZ_n-\bZ_n^{R,-}, \\
\label{eqn:def_bZ_n_R_-_delta}
\bZ_n^{R,-,\delta}
& = \E\Big[\ee^{\bH_n(X)}\, \1\{\fd(\bW_n,\mu^*) \ge \delta \}\, \1\Big\{ \max_{s\in[0,t_n]}|X_s|\leq R \br_n\Big\} \Big]. 
\end{align}
Recall \eqref{Xidef}. For a good point measure $\cP$ (see Definition~\ref{def:good_point_measure}) on $(0,\infty)\times \R^d$ and $\nu^* \in \fF(\cP)$ the unique maximizer of $\Psi_\cP$ (the existence is shown in Theorem~\ref{thm:variational_formula_deterministic}) so that $\Xi(\cP) = \Psi_\cP(\nu^*)$, we define
\begin{align}
\label{eqn:def_Xi_delta}
\Xi^\delta(\cP) := \sup_{\nu \in \cW : \fd(\nu,\nu^*) \ge \delta } \Psi_\cP(\nu). 
\end{align}
For the probability measure on the space where the $\bPi$ and $\bPi_n$'s live, we make abuse of notation and write $\Prob$ and assume this does not lead to confusion. 
\hfill$\Diamond$ 
\end{remark}
  
We can now formulate the lower bound for the partition function. 

\begin{proposition}[Lower bound]\label{lowerboundrand} Fix $\alpha\in(d,\infty)$. Then, with $\Prob$-probability $1$, 
\begin{align}
\label{eqn:lower_bound_liminf}
\liminf_{n\to \infty} \frac{1}{\bgamma_n} \log \bZ_n\geq  \Xi(\bPi).
\end{align}
\end{proposition}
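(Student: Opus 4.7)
Working under the Skorohod coupling of Remark~\ref{skohorod-embedding}, I restrict to the $\Prob$-probability one event on which $\bPi_n \to \bPi$ in $\cM_\rp^\circ$ (Lemma~\ref{l:P(t)toPi}) and $\bPi$ satisfies \eqref{eqn:many_points} (Lemma~\ref{l:Pi_satisfies_many_points}). Lemma~\ref{l:Xi_as_sup_over_probm_fF_1} then gives $\Xi(\bPi) = \sup_{\mu \in \fF_1(\bPi)}\Psi_{\bPi}(\mu)$, so it suffices to prove $\liminf_n \bgamma_n^{-1}\log \bZ_n \ge \Psi_{\bPi}(\mu)$ for each fixed $\mu = \sum_{i=1}^{k} w_i^*\delta_{(f_i^*,y_i^*)} \in \fF_1(\bPi)$ and then pass to the supremum over $\mu$. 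By Lemma~\ref{l:convergence_on_compacts}, for such a $\mu$ and all sufficiently large $n$ we may select distinct points $(f_i^n,y_i^n) \in \supp \bPi_n$ with $(f_i^n,y_i^n) \to (f_i^*,y_i^*)$; then $z_i^n := y_i^n \br_n \in \Z^d$ carries the potential value $\xi_n(z_i^n) = f_i^n \br_n^{d/\alpha}$.

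\textbf{Path-strategy event.}
Choose a bijection $\sigma : \{0,\ldots,k\} \to \{0,y_1^*,\ldots,y_k^*\}$ with $\sigma(0)=0$ realising $\cD_{\bPi}(\mu) = D_0(y_1^*,\ldots,y_k^*)$. Fix a small $\eta \in (0,\min_i w_i^*)$ and set the ``rush budget'' $s_n := \eta t_n/\log t_n$. Let $\cA_n$ be the event that $X$ starts at $0$, follows a fixed shortest nearest-neighbour $\Z^d$-path to $z_{\sigma(1)}^n$, then to $z_{\sigma(2)}^n$, \dots, then to $z_{\sigma(k)}^n$, allotting to the $i$-th rush segment a time interval proportional to its lattice length (so that the total rushing time is $s_n$), and sitting at $z_{\sigma(i)}^n$ during the intervening windows. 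The total stay time at $z_i^n$ equals $w_i^* t_n - \delta_i^n$ with $\sum_i \delta_i^n = s_n$. Using the identities \eqref{r_tpropertis}, the Hamiltonian on $\cA_n$ satisfies
\[
\bH_n(X) \;=\; \bgamma_n\Big(\sum_{i=1}^{k} w_i^* f_i^{*} - \theta\sum_{i=1}^k (w_i^*)^2\Big)(1+o(1)) \;=\; \bgamma_n\,\Phi_{\bPi}(\mu)\,(1+o(1)),
\]
the $o(1)$ absorbing the rush-generated potential reward $O(s_n\max_i f_i^n \br_n^{d/\alpha}) = o(\bgamma_n)$ and the negligible rush SILT.

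\textbf{Entropy estimate and conclusion.}
The strong Markov property applied at the end of each rush-and-stay piece factors $\P(\cA_n)$ into $k$ independent analogues of the single-island event $\cA_t^{z,s}$ treated in Section~\ref{sec:heuristics}. The $i$-th factor concerns a rush of length $\ell_i^n = \br_n|\sigma(i)-\sigma(i-1)|(1+o(1))$ inside a time window of length $s_n\ell_i^n/\sum_j \ell_j^n$; its Poisson-jump-count probability, after Stirling, contributes $-\ell_i^n \log(\sum_j \ell_j^n/s_n)(1+o(1))$ to $\log \P(\cA_n)$, while the stay intervals together cost only $-2dt_n(1+o(1)) = o(\bgamma_n)$. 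Since $\sum_j \ell_j^n/s_n = \eta^{-1}\cD_{\bPi}(\mu)\br_n\log t_n/t_n\,(1+o(1)) = \eta^{-1}\cD_{\bPi}(\mu)(t_n/\log t_n)^q(1+o(1))$, its logarithm equals $q\log t_n\,(1+o(1))$, and summing over $i$ yields
\[
\log\P(\cA_n) \;=\; -q\,\cD_{\bPi}(\mu)\,\bgamma_n\,(1+o(1)).
\]
Combining with the Hamiltonian bound gives $\bgamma_n^{-1}\log\bZ_n \ge \bgamma_n^{-1}\log\E[e^{\bH_n(X)}\1_{\cA_n}] \ge \Phi_{\bPi}(\mu) - q\cD_{\bPi}(\mu) + o(1) = \Psi_{\bPi}(\mu)+o(1)$. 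Taking $\liminf$ in $n$ and then the supremum over $\mu \in \fF_1(\bPi)$ yields \eqref{eqn:lower_bound_liminf}.

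\textbf{Main obstacle.}
The hard part is the multi-rush entropy estimate: one must produce a uniform-in-$n$ justification that concatenating $k$ independent rushes of total length $\sim \cD_{\bPi}(\mu)\br_n$ within the sublinear time budget $s_n = \eta t_n/\log t_n$ yields exactly the exponential rate $-q\cD_{\bPi}(\mu)\bgamma_n$. The single-rush version is the lower-bound input of \cite{KLMS09} recalled in Section~\ref{sec:heuristics}, but here the Stirling errors must be propagated across the $k$ concatenated pieces, and the rush-generated potential reward, the $O(t_n)$ stay-interval probability cost, and the $k$-dependent combinatorial prefactors from counting shortest lattice paths all have to be absorbed into the $o(\bgamma_n)$ slack — which is legitimate precisely because $t_n \ll \bgamma_n$ for $\alpha>d$.
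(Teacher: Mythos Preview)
Your proposal is correct and follows essentially the same approach as the paper: reduce via Lemma~\ref{l:Xi_as_sup_over_probm_fF_1} to a fixed $\mu\in\fF_1(\bPi)$, approximate its atoms by points of $\bPi_n$, insert a rush-and-stay path event, and separately estimate the Hamiltonian (via $\xi\ge 0$ plus the SILT inequality) and the probability (via Poisson/Stirling on each rush and an $O(t_n)$ stay cost). The only differences are cosmetic: the paper allots each rush the common budget $s_n t_n/k$ rather than a length-proportional share, and it introduces an explicit tolerance parameter $\delta_n=1/\log t_n$ for the stay durations so that the event has positive probability --- a point you gloss over when writing ``sitting during the intervening windows'' but which is easily patched.
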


The proof of Proposition~\ref{lowerboundrand} is given in Section~\ref{sec:LowBound}. Now we formulate the appropriate upper bounds for both assertions of Theorem \ref{thm:variational_formula1}. What will be crucial for the proof is the following observation:

\begin{lemma}
\label{l:maximizer_under_PPP}
Let $\alpha \in (2d,\infty)$. 
There exists a
random variable $\mu^*$ with values in $\fF(\bPi)$, 
such that $\Prob$-almost surely $\mu^*$ is the unique element of $\cW$ such that 
\begin{equation}\label{defxi}
 \Psi_{\bPi}(\mu^*)=\Xi(\bPi).
\end{equation}
Moreover, almost surely 
\begin{align}
\label{eqn:Xi_delta_less_Xi}
\Xi^\delta(\bPi) < \Xi(\bPi). 
\end{align}
\end{lemma}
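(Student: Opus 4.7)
The plan is to deduce the lemma essentially as a direct corollary of the deterministic analysis (Theorem~\ref{thm:variational_formula_deterministic}) combined with the goodness statement for $\Pi$ (Lemma~\ref{l:goodness}), using that $\bPi\overset{(d)}{=}\Pi$.

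First, since $\bPi$ has the same distribution as $\Pi$, Lemma~\ref{l:goodness} implies that, under the assumption $\alpha\in(2d,\infty)$, $\bPi$ is $\Prob$-almost surely good in the sense of Definition~\ref{def:good_point_measure}. On this full-measure event, Theorem~\ref{thm:variational_formula_deterministic}~\ref{item:var_f_maximizer} yields a unique $\mu^*\in\cW$ with
\begin{equation*}
\Psi_{\bPi}(\mu^*)=\sup_{\mu\in\cW}\Psi_{\bPi}(\mu)=\Xi(\bPi),
\end{equation*}
and moreover $\mu^*\in\fF_1(\bPi)\subset\fF(\bPi)$ (finite support, probability measure, absolutely continuous with respect to $\bPi$). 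On the null event where $\bPi$ fails to be good we simply set $\mu^*=0$.

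To conclude that $\mu^*$ is a genuine random variable with values in $\cW$, I would invoke a measurable selection argument: the functional $(\cP,\mu)\mapsto\Psi_\cP(\mu)$ is upper semicontinuous in $\mu$ and measurable in $\cP$, $\cW$ is a compact Polish space, and the argmax is almost surely a singleton; thus by a standard measurable-selection theorem (e.g.\ \cite[Thm.~18.19]{Kl08} or an argmax-selection lemma) there exists a measurable selection of the maximizer which coincides with the unique one wherever the latter exists. This gives $\mu^*$ as a bona fide $\cW$-valued random variable.

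For the strict inequality \eqref{eqn:Xi_delta_less_Xi}, fix $\delta>0$ and apply Theorem~\ref{thm:variational_formula_deterministic}~\ref{item:var_f_stability} to the open neighbourhood $O:=B(\mu^*,\delta)=\{\nu\in\cW:\fd(\nu,\mu^*)<\delta\}$ of $\mu^*$. Its complement $O^{\rm c}=\{\nu\in\cW:\fd(\nu,\mu^*)\ge\delta\}$ is precisely the set over which $\Xi^\delta(\bPi)$ is defined, so that
\begin{equation*}
\Xi^\delta(\bPi)=\sup_{\nu\in O^{\rm c}}\Psi_{\bPi}(\nu)<\Psi_{\bPi}(\mu^*)=\Xi(\bPi),
\end{equation*}
$\Prob$-almost surely, as required.

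The only real subtlety is the measurability of the selection of $\mu^*$; once that is granted, everything else is a direct citation of the deterministic results already established in Section~\ref{sec:AnaVP}. In particular, no new compactness or continuity estimates are needed here.
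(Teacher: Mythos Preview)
Your proof is correct and follows essentially the same route as the paper: goodness of $\bPi$ via Lemma~\ref{l:goodness}, then Theorem~\ref{thm:variational_formula_deterministic}~\ref{item:var_f_maximizer} for existence and uniqueness of $\mu^*$, and Theorem~\ref{thm:variational_formula_deterministic}~\ref{item:var_f_stability} with $O=B(\mu^*,\delta)$ for the strict inequality \eqref{eqn:Xi_delta_less_Xi}. The one point where you diverge is the measurability of $\mu^*$: you invoke an abstract argmax/measurable-selection theorem, whereas the paper gives a concrete argument in Appendix~\ref{sec:measurability_of_maximizer} by approximating $\bPi$ through the finitely supported $\bPi_n^{\ssup L}$, for which the optimization is a finite combinatorial problem with explicitly measurable solution (Proposition~\ref{p:AnaPhik}), and then passing to the limit via Theorem~\ref{thm:variational_formula_deterministic}~\ref{item:var_f_continuity}. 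Both approaches are legitimate; the paper's is self-contained and avoids checking the Carath\'eodory-type hypotheses on $(\cP,\mu)\mapsto\Psi_\cP(\mu)$ that an abstract selection theorem would require.
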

\begin{proof}
Because $\alpha>2d$, $\bPi$ is almost surely  good by Lemma~\ref{l:goodness}. 
Therefore by Theorem~\ref{thm:variational_formula_deterministic}~\ref{item:var_f_maximizer} such $\mu^*$ exists (that it is random in the sense that it is a measurable function on the probability space, is not completely trivial; see Appendix~\ref{sec:measurability_of_maximizer}) and is unique almost surely and by Theorem~\ref{thm:variational_formula_deterministic}~\ref{item:var_f_stability}, \eqref{eqn:Xi_delta_less_Xi} holds. 
\end{proof}

\begin{proposition}[Upper bounds]\label{p:uppbound} Fix $\alpha\in (d,\infty)$. 
\begin{enumerate}
\item 
\label{item:upper_bound_compactified_Z_n}
{\em Upper bound for compactified $\bZ_n$:} 
\begin{align}
\label{eqn:limsup_Z_R_min}
& \Prob \Big[ \, 
\limsup_{R\to\infty}\limsup_{n\to\infty}\frac{1}{\bgamma_n}\log \bZ_n^{R,-} \leq \Xi(\bPi) \, \Big] =1.
\end{align}


\item 
\label{item:upper_bound_compactified_Z_n_away_max}
{\em Upper bound for compactified $\bZ_n$ away from maximizer:} 
Assume that $\alpha>2d$. Then 
\begin{align}
\label{eqn:limsup_Z_R_min_delta}
& \Prob \Big[ \, 
\limsup_{R\to\infty}\limsup_{n\to\infty}\frac{1}{\bgamma_n}\log \bZ_n^{R,-,\delta} \le \Xi^\delta(\bPi)
\, \Big] = 1 \forqq{\delta>0}
\end{align}


\item 
\label{item:compactification}
{\em Compactification:} 
\begin{align}
\label{eqn:liminf_Z_R_plus}
& \lim_{R\to \infty} \liminf_{n\to\infty}\  \Prob\bigg [\frac 1{\bgamma_n}\log \bZ_n^{R,+} \leq -A\bigg]=1 \forqq{A>0}. 
\end{align}
\end{enumerate}
\end{proposition}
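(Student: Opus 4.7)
My strategy is to prove the three statements in the order \ref{item:compactification}, \ref{item:upper_bound_compactified_Z_n}, \ref{item:upper_bound_compactified_Z_n_away_max}: the compactification is a standard tail estimate, while the two compactified upper bounds share a common discretization/covering argument that couples the $\Gamma$-convergence $-\Psi_{\bPi_n}\to -\Psi_\bPi$ from Proposition~\ref{p:gamma_convergence_Psi} with large-deviation estimates for the simple random walk. Throughout I work on the Skorohod probability space of Remark~\ref{skohorod-embedding}.

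For \ref{item:compactification}, I would decompose according to the maximal excursion, writing $\bZ_n^{R,+}=\sum_{K\ge R}\E[\ee^{\bH_n(X)}\1\{\max_{s\le t_n}|X_s|\in[K\br_n,(K+1)\br_n)\}]$. Standard tail bounds for a rate-$2d$ Poisson clock give $\P(\max_{s\le t_n}|X_s|\ge K\br_n)\le\exp(-K\br_n(\log(K\br_n/t_n)+O(1)))=\exp(-qK\bgamma_n(1+o(1)))$ by \eqref{defr}. On the auxiliary good event $\{\max_{Q_{(K+1)\br_n}}\xi\le\lambda(K\br_n)^{d/\alpha}\}$, which by a union bound fails with probability $\lesssim\lambda^{-\alpha}$ uniformly in $K,n$, we have $\bH_n\le\lambda t_n(K\br_n)^{d/\alpha}=\lambda K^{d/\alpha}\bgamma_n$ by \eqref{r_tpropertis}. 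Since $d/\alpha<1$, the exponent $\lambda K^{d/\alpha}-qK\to-\infty$ as $K\to\infty$, and summing the geometric tail yields \eqref{eqn:liminf_Z_R_plus}.

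For parts \ref{item:upper_bound_compactified_Z_n} and \ref{item:upper_bound_compactified_Z_n_away_max}, the central estimate to establish is: for every finitely supported $\nu\in\cW_R$ and every $\varepsilon>0$, there is an open neighborhood $U=U_{\nu,\varepsilon}\subset\cW$ of $\nu$ such that, $\Prob$-a.s.,
\begin{equation*}
\limsup_{n\to\infty}\tfrac{1}{\bgamma_n}\log\E\Bigl[\ee^{\bH_n(X)}\1\{\bW_n\in U\}\1\{\max_{s\le t_n}|X_s|\le R\br_n\}\Bigr]\le\Psi_\bPi(\nu)+\varepsilon. \qquad(*)
\end{equation*}
Granted $(*)$, the compactness of $\cW_R$ (Proposition~\ref{p:gamma_convergence_Psi}\ref{item:compactness}) supplies a finite subcover $U_{\nu_1},\dots,U_{\nu_N}$, whence $\tfrac{1}{\bgamma_n}\log\bZ_n^{R,-}\le\max_{i\le N}\Psi_\bPi(\nu_i)+\varepsilon+o(1)\le\Xi(\bPi)+\varepsilon+o(1)$, proving \ref{item:upper_bound_compactified_Z_n}; density of finitely-supported $\nu$ in $\cW_R$ via Lemma~\ref{l:continuity_Psi_in_ssup_L} handles the restriction to finite support. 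For \ref{item:upper_bound_compactified_Z_n_away_max}, I restrict the subcover to $\nu_i\in\cW\setminus B(\mu^*,\delta/2)$, which by Lemma~\ref{l:maximizer_under_PPP} (i.e.\ Theorem~\ref{thm:variational_formula_deterministic}\ref{item:var_f_stability}) upgrades the sup to $\Xi^\delta(\bPi)$. To prove $(*)$ for $\nu=\sum_{i=1}^k w_i\delta_{(f_i,y_i)}$: on $U$ the rescaled local times place mass $\approx w_i$ near $(f_i,y_i)$, and since by Lemma~\ref{l:convergence_on_compacts} there exist points $(f_i^n,y_i^n)\in\supp\bPi_n$ with $(f_i^n,y_i^n)\to(f_i,y_i)$, the energy term is $\bgamma_n(\Phi_{\bPi_n}(\nu)+o(1))\to\bgamma_n\Phi_\bPi(\nu)+o(\bgamma_n)$ by Lemma~\ref{l:upper_semicontinuity_for_parts_of_tilde_Psi}\ref{item:conv_tilde_Phi}. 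The probabilistic price is controlled by the tour estimate for the simple random walk: conditioned on visiting the sites $\lfloor y_i^n\br_n\rfloor$ in some order by time $t_n$, the probability is bounded by $\exp(-\bgamma_n(qD_0(y_1^n,\dots,y_k^n)+o(1)))$, exactly as in the heuristic \eqref{probesti} with the prefactor $q=d/(\alpha-d)$ arising from the $\log\br_n$ piece of $\log(|z|/t_n)$.

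The main obstacle is obtaining $(*)$ with the sharp entropic constant $q\cD_\bPi(\nu)$, not merely some lower bound: one must optimize the Poisson-clock apportionment over the $k+1$ legs of the tour (rush to $y_1^n$, travel to $y_2^n$, \dots, and the sitting times between legs), mirroring the calculation of Section~\ref{sec:heuristics}. A closely related subtlety is converting the vague-topology condition $\bW_n\in U$ into quantitative constraints on $\ell_{t_n}(\lfloor y_i^n\br_n\rfloor)/t_n$; I would choose $U$ as a finite intersection of conditions $\{|\int h_j\,d\bW_n-\int h_j\,d\nu|<\eta\}$ for bump functions $h_j$ centered at the $(f_i,y_i)$, so that on $U$ the individual occupation fractions are close to $w_i$. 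Finally, care is needed to absorb the negligible contribution of sites outside the finite support of $\nu$: here I would exploit the monotonicity $\Phi_{\bPi_n}(\bW_n)\le\Phi_{\bPi_n}(\nu)+o(1)$ on $U$ combined with the $\ell^2$-penalty $-\theta\|\ell_t\|_2^2$ from \eqref{Hamiltonlocaltimes}, which prevents the walker from profitably spreading out over many low-potential sites.
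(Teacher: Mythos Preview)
For \ref{item:compactification} your dyadic-in-$K$ decomposition, with Pareto-tail control of the box maximum of $\xi$ and a Poisson-tail bound on the jump count, is essentially the paper's argument (Lemmas~\ref{boudfar} and~\ref{field}). For \ref{item:upper_bound_compactified_Z_n}--\ref{item:upper_bound_compactified_Z_n_away_max}, however, the paper does not cover $\cW_R$ by vague neighbourhoods. It first cuts off low potentials via $\bH_n\le\varepsilon\bgamma_n+\bgamma_n\Phi_{\bPi_n}(\bW_n^\varepsilon)$ (with $\bW_n^\varepsilon$ the restriction of $\bW_n$ to $[\varepsilon,\infty)\times\R^d$), then decomposes the expectation over the \emph{finite} collection of possible values of $A:=\supp_{\R^d}\bW_n^\varepsilon\subset\cE_n:=\supp_{\R^d}\bPi_n^\varepsilon\cap(Q_R\setminus Q_\kappa)$. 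For each of the $2^{\#\cE_n}$ terms the energy is bounded by $\sup\{\Phi_{\bPi_n^\varepsilon}(\mu):\supp_{\R^d}\mu=A\}$ and the probability by the one-line Poisson tail of Lemma~\ref{l:ineq_prob_supp_W_contains_A}, yielding $\exp(-qD_0(A)\bgamma_n(1+o(1)))$; no leg-by-leg ``apportionment'' enters the upper bound. The $\Gamma$-convergence of Proposition~\ref{p:gamma_convergence_Psi} is then applied once, to the resulting supremum over $\mu$. This combinatorial decomposition delivers the energy bound for free, which is exactly where your approach has trouble.

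Your neighbourhood $U$, built from bump functions $h_j$ centred only at the points of $\supp\nu$, is too coarse to control the energy. Membership in this $U$ pins $\bW_n$'s mass near each $(f_i,y_i)$ but leaves it unconstrained at every \emph{other} high-potential point of $\bPi_n$ in $Q_R$; the walk may spend a positive fraction of time at some $(f',y')\in\supp\bPi_n$ with $f'$ large and $(f',y')\notin\bigcup_j\supp h_j$, yielding $\Phi_{\bPi_n}(\bW_n)>\Phi_\bPi(\nu)$ by an order-one amount. There is no ``monotonicity $\Phi_{\bPi_n}(\bW_n)\le\Phi_{\bPi_n}(\nu)+o(1)$ on $U$'', and the $\ell^2$-penalty does not save you (one extra site contributes $+f'w'-\theta w'^2>0$ for small $w'$). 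Lemma~\ref{l:upper_semicontinuity_for_parts_of_tilde_Psi}\ref{item:conv_tilde_Phi} does furnish the needed upper semicontinuity, but only for a smaller neighbourhood that \emph{also} constrains $\bW_n$'s mass at the remaining high points of $\bPi$ in $Q_R$; once you spell this out you have essentially recovered the paper's support-set decomposition.
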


The proof of Proposition~\ref{p:uppbound} is given in Section~\ref{sec:UppBound}. 
We extract the following lemma from Proposition~\ref{lowerboundrand} and Proposition~\ref{p:uppbound}~\ref{item:compactification} which will be used for the proof of Theorem~\ref{thm:variational_formula1}. 

\begin{lemma}
\label{l:prob_convergence_frac_Z_n_plus_min}
For all $\epsilon, \eta >0$ there exist an $R>0$ and an $N\in\N$ such that for all $n\ge N$ 
\begin{align*}
\Prob \left[ \frac{\bZ_n^{R,+}}{\bZ_n^{R,-}} < \epsilon \right] 
\ge 1- \eta. 
\end{align*}
\end{lemma}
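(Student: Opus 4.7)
My plan is to reduce the claim to showing that the ratio $\bZ_n^{R,+}/\bZ_n$ becomes arbitrarily small with probability close to $1$. Since $\bZ_n = \bZ_n^{R,-} + \bZ_n^{R,+}$, a short computation shows that $\bZ_n^{R,+}/\bZ_n < \epsilon/(1+\epsilon)$ is equivalent to $\bZ_n^{R,+}/\bZ_n^{R,-} < \epsilon$, so it is enough to bound the former uniformly in large $n$.

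Fix $\epsilon,\eta>0$. The argument will combine two ingredients. The first is Proposition~\ref{p:uppbound}~\ref{item:compactification} applied with, say, $A=2$, which furnishes $R>0$ and $N_1\in\N$ such that
\begin{equation*}
\Prob\!\left[\tfrac{1}{\bgamma_n}\log \bZ_n^{R,+} \le -2\right]\;\ge\; 1-\tfrac{\eta}{2}\qquad\text{for all } n\ge N_1.
\end{equation*}
The second is a matching lower bound on $\bZ_n$. For this I will combine Proposition~\ref{lowerboundrand} with the trivial observation that $\Xi(\bPi)\ge \Psi_{\bPi}(0)=0$ (take $\mu=0$ in \eqref{Xidef}), to obtain $\liminf_n \tfrac{1}{\bgamma_n}\log \bZ_n\ge 0$ $\Prob$-almost surely. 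From this and continuity of probability I will conclude that
\begin{equation*}
\Prob\!\left[\bigcup_{n\ge N}\big\{\tfrac{1}{\bgamma_n}\log \bZ_n<-1\big\}\right]\xrightarrow{N\to\infty} 0,
\end{equation*}
yielding $N_2\ge N_1$ such that $\Prob[\tfrac{1}{\bgamma_n}\log \bZ_n\ge -1]\ge 1-\eta/2$ for all $n\ge N_2$.

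On the intersection of the two favorable events, which has probability at least $1-\eta$, I will have $\tfrac{1}{\bgamma_n}\log(\bZ_n^{R,+}/\bZ_n)\le -2-(-1)=-1$, i.e.\ $\bZ_n^{R,+}/\bZ_n\le \ee^{-\bgamma_n}$. Since $\bgamma_n=\br_n\log t_n\to\infty$, I can choose $N\ge N_2$ beyond which $\ee^{-\bgamma_n}<\epsilon/(1+\epsilon)$; on the favorable event this gives $\bZ_n^{R,+}/\bZ_n^{R,-}<\epsilon$ by the initial reduction, which completes the proof.

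The argument is routine once the two propositions are in hand; the only mildly delicate point is converting the almost-sure $\liminf$ statement of Proposition~\ref{lowerboundrand} into a uniform-in-$n$ probability bound, which is handled by the standard Borel--Cantelli / continuity-of-probability step above. I do not anticipate any genuine obstacle.
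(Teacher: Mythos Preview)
Your proof is correct, but it takes a slightly different route from the paper's. The paper avoids Proposition~\ref{lowerboundrand} entirely here and instead uses a completely elementary, \emph{deterministic} lower bound on $\bZ_n^{R,-}$: restricting the expectation to the path that stays at the origin for the whole time interval $[0,t_n]$ gives
\[
\bZ_n^{R,-}\;\ge\;\P(\ell_{t_n}(0)=t_n)\,\ee^{-\theta\bgamma_n}\;=\;\ee^{-2dt_n-\theta\bgamma_n},
\]
and since $t_n/\bgamma_n\to 0$ this yields $\bZ_n^{R,-}\ge \ee^{-2\theta\bgamma_n}$ for large $n$. Combined with the compactification at level $A=3\theta$, one gets $\bZ_n^{R,+}/\bZ_n^{R,-}\le \ee^{-\theta\bgamma_n}<\epsilon$ on an event of probability $\ge 1-\eta$.

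Your approach instead pulls in Proposition~\ref{lowerboundrand} together with $\Xi(\bPi)\ge\Psi_{\bPi}(0)=0$ to obtain the lower bound on $\bZ_n$ in probability. This is perfectly valid (there is no circularity: Proposition~\ref{lowerboundrand} is proved independently in Section~\ref{sec:LowBound}), and your reduction $\bZ_n^{R,+}/\bZ_n^{R,-}<\epsilon\iff \bZ_n^{R,+}/\bZ_n<\epsilon/(1+\epsilon)$ is clean. The trade-off is that you invoke a substantially heavier result where a one-line deterministic estimate suffices; the paper's version makes the lemma self-contained modulo the compactification step.
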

\begin{proof}
First, we bound $\bZ_n^{R,-}$ from below by restricting the expectation to the trajectory that remains at the origin up to time $t_n$ and obtain (because by for example \eqref{Hamil_Pi_t}, for $Y_s=0$ for all $s\in [0,t_n]$,  $\bH_n(Y) \ge - \theta \bgamma_n $)
\begin{calc}
Indeed, regarding \eqref{eqn:local_times}, $\ell^{\ssup{Y}}_{t_n} = t_n$ and thus 
\begin{align*}
\bH_n(Y) 
= \sum_{z\in\Z^d}\xi(z)\ell_{t_n}^{\ssup{Y}}(z)-\beta_{t_n} \sum_{z\in\Z^d}\ell_{t_n}^{\ssup{Y}}(z)^2
\ge - \beta_{t_n} t_n^2 
= \theta \bgamma_n,
\end{align*}
where the latter equality can be found in \eqref{r_tpropertis}. 
\end{calc}
\begin{equation}\label{borninfeasy}
\bZ_n^{R,-}\geq \P(\ell_{t_n}(0)=t_n) \ee^{-\theta \bgamma_n}=\ee^{-2d t_n-\theta \bgamma_n},\qquad n\in\N, R>0.
\end{equation}
Because $\bgamma_n = \br_n \log t_n = t_n^{1+q} (\log t_n)^{-q}$ (see \eqref{defr}), we have $\frac{t_n}{\bgamma_n} \rightarrow 0$ as $n\rightarrow \infty$. 
Let $\epsilon,\eta>0$. 
Then, by also using \eqref{eqn:liminf_Z_R_plus} with $A=3\theta$ there exists an $R>0$ and an $N\in\N$ such that for all $n\ge N$, 
\begin{align*}
\Prob \Big[ 
	\bZ_n^{R,+} \le \ee^{-3\theta \bgamma_n}
	\Big] \ge 1- \eta, \qquad 
	\bZ_n^{R,-} \ge \ee^{-2\theta \bgamma_n}, \qquad
	\ee^{-\theta \bgamma_n} <\epsilon
\end{align*} 
so that 
\begin{align*}
\Prob \left[ \frac{\bZ_n^{R,+}}{\bZ_n^{R,-}} < \epsilon \right] 
\ge 
\Prob \left[ \frac{\bZ_n^{R,+}}{\bZ_n^{R,-}} < \ee^{-\theta \bgamma_n} \right] 
\ge 1-\eta. 
\end{align*}
\end{proof}

Now we prove Theorem~\ref{thm:variational_formula1} subject to the above propositions and lemma. 

\begin{proof}[Proof of Theorem~\ref{thm:variational_formula1}~\ref{item:partition_convergence}] 
Let $\cD$ be the continuity set of the distribution function for $\Xi(\bPi)$, i.e., the subset of $\R$ containing every continuity point of  $x\mapsto \Prob(\Xi(\bPi)\leq x)$. 
We will prove 
\begin{align}
\label{eqn:limitfreeener_with_Xi_Pi}
 \frac{1}{\bgamma_n} \log \bZ_n
\ \overset{t\to\infty}{\Longrightarrow} \ \Xi(\bPi),  
\end{align}
by showing the
following two inequalities:
\begin{eqnarray}
\limsup_{n\to \infty} \Prob\left[  \frac{1}{\bgamma_n} \log \bZ_n \leq h\right]&\leq& \Prob\left (\Xi(\bPi)\leq h\right ), \qquad h\in \R,\label{convcontpartfun}\\
\liminf_{n\to \infty} \Prob\left[  \frac{1}{\bgamma_n} \log \bZ_n \leq h \right]&\geq& \Prob \left (\Xi(\bPi)\leq h\right ), \qquad h\in \cD.\label{convcontpartfunlower}
\end{eqnarray}
The proof of \eqref{convcontpartfunlower} is more involved. Therefore we focus on \eqref{convcontpartfunlower}, because \eqref{convcontpartfun} follows in a similar fashion from Proposition~\ref{lowerboundrand}. 
\begin{calc}
Indeed, by Proposition~\ref{lowerboundrand} it follows that for all $\eta>0$ there exists an $N\in\N$ such that for $\cB_N = \{\forall n\ge N : \frac{1}{\bgamma_n} \log \bZ_n \ge \Xi(\Pi)\}$, 
\begin{align*}
\Prob [ 
	\cB_N
	] \ge 1- \eta. 
\end{align*}
For $\eta>0$ and $N$ as such, we have
\begin{align*}
\sup_{n\ge N} \Prob \Big[
	\frac{1}{\bgamma_n} \log \bZ_n \le h
	\Big]
\le \sup_{n\ge N} \Prob \Big[
	\frac{1}{\bgamma_n} \log \bZ_n \le h, \cB_N
	\Big]
+ \Prob [
	\cB_N^c 
	] 
\le \Prob [
	\Xi(\bPi) \le h 
	]
+ \eta. 
\end{align*}
Therefore $\limsupn \Prob \Big[
	\frac{1}{\bgamma_n} \log \bZ_n \le h, \cB_N
	\Big] \le \Prob \Big[
	\Xi(\bPi) \le h 
	\Big]
+ \eta$ for any $\eta>0$. 
\end{calc}
Observe that for any $R>0$, because $\bZ_n = \bZ_n^{R,-} + \bZ_n^{R,+}$ (see \eqref{eqn:def_bZ_n_R_+_-}), 
\begin{equation}\label{Znesti}
\frac{1}{\bgamma_n} \log \bZ_n
=\frac{1}{\bgamma_n} \log \bZ_n^{R,-}+\frac{1}{\bgamma_n} \log \Big(1+\frac{\bZ_n^{R,+}}{\bZ_n^{R,-}}\Big)
\leq \frac{1}{\bgamma_n} \log \bZ_n^{R,-}+ \frac{1}{\bgamma_n}\frac{\bZ_n^{R,+}}{\bZ_n^{R,-}}.
\end{equation}
Pick an $\eta>0$. 
Let $\cA_{n,R}:= \{ \bZ_n^{R,+} \le \bZ_n^{R,-} \}$. By Lemma~\ref{l:prob_convergence_frac_Z_n_plus_min} there exist an $R>0$ and an $N\in\N$ such that $\Prob(\cA_{n,R}) \ge 1- \eta$ for all $n\ge N$. 

Fix $h\in\mathcal D$ and pick $\epsilon>0$. As $\frac{1}{\bgamma_n} \le \epsilon$ for large $n$, we have for sufficiently large $n$ that 
\begin{equation}\label{mastereq}
\begin{aligned}
\Prob\left[  \frac{1}{\bgamma_n} \log \bZ_n \leq h \right]&\geq \Prob\left[\Big \{ \frac{1}{\bgamma_n} \log \bZ_n \leq h\Big\} \cap \cA_{n,R}   \right]\\
&\geq \Prob\left[ \Big \{ \frac{1}{\bgamma_n} \log \bZ_n^{R,-} \leq h-\epsilon \Big\} \cap \cA_{n,R}   \right]\\
 &\geq  \Prob\left[  \frac{1}{\bgamma_n} \log \bZ_n^{R,-} \leq h-\epsilon  \right] -\Prob( \cA_{n,R}^{\rm c}) \\
&\geq  \Prob\left[  \frac{1}{\bgamma_n} \log \bZ_n^{R,-} \leq h-\epsilon  \right] -\eta.
\end{aligned}
\end{equation}
At this stage,  we use Proposition~\ref{p:uppbound}~\ref{item:upper_bound_compactified_Z_n}, i.e., we use \eqref{eqn:limsup_Z_R_min}, 
to infer that (possibly by choosing $R$ larger)
\begin{align}
\label{partfunrest}
\Prob\Big( \frac{1}{\bgamma_n} \log \bZ_n^{R,-}\leq \Xi(\bPi)+\epsilon\Big) = 1 \forqq{\mbox{ for large } n}. 
\end{align}
\begin{calc}
Indeed, if we have $\Prob( \limsupn Y_n \le A )=1$, then $\Prob( \bigcap_{m\in\N} \bigcup_{N\in\N} \bigcap_{n\ge N} \{Y_n \le A+\frac1m\} ) =1$, so that for all $\epsilon>0$ one has $\Prob( \bigcup_{N\in\N} \bigcap_{n\ge N} \{Y_n \le A+\epsilon\} ) =1$, i.e., by monotonicity, $\limN \Prob(  \{Y_n \le A+\epsilon\} ) \ge  \limN \Prob(  \bigcap_{n\ge N} \{Y_n \le A+\epsilon\} ) = \Prob( \bigcup_{N\in\N} \bigcap_{n\ge N} \{Y_n \le A+\epsilon\} ) =1$. 
\end{calc}
Combining \eqref{mastereq} and \eqref{partfunrest} gives 
\begin{equation}\label{finaleq}
\liminf_{n\to \infty}\Prob\left[  \frac{1}{\bgamma_n} \log \bZ_n \leq h \right]\geq \Prob\left( \Xi(\bPi)\leq h-2 \epsilon\right) -\eta \forqq{\mbox{for any } \eta,\epsilon>0}. 
\end{equation}
By letting $\eta$ and $\epsilon$ converge to zero and by using the continuity at $h$ of $x \mapsto \Prob\left( \Xi(\bPi)\leq x\right)$, this completes the proof of \eqref{convcontpartfunlower}.

From \eqref{eqn:limitfreeener_with_Xi_Pi} we deduce that $\frac{1}{r_t \log t} \log Z_t^{\ssup{\xi}} \Longrightarrow \Xi(\Pi)$ as we obtained the convergence along diverging sequences of $(t_n)_{n\in\N}$ in $(0,\infty)$.
We are left to show that $\Xi(\bPi)$ is almost surely in $[0,\infty)$. 
That $\Xi(\bPi) \ge 0$ follows directly by the fact that it is larger than $\Psi_\bPi$ evaluated in the zero measure. 
That it is finite follows by the fact that $H_t^{\ssup{\xi,\beta_t}} \le H_t^{\ssup{\xi,0}}$ and thus $Z_t^{\ssup{\xi}} = Z_t^{\ssup{\xi,\beta_t}} \le Z_t^{\ssup{\xi,0}}$. 
As the limit of $\frac{1}{r_t\log t} \log Z_t^{\ssup{\xi,0}}$ is almost surely finite (by e.g. (1.6) in \cite{KLMS09}), so is the limit of $\frac{1}{r_t \log t} \log Z_t^{\ssup{\xi}}$, which is $\Xi(\Pi)$. 
\end{proof}

\begin{proof}[Proof of Theorem~\ref{thm:variational_formula1}~\ref{item:2d_part}~\ref{i:law_W_convergence}]
Let $\delta>0$. 
First we show that $\pmb{\P}_n[ \fd(\bW_n,\mu^*) > \delta ]$ converges to zero in $\Prob$-probability, i.e., for all $\kappa>0$, 
\begin{align}
\label{eqn:convergence_in_prob_entangled}
 \Prob\Big[
 	\pmb{\P}_n[ \fd(\bW_n,\mu^*) > \delta ] > \kappa 
 \Big]  \xrightarrow{t\rightarrow \infty} 0. 
\end{align}
Observe that for any $R>0$ 
\begin{align*}
 \pmb{\P}_n[ \fd(\bW_n,\mu^*) > \delta ] 
\le \frac{\bZ_n^{R,-,\delta} + \bZ_n^{R,+} }{\bZ_n} 
\le \frac{\bZ_n^{R,-,\delta} }{\bZ_n} 
+ \frac{\bZ_n^{R,+} }{\bZ_n^{R,-}}.  
\end{align*}
Let $\kappa>0$. 
By Lemma~\ref{l:prob_convergence_frac_Z_n_plus_min} 
it is sufficient to show that there exists an $R>0$ such that 
$\Prob[ \frac{\bZ_n^{R,-,\delta} }{\bZ_n} \le \kappa] \arrown 1$. 
Let $\epsilon>0$. 
By \eqref{eqn:Xi_delta_less_Xi} there exists an  $m\in\N$ such that $\ee^{-\frac{1}{3m}} <\kappa$, and 
\begin{align*}
\Prob [ \cB_{m,\delta} ]
 \ge 1- \epsilon, \quad \mbox{ where } \cB_{m,\delta} =\Big\{ \Xi^\delta(\bPi) - \Xi(\bPi) < - \frac1m \Big\}. 
\end{align*}
By Proposition~\ref{p:uppbound}~\ref{item:upper_bound_compactified_Z_n_away_max} and Proposition~\ref{lowerboundrand} there exists an $R>0$ and an $N\in\N$ such that for all $n\ge N$ 
\begin{align*}
\Prob \Big[\frac{1}{\bgamma_n} \log \bZ_n^{R,-,\delta} \le \Xi^\delta(\bPi) + \frac{1}{3m} \Big] \ge 1- \epsilon, 
\qquad 
\Prob \Big[\frac{1}{\bgamma_n} \log \bZ_n^{R} \ge \Xi(\bPi) - \frac{1}{3m} \Big] \ge 1- \epsilon, 
\end{align*}
so that 
\begin{align*}
1-3\epsilon \le 
\Prob 
	\Big[ 
	\frac{\bZ_n^{R,-,\delta} }{\bZ_n} \le \exp{\Big(- \frac{1}{3m}\Big)}
	\Big] 
	\le 
	\Prob 
	\Big[ 
	\frac{\bZ_n^{R,-,\delta} }{\bZ_n} \le \kappa
	\Big]. 
\end{align*}
From this we conclude \eqref{eqn:convergence_in_prob_entangled}. 
From the convergence in probability we deduce the existence of a strictly increasing $\varphi : \N \rightarrow \N$ such that $\pmb{\P}_{\varphi(n)}[ \fd(\bW_{\varphi(n)},\mu^*) > \delta ]  \rightarrow 0$ $\ \Prob$-almost surely. 
This implies $\Prob$-almost surely that $\bW_{\varphi(n)} \Longrightarrow \mu^*$ in $\cW$, more precisely, $\pmb{\E}_{\varphi(n)}[g(\bW_{\varphi(n)})] \rightarrow g(\mu^*)$ for any $g\in C_\rb(\cW)$. Therefore, 
\begin{align*}
\Expec\big[\pmb{\E}_{\varphi(n)}[g(\bW_{\varphi(n)})]\big]
\rightarrow \Expec[ g(\mu^*)] \forqq{g\in C_\rb(\cW)}. 
\end{align*}
Therefore, as for each sequence $(t_n)_{n\in\N}$ with $t_n \to \infty$ there exists a  strictly increasing $\varphi: \N \rightarrow \N$ such that 
\begin{align*}
\Expec\big[\ee^{\ssup{\xi}}_{t_{\varphi(n)}}[g(W_{t_{\varphi(n)}})]\big]
\rightarrow \Expec[ g(\mu^*)] \forqq{g\in C_\rb(\cW)}, 
\end{align*}
it follows that for any sequence $(t_n)_{n\in\N}$ with $t_n \to \infty$ 
\begin{align*}
\Expec\big[\ee^{\ssup{\xi}}_{t_n}[g(W_{t_n})]\big]
\rightarrow \Expec[ g(\mu^*)] \forqq{g\in C_\rb(\cW)}, 
\end{align*}
and therefore \eqref{eqn:weak_convergence_W_t_expressed}. 
\end{proof}

\begin{remark}
\label{remark:proof_more_general_conv}
The proof of the more general convergence in distribution 
$\cL^{\ssup{\xi}}_t \ \overset{t\to\infty}{\Longrightarrow} \ \delta_{\mu^*}$ as in \eqref{eqn:law_converges_to_delta_mu_star} of Remark~\ref{remark:more_general_statement} can be deduced from the first part of the proof of Theorem~\ref{thm:variational_formula1}~\ref{item:2d_part}~\ref{i:law_W_convergence} as follows. 

First we observe that by Portmanteau's theorem, for probability measures $\rho_1,\rho_2,\dots$ on $\cW$ and $\mu \in \cW$, one has $\rho_n \rightarrow \delta_\mu$ weakly if and only if for all closed sets $C\subset \cW$ one has $\limsupn \rho_n(C) \le \delta_\mu(C)$, which in turns holds if and only if  $\limn \rho_n (\cC^\delta(\mu)) \rightarrow 0$ for all $\delta>0$, where $\cC^\delta(\mu) = \{ \nu \in \cW : \fd(\nu,\mu) >\delta\}$.

Let $\delta>0$. Let $\cC^\delta = \{ \nu \in \cW : \fd(\nu, \mu^*) \ge \delta\}$, i.e., $\cC^\delta = B(\mu^*,\delta)^c$. We write $\bcL_n = \cL_{t_n}^{\ssup{\xi_n}}$. 
From the fact that 
\begin{align*}
\bcL_n(\cC^\delta) 
= \pmb{\P}_n[ \fd(\bW_n,\mu^*) > \delta ], 
\end{align*}
we deduce from \eqref{eqn:convergence_in_prob_entangled} that $\bcL_n(\cC^\delta)$ converges to zero in $\Prob$-probability, i.e., for all $\kappa>0$, $\Prob [	\bcL_n( \cC^\delta ) > \kappa ] \arrown 0$. 
From this we infer the existence of a strictly increasing $\varphi : \N \rightarrow \N$ such that $\bcL_{\varphi(n)}(\cC^\delta) \rightarrow 0$ almost surely. 

Therefore, by the above observation, it follows that 
$\bcL_{\varphi(n)} \rightarrow \delta_{\mu^*}$ almost surely, and thus $\cL_{t_{\varphi(n)}}^{\ssup{\xi}} \Longrightarrow \delta_{\mu^*}$.  
As for each sequence $(t_n)_{n\in\N}$ with $t_n \to \infty$ there exists a  strictly increasing $\varphi: \N \rightarrow \N$ such that $\cL^{\ssup{\xi}}_{t_{\varphi(n)}} \Longrightarrow  \delta_{\mu^*}$, 
it follows that $\cL^{\ssup{\xi}}_{t_n} \Longrightarrow  \delta_{\mu^*}$ for any sequence $(t_n)_{n\in\N}$ with $t_n \to \infty$, implying \eqref{eqn:law_converges_to_delta_mu_star}. 
\end{remark}


\section{Lower bound: proof of Proposition \ref{lowerboundrand}}
\label{sec:LowBound}

\noindent Our strategy follows the heuristics described in Section \ref{sec:heuristics}. 

Recall the setting introduced at the beginning of Section \ref{prooofthvarfor}, in particular Remark~\ref{skohorod-embedding} on the Skorohod embedding and the notations in \eqref{simpli}. 
Let $\mu \in \fF_1(\Pi)$, i.e., $\mu \in \cW$, $\mu \ll \Pi$ and $\mu$ be a probability measure (in case $\alpha \in (2d,\infty)$ one may take $\mu = \mu^*$ as in Lemma~\ref{l:maximizer_under_PPP}). By Lemma~\ref{l:Xi_as_sup_over_probm_fF_1} 
it is sufficient to show that,
with $\Prob$-probability $1$, 
\begin{align}
\label{eqn:sufficient_lower_bound_skorohod}
\liminf_{n\to \infty} \frac{1}{\bgamma_n} \log \bZ_n\geq \Psi_{\bPi}(\mu).
\end{align}
Our approach to do this is to choose a specific path event $\bcA_n$ and use the trivial estimate 
\begin{align}
\label{eqn:easy_bound_Z_t_theta}
\bZ_n \ge \E[ \ee^{\bH_n (X)} \1_{\bcA_n} ].
\end{align}  
We describe the event $\bcA_n$ in Section~\ref{Entroest}, but first give an idea here after introducing the following objects. 
Since $\mu$ is in $\fF_1(\bPi)$, 
there exist (``$\Prob$-''random) 
$k\in \N$ and $(f_1,y_1), \dots, (f_{k},y_{k})\in \supp(\bPi)$ and
$w_1, \dots, w_{k}\in (0,1]$ with $\sum_{i=1}^{k}w_i=1$ such that 
\begin{align*}
\mu=\sum_{i=1}^{k} w_i \, \delta_{(f_i,y_i)}. 
\end{align*}
We may assume that the order of the $(f_1,y_1), \dots, (f_{k},y_{k})$ is such that the minimal distance between the $y_1,\dots,y_{k}$ points is given by $\sum_{i=1}^{k} |y_i-y_{i-1}|$, where here and in the following we take $y_0=0$. 
Hence, 
\begin{align*}
\Xi(\bPi) = 
\Psi_{\bPi}(\mu)=\sum_{i=1}^k \Big(f_i w_i-\theta (w_i)^2\Big)-q \sum_{i=1}^{k} |y_i-y_{i-1}|.
\end{align*}

Because $\bPi_n \rightarrow \bPi$ in $\cM_\rp^\circ$ almost surely, there exists an $N \in \N$ such that for every
$n\geq N$ there exist distinct $(f_1^n, y_1^{n}), \dots, (f_{k}^n, y_{k}^{n})\in \supp \bPi_n$ such that almost surely 
\begin{equation}\label{defpontimp}
(f_i^n, y_i^n)  \xrightarrow{n\to \infty} (f_i,y_i), \qquad i\in \{1,\dots,k\}.
\end{equation}
Observe that by \eqref{eqn:Pin}, 
\begin{align}
\label{eqn:f_i_n_equal_to_frac_xi_etc}
f_i^n = \frac{\xi_n(y_i^n)}{\br_n^{d/\alpha}} \forqq{i\in \{1,\dots,k\}, n\ge N}. 
\end{align}
We will define the event $\bcA_n$, such that on this event the path visits the sites $\br_ny_1^n,\dots,\br_ny_{k}^n$ in this order, staying $\approx w_i t_n$ time units in $\br_n y_i^n$ for any $i\in\{1,\dots,k\}$.
We define $\bcA_n$ and estimate its probability from below in Section \ref{Entroest}. Then, we bound $\bH_{n}(X)$ from below on $\bcA_n$ 
in Section \ref{Energ}. Finally, in Section \ref{compfin} we combine these bounds and apply them
in the framework of the Skorokhod embedding defined above to finish the proof.

\subsection{The path event} \label{Entroest}

\noindent Let us introduce some useful notation involving paths that will be used to define the set $\bcA_n$.

\begin{definition}
\label{def:event_A}
 For $x\in\Z^d$ and $t\in [0,\infty)$ we define 
the \emph{entry time at $x$ after time $t$}, $\tau_x(t)$, and the \emph{exit time from $x$ after time $t$}, $\sigma_x(t)$, by 
\begin{align*}
\tau_x(t):=\inf\{s>t\colon X_s=x\}, \qquad \sigma_x(t):=\inf\{s>t \colon X_s\not=x\}. 
\end{align*} 
Let $t\in (0,\infty)$, 
\begin{align}
\label{eqn:parameters_event_A}
\begin{aligned}
\delta,s \in (0,1), 
\quad k\in \N, 
\quad y_0 :=0, 
\quad y_1,\dots,y_k \in \Z^d, 
\quad y = (y_1,\dots,y_k), \\
w_1,\dots,w_k \in [0,1] \mbox{ with } \sum_{i=1}^k w_i = 1-s, \quad w= (w_1,\dots,w_k). 
\end{aligned}
\end{align}
We define $\cA_{t,k}^{\delta,s}(y,w)$ to be the event where the random walk $X$ walks from $0$ to $y_1$ and then to $y_2$ etcetera. 
It takes at most $\frac{st}{k}$ time to reach $y_1$, then it spends at least $(1-\delta)tw_1$ and at most $tw_1$ time at $y_1$ before it jumps, then it spends at most $\frac{st}{k}$ time to reach $y_2$, waits at least
$(1-\delta)tw_2$ and at most $tw_2$ time at $y_2$ before it jumps, etc. 
More precisely, first we define inductively the entry $\tau^i_y$ and exit times $\sigma^i_y$ of the $y_i$, after the time that $y_{i-1}$ and thus all of $0,y_1,\dots,y_{i-1}$ are visited
\begin{align*}
\btau_y^0 :=0 , 
\qquad 
\btau_y^1 := \tau_{y_1}(0), 
\qquad
\btau_y^i := \tau_{y_i}(\btau_y^{i-1}) \forqq{ i \in \{1,\dots,k\}}, \\
\bsigma_y^i := \sigma_{y_i}(\btau_y^i) \forqq{ i \in \{0,1,\dots,k\}}, 
\end{align*}
so that (by definition $\btau_y^0 = 0  \le \btau_y^1 \le \bsigma_y^1 \le \cdots \btau_y^k \le \bsigma_y^k$ and) 
\begin{align*}
\cA_{t,k}^{\delta,s} (y, w) 
= \bigcap_{i=1}^k \Big\{\, \btau_y^i - \bsigma_y^{i-1} \1_{i\ge 2} \le \frac{st}{k} , \ \bsigma_y^i - \btau_y^i \in [1-\delta,1]tw_i \Big\}. 
\end{align*}
Observe that for $i=1$ we have $\btau_y^i = \btau_y^i - \bsigma_y^{i-1} \1_{i\ge 2} \le \frac{st}{k}$, so that  the waiting time at $0$ plus the ``walking time'' to $y_1$ is less or equal to $\frac{st}{k}$. 
Furthermore, observe that $y_k$ is reached before $t$, i.e., $\bsigma_y^k \le t$, because 
\begin{align*}
\bsigma_y^k 
& = (\bsigma_y^{k} - \btau_y^{k}) +
(\btau_y^{k} - \bsigma_y^{{k-1}}) 
+ \cdots 
+ (\bsigma_y^{1} - \btau_y^{1} )
+ (\btau_y^{1} - \bsigma_y^0)
+ (\bsigma_y^0 - \btau_y^0) \\
& \le tw_k + \frac{st}{k} + tw_{k-1} + \frac{st}{k} 
+ \cdots 
+ tw_1 + \frac{st}{k}  
= t \Big(s+ \sum_{i=1}^k w_i \Big) = t. 
\end{align*}
\end{definition}

\begin{lemma}
\label{l:lower_bound_prob_A}
For any $t\in(0,\infty)$, $\delta,s \in (0,1)$,  $k\in\N$ and $y$ and $w$ as in \eqref{eqn:parameters_event_A}
\begin{equation}\label{ProbA}
\P\Big(\cA_{t,k}^{\delta,s}(y,w) \Big)
\ge 
\prod_{i=1}^k \Big[{\rm Poi}_{\frac{2dst}{k}}\big(|y_i - y_{i-1}| \big)
 {\rm e}^{-2dt w_i (1-\delta)}[1-{\rm e}^{-2dt\delta w_i}]\Big(\frac{1}{2d}\Big)^{|y_i - y_{i-1} |}\Big].
\end{equation} 
\end{lemma}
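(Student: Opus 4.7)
The plan is to lower-bound $\P(\cA_{t,k}^{\delta,s}(y,w))$ by restricting to a sub-event $\widetilde\cA$ on which the random walk is forced to follow a prescribed sequence of shortest paths, and whose probability factorises over the $k$ pieces by the independence structure of the simple random walk.

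For each $i\in\{1,\dots,k\}$ I would fix once and for all a specific nearest-neighbour shortest path $\pi_i$ from $y_{i-1}$ to $y_i$, of length $n_i:=|y_i-y_{i-1}|$. Using the standard decomposition of the continuous-time simple random walk as an independent pair -- a discrete-time simple random walk $(\hat X_m)_{m\ge 0}$ with jumps uniform on the $2d$ neighbours, together with an i.i.d.\ sequence $(T_m)_{m\ge 1}\sim\Exp(2d)$ of holding times -- I would take $\widetilde\cA$ to be the event that, for every $i\in\{1,\dots,k\}$: (i) the corresponding block of $n_i$ embedded jumps traces out $\pi_i$; (ii) the holding times governing the travel from $y_{i-1}$ to $y_i$ (namely $n_1$ consecutive $T_m$'s when $i=1$, and $n_i-1$ consecutive $T_m$'s when $i\ge 2$, since the holding time at the point of departure was already accounted for in piece $i-1$) sum to at most $st/k$; and (iii) the subsequent holding time, spent at $y_i$, lies in $[(1-\delta)tw_i,tw_i]$. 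A short inductive bookkeeping -- using the distinctness of the $y_i$'s together with the fact that a shortest path does not revisit its endpoint before the final step -- then shows that on $\widetilde\cA$ the random times $\btau_y^i$ and $\bsigma_y^i$ coincide with the partial sums of the $T_m$'s dictated by (ii)-(iii), so that $\widetilde\cA\subset\cA_{t,k}^{\delta,s}(y,w)$. This inclusion is the main delicate point.

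Since the conditions (i)-(iii) indexed by different $i$ involve disjoint subsets of the $T_m$'s and disjoint stretches of $(\hat X_m)$, and since these two sequences are independent with $(T_m)$ i.i.d.\ and each step of $(\hat X_m)$ uniform on the $2d$ neighbours, the event $\widetilde\cA$ is the intersection of $3k$ independent events, and
\[
\P(\widetilde\cA)=\prod_{i=1}^{k}\Big(\tfrac{1}{2d}\Big)^{n_i}\,\P\!\Big(\sum_{j=1}^{m_i} T_j\leq \tfrac{st}{k}\Big)\,\P\!\big(T_1\in[(1-\delta)tw_i,tw_i]\big),
\]
with $m_1=n_1$ and $m_i=n_i-1$ for $i\ge 2$. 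A direct computation yields $\P(T_1\in[(1-\delta)tw_i,tw_i])=\ee^{-2dtw_i(1-\delta)}(1-\ee^{-2dt\delta w_i})$, while the classical identity $\P(\sum_{j=1}^{m_i}T_j\leq st/k)=\P(\Poi(2dst/k)\geq m_i)$ bounds the middle factor from below by $\Poi_{2dst/k}(n_i)$ (the single term corresponding to exactly $n_i$ Poisson events, which belongs to the tail sum $\{\ge m_i\}$ in both cases $m_i=n_i$ and $m_i=n_i-1$). Substituting yields precisely \eqref{ProbA}; the remainder of the argument consists only of the routine Poisson/exponential computations just sketched.
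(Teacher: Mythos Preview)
Your proof is correct and follows essentially the same idea as the paper's own proof. The paper first factorises $\P(\cA_{t,k}^{\delta,s}(y,w))$ via the strong Markov property at the stopping times $\btau_y^i$, $\bsigma_y^i$ and then bounds each travel factor by the probability of following a fixed shortest path in the allotted time; you instead build the global shortest-path sub-event directly and exploit the independence of the i.i.d.\ holding times and jump directions. Both routes land on the identical product of $\Poi_{2dst/k}(|y_i-y_{i-1}|)$, $\ee^{-2dtw_i(1-\delta)}(1-\ee^{-2dt\delta w_i})$ and $(2d)^{-|y_i-y_{i-1}|}$ factors, so there is no substantive difference. One small remark: the lemma as stated does not assume the $y_i$ are distinct, whereas your bookkeeping invokes distinctness; the paper's proof is equally silent on this corner case, and in the application (Section~\ref{compfin}) the points are distinct elements of $\supp\bPi_n$, so this is harmless.
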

\begin{proof}
By independence we have 
\begin{align*}
\P\Big(\cA_{t,k}^{\delta,s}(y,w \big) \Big)
= 
\prod_{i=1}^k \P \Big( 0 \le \btau_y^i - \bsigma_y^{i-1} \1_{i\ge 2} \le \frac{st}{k} \Big) \P \Big( \bsigma_y^i - \btau_y^i \in [1-\delta,1]tw_i \} \Big) . 
\end{align*}
By the strong Markov property and the fact that each jump occurs according to an $\Exp(2d)$ random variable (as we assumed our continuous time random walk to have generator $\Delta$),  we have 
\begin{align*}
\bsigma_y^i - \btau_y^i = \sigma_{y_i}(\btau_y^i) - \btau_y^i
= \sigma_{y_i}(\tau_{y_i}(\btau_y^{i-1})) - \tau_{y_i}(\btau_y^{i-1}) 
\overset{(\rm d)}{=} \sigma_{0}(0) - \tau_0(0), \\
\btau_y^i - \bsigma_y^{i-1} \1_{i\ge 2}
= \tau_{y_i}(\btau_y^{i-1}) - \sigma_{y_{i-1}}(\btau_y^{i-1})\1_{i\ge 2}
\overset{(\rm d)}{=} \tau_{y_i-y_{i-1}}(0) - \sigma_0(0) \1_{i\ge 2}, 
\end{align*}
and thus 
\begin{align*}
\P \Big( \bsigma_y^i - \btau_y^i \in [1-\delta,1]tw_i \} \Big)
& = 
\P \Big( \sigma_{0}(0) - \tau_{0}(0) \in [1-\delta,1]tw_i \} \Big)
= \ee^{-2dt w_i (1-\delta)} - \ee^{- 2d t w_i}, \\
 \P \Big( 0 \le \btau_y^i - \bsigma_y^{i-1}\1_{i\ge 2} \le \frac{st}{k} \Big)
&  =  \P \Big( 0 \le \tau_{y_i - y_{i-1}}(0) - \sigma_{0}(0)\1_{i\ge 2} \le \frac{st}{k} \Big)
 \ge \P \Big( \tau_{y_i - y_{i-1}}(0) \le \frac{st}{k} \Big). 
\end{align*}
To estimate the latter probability, we use the following estimate for $\rho \in (0,\infty)$ and $z\in \Z^d$, where $N(z)$ denotes the number of direct paths (i.e., of length $|z|$) from $0$ to $z$: 
\begin{align*}
\P \Big( \tau_{z}(0) \le \rho \Big)
& \ge \P \Big ( X \mbox{ makes } |z| \mbox{ jumps within } \rho \mbox{ time, from } 0 \mbox{ to } z \Big) \\
& = \Poi_{2d \rho}(|z|) (2d)^{-|z|} N(z) \ge \Poi_{2d \rho}(|z|) (2d)^{-|z|}. 
\end{align*}
\end{proof}

\subsection{Energetic lower bound}\label{Energ}

 \noindent  Now we derive a lower bound of $H_{t}^{\ssup{\xi}}$ on 
 $\cA_{t,k}^{\delta,s}(y,w)$.

\begin{lemma}[Lower bound for $H_{t}^{\ssup{\xi}}$]
\label{l:bound_H_t_theta}
Let $t\in(0,\infty)$, $\delta,s \in (0,1)$,  $k\in\N$ and $y$ and $w$ as in \eqref{eqn:parameters_event_A}.  
Then, on the event $\cA_{t,k}^{\delta,s}(y,w)$, 
\begin{align}
\frac{1}{r_t \log t}\, H_{t}^{\ssup{\xi}}(X)
\label{eqn:ham_bound_below}
&\geq  (1-\delta)\sum_{i=1}^k \frac{\xi(y_i)}{r_t^{d/\alpha}} \, w_i -\theta \sum_{i=1}^k w_i^2-(k+5) \theta(\delta+s).
\end{align}
\end{lemma}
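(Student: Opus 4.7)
The plan is to reduce \eqref{eqn:ham_bound_below} to two separate local-time estimates on the event $\cA_{t,k}^{\delta,s}(y,w)$. Writing $H_t^{\ssup{\xi}}$ via local times as in \eqref{Hamiltonlocaltimes} and invoking the scaling identities $r_t^{d/\alpha} t = r_t \log t$ and $\beta_t t^2 = \theta r_t \log t$ from \eqref{r_tpropertis}, it will suffice to establish
\begin{equation*}
\sum_{z\in\Z^d} \xi(z) \ell_t(z) \ge (1-\delta)\, t \sum_{i=1}^k \xi(y_i) w_i
\quad\text{and}\quad
\sum_{z\in\Z^d} \ell_t(z)^2 \le t^2 \sum_{i=1}^k w_i^2 + C\, t^2(\delta+s)
\end{equation*}
for some absolute constant $C$ (any $C\le k+5$ suffices, so I make no attempt to optimize).

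For the first inequality, the event $\cA_{t,k}^{\delta,s}(y,w)$ forces $T_i := \bsigma_y^i - \btau_y^i \ge (1-\delta) t w_i$ at the pairwise distinct sites $y_i$ (distinctness is implicit in Definition~\ref{def:event_A}, and in any case it holds in the applications because the $y_i^n$ arise as rescaled lattice points in $\supp\bPi_n$). Since $\ell_t\ge 0$ everywhere, I simply drop the contributions from the other sites and from any return visits to $y_i$, keeping only the main-stay mass $T_i$; this gives $\sum_z \xi(z)\ell_t(z) \ge \sum_i \xi(y_i) T_i$ and the claimed lower bound follows.

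For the SILT upper bound, I decompose $\ell_t(y_i) = T_i + \rho_i$, where $\rho_i\ge 0$ collects any return visits to $y_i$ during a later travel leg or during the unconstrained residual interval $[\bsigma_y^k, t]$, and introduce the single bookkeeping quantity $\Delta := \sum_i \rho_i + \sum_{z\notin\{y_1,\dots,y_k\}} \ell_t(z)$. Total-time conservation $t = \sum_i T_i + \Delta$ combined with $\sum_i T_i \ge (1-\delta)(1-s) t$ yields $\Delta \le t(\delta+s-\delta s) \le t(\delta+s)$. Expanding
\begin{equation*}
\sum_z \ell_t(z)^2 \;=\; \sum_i (T_i+\rho_i)^2 \;+\; \sum_{z\notin\{y_i\}} \ell_t(z)^2,
\end{equation*}
I bound $\sum_i T_i^2 \le t^2\sum_i w_i^2$ using $T_i\le tw_i$, the cross term $2\sum_i T_i\rho_i$ by $2t\Delta$ using $T_i\le t$, and the tail $\sum_i \rho_i^2 + \sum_{z\notin\{y_i\}} \ell_t(z)^2$ by $\Delta^2$ via the elementary inequality $\sum_j a_j^2 \le (\sum_j a_j)(\max_j a_j)$ applied to the nonnegative family $(\rho_i)_i \cup (\ell_t(z))_{z\notin\{y_i\}}$. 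This yields $\sum_z \ell_t(z)^2 \le t^2 \sum_i w_i^2 + 2t\Delta + \Delta^2 \le t^2 \sum_i w_i^2 + 3t^2(\delta+s)$ after using $\Delta\le t$ and $\delta+s\le 1$.

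Combining the two estimates, dividing by $r_t\log t$, and applying $\beta_t t^2 = \theta r_t\log t$ delivers \eqref{eqn:ham_bound_below} with constant $3$ in place of $k+5$. The main obstacle is not any individual estimate but the correct bookkeeping of the walk's behavior outside its $k$ main stays: the event $\cA_{t,k}^{\delta,s}(y,w)$ only prescribes the first-visit durations $T_i$ and leaves the walk on $[\bsigma_y^k, t]$ entirely free, so all the mass that must be written off as an error term --- returns to the $y_i$'s, detours during travel legs, and the residual interval --- must be corralled into a single quantity controlled via total-time conservation, which is precisely the role of $\Delta$ above.
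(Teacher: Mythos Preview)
Your proof is correct and follows essentially the same strategy as the paper: bound the potential term below by retaining only the main stays $\ell_t(y_i)\ge T_i\ge(1-\delta)tw_i$, and bound the SILT above via total-time conservation. The paper's bookkeeping differs slightly --- it writes $\ell_t(y_i)/t\le w_i+s+m_i$ with $m_i$ the post-$\bsigma_y^k$ return time to $y_i$, then expands $\sum_i(w_i+s+m_i)^2$ and separately bounds $(1-\sum_i\ell_t(y_i)/t)^2\le(s+\delta)^2$ --- whereas your single aggregate $\Delta=t-\sum_iT_i$ absorbs all travel, return, and residual mass at once, yielding the sharper constant $3$ in place of $k+5$.

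One small slip: you invoke ``$\delta+s\le 1$'' to bound $\Delta^2$, but only $\delta,s\in(0,1)$ is assumed, so $\delta+s$ may exceed $1$. This is harmless, since $\Delta^2\le t\Delta\le t^2(\delta+s)$ already follows from $\Delta\le t$ (which you also mention), and your final inequality stands.
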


\begin{proof} 
We have (see also \eqref{Hamil_Pi_t})
\begin{align}\label{eqn:ham_bound}
\frac{1}{r_t \log t}\, H_{t}^{\ssup{\xi}}(X)=
&\sum_{z\in \Z^d} \Big( \frac{\xi(z)}{r_t^{d/\alpha}} \frac{\ell_t(z)}{t} -\theta \big(\frac{\ell_t(z )}{t} \big)^2\Big).
\end{align}
Using that $\xi \ge 0$ and the basic estimate $\sum_{z\in \Z^d} a_z^2 \le \sum_{z\in A} a_z^2 + (\sum_{z\notin A} a_z)^2$, which can be used to show that the total normalized self-intersection local time (SILT) is not larger than the sum of the normalized SILTs in the $y_1,\dots,y_k$ plus the square of the remaining total local time, we obtain that
\begin{align}
\label{eqn:easy_H_t_theta_bound}
\frac{1}{r_t \log t}\, H_{t}^{\ssup{\xi}}(X) &\geq \sum_{i=1}^k\Big( \frac{\xi(y_i)}{r_t^{d/\alpha}}\, \frac{\ell_t(y_i)}{t}-\theta \big( \frac{\ell_t(y_i)}{t} \big) ^2\Big)-\theta\Big(1-\sum_{i=1}^k \frac{\ell_t(y_i)}{t}\Big)^2.
\end{align}
Observe that the local time at each $y_i$ is at least the time the random walk waits before jumping away, i.e., 
\begin{align}
\label{eqn:lower_bound_local_time_y_i}
\ell_t(y_i) \ge \bsigma_y^{i} - \btau_y^{i} \ge (1-\delta) t w_i. 
\end{align}
On $\cA_{t,k}^{\delta,s}(y,w)$, in between the times $\bsigma_y^{i-1}$ and $\btau_y^i$ the walker is allowed to visit sites $y_j$ for $j\ne i$. 
Moreover, after $\sigma_k t$ time, each of the $y_i$ may be revisited. 
We let $m_i \in [0,1]$ be such that $m_i t$ is the amount of time the path visits $y_i$ after $\sigma_k t$, for all $i \in \{1,\dots,k\}$. 
In particular, 
\begin{equation}\label{encadloctimes}
(1-\delta) w_i\leq \frac{\ell_t(y_i)}{t} \leq w_i + s + m_i, \qquad   i\in\{1,\dots,k\},
\end{equation}
and consequently, 
\begin{align}
\label{eqn:upper_bound_one_min_sum_local_times_sq}
& \Big(1- \sum_{i=1}^k \frac{\ell_t(y_i)}{t}\Big)^2 \le \Big(1-(1-\delta)\sum_{i=1}^k w_i\Big)^2 \leq  (s+\delta w)^2 \le  (s+\delta)^2.
\end{align}
Since $\sigma_{y_k}$ is both bounded from below by $ (1-\delta) t w = t(1-\delta)(1-s) \ge t - (s+\delta) t$, we infer
 $ \sum_{i=1}^k m_i  t\le t- \sigma_{y_k}  \le (s+\delta) t $ and therefore  deduce from the upper bound in \eqref{encadloctimes} that
\begin{align}
 \sum_{i=1}^k \frac{\ell_t(y_i)}{t}^2
\notag 
& \leq \sum_{i=1}^k (w_i +s)^2+2 \sum_{i=1}^k m_i (w_i +s)+\sum_{i=1}^k m_i^2 \\
& \le \sum_{i=1}^k w_i^2 + 2sw + ks^2 + 3\sum_{i=1}^k m_i 
\cnewline
\cand 
\begin{calc}
\notag 
\le \sum_{i=1}^k w_i^2 + 2s + ks^2 + 3(s+\delta) 
\end{calc} 
\label{eqn:upper_bound_one_min_sum_local_times_sq2}
 \leq  \sum_{i=1}^k w_i ^2 + (k+5)  (s+\delta).
\end{align}
Substituting these bounds \eqref{eqn:upper_bound_one_min_sum_local_times_sq} and \eqref{eqn:upper_bound_one_min_sum_local_times_sq2} in \eqref{eqn:easy_H_t_theta_bound} leads to \eqref{eqn:ham_bound_below}.
\end{proof}

\subsection{Conclusion}\label{compfin}

\noindent We now prove Proposition \ref{lowerboundrand}, by proving that  \eqref{eqn:sufficient_lower_bound_skorohod} holds $\bPi$-almost surely. 

\begin{proof}[Proof of Proposition~\ref{lowerboundrand}]
Recall the definition of the approximating sequence of vectors in \eqref{defpontimp}. 
We will assume that $n\geq N$ (where $N$ is as mentioned before \eqref{defpontimp}). 
We set (assuming $N$ is large enough)
\begin{align*}
\delta_n=
s_n = \frac{1}{\log t_n} , \quad
\bcA_n := \cA_{t_n,k}^{\delta_n,s_n} 
\Big( 
\, (\br_n y_1^n,\dots,\br_n y_k^n) \, , 
\, (w_1 - \frac{s_n}{k},\dots,w_k - \frac{s_n}{k}) 
\Big).
\end{align*}
By \eqref{eqn:easy_bound_Z_t_theta} we find a lower estimate for $\bZ_n$ by estimating $\bH_n(X)$ on $\bcA_n$ from below and by estimating $\P(\bcA_n)$ from below.
Recalling \eqref{simpli} and using Lemma \ref{l:bound_H_t_theta} we see that on $\bcA_n$, by using \eqref{eqn:f_i_n_equal_to_frac_xi_etc}, 
\begin{equation}\label{inegham2}
\begin{aligned}
\frac{1}{\bgamma_n}\, \bH_{n}(X)
\cand \begin{calc}
\geq  (1-\delta_n)\sum_{i=1}^k 
\frac{\xi_n(y_i^n)}{\br_n^{d/\alpha}} 
\, w_i^n -\theta \sum_{i=1}^k (w_{i}^n)^2-(k+5) \theta(\delta_n+s_n)
\end{calc} \cnewline
&\ge  (1-\delta_n)\sum_{i=1}^k f_i^n
\, w_i^n -\theta \sum_{i=1}^k (w_{i}^n)^2-(k+5) \theta(\delta_n+s_n)\\
& \xrightarrow{n\to \infty}
\sum_{i=1}^k( f_i \, w_{i} -\theta (w_{i})^2) .
\end{aligned}
\end{equation}
Due to the above limit, for \eqref{eqn:sufficient_lower_bound_skorohod} we are left to show
\begin{equation}\label{ProbAfin}
\liminf_{n\to\infty}\frac 1{\bgamma_n}\log \P\big(
\bcA_n \big)\geq -q D_0(y_1, \dots,y_k).
\end{equation}
With 
$y_0^n :=0$, put
\begin{align*}
\fd_i^n = \br_n |y_i^n - y_{i-1}^n| \forqq{i\in \{1,\dots,k\} }. 
\end{align*}
From  Lemma \ref{l:lower_bound_prob_A} we obtain 
 \begin{equation}\label{ProbA2}
\P\big( \bcA_n \big)
  \ge 
\prod_{i=1}^k \Big[{\rm Poi}_{\frac{2d s_n t_n}{k}}\big(\fd_i^n\big)
 {\rm e}^{-2dt_n w_i^n (1-\delta_n)}[1-{\rm e}^{-2dt_n\delta_n w_{i}^n}]\Big(\frac{1}{2d}\Big)^{\fd_i^n}\Big].
\end{equation} 
Since  for $n$ large enough $w_i^n = w_i - \frac{s_n }{k} = w_i - \frac{1}{k \log t_n }$ is bounded away from $0$ for all $i\in \{1,\dots,k\}$, since $\lim_{n\to \infty} t_n \delta_n=\infty$ and since  $\lim_{x\to \infty}\ee^x (1-\ee^{-x})=\infty$, we have that 
for $n$ large enough - by using that $j^j\geq j !$ for $j\in \N$ and  that $\sum_{i=1}^k (w_i^n + \frac{s_n}{k}) \leq 1$, 
 \begin{align*}
\nonumber \P\big(
\bcA_n
\big)
 & \geq   
\prod_{i=1}^k \Big[{\rm Poi}_{\frac{2d s_n t_n}{k}}\big(\fd_i^n \big)
 {\rm e}^{-2dt_n w_i^n }\Big(\frac{1}{2d}\Big)^{\fd_i^n}\Big]\\
\cand \begin{calc}
\nonumber 
= \prod_{i=1}^k 
\Big[
\frac{[ \frac{2d s_n t_n}{k} ]^{\fd_i^n } \ee^{ - \frac{2d s_n t_n}{k}} }{ \fd_i^n !} 
 {\rm e}^{-2dt_n w_i^n }\Big(\frac{1}{2d}\Big)^{\fd_i^n}\Big]
\end{calc}\cnewline
&\geq\prod_{i=1}^k \Big( 
 \bigg[\frac{\frac{2d s_n t_n}{k}}{\fd_i^n}\bigg]^{\fd_i^n}
 {\rm e}^{-2dt_n (w_i^n+\frac{s_n}{k}) }\Big(\frac{1}{2d}\Big)^{\fd_i^n}
 \Big)
 \geq \ee^{-2dt_n} \   \prod_{i=1}^k \bigg[\frac{s_n t_n}{k\fd_i^n}\bigg]^{\fd_i^n}. 
\end{align*}  
Clearly, as $\bgamma_n = \br_n \log t_n = t_n^{1+q} (\log t_n)^q$ (see \eqref{defr})
we have $\bgamma_n^{-1} \log (\ee^{-2dt_n}) \rightarrow 0$. 
Therefore, because 
$\frac{\fd_i^n}{\br_n} =|y_i^n - y_{i-1}^n|  \xrightarrow{n\rightarrow \infty} |y_i - y_{i-1}|$ for all $i\in \{1,\dots,k\}$ and $n\in \N$ and  because $\bgamma_n=\br_n\log t_n$ and $\log \frac{\br_n}{t_n s_n} \begin{calc}
= \log r_{t_n} - \log t_n - \log s_n = (1+q) \log t_n - (1+q) \log \log t_n - \log t_n + \log \log t_n 
\end{calc} = q \log t_n- q \log \log t_n$,
\begin{align*}
\liminfn \frac{1}{\bgamma_n}
\log \P(\bcA_n) 
&
 \ge 
 \liminfn \frac{1}{\bgamma_n} \log \bigg( \prod_{i=1}^k \bigg[\frac{s_n t_n}{k\fd_i^n}\bigg]^{\fd_i^n}
 \bigg) \\
 \cand \begin{calc}
\ge 
\liminfn \frac{1}{\bgamma_n}
 \Big( 
-\sum_{i=1}^k \Big[ \fd_i^n  \log \frac{ k\fd_i^n }{s_n} 
+ \fd_i^n \log t_n \Big]
 \Big)
\end{calc} \cnewline
&
 \ge 
- \limsupn \sum_{i=1}^k \frac{\fd_i^n}{\br_n} \frac{\br_n}{\bgamma_n}  \Big( \log \frac{\fd_i^n}{\br_n } +\log \frac{k \br_n}{t_n s_n} \Big) \\
\cand \begin{calc}
\geq - \sum_{i=1}^k |y_i - y_{i-1}|  \limsupn \frac{1}{\log t_n}  \Big(
  \log |y_i - y_{i-1}| +
  \log k  - q\log \log t_n + q \log t_n   \Big) 
\end{calc} \cnewline 
 &\geq - q \sum_{i=1}^k  |y_i - y_{i-1}| 
= -q D_0(y_1, \dots,y_k), 
\end{align*}
\end{proof}


\section{Upper bounds: proof of Proposition \ref{p:uppbound}} 
\label{sec:UppBound}

Part \ref{item:compactification} of Proposition \ref{p:uppbound} is a kind of \lq compactification\rq, which we will prove in Section  \ref{sec:compactification}. Part \ref{item:upper_bound_compactified_Z_n} is proved in Section~\ref{subsec:proof_prop_upp_bound_compactified} (using the Skorohod embedding of 
Remark \ref{skohorod-embedding}), and Part \ref{item:upper_bound_compactified_Z_n_away_max} in Section \ref{subsec:proof_prop_upp_bound_cpt_away}.

\subsection{Compactification}
\label{sec:compactification}

\noindent
In this section, we prove Proposition~\ref{p:uppbound}~\ref{item:compactification}. 
For this we actually do not need to consider a subsequence and the objects considered as in Remark~\ref{skohorod-embedding}. That is, we prove the following in this section, from which one directly derives Proposition~\ref{p:uppbound}~\ref{item:compactification}: 

\begin{proposition}
\label{p:compactification_general}
Let $\alpha \in (d,\infty)$ and $\theta \in (0,\infty)$. For any $A>0$, 
\begin{align}
\label{eqn:compactification_general}
\lim_{R\to \infty} \liminf_{t\to\infty}\  \Prob\bigg [\frac 1{r_t \log t }\log\E\Big[\ee^{H_t^{\ssup{\xi}}(X)}\1\Big\{ \max_{s\in[0,t]}|X_s|>R r_t \Big\}\Big]\leq -A\bigg]=1.
\end{align}
\end{proposition}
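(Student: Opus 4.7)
My plan is to reduce to the pure potential contribution by dropping the (non-positive) self-intersection term, and then combine a Poissonian estimate on the number of jumps with a dyadic extreme-value control on the potential. Since $\beta_t \geq 0$, we have $H_t^{\ssup{\xi}}(X) \leq \int_0^t \xi(X_s)\,\dd s$. Let $N_t$ be the number of jumps of $X$ on $[0,t]$, and set $M_n := \max_{|z|\leq n} \xi(z)$. Conditionally on $N_t=n$, the trajectory visits only sites in the $\ell^1$-ball $Q_n$, so $\int_0^t \xi(X_s)\,\dd s \leq tM_n$; moreover, since $|X|$ changes by $1$ at each jump, $\{\max_{s\in[0,t]}|X_s|> Rr_t\} \subset \{N_t \geq Rr_t\}$. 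Hence
\[
\E\Big[\ee^{H_t^{\ssup{\xi}}(X)}\,\1\{\max_{s\in[0,t]}|X_s|>Rr_t\}\Big] \;\leq\; \sum_{n\geq Rr_t} \Prob(N_t=n)\,\ee^{tM_n},
\]
which is a purely deterministic bound given $\xi$.

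The next step is to introduce the $\xi$-measurable good event
\[
\cE_R \;:=\; \bigcap_{j\geq 0}\Big\{\, M_{2^{j+1}Rr_t} \leq C_0\,2^{j\epsilon/\alpha}\,(2^{j+1}Rr_t)^{d/\alpha}\,\Big\},
\]
with $\epsilon\in(0,\alpha-d)$ and $C_0>0$ to be chosen. Using the Pareto tail $\Prob(\xi(0)>x)=x^{-\alpha}$ for $x\geq 1$ together with a union bound, for $t$ large enough that $r_t\geq 1$,
\[
\Prob(\cE_R^{\rm c}) \;\leq\; \sum_{j\geq 0} (2\cdot 2^{j+1}Rr_t+1)^d \big(C_0\,2^{j\epsilon/\alpha}(2^{j+1}Rr_t)^{d/\alpha}\big)^{-\alpha} \;\leq\; C_1\, C_0^{-\alpha},
\]
with $C_1>0$ independent of $R$ and $t$. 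Hence choosing $C_0$ large makes $\Prob(\cE_R^{\rm c})$ arbitrarily small, uniformly in large $R$ and $t$. For the Poissonian factor, Stirling's inequality $n!\geq (n/\ee)^n$ gives $\Prob(N_t=n)\leq \exp\{-2dt + n\log(2\ee dt/n)\}$. A short computation using $r_t=(t/\log t)^{1+q}$ (so that $\log r_t = (1+q)\log t + O(\log\log t)$) together with $r_t\log t \gg t$ then yields
\[
\Prob(N_t=n) \;\leq\; \exp\!\big\{-\tfrac{1}{2}\,q\,n\log t\big\} \qquad \text{uniformly for } n\geq Rr_t \text{ and } t \text{ large.}
\]

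I would then decompose the sum over $n$ into the dyadic annuli $n\in[2^jRr_t,\,2^{j+1}Rr_t)$, $j\geq 0$. Using the identity $t\,r_t^{d/\alpha}=r_t\log t$ from \eqref{r_tpropertis}, on $\cE_R$ the potential part in the $j$-th annulus satisfies
\[
tM_n \;\leq\; C_0\,2^{d/\alpha}\,R^{d/\alpha}\,2^{j(d+\epsilon)/\alpha}\,r_t\log t,
\]
whereas the Poissonian part contributes at least $\tfrac{1}{2}qR\cdot 2^j\cdot r_t\log t$ to the negative exponent. Since $(d+\epsilon)/\alpha<1$, for $R$ large compared to $C_0$ the Poissonian term strictly dominates the potential term in every annulus, leaving a negative exponent of order $-cR\,2^j\,r_t\log t$ for some $c>0$ uniform in $j$. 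Summing the polynomial count of integers per annulus and then the geometric series over $j$ yields, on $\cE_R$ and for $t$ large,
\[
\E\Big[\ee^{H_t^{\ssup{\xi}}(X)}\,\1\{\max_{s\in[0,t]}|X_s|>Rr_t\}\Big] \;\leq\; \ee^{-c' R\, r_t\log t},
\]
so that dividing by $r_t\log t$ and choosing $R\geq A/c'$ gives the desired deterministic bound on $\cE_R$; since $\Prob(\cE_R)\geq 1-\eta$ for all $t$ large (uniformly in $R\geq 1$) once $C_0$ is fixed, taking $R\to\infty$ then $\eta\to 0$ concludes.

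The main technical point is to match scales in the extreme-value control of $M_n$: the threshold in the definition of $\cE_R$ must be large enough that the probability of the bad event is summable over dyadic scales (which forces the multiplicative factor $2^{j\epsilon/\alpha}$), yet small enough that the potential gain $tM_n$ stays strictly below the Poissonian entropy cost $\tfrac12 qn\log t$ for all $n\geq Rr_t$. This is precisely what requires $(d+\epsilon)/\alpha<1$, i.e.\ $\epsilon\in(0,\alpha-d)$, and is where the assumption $\alpha>d$ enters critically.
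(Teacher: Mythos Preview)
Your proof is correct and uses the same two core ingredients as the paper---the Poisson tail estimate for the number of jumps (the paper's Lemma~\ref{boudfar}) and an extreme-value control on the Pareto maxima (the paper's Lemma~\ref{field})---but the organisation is genuinely different. The paper first bounds $H_t^{\ssup{\xi}}\le t\,\xi^*(\fQ_{M_t})$ and then \emph{splits} into a ``near'' term $C_{t,R}$ (maximum of $\xi$ over $\fQ_{Rr_t}$) and a ``far'' term $D_{t,R}$ (maximum over $\fQ_{M_t}\setminus\fQ_{Rr_t}$). For $C_{t,R}$ it invokes an order-statistics representation of Pareto maxima (Lemma~\ref{l:order_statistics_n_pareto}) together with the weak law of large numbers; for $D_{t,R}$ it uses the field estimate of Lemma~\ref{field} and sums over annuli of width $r_t$. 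Your route avoids the split and the order-statistics lemma entirely: conditioning on the number of jumps $N_t=n$, you bound everything by a single sum $\sum_{n\ge Rr_t}\P(N_t=n)\ee^{tM_n}$, control all the $M_n$ simultaneously on one dyadic good event $\cE_R$, and then compare the potential gain $\asymp R^{d/\alpha}2^{j(d+\epsilon)/\alpha}r_t\log t$ against the Poisson cost $\asymp R\,2^j r_t\log t$ annulus by annulus. The inequality $(d+\epsilon)/\alpha<1$ (i.e.\ $\alpha>d$) is exactly what makes the cost win for every $j$ once $R$ is large relative to $C_0$, and the quantifier order (fix $\eta$, choose $C_0$, then choose $R$ large) is handled correctly. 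Your argument is somewhat more streamlined; the paper's version makes the role of the order statistics of the field more explicit, which fits its overall narrative but is not needed for this particular estimate.
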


Let us first state three auxiliary lemmas. 
In the first one we estimate the $\P$-probability that the random walk $X$ takes too many jumps before time $t$ (Lemma \ref{boudfar})
and in the second one we estimate the $\Prob$-probability of the maximum of a modified version of the field $\xi$ outside a big box centered at the origin (Lemma \ref{field}). 
The third one (Lemma~\ref{l:order_statistics_n_pareto}) is a classical representation of the joint distribution of leading values in the $\xi$-field. 
The latter is used because it suffices to prove the estimate \eqref{eqn:compactification_general} but with  $H_t^{\ssup{\xi}}$ replaced by the maximum over the $\xi$ values in a box of radius $M_t$, where 
\begin{align}
\label{def:M_t}
M_t:= \max_{s\in[0,t]} |X_s| \forqq{t \in [0,\infty)}. 
\end{align}

\begin{lemma}\label{boudfar}
For every $R\ge 1$ and all sufficiently large $t$,
\begin{equation} \label{eq:boundfar}
 \P [ M_t \ge R r_t ] 
 \leq \exp\Big(-\frac q2 R r_t \log t\Big) .
 \end{equation}
\end{lemma}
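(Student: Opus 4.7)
The plan is to reduce the event $\{M_t \ge R r_t\}$ to a tail event for the number of jumps of $X$ by time $t$, and then apply a Chernoff/Stirling estimate for the Poisson distribution together with the asymptotic identity $\log(r_t/t) = q\log t + O(\log\log t)$ that comes from the definition of $r_t$ in \eqref{defr}.

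First I would note that each jump of $X$ changes $|X_s|$ by exactly $1$ (since $X$ moves by $\pm e_i$ on the lattice and we use the $\ell^1$ norm), so if $N_t$ denotes the total number of jumps of $X$ on $[0,t]$, then $M_t \le N_t$. Since $X$ has generator $\Delta$, the process $N_t$ is Poisson-distributed with parameter $2dt$. Consequently
\begin{equation*}
\P[M_t \ge R r_t] \;\le\; \P[N_t \ge R r_t].
\end{equation*}

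Next I would use the elementary Poisson tail bound: for $k \ge 4dt$ one has
\begin{equation*}
\P[N_t \ge k]
\;\le\; 2\,\ee^{-2dt}\frac{(2dt)^k}{k!}
\;\le\; 2 \exp\!\Big(k\log\frac{2edt}{k} - 2dt\Big),
\end{equation*}
obtained by dominating the tail sum by twice its leading term and then applying Stirling in the form $k! \ge (k/\ee)^k$. For $t$ large enough one has $R r_t \ge 4dt$ (recall $r_t = (t/\log t)^{1+q}$ with $1+q > 1$), so the bound applies with $k = \lceil R r_t\rceil$.

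Plugging in $k = R r_t$ and rearranging, the exponent is
\begin{equation*}
R r_t\Big[\log\frac{2ed}{R} \;+\; \log\frac{t}{r_t}\Big] - 2dt + \log 2.
\end{equation*}
From \eqref{defr} we compute $\log(r_t/t) = q\log t - (1+q)\log\log t$, so the bracket equals $-q\log t + (1+q)\log\log t + O(1)$. Since $r_t\log t \gg r_t \gg t$, all lower-order terms are absorbed, and for $t$ sufficiently large (depending on $R$) the exponent is at most $-\tfrac{q}{2} R r_t\log t$, giving the claim.

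There is no substantive obstacle; the only thing to watch is making the asymptotic bookkeeping uniform enough in $R$ to ensure that the ``sufficiently large $t$'' can be chosen once and that the lower-order corrections $(1+q) r_t \log\log t$ and $-2dt$ are genuinely dominated by $\tfrac{q}{2} R r_t \log t$, which is immediate from $r_t \log t = t^{1+q}(\log t)^{-q}\log t$.
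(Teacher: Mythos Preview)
Your proof is correct and follows essentially the same approach as the paper: reduce $\{M_t \ge R r_t\}$ to a Poisson tail for the jump count, apply the Stirling bound $k! \ge (k/\ee)^k$ to get an exponent of the form $R r_t \log(ct/(R r_t))$, and then use $\log(r_t/t)=q\log t + O(\log\log t)$ from \eqref{defr} to absorb the lower-order terms into $-\tfrac{q}{2}Rr_t\log t$. Your closing remark about uniformity in $R$ is also on point: since $R\ge 1$ only helps in the estimates (both in $Rr_t \ge 4dt$ and in $\log(2ed/R)\le \log(2ed)$), the threshold on $t$ is indeed independent of $R$, matching the statement of the lemma.
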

\begin{proof}
The number of jumps taken by $X$ on the time interval $[0,t]$ is in distribution equal to a Poisson random variable $Z$ with parameter 
$2dt$. Therefore, $ \P [ M_t \ge R r_t ] \le P [Z\geq R r_t]$.  
By using Stirling's inequality $n! \ge (\frac{n}{e})^n$, 
the crude bound  $P[Z\geq n]\leq (2dt)^{n}/ n !\leq (2dt\e /n)^n$ for $n\in\N$ implies (recall that $r_t=(\frac t{\log t})^{q+1}$)
\begin{equation}\label{boundpoiss}
P[Z\geq R r_t] \leq \Big( \frac{4d\e t}{R r_t} \Big)^{R r_t}\leq \exp\Big(-\frac q2 R r_t \log t\Big). 
\end{equation}
for any large $t$ (we took an additional factor $2$ to cover up that $Rr_t$ might not be in $\N$).
\end{proof}

\begin{lemma}\label{field}
Let $A,c>0$ and $\epsilon \in (0,1)$. Then there exists an $R>0$ such that for all $r \ge 1$ 
\begin{align}
\label{eqn:prob_max_minus_rel_distance_small}
\Prob \left( \max_{x\in \Z^d \setminus Q_{Rr}}\Big( \frac{\xi(x)}{r^{d/\alpha}} - c\frac{|x|}{r}\Big) \le - A \right) \ge 1- \epsilon. 
\end{align}
\end{lemma}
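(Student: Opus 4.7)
The plan is to pass to the complement, apply a union bound, and then estimate each term via the Pareto tail. Set
\begin{equation*}
S_{R,r} := \bigl\{ x \in \Z^d \setminus Q_{Rr} \colon \xi(x) > c\,r^{d/\alpha-1}|x| - A\,r^{d/\alpha} \bigr\},
\end{equation*}
so that the event in \eqref{eqn:prob_max_minus_rel_distance_small} is exactly $\{S_{R,r} = \emptyset\}$. The goal is thus to show that $\Prob(S_{R,r} \neq \emptyset) < \epsilon$ for all $r \ge 1$, provided $R$ is large enough.

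First I would absorb the additive constant $A\,r^{d/\alpha}$ by the linear term. If $R > 2A/c$, then for every $x \in \Z^d \setminus Q_{Rr}$ we have $|x| \ge \|x\|_\infty > Rr \ge (2A/c)\,r$, so
\begin{equation*}
c\,r^{d/\alpha-1}|x| - A\,r^{d/\alpha} \ge \tfrac{c}{2}\,r^{d/\alpha-1}|x|.
\end{equation*}
Moreover, enlarging $R$ if necessary we may assume $\tfrac{c}{2}\,r^{d/\alpha-1}|x| \ge 1$ whenever $r \ge 1$ and $|x|>Rr$, so the Pareto tail \eqref{defPareto} applies and gives
\begin{equation*}
\Prob\bigl(\xi(x) > c\,r^{d/\alpha-1}|x| - A\,r^{d/\alpha}\bigr) \le \bigl(\tfrac{c}{2}\bigr)^{-\alpha}\,r^{\alpha-d}\,|x|^{-\alpha}.
\end{equation*}

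Next I would apply the union bound and compare the resulting sum with an integral. Writing $C_1 := (c/2)^{-\alpha}$,
\begin{equation*}
\Prob(S_{R,r} \neq \emptyset) \le C_1\,r^{\alpha-d}\sum_{x \in \Z^d,\;\|x\|_\infty > Rr} |x|^{-\alpha}.
\end{equation*}
A standard comparison with the integral $\int_{\{|y|_\infty > Rr\}} |y|^{-\alpha}\,dy$ (which is finite since $\alpha > d$) yields a constant $C_2 = C_2(d,\alpha)$ such that the inner sum is bounded by $C_2\,(Rr)^{d-\alpha}$ for all $r \ge 1$ and $R \ge 1$. Therefore
\begin{equation*}
\Prob(S_{R,r} \neq \emptyset) \le C_1 C_2\,R^{d-\alpha},
\end{equation*}
uniformly in $r \ge 1$. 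Since $\alpha > d$, the right-hand side tends to $0$ as $R \to \infty$, so it can be made smaller than $\epsilon$ by taking $R$ large enough.

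There is no real obstacle here: the only points to check are the absorption of the $-A\,r^{d/\alpha}$ term (which just imposes $R > 2A/c$) and the uniformity in $r$ of the tail sum estimate, which follows at once from the scaling $r^{\alpha-d}\cdot(Rr)^{d-\alpha} = R^{d-\alpha}$ together with $\alpha > d$.
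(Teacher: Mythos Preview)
Your argument is correct. Both your approach and the paper's reach the same uniform bound $\Prob(\text{complement}) \le C R^{d-\alpha}$, but by slightly different routes: you pass to the complement and use a union bound over individual sites, while the paper exploits independence to write the desired probability as a product over annuli $\fQ_{(n+1)r}\setminus\fQ_{nr}$, then lower-bounds each factor via $1-x\ge\ee^{-2x}$. The two methods are essentially dual (union bound versus product bound under independence) and yield the same summand $n^{d-1-\alpha}$; your version is arguably more direct since it bypasses the independence structure and the auxiliary inequality, at the cost of being slightly less sharp in the constant. Either way the key cancellation $r^{\alpha-d}\cdot (Rr)^{d-\alpha}=R^{d-\alpha}$ and the hypothesis $\alpha>d$ do all the work, exactly as you identified.
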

\begin{proof}
Recall that $Q_R= [-R,R]^d$. We write 
\begin{align*}
 \fQ_R = Q_R \cap \Z^d.
\end{align*}
Pick a $C>0$ such that for all $r\ge 1$
\begin{align*} 
\#(\fQ_{(n+1)r}  \setminus \fQ_{nr})
\le  C n^{d-1} r^d,\qquad n\in\N. 
\end{align*}
Note that $1-x \ge \ee^{-2x}$ for $x\in [0,\frac{\log2}{2}]$. 
\begin{calc}
It will be clear that $1-x - \ee^{-2x} \ge 0$ for $x=0$. 
Furthermore, 
\begin{align*}
\frac{\rm d}{{\rm d} x} 1- x - \ee^{-2x} 
= 2\ee^{-2x} -1, 
\end{align*}
which is $\ge 0$ if $\ee^{-2x} \ge \frac12$, i.e., if $-2x \ge -\log 2$, i.e., $x\le \frac{\log 2}{2}$. 
\end{calc}
Pick $R\in \N$ large enough such that  $CR >A$ and $\frac{1}{(cR-A)^\alpha r^d} < \frac{\log 2}{2}$. Then 
\begin{align*}
\Prob \Big( &\max_{x\in \Z^d \setminus  \fQ_{Rr}}\Big( \frac{\xi(x)}{r^{d/\alpha}} - c \frac{|x|}{r}\Big) \le - A \Big)
= \prod_{n=R}^\infty 
\Prob \Big( \max_{x\in \fQ_{(n+1) r}  \setminus \fQ_{n r} } \Big(\frac{\xi(x)}{r^{d/\alpha}} - c \frac{|x|}{r}\Big) \le - A \Big) \\
& \ge  \prod_{n=R}^\infty 
\Prob \Big(  \frac{\xi(0)}{r^{d/\alpha}} - c n \le - A \Big)^{C n^{d-1} r^d} 
 = \prod_{n=R}^\infty 
 \Big( 1- \frac{1}{(cn-A)^{\alpha} r^d}  \Big)^{C n^{d-1} r^d}  \\
& \ge  \exp \Big(  - 2 C \sum_{n=R}^\infty  n^{d-1 -\alpha } \Big(\frac{n}{cn-A} \Big)^{\alpha} \Big).
\end{align*}
The exponential term on the right-hand side does not depend on $r$ and converges to $1$ as $R \rightarrow \infty$ as $d-1 -\alpha <- 1$ and as $\frac{n}{cn-A}$ is bounded from above.
\end{proof}

We will use the following classical representation of the distribution of the maximum over the $\xi$-values. 

\begin{lemma}
\label{l:order_statistics_n_pareto}
Let $\alpha \in (0,\infty)$, $n\in\N$ and $Z_1,\dots, Z_n$ be i.i.d.\ random variables that are Pareto distributed with parameter $\alpha$. Then the order statistics $Z_{1:n} \ge \cdots \ge Z_{n:n}$ of $Z_1, \dots, Z_n$ is given by 
\begin{align}
\label{eqn:max_pareto}
\Big( Z_{1:n} ,\dots, Z_{n:n} \Big) 
\overset{(\rm d)}{=}
\bigg( \Big(\frac{\Gamma_{n+1}}{\Gamma_1}\Big)^{1/\alpha},  \Big(\frac{\Gamma_{n+1}}{\Gamma_2}\Big)^{1/\alpha}, \dots, \Big(\frac{\Gamma_{n+1}}{\Gamma_n}\Big)^{1/\alpha}\bigg),
\end{align}
where $\Gamma_i = E_1 + \cdots + E_i$ and $(E_i)_{i\in\N}$ is a sequence of i.i.d.\ exponentially distributed random variables with parameter one.
\end{lemma}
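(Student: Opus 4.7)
The plan is to combine the probability integral transform with the classical Rényi representation of the order statistics of i.i.d.\ uniform random variables; there is no serious obstacle here, as the statement is a textbook consequence of these two standard facts and is included only for the reader's convenience.

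First I would reduce to the uniform case. Since the Pareto cdf is $F(r) = 1 - r^{-\alpha}$ for $r \geq 1$, the variables $U_i := Z_i^{-\alpha}$ satisfy $\Prob(U_i \leq u) = \Prob(Z_i \geq u^{-1/\alpha}) = u$ for $u \in (0,1)$ and are therefore i.i.d.\ uniform on $(0,1)$. Because the map $r \mapsto r^{-\alpha}$ is strictly decreasing on $[1,\infty)$, the decreasing Pareto order statistics $Z_{1:n} \geq \cdots \geq Z_{n:n}$ correspond bijectively to the increasing uniform order statistics $U_{(1)} \leq \cdots \leq U_{(n)}$ via the identity $U_{(i)} = Z_{i:n}^{-\alpha}$ for $i \in \{1,\dots,n\}$, so that the joint distribution of $(Z_{1:n},\dots,Z_{n:n})$ is completely determined by the joint distribution of $(U_{(1)},\dots,U_{(n)})$.

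Next I would invoke the well-known Rényi representation for uniform order statistics: with $(E_i)_{i\in\N}$ i.i.d.\ $\Exp(1)$ and $\Gamma_i = E_1 + \cdots + E_i$, one has
\[
(U_{(1)}, \ldots, U_{(n)}) \overset{(\rm d)}{=} \Bigl( \tfrac{\Gamma_1}{\Gamma_{n+1}}, \ldots, \tfrac{\Gamma_n}{\Gamma_{n+1}} \Bigr).
\]
A clean way to justify this is to recall that $(\Gamma_k)_{k\in\N}$ are the arrival times of a rate-$1$ Poisson process on $(0,\infty)$, and conditionally on $\Gamma_{n+1}=s$ the first $n$ arrival times are distributed as the order statistics of $n$ i.i.d.\ uniforms on $(0,s)$; rescaling by $s$ then yields uniforms on $(0,1)$, and the scaling-out of $\Gamma_{n+1}$ gives the joint identity above.

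Finally I would apply the coordinatewise deterministic bijection $u \mapsto u^{-1/\alpha}$ (for $u \in (0,1)$) to both sides of the above identity, combining with $Z_{i:n} = U_{(i)}^{-1/\alpha}$ from the first step. This yields
\[
(Z_{1:n}, \ldots, Z_{n:n}) \overset{(\rm d)}{=} \Bigl( \bigl(\tfrac{\Gamma_{n+1}}{\Gamma_1}\bigr)^{1/\alpha}, \ldots, \bigl(\tfrac{\Gamma_{n+1}}{\Gamma_n}\bigr)^{1/\alpha} \Bigr),
\]
which is exactly \eqref{eqn:max_pareto}. The joint equality in distribution is automatically preserved because the transformation is applied coordinatewise.
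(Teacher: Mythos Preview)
Your proof is correct and follows essentially the same approach as the paper: reduce to uniform order statistics via the probability integral transform, invoke the R\'enyi/Gamma representation $(U_{(1)},\dots,U_{(n)})\overset{(\rm d)}{=}(\Gamma_1/\Gamma_{n+1},\dots,\Gamma_n/\Gamma_{n+1})$, and transform back. The only cosmetic difference is that the paper uses the increasing map $\Phi(s)=(1-s)^{-1/\alpha}$ to go from uniforms to Paretos (and then needs an implicit relabeling of the exponentials at the end), whereas you use the decreasing map $r\mapsto r^{-\alpha}$, which aligns increasing uniform order statistics directly with decreasing Pareto order statistics and so yields \eqref{eqn:max_pareto} without that last step.
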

\begin{proof}
It is a standard exercise, see for example \cite[Exercise 2.1.2]{DaVe03}, to show that the order statistics of  $n$ i.i.d.\ uniformly distributed random variables in distribution equals $\frac{E_1}{E_1+\cdots + E_{n+1}}, \dots, \frac{E_1+\cdots + E_{n}}{E_1+\cdots + E_{n+1}}$. By using that the Pareto distribution function 
\begin{calc}
$\Prob(Z \le r) =  r\mapsto 1- r^{-\alpha}$
\end{calc} is the composition of the uniform distribution function with $\Phi(s) := (1-s)^{-\frac{1}{\alpha}}$ one finds the order statistics of Pareto distributions and in particular \eqref{eqn:max_pareto}.
\begin{calc}
Indeed, the function $\Phi$ is increasing. Therefore if $U_1,\dots,U_n$ are i.i.d.\ uniformly distributed random variables and $U_{1:n}\ge \cdots \ge U_{n:n}$ their order statistics, then $\Phi(U_i) \overset{(\rm d)}{=} Z_i$ and thus the order statistics of the Pareto variables $Z_1,\dots, Z_n$, say $Z_{1:n} \ge \cdots \ge Z_{n:n}$, satisfies $(Z_{1:n},\dots, Z_{n:n}) = (\Phi(U_{1:n}), \dots, \Phi(U_{n:n}) )$, where 
\begin{align*}
Z_{i:n} \overset{(\rm d)}{=} \Phi(U_{i:n}) 
\overset{(\rm d)}{=} 
\Big( 1 - \frac{ E_1 + \cdots + E_i }{E_1+\cdots + E_{n+1}} \Big)^{-\frac{1}{\alpha}}
= \Big(\frac{ E_{i+1} + \cdots E_{n+1} }{E_1+\cdots + E_{n+1}} \Big)^{-\frac{1}{\alpha}}
\overset{(\rm d)}{=} 
\Big(\frac{E_1+\cdots + E_{n+1}}{ E_1 + \cdots E_n } \Big)^{\frac{1}{\alpha}}. 
\end{align*}
\end{calc}
\end{proof}

\begin{proof}[Proof of Proposition~\ref{p:compactification_general}]
We write $\xi^*(B) = \max_{x\in B} \xi(x)$ here. 
We have (recall \eqref{Hamilton})
\begin{equation}\label{boundH}
H_{t}^{\ssup{\xi}}\leq t  \xi^*( \fQ_{M_t}).
\end{equation}
It is then sufficient to prove \eqref{eqn:compactification_general} with  $t \xi^*( \fQ_{M_t})$ instead of $H_{t}^{\ssup{\xi}}$. Thus, we set 
\begin{align*}
C_{t,R}&:=\E\Big[\ee^{t  \xi^*( \fQ_{R r_t})}\1\big\{M_t>R r_t\big\}\Big] \qquad
\mbox{and} \qquad
D_{t,R}:=\E\Big[\ee^{t  \xi^*( \fQ_{M_t} \setminus \fQ_{R r_t})}\1\big\{M_t>R r_t\big\}\Big].
\end{align*}
Because $\xi^*(\fQ_{M_t}) = \xi^*(\fQ_{M_t} \setminus \fQ_{Rr_t}) \vee \xi^*(\fQ_{Rr_t})$, the proof of \eqref{eqn:compactification_general} is complete once we show that for every $A>0$  
\begin{equation}\label{eq:boundDaux1}
\lim_{R\to \infty} \liminf_{t\to\infty}\  \Prob\bigg [\frac 1{r_t \log t}\log C_{t,R}\leq -A\bigg]= 1,
\end{equation}
and that 
\begin{equation}\label{eq:boundDaux2}
\lim_{R\to \infty} \liminf_{t\to\infty}\  \Prob\bigg [\frac 1{r_t \log t}\log D_{t,R}\leq -A\bigg]= 1.
\end{equation}
\begin{calc}
Indeed, by writing 
\begin{align*}
E_{t,R}&:=\E\Big[\ee^{t  \xi^*( \fQ_{M_t})}\1\big\{M_t>R r_t\big\}\Big], 
\end{align*}
we have $E_{t,R} \le C_{t,R} + D_{t,R}$ and so 
\begin{align*}
\{C_{t,R} \le \exp ( -A r_t \log t) \} 
\cap \{D_{t,R} \le \exp ( -A r_t \log t) \} 
\subset \{ E_{t,R} \le 2 \exp ( -A r_t \log t) \}
. 
\end{align*}
Because $\fQ_{M_t}\setminus \fQ_{Rr_t}$ and $\fQ_{Rr_t}$ are disjoint, $ \xi^*(\fQ_{M_t} \setminus\fQ_{Rr_t})$ and $ \xi^*(\fQ_{Rr_t})$ are independent, and thus so are $C_{t,R}$ and $D_{t,R}$. Therefore 
\begin{align*}
\Prob[E_{t,R} \le 2 \exp ( -A r_t \log t)]
\ge \Prob[C_{t,R} \le \exp ( -A r_t \log t)]
\Prob[D_{t,R} \le \exp ( -A r_t \log t)]
\end{align*}
It remains to observe that 
\begin{align*}
\lim_{R\to \infty} \liminf_{t\to\infty}\ 
\Prob[E_{t,R} \le 2 \exp ( -A r_t \log t)]= 1 \forqq{A>0}, 
\end{align*}
if
\begin{align*}
\lim_{R\to \infty} \liminf_{t\to\infty}\ 
\Prob[E_{t,R} \le \exp ( - B r_t \log t)]= 1 \forqq{B>0}. 
\end{align*}
\end{calc}

\medskip

Let us begin with proving \eqref{eq:boundDaux1}. Pick $A>0$. Recall that $t r_t^{d/\alpha}=r_t \log t$. 
As $C_{t,R}= \ee^{t\xi^*(\fQ_{R r_t})} \P [ M_t > R r_t]$, from Lemma~\ref{boudfar} we obtain a $T>0$
such that for every $R\geq 1$ and $t\geq T$
\begin{align}
\frac{1}{r_t \log t} \log C_{t,R}
&\leq R \Big( \frac{1}{R^{1-d/\alpha}} \frac{\xi^*(\fQ_{R r_t} )}{(Rr_t)^{d/\alpha}}-\frac{q}{2}\Big).
\end{align}
Pick $R_0$ such that $\frac{q R_0}{4}> A$ and thus, 
\begin{calc}
for $t \ge T$ 
\begin{align*}
\Prob \Big[ R \Big( \frac{1}{R^{1-d/\alpha}} \frac{\xi^*(\fQ_{R r_t} )}{(Rr_t)^{d/\alpha}}-\frac{q}{2}\Big) \le - \frac{qR_0}{4} \Big]
\le 
\Prob\bigg [\frac 1{r_t \log t}\log C_{t,R}\leq -A\bigg]
\end{align*}
and so for $R \ge R_0$, 
\begin{align*}
\Prob \Big[ R \Big( \frac{1}{R^{1-d/\alpha}} \frac{\xi^*(\fQ_{R r_t} )}{(Rr_t)^{d/\alpha}}-\frac{q}{2}\Big) \le - \frac{qR_0}{4} \Big]
& \ge 
\Prob \Big[ R \Big( \frac{1}{R^{1-d/\alpha}} \frac{\xi^*(\fQ_{R r_t} )}{(Rr_t)^{d/\alpha}}-\frac{q}{2}\Big) \le - \frac{qR}{4} \Big] \\
& = 
\Prob \Big[  \frac{1}{R^{1-d/\alpha}} \frac{\xi^*(\fQ_{R r_t} )}{(Rr_t)^{d/\alpha}}  \le \frac{q}{4} \Big], 
\end{align*}
so that
\end{calc}
for every $t\ge T$ and $R\geq R_0$ one has 
\begin{equation}\label{refin1}
\Prob\bigg [\frac 1{r_t \log t}\log C_{t,R}\leq -A\bigg]
\geq  \Prob\bigg [\frac{\xi^*(\fQ_{R r_t} ) }{(Rr_t)^{d/\alpha}}
\leq \frac{q}{4} R^{1-d/\alpha} \bigg].
\end{equation}
We apply Lemma \ref{l:order_statistics_n_pareto} to $\xi^*(\fQ_{R r_t})$ to obtain
\begin{align*}
 \Prob\bigg [\frac{\xi^*(\fQ_{R r_t} ) }{(Rr_t)^{d/\alpha}}\leq \frac{q}{4} R^{1-d/\alpha} \bigg]
=
 \Prob\bigg [ \frac{ \Gamma_{\#(\fQ_{Rr_t})} }{(Rr_t)^{d}} \leq \big(\frac{q}{4}\big)^\alpha R^{\alpha-d} \Gamma_1 \bigg].
\end{align*}
By the weak law of large numbers $(Rr_t)^{-d} \Gamma_{\#(\fQ_{Rr_t})}  \Longrightarrow 2^{d}$ as $t\rightarrow \infty$, so that 
\begin{align*}
\liminf_{t \rightarrow \infty} 
 \Prob\bigg [ \frac{ \Gamma_{\#(\fQ_{Rr_t})} }{(Rr_t)^{d}} \leq \big(\frac{q}{4}\big)^\alpha R^{\alpha-d} \Gamma_1 \bigg]
=  \Prob\bigg [  \Gamma_1 \ge  2^d \big(\frac{q}{4}\big)^{-\alpha} R^{d- \alpha} \bigg]. 
\end{align*}
Then, by letting $R\rightarrow \infty$ we conclude  \eqref{eq:boundDaux1}. 

It remains to prove \eqref{eq:boundDaux2}. Since $D_{t,R}$ is decreasing in $R$,  it is sufficient to show that
for every $\epsilon \in (0,1)$, there exists a $R\in \N$ such that 
\begin{equation}\label{limDred}
 \liminf_{t\to\infty}\  \Prob\bigg [\frac 1{r_t \log t}\log D_{t,R}\leq -A\bigg]\geq  1-\epsilon.
 \end{equation}
We set 
\begin{equation}\label{defxx}
x^*_t:=\argmax\{\xi(x) \colon x\in \fQ_{M_t}\setminus \fQ_{R r_t}\}.
\end{equation}
Set 
\begin{equation}\label{defB}
\mathcal{B}_{ R,r}:=\Big\{\max_{x\in \Z^d \setminus \fQ_{ Rr}}\Big( \frac{\xi(x)}{r^{d/\alpha}} - \frac{q}{4}\frac{|x|}{r}\Big) \le - A\Big\}.
\end{equation}
We pick $\epsilon>0$ and we use Lemma \ref{field} with  $c=\frac{q}{4}$ to obtain the existence of an $ R >1$ such that for every  $r \ge 1$, 
\begin{equation}\label{contrfield}
\Prob(\mathcal{B}_{ R,r})\geq 1-\epsilon. 
\end{equation}
As $x_t^* \notin \fQ_{R r_t}$, on $\mathcal{B}_{ R,r_t}$ we derive the following estimates
\begin{align}\label{bounddt}
\nonumber 
	D_{t,R}&\leq \E\Big[\ee^{t  \xi(x^*_t)}\1\big\{M_t> R r_t\big\}\Big] \\
\nonumber 
	&\leq \E\bigg[\exp \Big( t r_t^{d/\alpha}  \frac{\xi(x^*_t)}{r_t^{d/\alpha}}-\frac{q}{4} (r_t \log t)  \frac{|x_t^*|}{r_t} \Big) \  \exp \Big( \frac{q}{4} |x_t^*| \log t \Big)\  \1\big\{M_t> R r_t\big\}\bigg] \\
	\nonumber  &\leq \exp \bigg( r_t \log t 
	\max_{x\in \Z^d \setminus \fQ_{ Rr_t}}\Big( \frac{\xi(x)}{r_t^{d/\alpha}} - \frac{q}{4}\frac{|x|}{r_t}\Big) 
	\bigg)\  \E\Big[\ee^{ \frac{q}{4} |x_t^*| \log t} \1\big\{M_t> R r_t\big\}\Big]\\
&\leq \ee^{-A r_t \log t}\ \   \E\Big[\ee^{ \frac{q}{4} M_t \log t} \, \1\big\{M_t> R r_t\big\}\Big]. 
\end{align}
Furthermore, by Lemma \ref{boudfar}, for large $t$ we have $ \P\big[M_t> j r_t\big] \le \exp (-\frac{q}{2} j  r_t \log t )$ for all $j\in\N$ and so, 
\begin{align*}
 \E\Big[\ee^{ \frac{q}{4} M_t \log t} \, \1\big\{M_t> R r_t\big\}\Big]
&\leq \sum_{j =  R}^\infty \E\Big[\ee^{ \frac{q}{4} M_t \log t} \, \1\big\{j r_t<M_t\leq (j+1) r_t\big\}\Big]\\
 &\leq  \sum_{j =  R}^\infty  \ee^{\frac{q}{4}   (j+1) r_t \log t} \,   \P\big[M_t> j r_t\big] \\
&\leq  \ee^{\frac{q}{4} r_t \log t } \sum_{j =  R}^\infty \exp \left(-\frac{q}{4} j  r_t \log t \right)
\\
& 
= 
\frac{ \exp \left(\frac{q}{4} (1-  R  ) r_t  \log t \right) }
{1 - \exp \left(-\frac{q}{4} r_t \log t  \right)}.
\end{align*}
As $R>1$, the latter converges to $0$ as $t\to \infty$. Therefore, by combining this with \eqref{bounddt}, we assert that for $t$ large enough, we have
\begin{equation}\label{contD}
\cB_{R,r_t} \subset \bigg\{ \frac{1}{r_t\log t} \log D_{t,R}\leq -A \bigg\}.
\end{equation}
\begin{calc}
Indeed, let 
\begin{align*}
s_t = 
\frac{ \exp \left(\frac{q}{4} (1-  R  ) r_t  \log t \right) }
{1 - \exp \left(-\frac{q}{4} r_t \log t  \right)}. 
\end{align*}
Suppose that $s_t <1$ for $t \ge T'$ for some $T' \ge T$. Then for $t \ge T'$, 
\begin{align*}
\cB_{R,r_t} \subset 
\bigg\{  D_{t,R}\leq \exp ( -A r_t\log t) s_t \bigg\} 
\subset
\bigg\{ D_{t,R}\leq \exp ( -A r_t\log t) s_t \bigg\}.
\end{align*}
\end{calc}
It remains to combine  \eqref{contrfield}  with \eqref{contD} to derive \eqref{limDred}. 
\end{proof}

\subsection{Proof of Proposition~\ref{p:uppbound}~\ref{item:upper_bound_compactified_Z_n}}
\label{subsec:proof_prop_upp_bound_compactified}

\noindent We adopt the setting introduced in Remark \ref{skohorod-embedding}; see also \eqref{simpli} for abbreviations. 
Before we start the proof and state a lemma that we will use for it, let us make the following observations. First observe that by \eqref{eqn:Hamiltonian_in_terms_of_Phi} $\bH_n(X) = \bgamma_n \Phi_{\bPi_n}( \bW_n)$. Let us write $\bW_n^\epsilon$ for the restriction of $\bW_n$ to $[\epsilon,\infty) \times \R^d$. Because for $w = \frac{\dd \bW_n}{\dd \bPi_n}$ one has  $\int_{(0,\epsilon) \times \R^d }  f w(f,y) - \theta w(f,y)^2 \dd \bPi_n(f,y) \le \epsilon$, we have 
\begin{align*}
\bH_n(X) \le \epsilon \bgamma_n + \Phi_{\bPi_n} (\bW_n^\epsilon). 
\end{align*}
Let $\bPi_n^\epsilon$ also be the restriction of $\bPi_n$ to $[\epsilon, \infty) \times \R^d$. 
As on the event $\{ \max_{s\in[0,t_n]}|X_s|\leq R \br_n \}$ the support of $\bW_n^\epsilon$ is a subset of $ E_n := \supp_{\R^d} \bPi_n^\epsilon \cap Q_R$, we have 
\begin{equation}
\begin{aligned}
\bZ_n^{R,-} = 
\E\Big[\ee^{\bH_n(X)} & \1\Big\{ \max_{s\in[0,t_n]}|X_s|\leq R \br_n \Big\}\Big]
 \le 
\sum_{A\subset E_n} \ee^{\epsilon \bgamma_n} 
\E\Big[\ee^{\bgamma_n \Phi_{\bPi_n}(\bW_n^\epsilon)} \1\Big\{ \supp_{\R^d} \bW_n^\epsilon =  A \Big\}\Big] \\
& \le 
\sum_{A\subset E_n} \ee^{\epsilon \bgamma_n} \exp \Big(\sup_{\substack{\mu \in \cW_R \\ \supp_{\R^d} \mu = A}}  \bgamma_n \Phi_{\bPi_n^\epsilon}(\mu) \Big)
\P\Big[ A =  \supp_{\R^d} \bW_n^\epsilon \Big]. 
\end{aligned}
\label{eqn:estimate_Z_n_R_min}
\end{equation}
In the proof we will provide a probabilistic argument that allows us to restrict the $A$ in the summand to those which do not contain elements around zero, i.e., of $A$ that are subsets of $\supp_{\R^d} \bPi_n^\epsilon \cap Q_R \setminus Q_\delta$. 
Then we will use that $\{A =  \supp_{\R^d} \bW_n^\epsilon\} \subset \{A \subset  \supp_{\R^d} \bW_n\}$ and the following lemma (for which we do not need the Skorohod setting, i.e., we do not need to restrict to a sequence of times). To motivate the condition of the lemma, observe that $A\subset E_n$ implies that $A\subset \supp_{\R^d} \bPi_n^\epsilon \subset \supp_{\R^d} \bPi_n \subset \br_n^{-1} \Z^d$. 

\begin{lemma}
\label{l:ineq_prob_supp_W_contains_A}
Let $R>\delta>0$. 
There exists a function $\gamma : (0,\infty) \rightarrow \R$ such that $\lim_{t\rightarrow \infty} \gamma(t) =0$, and such that for all $t\in (1,\infty)$ and all $A \subset Q_R \setminus Q_\delta$ with $r_t A \subset \Z^d$, 
\begin{align}
\label{eqn:inequality_prob_supp_W_contains_finite_set}
\P \Big[ 
A \subset \supp_{\R^d} W_{t} 
\Big]
\le \exp \Big( 
- q D_0(A) r_t \log t (1+\gamma(t)). 
\Big) .
\end{align}
\end{lemma}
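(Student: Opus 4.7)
The plan is to reduce the event $\{A \subset \supp_{\R^d} W_t\}$ to a constraint on the number of jumps of the continuous-time random walk and then estimate a Poisson tail. The key observation is that if the walk $(X_s)_{s\in[0,t]}$ visits every point of $r_t A \subset \Z^d$, then, because the $\ell^1$ norm coincides with the graph distance on $\Z^d$, it must have made at least $\lceil r_t D_0(A)\rceil$ jumps. Writing $N_t$ for the number of jumps during $[0,t]$, which is Poisson with parameter $2dt$, this gives the bound
\[
\P\big[A\subset \supp_{\R^d} W_t\big] \le \P\bigl[N_t \ge k\bigr], \qquad k:=\lceil r_t D_0(A)\rceil.
\]

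Next, I would estimate this Poisson tail in the regime $k\gg 2dt$, which is valid for large $t$ since $r_t/t = t^q(\log t)^{-(1+q)}\to\infty$ and $D_0(A)\ge \delta$ (this uniform lower bound comes from the assumption $A\subset Q_R\setminus Q_\delta$, which forces $|y|>\delta$ for all $y\in A$, hence $D_0(A)\ge \max_{y\in A}|y|>\delta$). Combining Stirling's inequality $k!\ge (k/e)^k$ with a geometric-series bound gives
\[
\P[N_t\ge k] \le \frac{(2det/k)^k}{1-2dt/k}\cdot e^{-2dt}.
\]
Taking logarithms and using $r_t=(t/\log t)^{1+q}$ to compute $\log(2det/k) = -q\log t + (1+q)\log\log t -\log D_0(A) +\log(2de)$, the leading term becomes
\[
k\log(2det/k)\,=\,-q r_t D_0(A)\log t\,\bigl(1-\eta(t,A)\bigr), \qquad
\eta(t,A):=\frac{(1+q)\log\log t-\log D_0(A)+\log(2de)}{q\log t}.
\]

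The crucial step is to bound $\eta(t,A)$ uniformly in $A$. The uniform lower bound $D_0(A)\ge \delta$ yields $-\log D_0(A)\le -\log\delta$, so
\[
\eta(t,A)\,\le\,c(t)\,:=\,\frac{(1+q)\log\log t-\log\delta+\log(2de)}{q\log t}\ \xrightarrow{t\to\infty}\ 0,
\]
and when instead $\eta(t,A)<0$ (large $D_0(A)$) the estimate only improves. The remaining corrections $-2dt$ and $-\log(1-2dt/k)$ are of order $O(t) + o(1)$, which are negligible compared to $qr_tD_0(A)\log t \ge q\delta r_t\log t$, so they contribute to a further $o(1)$ term. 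Setting $\gamma(t):=-c(t)+o(1)$ (which tends to $0$) then gives
\[
\log \P\bigl[A\subset \supp_{\R^d} W_t\bigr]\,\le\,-q D_0(A)\,r_t\log t\,\bigl(1+\gamma(t)\bigr),
\]
as desired.

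The main obstacle is ensuring that $\gamma(t)$ is genuinely \emph{uniform} in $A$ — this is exactly why the excluded box $Q_\delta$ appears in the hypothesis, providing the uniform lower bound $D_0(A)\ge \delta$. No upper bound on $D_0(A)$ is required: although the number of points in $A$ can be as large as $\#(r_t Q_R\cap\Z^d)=O(r_t^d)$, a large $D_0(A)$ only strengthens the estimate. Everything else (Stirling, Poisson tail, log-expansion of $r_t$) is routine.
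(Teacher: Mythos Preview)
Your proof is correct and follows essentially the same approach as the paper: reduce to a Poisson tail for the number of jumps via $D_0(r_tA)=r_tD_0(A)$, apply Stirling, and use the uniform lower bound $D_0(A)>\delta$ coming from $A\subset Q_R\setminus Q_\delta$ to make $\gamma(t)$ independent of $A$. The only cosmetic difference is that the paper bounds the Poisson tail by $(2dte/n)^n$ using $\sum_{m\ge0}e^{-2dt}\frac{(2dt)^m n!}{(m+n)!}\le 1$ rather than a geometric series, and substitutes $n\ge\delta r_t$ directly into the base instead of keeping $\log D_0(A)$ explicit; both lead to the same $\gamma(t)$.
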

\begin{proof}
Let $t\in (0,\infty)$ and $A$ be as mentioned.
Without loss of generality we may assume that $A$ is nonempty (because $D_0(\emptyset)=0$). 
Write $\widetilde A = r_t A$. 
Observe that $\supp_{\R^d} \Pi_t = r_t^{-1} \Z^d$, so that by definition of $W_t$, see \eqref{Wtdensity}, 
\begin{align*}
\Big\{ 
A \subset \supp_{\R^d} W_t  
\Big\}
= \Big\{
\ell_t(z) >0 \mbox{ for all } z\in \widetilde A 
\Big\}.
\end{align*}
The paths that realise the above event, i.e., that have a strict positive local time at all points of $A$, they make at least $n= D_0(\widetilde A)$ jumps. 
By using Stirling's inequality $n! \ge (\frac{n}{e})^n$  and that $\frac{n!}{(n+m)!}\le \frac{1}{m!}$, we obtain 
\begin{equation}
\label{eqn:upper_bound_prop_local_times}
\begin{aligned}
\P [ \ell_{t}(z)>0\ \mbox{ for all } z\in A ] 
&\leq \sum_{m=n}^\infty {\rm Poi}_{2dt}(m)=\sum_{m=n}^\infty\ee^{-2dt}\frac{(2dt)^{m}}{m!}\\
&=\frac{(2dt)^{n}}{n!}\sum_{m=0}^\infty\ee^{-2dt}\frac{(2dt)^{m}}{(m+n)!}n!
\leq \left(\frac{2dt e}{n} \right)^{n}. 
\end{aligned}
\end{equation}
Now we use that $n= D_0(\widetilde A) = r_t D_0(A)$, that $n > \delta r_t$ (because $A$ is nonempty and a subset of $Q_R \setminus Q_\delta$) and use that $r_t = t^{1+q} (\log t)^{-(1+q)}$ so that $\log t - \log r_t = - q \log t + (1+q) \log \log t$ to obtain 
\begin{align*}
\left(\frac{2dt \e}{n} \right)^{n} 
\le \left(\frac{2dt e}{\delta r_t} \right)^{r_t D_0(A)}
& = \exp \Big( D_0(A) r_t \Big( \log \frac{2d \e }{\delta}  + \log t - \log r_t  \Big) \\
& = \exp \Big( D_0(A) r_t \Big( \log \frac{2d \e }{\delta}  - q \log t + (1+q) \log \log t \Big) \Big),
\end{align*}
so that by setting $\gamma(t) = - (\log t)^{-1} (  \log \frac{2d \e }{\delta} + (1+q) \log \log t )$ we obtain the desired inequality. 
\end{proof}

\begin{proof}[Proof of Proposition~\ref{p:uppbound}~\ref{item:upper_bound_compactified_Z_n}]
Fix $\eps>0$ and $\eta>0$ and choose $\kappa>0$ so small that 
\begin{equation}\label{limpoint}
\Prob\big(\bPi([\eps,\infty)\times Q_\kappa)=0\big)\geq 1-\frac{\eta}{2}.
\end{equation} 
The $\Prob$-almost-sure convergence $\bPi_n \rightarrow \bPi$ in $\cM_\rp^\circ$ (see Lemma~\ref{l:P(t)toPi}) entails that, see for example Remark~\ref{remark:vague_convergence}, 
 \begin{equation}\label{limpointproc}
 \bPi_n([\eps,\infty)\times Q_\kappa) \  
 \underset{n\to \infty}{\overset{\Prob-a.s.}{\longrightarrow}} 
 \ \bPi([\eps,\infty)\times Q_\kappa).
\end{equation} 
Therefore there exists an $N\in \N$ such that $\Prob(\cB_N)\ge 1- \eta$, where 
\begin{align*}
\cB_N : = \cB_N^{\epsilon,\kappa} = 
\left\{ \bPi_n([\eps,\infty)\times Q_\kappa)=0 \mbox{ for all } n \ge N \right\}.
\end{align*}
Henceforth, we will work on the event $\cB_N$.
Let $R>0$. 
Observe that on $\cB_N$, for any $n\in\N$, one has 
$\supp_{\R^d} W_n^\epsilon \subset \cE_n$ for 
\begin{align*}
\cE_n = \cE_{n,\epsilon,R,\kappa} = (\supp_{\R^d} \bPi_n^\epsilon) \cap ( Q_R \setminus Q_\kappa). 
\end{align*} 
Therefore, by adapting the last inequality in  \eqref{eqn:estimate_Z_n_R_min} to restricting to subsets $A$ of $\cE_n$, we have on $\cB_N$, for all $n\ge N$
\begin{align}
\label{eqn:estimate_Z_n_R_min_better}
\bZ_n^{R,-}
\le 
\sum_{A \subset \cE_n } \ee^{\epsilon \bgamma_n} \exp \Big(\sup_{\substack{\mu \in \cW_R \\ \supp_{\R^d} \mu = A}}  \bgamma_n \Phi_{\bPi_n}(\mu) \Big)
\P\Big[ A \subset  \supp_{\R^d} \bW_n \Big]. 
\end{align}
Therefore, by Lemma~\ref{l:ineq_prob_supp_W_contains_A} and because $D_0(A) = \cD_{\bPi_n}(\mu)$ for any $\mu \in \cW$ with $\supp_{\R^d} \mu = A$, we have 
\begin{equation}
\begin{aligned}
\bZ_n^{R,-}
& \le 
\sum_{A\subset \cE_n} \ee^{\epsilon \bgamma_n} \exp \Big(
\sup_{\substack{\mu \in \cW_R \\ \supp_{\R^d} \mu = A}}  
\Big[
\bgamma_n \Phi_{\bPi_n^\epsilon}(\mu) 
- \bgamma_n (1+\gamma(t_n)) q \cD_{\bPi_n^\epsilon}(\mu) \Big] . 
\Big) \\
& \le 
 \ee^{\epsilon \bgamma_n} 2^{\# \cE_n} \exp \Big(
\sup_{ \mu \in \cW }  
\Big[
\bgamma_n \Phi_{\bPi_n^\epsilon}(\mu) 
- \bgamma_n (1+\gamma(t_n)) q \cD_{\bPi_n^\epsilon}(\mu) \Big] . 
\Big) 
\end{aligned}
\label{eqn:estimate_bZ_n_R_min_with_supremum}
\end{equation}
Since $\bPi_n \rightarrow \bPi$ in $\cM_\rp^\circ$ 
it follows by Remark~\ref{remark:vague_convergence} that for any $\epsilon>0$,  $\bPi_n^\epsilon \rightarrow \bPi^\epsilon$ in $\cM_\rp^\circ$, where $\bPi^\epsilon$ is the restriction of $\bPi$ to $[\epsilon,\infty) \times Q_R \setminus Q_\kappa$. Therefore, 
\begin{align*}
\#\cE_n =
\# \cE_{n,\epsilon,R,\kappa} 
=  \bPi_n^\epsilon( (0,\infty) \times Q_R \setminus Q_\kappa ) =  \bPi_n([\eps,\infty)\times Q_R\setminus Q_\kappa) 
\  \rightarrow \  
\bPi ([\eps,\infty)\times Q_R\setminus Q_\kappa) , 
\end{align*}
and thus 
\begin{align*}
\limn \frac{1}{\bgamma_n} (\log 2^{\# \cE_n}) =0, 
\end{align*}
and, 
since $\lim_{n\to \infty} \gamma(t_n)=0$, we have by Theorem~\ref{theorem:gamma_convergence}~\ref{item:gamma_conv_on_cpt} and  Proposition~\ref{p:gamma_convergence_Psi}~\ref{item:gamma_convergence_Psi}, 
\begin{align*}
\limsup_{n\to\infty}
\sup_{ \mu \in \cW }  
\Big[
\bgamma_n \Phi_{\bPi_n^\epsilon}(\mu) 
- \bgamma_n (1+\gamma(t_n)) q \cD_{\bPi_n^\epsilon}(\mu) \Big] 
\le \sup_{\mu\in \cW}[\Phi_{\bPi^\epsilon}(\mu)-q\, \mathcal D_{\bPi^\epsilon }(\mu)]
\le \Xi(\bPi^\epsilon). 
\end{align*}
Therefore, on $\cB_N$, for any $R>0$, 
\begin{align*}
\limsupn \frac{1}{\bgamma_n} \log \bZ_n^{R,-} 
\le \epsilon + \Xi(\bPi^\epsilon). 
\end{align*}
So summarizing the above, for every $\epsilon$ and $\eta$ in $(0,\infty)$ there exist a $\kappa>0$ and an $N\in\N$ such that $\Prob[\cB_N^{\epsilon,\kappa}] \ge 1-\eta$ and thus 
\begin{align*}
\Prob\Big[ \limsupn \frac{1}{\bgamma_n} \log \bZ_n^{R,-} 
\le \epsilon + \Xi(\bPi^\epsilon) \Big] 
\ge \Prob[\cB_N^{\epsilon,\kappa}] \ge 1-\eta. 
\end{align*}
As $\bPi^\epsilon \rightarrow \bPi$ in $\cM_\rp^\circ$ almost surely, we have  for any sequence $(\epsilon_k)_{k\in\N}$ with $\epsilon_k \downarrow \infty$;  $\limsup_{k \rightarrow 0} \Xi(\bPi^{\epsilon_k}) \le \Xi(\bPi)$  almost surely by Theorem~\ref{theorem:gamma_convergence}~\ref{item:gamma_conv_optimizers}\begin{calc}(strictly speaking,  as this theorem only considers sequences  we should replace the occurrences of ``$\epsilon$'' by ``$\epsilon_k$'' for a sequence $(\epsilon_k)_{k\in\N}$ that converges to $0$ and replace ``$\limsup_{\epsilon \downarrow 0}$'' by ``$\limsupk$'')\end{calc}. 
Therefore, for all $\eta>0$ and $\zeta>0$ there exists an $\epsilon>0$ such that 
\begin{align*}
\Prob[\epsilon + \Xi(\bPi^\epsilon) \le \Xi(\bPi) +\zeta] \ge 1-\eta, 
\end{align*}
and thus 
\begin{align*}
\Prob\Big[ \limsupn \frac{1}{\bgamma_n} \log \bZ_n^{R,-} 
\le \Xi(\bPi) + \zeta \Big] 
\ge 
\Prob\Big[ \limsupn \frac{1}{\bgamma_n} \log \bZ_n^{R,-} 
\le \epsilon +  \Xi(\bPi^\epsilon) \Big] 
\ge 1- \eta. 
\end{align*}
As the above holds for any $\eta>0$ and $\zeta>0$, (by first taking $\eta$ to zero and then $\zeta$) we obtain \eqref{eqn:limsup_Z_R_min}.
\end{proof}

\subsection{Proof of Proposition~\ref{p:uppbound}~\ref{item:upper_bound_compactified_Z_n_away_max}}
\label{subsec:proof_prop_upp_bound_cpt_away}

In this section we prove Proposition~\ref{p:uppbound}~\ref{item:upper_bound_compactified_Z_n_away_max} by mentioning where to adapt the proof in of Proposition~\ref{p:uppbound}~\ref{item:upper_bound_compactified_Z_n} as in the previous section. 

\begin{proof}[Proof of Proposition~\ref{p:uppbound}~\ref{item:upper_bound_compactified_Z_n_away_max}]
Let us write 
\begin{align*}
\cC^\delta  = \{ \nu \in \cW : \fd(\nu,\mu^*) \ge \delta\}, 
\qquad \fS^\delta 
= \bigcup \, \{ \supp_{\R^d} \nu : \nu \in \cC^\delta(\mu^*) \}. 
\end{align*}
Thus $\fS^\delta$ is the subset of $\R^d$ where the $\nu$ that are at least at distance $\delta$ of $\mu^*$, are allowed to be supported. Then, similarly to \eqref{eqn:estimate_Z_n_R_min} and \eqref{eqn:estimate_Z_n_R_min_better}, the following estimates hold, with $\cE_n^\delta = \cE_n \cap \fS^\delta$, on $\cB_N$, for $n\ge N$
\begin{align*}
\bZ_n^{R,-,\delta}
& = \E\Big[\ee^{\bH_n(X)}\, \1\{\fd(\bW_n,\mu^*) \ge \delta \}\, \1\Big\{ \max_{s\in[0,t_n]}|X_s|\leq R \br_n\Big\} \Big] \\
& \le 
\sum_{A \subset \cE_n^\delta } \ee^{\epsilon \bgamma_n} \exp \Big(\sup_{\substack{\mu \in \cW_R \cap \cC^\delta \\ \supp_{\R^d} \mu = A}}  \bgamma_n \Phi_{\bPi_n^\epsilon}(\mu) \Big)
\P\Big[ A \subset  \supp_{\R^d} \bW_n \Big].
\end{align*}
Then, similar to \eqref{eqn:estimate_bZ_n_R_min_with_supremum}, by using that $\cE_n^\delta \subset \cE_n$, we obtain (on $\cB_N$, for $n\ge N$)
\begin{equation*}
\begin{aligned}
\bZ_n^{R,-,\delta}
& \le 
\sum_{A\subset \cE_n^\delta} \ee^{\epsilon \bgamma_n} \exp \Big(
\sup_{\substack{\mu \in \cW_R \\ \supp_{\R^d} \mu = A}}  
\Big[
\bgamma_n \Phi_{\bPi_n^\epsilon}(\mu) 
- \bgamma_n (1+\gamma(t_n)) q \cD_{\bPi_n^\epsilon}(\mu) \Big] . 
\Big) \\
& \le 
 \ee^{\epsilon \bgamma_n} 2^{\# \cE_n} \exp \Big(
\sup_{ \mu \in \cC^\delta }  
\Big[
\bgamma_n \Phi_{\bPi_n^\epsilon}(\mu) 
- \bgamma_n (1+\gamma(t_n)) q \cD_{\bPi_n^\epsilon}(\mu) \Big] . 
\Big) 
\end{aligned}
\end{equation*}
The rest of the proof follows in the same fashion as in the proof of Proposition~\ref{p:uppbound}~\ref{item:upper_bound_compactified_Z_n} in the previous section, by taking $\Xi^\delta$ (see \eqref{eqn:def_Xi_delta}) instead of $\Xi$. 
\end{proof}


\appendix 

\section{The space \texorpdfstring{$\fE$}{} }
\label{section:space_fE}

\begin{lemma}
\label{l:fE}
Let $\mathfrak{E}$ be the union of $(0,\infty) \times \R^d$ with $(0,\infty]$, $\fE = ((0,\infty) \times \R^d) \cup (0,\infty]$. 
Define $\fd : \fE \times \fE \rightarrow [0,\infty)$ by 
\begin{align*}
\fd(s, s') 
&= \left|s - s' \right|  
& & s, s'\in (0,\infty], \\
\fd(s, (f,y)) 
&= \frac{1}{f} + \left| \frac{f}{1\vee |y|} - s \right|  
& & s \in (0,\infty], (f,y) \in (0,\infty) \times \R^d, \\
\fd((f,y), (f',y')) 
& = \frac{1}{f\wedge f'} \left( 1- \ee^{-|\log f- \log f'| -|y-y'|} \right) + \left| \frac{f}{1\vee |y|} - \frac{f'}{1 \vee |y'|}  \right| 
& & (f,y), (f',y') \in (0,\infty) \times \R^d. 
\end{align*}
\begin{enumerate}
\item 
\label{item:metric_on_fE}
$\fd$ is a metric on $\fE$.
\item 
\label{item:iota_cont_and_open}
The function $\iota : (0,\infty) \times \R^d \rightarrow \fE$, $\iota((f,y)) = (f,y)$, \quad $(f,y) \in (0,\infty) \times \R^d$, is continuous and open. 
\item 
\label{item:top_on_fE}
$\fE$ equipped with the topology generated by $\fd$ is a locally compact Polish space, such that \ref{item:compactness_in_fE_wrt_cH} and \ref{item:open_and_continuous_embed} of Lemma~\ref{l:about_cM_p} hold. 
Moreover, for $s,h>0$, the closure of $\cH_h^s$, is given by 
\begin{align}
\label{eqn:closure_cH}
\overline{\cH_h^s} 
= \{(f,y) \in (0,\infty) \times \R^d : f \ge s|y| + \height\} \cup \left[s,\infty \right],
\end{align}
and is a compact set. 
\item 
\label{item:every_compact_in_cH}
For every compact set $K$ in $\fE$ there exist $\height, \slope >0$ such that $K \cap [(0,\infty) \times \R^d]  \subset \cH_\height^\slope$ for some $\height,\slope>0$.
\end{enumerate}
\end{lemma}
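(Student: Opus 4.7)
The plan is to organise the proof around two natural coordinates on $\fE$: the ``inverse height'' $\pi\colon\fE\to[0,\infty)$ with $\pi((f,y))=1/f$ and $\pi(s)=0$, and the ``slope'' $\sigma\colon\fE\to(0,\infty]$ with $\sigma((f,y))=f/(1\vee|y|)$ and $\sigma(s)=s$. For interior--interior pairs the identity
\[
1-\ee^{-|\log f-\log f'|-|y-y'|}=\Big(1-\tfrac{f\wedge f'}{f\vee f'}\Big)+\tfrac{f\wedge f'}{f\vee f'}\big(1-\ee^{-|y-y'|}\big),
\]
after division by $f\wedge f'$, rewrites
\[
\fd((f,y),(f',y'))=|\pi(x)-\pi(x')|+\tfrac{1-\ee^{-|y-y'|}}{f\vee f'}+|\sigma(x)-\sigma(x')|.
\]
Symmetry and positivity in \ref{item:metric_on_fE} are immediate, since $\fd(x,x')=0$ forces $|y-y'|=0$, $\pi(x)=\pi(x')$ and $\sigma(x)=\sigma(x')$, hence $x=x'$; the boundary--interior and boundary--boundary cases match the same template.

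For the triangle inequality I would introduce $F(x,x')=(1-\ee^{-a(x,x')})/(f\wedge f')$ with $a(x,x')=|\log f-\log f'|+|y-y'|$, that is the sum of the first two terms above, and show $F$ is a pseudometric on $(0,\infty)\times\R^d$ by a short case split on $f''$. When $f''\le f\wedge f'$ one has $f\wedge f''=f'\wedge f''=f''\le f\wedge f'$, so dominating the prefactors is straightforward; when $f''>f\wedge f'$, say $f=f\wedge f'\le f''$, I would apply the convexity bound $1-\ee^{-a(x,x')}\le(1-\ee^{-a(x,x'')})+\ee^{-a(x,x'')}(1-\ee^{-a(x'',x')})$ combined with the key estimate $\ee^{-a(x,x'')}\le f/f''$ (which follows from $\ee^{-|\log f-\log f''|}=f/(f\vee f'')\le f/f''$), cancelling the $1/(f\wedge f')$ prefactor against $1/(f'\wedge f'')$. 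The third summand $|\sigma-\sigma|$ is trivially a pseudometric, so $\fd$ satisfies the triangle inequality; the boundary cases are handled identically once $\pi$ and $\sigma$ are read off the boundary points. Claim \ref{item:iota_cont_and_open} then follows, since on any compact subset of $(0,\infty)\times\R^d$ each of the three terms of $\fd$ is locally Lipschitz in the Euclidean coordinates and conversely, so $\iota$ is bi-continuous; openness holds because any small enough $\fd$-ball around an interior point stays inside $(0,\infty)\times\R^d$.

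For \ref{item:top_on_fE} I would establish \eqref{eqn:closure_cH} and the compactness of $\overline{\cH_h^s}$ simultaneously by sequential compactness: given $(x_n)\subset\cH_h^s$, either $\sup_n f_n<\infty$, whence $|y_n|\le(f_n-h)/s$ is bounded and a subsequence converges in $(0,\infty)\times\R^d$ to some $(f,y)$ with $f\ge s|y|+h$; or $f_n\to\infty$, in which case $f_n>s|y_n|+h$ forces $\sigma(x_n)\ge s$ eventually, and a subsequence converges in $[s,\infty]$. The reverse inclusion in \eqref{eqn:closure_cH} is trivial from explicit approximating sequences. Separability follows from the countable dense subset $((0,\infty)\cap\Q)\times\Q^d\cup(\Q\cap(0,\infty])$; completeness is verified by the same case split applied to $\fd$-Cauchy sequences; local compactness is immediate since the $\cH_h^s$ form an exhausting family of relatively compact open neighbourhoods as $h,s\downarrow 0$. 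For \ref{item:every_compact_in_cH}, continuity of $\pi$ and $\sigma$ on $\fE$ forces a compact $K\subset\fE$ to satisfy $\sup_K\pi<\infty$ and $\inf_K\sigma>0$, so there exist $h_0,s_0>0$ with $f\ge h_0$ and $f\ge s_0(1\vee|y|)\ge s_0|y|+(h_0\wedge s_0)$ on $K\cap(0,\infty)\times\R^d$, giving $K\cap(0,\infty)\times\R^d\subset\cH^{s_0}_{h_0\wedge s_0}$.

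The main obstacle is the triangle inequality for the coupling piece $F$: unlike the $\pi$- and $\sigma$-summands, $F$ is \emph{not} of the form $|\phi(x)-\phi(x')|$ for a scalar $\phi$, so the argument must combine the algebraic bound on $1-\ee^{-a}$ with the multiplicative estimate $\ee^{-a(x,x'')}\le f/f''$ in order to absorb an otherwise uncontrolled $1/(f\wedge f')$ prefactor when $f''$ is much larger than $f\wedge f'$; in every other configuration the inequality reduces to routine real-analysis manipulations.
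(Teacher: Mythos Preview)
Your proof is correct in spirit and, for the triangle inequality in \ref{item:metric_on_fE}, takes a genuinely different route from the paper. The paper substitutes $\lambda=\log f$ and verifies the triangle inequality for $\overline{\fd}((\lambda,z),(\lambda',z'))=\ee^{-(\lambda\wedge\lambda')}(1-\ee^{-|\lambda-\lambda'|-|z-z'|})$ by a direct computation: writing $\lambda'=\lambda+a$, $\lambda''=\lambda+b$, it expands the difference and factors it as $(1-\ee^{-q})(1-\ee^{-r})\ge0$ (when $b>a$) or $(1-\ee^{|b-a|-q})(1-\ee^{-|b-a|-r})\ge0$ (when $b<a$). Your decomposition $\fd=|\pi-\pi'|+\frac{1-\ee^{-|y-y'|}}{f\vee f'}+|\sigma-\sigma'|$ instead isolates the only nontrivial summand $F=\frac{1-\ee^{-a}}{f\wedge f'}$ and proves it is a pseudometric via the convexity identity $1-\ee^{-(p+q)}=(1-\ee^{-p})+\ee^{-p}(1-\ee^{-q})$ together with $\ee^{-a(x,x'')}\le (f\wedge f'')/(f\vee f'')$. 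This is cleaner conceptually---it explains \emph{why} the metric works (the $\pi$- and $\sigma$-coordinates control the two ways to escape to the boundary)---and it makes the boundary cases transparent, since boundary points correspond to $\pi=0$ with the middle term vanishing. The paper's computation, by contrast, is shorter to write down but opaque. For parts \ref{item:iota_cont_and_open}--\ref{item:top_on_fE} your sequential-compactness and Cauchy-sequence arguments coincide with the paper's.

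There is one small slip in \ref{item:every_compact_in_cH}: the chain $f\ge s_0(1\vee|y|)\ge s_0|y|+(h_0\wedge s_0)$ is false when $|y|\ge 1$, since then $s_0(1\vee|y|)=s_0|y|$. The fix is immediate: from $f\ge h_0$ and $f\ge s_0|y|$ you get $2f\ge s_0|y|+h_0$, hence $f> \tfrac{s_0}{3}|y|+\tfrac{h_0}{3}$, placing $K\cap(0,\infty)\times\R^d$ inside $\cH^{s_0/3}_{h_0/3}$. Similarly, your local-compactness claim via the family $\cH_h^s$ needs a word for boundary points $t\in(0,\infty]$, which lie in $\overline{\cH_h^s}$ but not in $\cH_h^s$ itself; the paper handles this by noting that a small $\fd$-ball around $t$ sits inside $\overline{\cH_h^s}$ for suitable $h,s$.
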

\begin{proof}
\ref{item:metric_on_fE}
The idea behind this is very similar to \cite[Section~13]{BKS18} (which considers a larger space than $\R \times \R^d$ instead of $(0,\infty)\times \R^d$). 
\begin{calc}
Somehow the homeomorphism $(0,\infty) \times \R^d \rightarrow \R \times \R^d$, $(f,y) \mapsto (\log f, y)$ is put inbetween to relate these spaces, but the ``second part'' of the metric is quite similar to keep the limit as being the ``ratio''. 
\end{calc}
In order to see that $\fd$ is a metric, we have to show that the triangle inequality is satisfied. 
If $\fa,\fb,\fc\in \fE$, then $\fd(\fa,\fb) \le \fd(\fa,\fc) + \fd(\fc,\fb)$ follows easily if at least one element of $\fa,\fb,\fc$ is in $(0,\infty]$. 
Therefore, we show that $\fd$ satisfies the triangle inequality on $(0,\infty) \times \R^d$. 
It is rather easy to see that it suffices to prove that $\overline \fd$ is a metric on $\R \times \R^d$ (by plugging in $\lambda = \log f$ and $z=y$), where 
\begin{align*}
\overline \fd((\lambda,z), (\lambda',z')) & = \ee^{-(\lambda \wedge \lambda')} \left( 1- \ee^{-|\lambda - \lambda'| -|z-z'|} \right) & & (\lambda,z), (\lambda',z') \in \R \times \R^d. 
\end{align*}
As we will see, this can be boiled down to the fact that $(1-\ee^{-a}) ( 1- \ee^{-b}) \ge 0$ for $a,b\ge 0$. 
Let $(\lambda,z), (\lambda ', z'),$ $(\lambda '', z'') \in \R \times \R^d$. 
We may assume $\lambda < \lambda'$ and $\lambda ' = \lambda+ a, \lambda '' = \lambda +b$, $a>0$, $b\in \R$. Then with $p= |z-z'|$, $q=|z-z''|$, $r=|z'-z''|$, so that $p\le q+r$ and $\ee^{-|z-z'|}= \ee^{-p} \ge \ee^{-q-r}$, 
\begin{align*}
& \ee^{\lambda} \Big[ \overline \fd((\lambda, z), (\lambda'',z''))  + \overline \fd((\lambda'', z''), (\lambda',z'))  - 
\overline \fd ((\lambda, z), (\lambda',z')) \Big] \\
& = 
\ee^{-(b\wedge 0)} \left( 1- \ee^{-b -q} \right)
+ \ee^{- (a \wedge b)} \left( 1- \ee^{-|b-a| -r} \right)
- \left( 1- \ee^{-a -p} \right) \\
& \ge 
 \left( 1- \ee^{-b -q} \right)
+ \ee^{-a} \left( 1- \ee^{-|b-a| -r} \right)
- \left( 1- \ee^{-a -q-r} \right) \\
& =  - \ee^{-b -q} 
+ \ee^{-a} - \ee^{-|b-a|-a -r} 
+ \ee^{-a -q-r} \\
& =  \ee^{-a} (1  - \ee^{a-b -q} - \ee^{-|b-a|-r} + \ee^{ -q-r} ) 
  \\
&
\begin{cases}
\ge \ee^{-a} (1  - \ee^{ -q} - \ee^{-r} + \ee^{ -q-r} ) 
= \ee^{-a} (1-\ee^{-q})(1-\ee^{-r}) \ge0 & \mbox{if } b>a, \\
= \ee^{-a} (1  - \ee^{|b-a| -q})(1 - \ee^{-|b-a|-r}) \ge 0 & \mbox{if } b<a. 
\end{cases}
\end{align*}

\ref{item:iota_cont_and_open}
It is rather straightforward to check that a sequence $(f_n,y_n)_{n\in\N}$ in $(0,\infty) \times \R^d$ converges to an element $(f,y)$ of $(0,\infty) \times \R^d$ with respect to $\fd$ if and only if it converges with respect to the Euclidean metric on $(0,\infty) \times \R^d$. Therefore $\iota$ is continuous and open. 

\ref{item:top_on_fE}
That \eqref{eqn:closure_cH} holds follows by the definition of $\fd$. 
Observe that for a sequence $(a_n)_{n\in\N}$ in $\overline{\cH_h^s}$ there either exists a subsequence that is contained in $[s,\infty]$ or a subsequence in $(0,\infty)\times \R^d$ of the form $(f_n,y_n)_{n\in\N}$ for which either 
\begin{enumerate}[label={\normalfont(\arabic*)}] 
\item $f_n$ is contained in a set of the form $[\height, M]$ for some $M\ge \height$ (and thus the $y_n$ are contained in a ball of radius $\slope(M+\height)$), or, 
\item $f_n \rightarrow \infty$ (and thus $\liminfn \frac{f_n}{|y_n|} \ge \slope$). 
\end{enumerate}
In both cases one can find a subsequence of $(f_n,y_n)_{n\in\N}$ that converges in $\overline{\cH_h^s}$. Hence $\overline{\cH_h^s}$ is compact. 

Every $(f,y) \in (0,\infty) \times \R^d$ has a compact neighbourhood in $(0,\infty) \times \R^d$ and therefore in $\fE$, because $\iota$ is continuous.  
On the other hand, every $t\in (0,\infty]$ has a compact neighbourhood, for example $\overline{\cH_h^s}$ for $s<t$ and $h= \frac{1}{t-s}$ (indeed, observe that $\{a \in \fE :\fd(a,t) <t-s\}\subset \cH_h^s$). 
Therefore $\fE$ is locally compact. 

Observe that a sequence $(f_n,y_n)_{n\in\N}$ whose elements belong to $(0,\infty) \times \R^d$ is a Cauchy sequence in $\fE$ either if it is a Cauchy sequence in $[h,h^{-1}] \times \R^d$ for some $h\in (0,1)$ or if $f_n\rightarrow \infty$ and $\frac{f_n}{|y_n|} \rightarrow \slope$ for some $\slope \in (0,\infty]$, this $\slope$ is then the limit in $\fE$.  
From this we infer that $\fE$ is complete. 
It is separable as $\Q_{>0} \times \Q^d \cup \Q_{>0}$ is dense, where $\Q_{>0} = (0,\infty) \cap \Q$. 
Therefore $\fE$ is a Polish space. 

\ref{item:every_compact_in_cH}
This follows by the fact that every compact set in $\fE$ is a subset of $\overline{\cH_\height^\slope}$ for some $\height,\slope>0$. 
Indeed, first it will be clear that every compact set is a subset of $\{(f,y) \in (0,\infty) \times \R^d: f \ge h\} \cup (0,\infty]$ for some $h>0$. 
Secondly, $(0,s]$ is not compact for all $s>0$ and moreover, if $(f_n,y_n)_{n\in\N}$ is a sequence in $(0,\infty) \times \R^d$ with $f_n,y_n \rightarrow \infty$ and $\frac{f_n}{|y_n|} \rightarrow 0$, then it does not possess a subsequence that converges in $\fE$. 
\end{proof}

\begin{proof}[Proof of Lemma~\ref{l:about_cM_p}]
The existence of $\fE$ for which \ref{item:compactness_in_fE_wrt_cH} and \ref{item:open_and_continuous_embed} hold, follows by Lemma~\ref{l:fE}. 

\ref{item:cM_circ_as_subset}
Because $\iota$ is an open map, $\iota(B)$ is a Borel set in $\fE$ for every Borel set $B$ in $(0,\infty)\times \R^d$. Hence $\cP\circ \iota$ defines a measure on $\fE$, clearly with values in $\N_0 \cup \{\infty\}$. 
It is a Radon measure because $\iota$ is continuous and so $\iota(K)$ is compact in $\fE$ for every compact set $K$ in $(0,\infty)\times \R^d$. Therefore it is a Point measure on $(0,\infty)\times \R^d$. \begin{calc}
because for such $K$, one has $\cP(\iota(K)) <\infty$ because $\cP$ is Radon on $\fE$. 
\end{calc}
\begin{calc}
Observe that not every element of $\cM_\rp((0,\infty) \times \R^d)$ is of the form $\cP \circ \iota$ for some $\cP$ in $\cM_\rp^\circ$. Take $\sumn \delta_{(|y_n|+1,y_n)}$ for a sequence $(y_n)_{n\in\N}$ in $\R^d$ with $|y_n|\rightarrow \infty$ for example (it attains the value $\infty$ on the set $\cH_s^s$ for any $s\in (0,1)$). 
\end{calc}

\ref{item:extension_to_fE} 
Because the embedding is continuous, it follows that $\overline \cP$ is a measure on $\fE$. Hence it is an element of $\cM_\rp$ if and only if it is a Radon measure. 
Therefore by \ref{item:compactness_in_fE_wrt_cH} it follows that $\overline \cP$ is an element of $\cM_\rp$ if and only if $\cP(\cH_h^s) <\infty$ for all $s,h>0$. 
Suppose the latter is the case. 
Then $\supp \overline \cP \subset (0,\infty) \times \R^d$, as if otherwise, then there exists a sequence $(f_n,y_n)_{n\in\N}$ in this support that converges in $\fE$ to $2s$ for a $s\in (0,\infty]$. This can only be the case if that sequence is contained in $\cH_h^s$ for some $h>0$, in which case $\cP(\cH_h^s)=\infty$. 
\end{proof}

\section{Measurability of the maximizer \texorpdfstring{$\mu^*$}{}}
\label{sec:measurability_of_maximizer}

In this section we show the measurability of the $\mu^*$ as in Lemma~\ref{l:maximizer_under_PPP}. 

First observe that $\bPi$, $\bPi_n$ and $\bPi_n^{\ssup{L}}$ for all $n,L\in\N$ are all good point measures $\Prob$-almost surely by Lemma~\ref{l:at_most_one_finite_max} and Lemma~\ref{l:part_goodness}.
Let $(\Omega,\cF)$ be the underlying (complete) measurable space of $\Prob$. 
Let $\Omega_1 \in \cF$ be such that on $\Omega_1$, \  $\bPi$, $\bPi_n$ and $\bPi_n^{\ssup{L}}$ for all $n,L\in\N$ are all good point measures and such that $\bPi_n \rightarrow \bPi$ on $\Omega_1$. 
Then, for all $\omega \in \Omega_1$ and all $n,L\in\N$, there exist $\mu^*[\omega], \mu^*_n[\omega]$ and $\mu^*_{n,L}[\omega]$ such that 
\begin{align*}
\Psi_{\bPi[\omega]}(\mu^*[\omega]) = \Xi(\bPi[\omega]), \quad 
\Psi_{\bPi_n [\omega]}(\mu^*[\omega]) = \Xi(\bPi_n[\omega]), \quad 
\Psi_{\bPi_n^{\ssup{L}}[\omega]} (\mu^*[\omega]) = \Xi(\bPi_n^{\ssup{L}}[\omega]). 
\end{align*}
Let us set $\mu^*[\omega]= \mu^*_n[\omega] = \mu^*_{n,L}=0$ for all $n,L\in \N$. 
As, on $\Omega_1$, $\bPi_n^{\ssup{L}} \xrightarrow{L\rightarrow \infty} \bPi_n$ and $\bPi_n \xrightarrow{L \rightarrow \infty} \bPi$, by  Theorem~\ref{thm:variational_formula_deterministic}~\ref{item:var_f_continuity} we have $\mu^*_{n,L} \xrightarrow{L\rightarrow \infty} \mu^*_n$ and $\mu^*_n \xrightarrow{n \rightarrow \infty} \mu^*$ on $\Omega$. Therefore it suffices to show that $\mu^*_{n,L}$ is measurable for all $n,L\in\N$  in order to conclude that $\mu^*$ is measurable (likewise, $\mu^*_n$ for all $n\in\N$).

As $\bPi$ is almost surely good, there exists an $\Omega_1\subset \Omega$ with $\Prob(\Omega_1)=1$ such that for each $\omega \in \Omega_1$, by Theorem~\ref{thm:variational_formula_deterministic}~\ref{item:var_f_maximizer} there exists a unique $\mu^* [\omega] \in \fF(\bPi(\omega))$ such that 
\begin{align*}
\Psi_{\bPi[\omega]}(\mu^*[\omega]) = \Xi(\bPi[\omega]). 
\end{align*}
We will show that there exists an $\Omega^*\subset \Omega$ with $\Prob(\Omega^*)=1$ such that $\omega \mapsto \mu^*[\omega]$ is measurable. 

This follows because $\bPi_n^L$ has a finite support: 
For $\omega \in \Omega_1$, there exist $m[\omega]\in \N_0$ and distinct $(f_1[\omega],y_1[\omega]),$ $\dots,$ $(f_m[\omega],y_m[\omega])$ in $\supp \bPi_n^L[\omega]$ such that 
\begin{align*}
\Psi_{\bPi_n^L[\omega]}(\mu_{n:L}^*[\omega]) = 
\Xi(\bPi_n^L[\omega]) 
& = \varphi_m(f_1[\omega],\dots, f_m[\omega]) - q D_0 (y_1[\omega],\dots,y_m[\omega]) . 
\end{align*}
These $m$, $f_1,\dots,f_m$ and $y_1,\dots,y_m$ are measurable as this is a finite optimization problem. 
Then $\mu_{n:L}^* = \sum_{i=1}^m w_i \delta_{f_i,y_i}$, where the $w_i$ are measurable functions of the $f_1,\dots,f_m$ due to Proposition~\ref{p:AnaPhik}. 

\begin{calc}
\begin{nobs}
[Measurability of a finite optimization problem]
Let $\UU_1,\dots,\UU_m$ be random variables such that $\Prob[\UU_i = \UU_j] =0$ for all $i,j$ with $i\ne j$. 
Suppose $\Omega_1$ is such that $\UU_i(\omega) \ne \UU_j(\omega)$ for all $i,j$ with $i\ne j$ and all $\omega \in \Omega_1$. 
On $\Omega_1$ there exists a function $i$ such that 
\begin{align*}
i(\omega) = \argmax_{j=1}^m \UU_j(\omega) \forqq{\omega\in \Omega_1}. 
\end{align*}
This function is measurable as for all $k\in \{1,\dots,m\}$
\begin{align*}
\{\omega \in \Omega_1 : i(\omega) =k \} 
= \{ \omega \in \Omega_1 : \UU_k(\omega) - \max_{j=1}^m \UU_j(\omega) = 0\}, 
\end{align*}
and because $\UU_k$ and the maximum function $\max_{j=1}^m \UU_j$ are measurable. 
\end{nobs}
\end{calc}

\begin{calc}

\section{References to Resnick's book}

$\cM_{\rm p}(E)$ denotes the set of point measures on $E$, where $\mu$ is a point measure on $E$ if $\mu = \sum_{i=1}^\infty k_i \delta_{x_i}$ for $k_1,k_2,\dots $ in $\N$ and $x_1,x_2,\dots $ in $E$ such that for all compact $K\subset E$ there are only finitely many $i$ such that $x_i \in K$. 

\begin{theorem} \cite[Proposition 3.13]{Re87}
\label{theorem:resnick_on_compact}
Let $E$ be a complete separable metric space. 
Suppose $m_n, m \in \cM_{\rm p}(E)$ and $m_n \xrightarrow{v} m$. 
Let $K\subset E$ be compact and such that $m(\partial K) = 0$. 
Then there exists an $N$ such that for a $n\ge N$ there exist labelling of points in $m_n$ and $m$
\begin{align*}
m_n (\cdot \cap K) = \sum_{i=1}^I \delta_{x_i^n} , \qquad m (\cdot \cap K) = \sum_{i=1}^I \delta_{x_i}, 
\end{align*}
such that in $E^I$
\begin{align*}
(x_1^n, \dots, x_I^n) \rightarrow (x_1,\dots, x_I). 
\end{align*}
\end{theorem}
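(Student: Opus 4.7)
The plan is to exploit the standard Portmanteau-type principle for vague convergence of Radon measures: if $A\subset E$ is relatively compact and $m(\partial A)=0$, then $m_n(A)\to m(A)$. Since point measures take only values in $\N_0\cup\{\infty\}$, and are finite on any relatively compact set, such convergence of integers forces eventual equality, i.e.\ $m_n(A)=m(A)$ for all sufficiently large $n$. The argument then has three main steps: count the atoms inside $K$, localise each atom in a small ball with boundary of $m$-measure zero, and finally match points via a consistent bijective labelling.

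First, applying the principle to $A=K$ gives $m_n(K)=I:=m(K)<\infty$ for all $n\geq N_0$, where $I<\infty$ because $K$ is compact and $m$ is Radon. Since $m(\partial K)=0$, every atom of $m$ in $K$ actually lies in the open set $K\setminus\partial K$ (note that $K$ is closed, being compact in a Hausdorff space). Let $z_1,\dots,z_J$ be the distinct atoms of $m$ inside $K$, with multiplicities $n_1,\dots,n_J$ so that $\sum_{j=1}^J n_j=I$. Because $m$ has at most countably many atoms, for each $j$ one can choose an arbitrarily small radius $r_j>0$ such that the open balls $B_j:=B(z_j,r_j)$ are pairwise disjoint, contained in $K\setminus\partial K$, and satisfy $m(\partial B_j)=0$. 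Applying the principle again to each $B_j$ yields $m_n(B_j)=n_j$ for all $n$ large, and the identity $I=m_n(K)=\sum_j m_n(B_j)+m_n\bigl(K\setminus\bigcup_j B_j\bigr)$ for large $n$ then forces $m_n\bigl(K\setminus\bigcup_j B_j\bigr)=0$ eventually.

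Repeating this construction with a decreasing sequence $r_j^{(k)}\downarrow 0$ of radii satisfying $m(\partial B(z_j,r_j^{(k)}))=0$ for each $k$, and combining with a standard diagonal argument, produces, for every $k$, a threshold $N_k$ beyond which the $n_j$ points (counted with multiplicity) of $m_n$ inside $B_j$ all lie in the smaller ball $B(z_j,r_j^{(k)})$. Hence for any enumeration $y_{j,1}^n,\dots,y_{j,n_j}^n$ of those $n_j$ points one has $y_{j,\ell}^n\to z_j$ as $n\to\infty$, for every $\ell\in\{1,\dots,n_j\}$. Fixing once and for all a target enumeration $x_1,\dots,x_I$ of $m(\,\cdot\cap K)$ (with $n_j$ of the slots equal to $z_j$) and, for each $n\geq N_0$, placing the $y_{j,\ell}^n$'s into those $n_j$ slots in any order yields a labelling $(x_1^n,\dots,x_I^n)$ of $m_n(\,\cdot\cap K)$ with $(x_1^n,\dots,x_I^n)\to(x_1,\dots,x_I)$ in $E^I$.

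The main technical subtlety is the bookkeeping of multiplicities: when $n_j\geq 2$ the $n_j$ points of $m_n$ inside $B_j$ may be genuinely distinct for each $n$ and only \emph{collectively} collapse to the single location $z_j$, so there is no canonical one-to-one correspondence between individual approximants and individual target points. The strategy above sidesteps this by only requiring a matching within each $B_j$ separately: the disjointness of the $B_j$'s together with the exact integer count $m_n(B_j)=n_j$ for large $n$ guarantees that concatenating these matches yields a bijection onto the enumeration of $m(\,\cdot\cap K)$, while componentwise convergence of each labelling follows automatically from the nested-ball construction.
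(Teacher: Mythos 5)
The paper never proves this statement at all: it is transcribed (inside a suppressed \texttt{calc} note) directly as Proposition~3.13 of \cite{Re87}, so there is no internal proof to compare with; your argument is a self-contained reconstruction, and it is correct — indeed it is essentially the standard proof of Resnick's proposition (integer-valued Portmanteau counts on null-boundary relatively compact sets, then shrinking balls around the atoms, then slot-filling to handle multiplicities). Two points should be tightened. First, the step $m_n(B_j)=n_j$ for large $n$ needs $m(B_j)=n_j$, i.e.\ $B_j$ must be chosen small enough that it contains no atom of $m$ other than $z_j$; pairwise disjointness of the $B_j$ and $m(\partial B_j)=0$ alone do not give this. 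It does follow, since atoms of a point measure cannot accumulate (a convergent sequence of atoms together with its limit would be a compact set carrying infinite mass), but it should be stated, and it is also worth noting explicitly that the $B_j$ are relatively compact because they lie inside the compact set $K$ — this is what licenses applying the counting principle to them. Second, the Portmanteau-type principle you invoke ($m_n(A)\to m(A)$ for relatively compact $A$ with $m(\partial A)=0$) is a fact about the vague topology in Resnick's locally compact, second countable setting (equivalently, for the paper's space $\fE$, where the paper itself uses it in Remark~\ref{remark:vague_convergence}); in a general complete separable metric space, as the statement is literally phrased, continuous compactly supported functions can be too scarce for vague convergence to control such counts, so the hypothesis should be read as in \cite{Re87}. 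With these caveats, the nested-ball argument and the per-ball matching of the $n_j$ approximating points to the $n_j$ copies of $z_j$ are sound and give exactly the claimed labelling and componentwise convergence.
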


\end{calc}

\bibliography{references}{}
\bibliographystyle{alpha}

\end{document}